%% file: main.tex
\documentclass[11pt,letterpaper]{article}
\input{preamble}
\title{Quillen's local parameter conjecture\\[3pt]
       in dimension four}
\input{author}
\date{}
\begin{document}
\maketitle
\begin{abstract}
\input{abstract}
\end{abstract}
\medskip
\noindent\textbf{Keywords.} Projective modules; local Quillen problem;
mixed characteristic; algebraic $K$-theory; hermitian $K$-theory;
Milnor--Witt theory.

\clearpage
\begingroup
\setlength{\parskip}{0pt}
\tableofcontents
\endgroup
\clearpage
\part{The completion criterion}
\input{sections/00_theorem_and_bridge}
\par\bigskip\needspace{12\baselineskip}
\part{Integral degree-two algebraic K-theory}
\input{sections/01_k2}
\par\bigskip\needspace{12\baselineskip}
\part{Hermitian theory and local motivic cohomology}
\input{sections/02_hermitian_and_support}
\input{sections/03_motivic_local_comparisons}
\par\bigskip\needspace{12\baselineskip}
\part{Global vanishing and projective modules}
\input{sections/04_global_mw4}
\input{sections/05_projective_conclusion}
\bigskip
\needspace{8\baselineskip}
\input{frontmatter/ai_disclosure}
\bigskip
\needspace{8\baselineskip}
\phantomsection
\addcontentsline{toc}{section}{References}
\begingroup
\linespread{1.03}\selectfont
\bibliographystyle{alpha}
\bibliography{references,references_hermitian,references_motivic,references_bridge,references_k2,references_context}
\endgroup
\end{document}

%% file: preamble.tex
\usepackage[T1]{fontenc}
\usepackage[utf8]{inputenc}
\usepackage{lmodern,amsmath,amssymb,amsthm,mathtools}
\usepackage[margin=1in,headheight=14pt]{geometry}
\usepackage{microtype,enumitem,longtable,booktabs,tabularx,xurl,needspace}
\usepackage[hidelinks]{hyperref}
\usepackage{bookmark,fancyhdr}
\hypersetup{pdftitle={Quillen's local parameter conjecture in dimension four},pdfauthor={Yunpeng Li},pdfsubject={The mixed-characteristic dimension-four local Quillen problem},pdfkeywords={projective modules, local Quillen problem, mixed characteristic, algebraic K-theory, Milnor-Witt K-theory},pdfdisplaydoctitle=true,bookmarksopen=true,bookmarksopenlevel=1}
\newcommand{\proofstep}[1]{\par\medskip\needspace{3\baselineskip}\noindent\textit{#1}\quad\ignorespaces}
\setlist{itemsep=2pt,topsep=5pt}\allowdisplaybreaks[2]
\numberwithin{equation}{section}
\newtheorem{theorem}{Theorem}[section]
\newtheorem{proposition}[theorem]{Proposition}
\newtheorem{lemma}[theorem]{Lemma}
\newtheorem{corollary}[theorem]{Corollary}
\theoremstyle{definition}\newtheorem{definition}[theorem]{Definition}
\theoremstyle{remark}\newtheorem{remark}[theorem]{Remark}
\newcommand{\Z}{\mathbb Z}\newcommand{\Q}{\mathbb Q}
\newcommand{\F}{\mathbb F}\newcommand{\A}{\mathbb A}
\newcommand{\KM}{K^{\mathrm M}}\newcommand{\KMW}{K^{\mathrm{MW}}}
\newcommand{\Khat}{\widehat K^{\mathrm M}}
\newcommand{\CH}{\mathrm{CH}}\newcommand{\Zar}{\mathrm{Zar}}
\newcommand{\et}{\mathrm{\acute et}}\newcommand{\mot}{\mathrm{mot}}
\newcommand{\ind}{\mathrm{ind}}
\newcommand{\Fil}{\operatorname{Fil}}\newcommand{\gr}{\operatorname{gr}}

\newcommand{\fm}{\mathfrak m}
\newcommand{\xto}[1]{\xrightarrow{#1}}

\DeclareMathOperator{\Spec}{Spec}\DeclareMathOperator{\Frac}{Frac}
\DeclareMathOperator{\Pic}{Pic}\DeclareMathOperator{\Br}{Br}
\DeclareMathOperator{\im}{im}\DeclareMathOperator{\coker}{coker}

\DeclareMathOperator{\Um}{Um}\DeclareMathOperator{\GW}{GW}\DeclareMathOperator{\KO}{KO}
\DeclareMathOperator{\SL}{SL}\DeclareMathOperator{\GL}{GL}
\DeclareMathOperator{\ESp}{ESp}\DeclareMathOperator{\Sp}{Sp}
\DeclareMathOperator{\hofib}{hofib}
\DeclareMathOperator{\diag}{diag}
\DeclareMathOperator{\res}{res}\DeclareMathOperator{\cor}{cor}
\DeclareMathOperator{\Div}{div}
\DeclareMathOperator{\Sing}{Sing}
\newcommand{\colim}{\mathop{\mathrm{colim}}}

\newcommand{\Hyp}{\mathsf H}
\newcommand{\Hypq}{\mathsf H_{\mathrm q}}
\newcommand{\hsym}{h_{\mathrm{sym}}}
\newcommand{\Ddual}{\mathbb D}
\newcommand{\Bclass}{\mathrm B}
\newcommand{\NFr}{N_{\mathrm{Fr}}}
\DeclareMathOperator{\mult}{mult}

%% file: author.tex
\author{Yunpeng Li\\[5pt]
  \small School of Data Science\\
  \small The Chinese University of Hong Kong, Shenzhen, China\\[3pt]
  \small \href{mailto:liyunpeng@cuhk.edu.cn}{\nolinkurl{liyunpeng@cuhk.edu.cn}}}

%% file: abstract.tex
Let $R$ be a Noetherian regular local ring of mixed characteristic
and dimension four, and let $r$ be a member of a regular system of
parameters. We prove that every finitely generated projective module over
$R[1/r]$ is free. We also prove that $\Sp_4(R[1/r])$ acts transitively on unimodular
columns of length four and that every determinant orientation of a
rank-two projective module is induced by a basis. The argument reduces freeness
to two vanishing results: the degree-three Zariski cohomology of the
naive degree-four Milnor--Witt sheaf and the alternating Witt group of $R[1/r]$.
We establish these results using integral degree-two algebraic
$K$-theory, supported hermitian $K$-theory, and local motivic cohomology.
A finite-rank symplectic comparison relates the image of the completion
obstruction to Milnor--Witt multiplication and yields the required
symplectic frames.

%% file: sections/00_theorem_and_bridge.tex
\section{Introduction}\label{main:setup}
\input{frontmatter/context}

Throughout, rings are commutative with identity and projective modules are finitely generated. A regular local ring is Noetherian. A \emph{regular parameter} in a regular local ring $(R,\fm)$ is an element of $\fm\setminus\fm^2$; it can be included in a regular system of parameters.

\begin{theorem}[The four-dimensional local Quillen theorem]\label{main:quillen}
Let $(R,\fm)$ be a regular local ring of mixed characteristic, with $\dim R=4$, and let $r$ be a regular parameter. Put $A=R[1/r]$. Every projective $A$-module is free. Moreover, $\Sp_4(A)$ is transitive on unimodular columns of length four. A rank-two projective $A$-module with any chosen determinant orientation admits a basis inducing that orientation.
\end{theorem}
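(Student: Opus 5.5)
The proof reduces freeness to vanishing of the rank-two obstruction, and then obtains the symplectic and orientation statements from Milnor--Witt obstruction theory.

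\textbf{Step 1: reduction to rank two.} The ring $A=R[1/r]$ is a regular Noetherian domain of dimension three. Its maximal ideals correspond to primes of $R$ of height at most three not containing $r$. Every projective $A$-module $P$ is stably free. Indeed, $K_0(R)=\Z$, and localization gives a surjection $K_0(R)\to K_0(A)$, because $A$ is a localization of the regular ring $R$ and every module over $A$ extends to a coherent module over $R$. Moreover $\Pic(A)=0$, since $R$ is a UFD. By Bass--Serre cancellation, or rather Suslin's cancellation in dimension $d=3$ (projectives of rank $\ge d$ cancel), any stably free module of rank $\ge 3$ is free once one has transitivity of $\GL_{n}$ on $\Um_n$ for $n\ge 4$. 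That transitivity holds by Suslin for $n\ge d+1=4$.

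It remains to treat rank two and rank three. A stably free module $P$ with $P\oplus A\cong A^{n}$ corresponds to a unimodular row of length $n$ modulo $\GL_n$. Suslin--Vaserstein theory gives $\Um_4(A)/E_4(A)\cong W_E(A)$, the elementary symplectic Witt group, and $\Um_3(A)/E_3(A)$ is governed by the Vaserstein symbol.

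\textbf{Step 2: obstruction classes.} By Asok--Fasel, for smooth-like regular $A$ of dimension three, the obstruction to splitting a free summand off a rank-three stably free module is described as follows. The set $\Um_4(A)/E_4(A)$ sits in an exact sequence whose pieces are $H^3_{\Zar}(A,\KMW_4)$ (or its naive sheaf analogue) and a Witt-type term. This is precisely the ``completion obstruction'' of the abstract. For rank two, $\Um_3(A)/E_3(A)$ maps via the Vaserstein symbol to $\widetilde V(A)$, the alternating Witt group, which the paper calls the alternating Witt group of $A$. I would therefore invoke the two vanishing theorems announced for later sections: $H^3_{\Zar}(A,\KMW_4^{\mathrm{naive}})=0$ and $W^{\mathrm{alt}}(A)=0$. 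These show that every unimodular row of length three or four is completable.

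Hence all stably free modules of rank $2$ and $3$ are free. Transitivity of $\GL_4$ on $\Um_4$, upgraded to $\Sp_4$, follows from the symplectic comparison. There, the class of a column in $\Um_4(A)/\Sp_4(A)$ maps injectively to $H^3(A,\KMW_4)$ by the finite-rank symplectic comparison, via the Milnor--Witt multiplication identification. Vanishing therefore gives transitivity.

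\textbf{Step 3: orientations.} A rank-two free module $P$ with orientation $\omega:\det P\cong A$ is determined by choosing a basis $e_1,e_2$; its orientation is $u\,\omega$ for some $u\in A^\times$. One needs $\SL$-versus-$\GL$ control: replacing $e_1$ by $u^{-1}e_1$ fixes the orientation, so this step is trivial once $P$ is free. The statement that every determinant orientation is induced by a basis is thus immediate from freeness.

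\textbf{Main obstacle.} The main obstacle is Step 2. One must make the Asok--Fasel obstruction theory work for $A$, which is not essentially smooth over a field but mixed characteristic. Concretely, one must identify the obstruction with a class in the naive Zariski cohomology $H^3(A,\KMW_4)$, using Gersten resolutions over $R$ and the supported hermitian $K$-theory comparison. I would first try to build the obstruction directly from the Vaserstein/symplectic Witt description via a Gersten-type complex for $\GW$, avoiding $\A^1$-homotopy entirely.
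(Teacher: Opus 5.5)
Your stable-freeness reduction and your orientation step agree with the paper, but the two central steps have genuine gaps.

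\textbf{Rank three and transitivity on $\Um_4(A)$.} In Step~1 you claim that $\GL_4$ is transitive on $\Um_4(A)$ ``by Suslin for $n\geq d+1$''. Suslin's rank-$d$ cancellation is proved for affine algebras over algebraically closed fields. For an arbitrary three-dimensional Noetherian ring, Bass gives transitivity on $\Um_n$ only for $n\geq5$. Transitivity on $\Um_4(A)$ is precisely the hard content of the theorem. The paper obtains it only from $H^3_{\Zar}(\Spec A,\mathcal K_4^{\mathrm{MW}})=0$, through Theorem~\ref{bridge:criterion}.

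Your Step~2 then appeals to Asok--Fasel. Their obstruction theory rests on $\A^1$-representability over a field and is not available for $A$. You flag this as the main obstacle but supply no substitute. The paper's substitute is the completion criterion, which combines:
\begin{enumerate}
\item Schlichting's Zariski plus-construction models, together with the Quillen-patching bijection $[X,\Bclass\GL_2]\to[X,\Sing\Bclass\GL_2]$;
\item local integral symplectic homology stability, which puts the first obstruction for $\Bclass\Sp_2\to\Bclass\Sp_4$ in $H^3(X,\mathcal N)$;
\item a generic-point comparison with Hutchinson--Wendt, showing that the image of $\mathcal N$ in $\pi_3\mathcal Y$ is generically the Whitehead-product image controlled by Milnor--Witt multiplication;
\item Lemma~\ref{bridge:support}, which kills the generically zero defect sheaves in top degree.
\end{enumerate}
This shows only that the obstruction dies in the bundle-classifying space. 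It does not give the injection $\Um_4(A)/\Sp_4(A)\hookrightarrow H^3(A,\mathcal K_4^{\mathrm{MW}})$ that you assert, and none is needed.

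\textbf{Rank two.} Your rank-two argument does not work. The Vaserstein symbol takes values in the elementary symplectic Witt group $W_E(A)$, a degree-one hermitian group of type $\GW_1^{[3]}$. It does not land in the classical alternating Witt group $W_{\mathrm{alt}}(A)=\coker(K_0(A)\to\GW_0^{[2]}(A))$. Moreover, Vaserstein's bijectivity theorem needs dimension at most two. So $W_{\mathrm{alt}}(A)=0$ does not make rows of length three completable.

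In the paper the two vanishings play different roles, and the key observation is $\SL_2=\Sp_2$. An orientation makes a rank-two $P$ a symplectic plane. Then $W_{\mathrm{alt}}(A)=0$ together with $K_0(A)=\Z$ makes it stably hyperbolic. Lemma~\ref{main:metabolic} and cancellation of hyperbolic planes in the stable range then give a frame $P\perp\Hyp\simeq\Hyp^2$ (Lemma~\ref{main:actual_frame}). Freeness of $P$ comes once more from the $H^3$ vanishing, via Theorem~\ref{bridge:criterion}.

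Rank three follows by completing a length-four row with the transpose of a symplectic matrix. Ranks at least four follow from Bass.
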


Here mixed characteristic means $\operatorname{char}R=0$ and $\operatorname{char}(R/\fm)=p>0$.
The parameter open $\Spec A$ has dimension at most three, and all its
residue fields are infinite, even when $R/\fm$ is finite.
The elementary argument is recorded in Step~1 of the proof of
Theorem~\ref{global:naive_mw4}.
These facts supply the dimension and residue-field hypotheses of
Theorem~\ref{bridge:criterion}.

\subsection{Two vanishing theorems}

The proof rests on two vanishing theorems:
\begin{equation}\label{main:arithmetic_endpoints}
 H^3_{\Zar}(\Spec A,\mathcal K^{\mathrm{MW}}_4)=0,
 \qquad W_{\mathrm{alt}}(A)=0.
\end{equation}
Here $\mathcal K^{\mathrm{MW}}_4$ is the sheaf associated with the \emph{naive unit-symbol presentation}, and $W_{\mathrm{alt}}$ is the stable Witt group of perfect alternating forms. The first vanishing is Theorem~\ref{global:naive_mw4}; the second is Theorem~\ref{gw:global_walt}. Part~II establishes the integral degree-two $K$-theory results used in their proofs, and Part~III develops the local motivic and supported hermitian calculations.

Theorem~\ref{bridge:criterion} converts the first vanishing into a symplectic completion theorem. It compares the obstruction at the characteristic-zero generic point and then extends the resulting cohomological vanishing over the affine scheme.

\input{frontmatter/roadmap}

\subsection{Unit symbols and symplectic conventions}

\begin{definition}\label{notation:mw_definition}
For a ring $B$, $K_*^{\mathrm{MW}}(B)$ denotes the graded unit-symbol presentation with generators $[a]$ of degree one for $a\in B^\times$ and $\eta$ of degree $-1$, and relations
\[
 [ab]=[a]+[b]+\eta[a][b],\qquad [a][1-a]=0,
 \qquad \eta[a]=[a]\eta,\qquad \eta(\eta[-1]+2)=0,
\]
where the Steinberg relation is imposed when both $a$ and $1-a$ are units. The associated small-Zariski sheaf in degree $n$ is denoted $\mathcal K_n^{\mathrm{MW}}$. For local rings with infinite residue field, Schlichting's equivalent presentation in degrees at least two is identified with this one by the map fixing the degree-one symbols \cite[Theorem~4.18]{SchlichtingEuler}.
\end{definition}

An alternating pairing on a finite projective $B$-module $P$ satisfies
$\langle v,v\rangle=0$ for every $v\in P$. It is perfect if the
induced map $P\to P^*$ is an isomorphism. In characteristic two,
skew-symmetry alone does not imply the alternating condition.
Over a ring $B$, we write $\Hyp$ for the free hyperbolic alternating plane.
The form on $\Hyp^2$ is the block sum of two matrices
$\left(\begin{smallmatrix}0&1\\-1&0\end{smallmatrix}\right)$.
We order the standard coordinate vectors by hyperbolic pairs:
$(e_1,e_2)$ is the first pair and $(e_3,e_4)$ the second.
Grouping the first vectors before the second ones gives the usual
matrix $\left(\begin{smallmatrix}0&I_2\\-I_2&0\end{smallmatrix}\right)$;
see \cite[Chapter~XV, \S8]{LangAlgebra2002}.
A unimodular column $u$ in this space has a partner $v$ with $\langle u,v\rangle=1$. It determines the oriented rank-two alternating projective
\[
 P_u=u^\perp/Bu,\qquad P_u\perp \Hyp\simeq \Hyp^2.
\]
A symplectic basis of the complement, together with $u$ and its partner, completes $u$ to a symplectic matrix. Thus symplectic completion of $u$ is equivalent to freeness of the underlying module $P_u$.

\input{frontmatter/notation}

\section{A symplectic completion criterion}\label{bridge:section}

We first separate the passage from cohomology to vector bundles from the
arithmetic vanishing results. The criterion below applies to a rank-two
bundle together with a symplectic trivialization after adjoining one
hyperbolic plane. Its proof compares the image of the completion obstruction
at the generic point and then uses the dimension of the base to remove the
remaining sheaf-theoretic errors.

\begin{theorem}[Symplectic completion from the generic image]\label{bridge:criterion}
Let $B$ be an integral Noetherian ring with $X=\Spec B$ of dimension at most three. Suppose its fraction field has characteristic zero, every residue field of $B$ is infinite, and
\[
 H^3_{\Zar}(X,\mathcal K_4^{\mathrm{MW}})=0.
\]
Then every unimodular column of length four over $B$ is the first column of a symplectic matrix. More generally, every oriented rank-two projective $P$ with a specified isometry $P\perp \Hyp\simeq \Hyp^2$ is free with its orientation.
\end{theorem}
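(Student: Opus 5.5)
We reduce the statement to an Euler-class-type obstruction in $H^3_{\Zar}(X,\mathcal K_4^{\mathrm{MW}})$, and then show that this obstruction class is the only thing preventing a symplectic basis. The reduction is the one recorded before the statement. A unimodular column $u$ of length four gives $P_u=u^\perp/Bu$ with $P_u\perp\Hyp\simeq\Hyp^2$. So the first assertion is the special case of the second in which $P=P_u$ carries its induced orientation. We therefore work with an oriented rank-two alternating projective $P$ together with a fixed isometry $\phi\colon P\perp\Hyp\simeq\Hyp^2$.

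\proofstep{Step 1: the obstruction.} Rank-two alternating projectives are the same as $\SL_2=\Sp_2$-torsors. The datum $\phi$ says that the associated $\Sp_4$-torsor is trivialized. Hence $(P,\phi)$ is classified by a section of the quotient $\Sp_4/\Sp_2$ over $X$, and this quotient is identified with the space of unimodular length-four columns, i.e.\ with $\A^4\setminus 0$. A symplectic basis of $P$ compatible with $\phi$ exists exactly when this section lifts to $\Sp_4$ after modification by elementary symplectic operations.

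We run the Moore--Postnikov (obstruction-theoretic) analysis of $B\Sp_2\to B\Sp_4$ over a base of dimension at most three. The homotopy fibre is $\A^4\setminus0$. It is $2$-connected, and its first nonzero homotopy sheaf is $\mathcal K_4^{\mathrm{MW}}$ in the sense of Morel. This places the primary obstruction in $H^4$ with coefficients $\mathcal K_4^{\mathrm{MW}}$, which vanishes for dimension reasons. The relevant \emph{relative} obstruction, which measures whether the given $\phi$ can be straightened, therefore lives one degree lower, in $H^3_{\Zar}(X,\mathcal K_4^{\mathrm{MW}})$ up to indeterminacy. The hypotheses that every residue field is infinite and that the fraction field has characteristic zero are used here. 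They let us identify Morel's sheaf on stalks with the naive unit-symbol sheaf of Definition~\ref{notation:mw_definition}, via Schlichting's comparison. Since we work over a non-smooth base, we do this through Gersten-type resolutions on the regular stalks rather than through motivic homotopy theory.

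\proofstep{Step 2: comparison at the generic point.} We compute the image of the completion obstruction of $(P,\phi)$ at the characteristic-zero generic point. Over the field $\Frac B$ every unimodular column is symplectically completable. The obstruction is thus a Zariski-locally trivial class, and the difference of local completions defines a Čech cocycle. Following the strategy announced in the section introduction, we show that this cocycle represents a class in $H^3_{\Zar}(X,\mathcal K_4^{\mathrm{MW}})$. We then show that its vanishing implies completability. This is done by patching the local symplectic bases: they differ on overlaps by $\Sp_2$-valued transition data. Because $\dim X\le 3$, the higher obstructions, namely $H^i$ for $i\ge4$ and the Postnikov terms beyond $\mathcal K_4^{\mathrm{MW}}$, vanish automatically. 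The orientation is preserved because $\Sp_2=\SL_2$.

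\proofstep{Main obstacle.} The hard step is the identification in Step 2. We must show that the generic comparison really lands in Zariski cohomology of the \emph{naive} sheaf $\mathcal K_4^{\mathrm{MW}}$, and not in some variant, when $B$ is not smooth over a field. We also need the identification to be compatible with Milnor--Witt multiplication, so that the class is the product-type Euler class. I would first try to compute the class explicitly using the Suslin-matrix or Vaserstein symbol construction for length-four columns, which gives a unit-symbol formula in degree four. I would then check, stalk by stalk at points of codimension $\le3$, that the residue maps are the naive ones, using infinitude of residue fields to move into general position.
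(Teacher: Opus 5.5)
\textbf{There is a genuine gap in Step~1, and Step~2 does not repair it.}

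You assert that the fibre of $\Bclass\Sp_2\to\Bclass\Sp_4$ is $\A^4\setminus0$, with first homotopy sheaf ``Morel's'' $\mathcal K_4^{\mathrm{MW}}$. From this you conclude that the class of $(P,\phi)$ lies in $H^3_{\Zar}(X,\mathcal K_4^{\mathrm{MW}})$ and therefore vanishes. This is the smooth-over-a-field argument (Morel's computation, affine representability, Asok--Fasel). None of it is available for the intended $X=\Spec R[1/r]$, whose local rings are regular of mixed characteristic and not essentially smooth over a field.

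Over such a base, the Zariski-local model the paper uses is the plus construction of $\Bclass\Sp_{2n}$ on stalks with infinite residue field. There the first homotopy sheaf of the fibre is $\mathcal N$, with stalks $H_4(\Sp_4(T),\Sp_2(T);\Z)$. The available comparison is only a map $\mathcal N\to\mathcal K_4^{\mathrm{MW}}$ through $H_4(\SL_4,\SL_3)$, together with surjectivity of multiplication $\operatorname{Sym}^2\mathcal K_2^{\mathrm{MW}}\to\mathcal K_4^{\mathrm{MW}}$. That points the wrong way: right exactness in top degree transfers vanishing of $H^3$ from $\mathcal K_4^{\mathrm{MW}}$ to its quotients, not to a sheaf that merely maps to it.

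You name this identification as the main obstacle, but your remedy does not touch it. A Suslin-matrix or Vaserstein-symbol formula for the class, or a stalkwise check of residue maps, does not show that the coefficient sheaf of the obstruction is the naive sheaf. Step~2 also gives no mechanism:
\begin{itemize}
\item Local completions differing by $\Sp_2$-valued transition data just recover the $H^1$ torsor class of $P$; nothing in a \v{C}ech description produces a degree-three class.
\item Zariski-local triviality comes from the stalks being local, not from completability over $\Frac B$.
\item You have no statement turning a vanishing cohomology class into actual freeness over this base. That requires a bundle-detecting object, such as $\mathcal Y=R_{\Zar}\Sing\Bclass\GL_2$ with Schlichting's bijection $[X,\Bclass\GL_2]_{\Zar}\to[X,\Sing\Bclass\GL_2]_{\Zar}$ for affine Noetherian $X$. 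It also needs homotopy dimension and Postnikov completeness to compute section components.
\end{itemize}

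\textbf{The missing idea} is that one never needs $\mathcal N\simeq\mathcal K_4^{\mathrm{MW}}$. It suffices to control the image $\mathcal I$ of $\mathcal N$ in $\pi_3\mathcal Y$.
\begin{itemize}
\item At the generic point, which is where the characteristic-zero hypothesis enters, the Hutchinson--Wendt fibre sequence $\Sing(\A^4\setminus0)(F)\to\Bclass\SL_2(F[\Delta^\bullet])\to\Bclass\Sp_4(F[\Delta^\bullet])$ is available. It shows that the relevant kernel on $\pi_3$ is generated by Whitehead products $[a,b]$ of classes in $\KMW_2(F)$, with relative boundary $\partial\mult(a\odot b)$.
\item Hence the bracket-image sheaf $\mathcal W$ satisfies $\mathcal W\subset\mathcal I$ and $\mathcal K_4^{\mathrm{MW}}\twoheadrightarrow\mathcal W/\mathcal R$. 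Here $\mathcal R$ is the image of the multiplication kernel, and both $\mathcal R$ and $\mathcal I/\mathcal W$ have zero generic stalk.
\item Such sheaves have vanishing $H^3$ on an integral Noetherian scheme of dimension at most three (Lemma~\ref{bridge:support}). Combined with $H^3(X,\mathcal K_4^{\mathrm{MW}})=0$ and right exactness, this makes $H^3(X,\mathcal N)\to H^3(X,\pi_3\mathcal Y)$ zero.
\item That is all that is needed, because the framed class lives in $\pi_0R\Gamma(X,\mathcal F)=H^3(X,\mathcal N)$ and $\mathcal Y$ detects the underlying bundle.
\end{itemize}
The infinite residue fields are used for perfectness of $\Sp_{2n}(T)=\ESp_{2n}(T)$ and for Schlichting's stability vanishing $H_i(\Sp_4(T),\Sp_2(T);\Z)=0$ for $i<4$. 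They are not used for a comparison with a Morel sheaf, which your argument would require but which does not exist over this base.
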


We use integral coefficients. The isometry in the theorem fixes a point
in the homotopy fibre of stabilization. We shall follow this point through
the comparison maps, so that triviality in the final classifying space
implies freeness of the given bundle.

\subsection{Top cohomology of a generically zero sheaf}

The comparison at the generic point will leave two sheaves with zero
generic stalk. The next lemma is the reason that these sheaves do not
contribute to the cohomological degree used in the proof.

\begin{lemma}[Top cohomology and codimension of support]\label{bridge:support}
Let $X$ be an integral Noetherian scheme of dimension at most $d$, with generic point $\xi$, and let $\mathcal E$ be an arbitrary abelian sheaf on its small Zariski site. If $\mathcal E_\xi=0$, then $H^d(X,\mathcal E)=0$ for $d>0$. More generally, if all stalks of codimension less than $c$ vanish, then $H^i(X,\mathcal E)=0$ for $i>d-c$.
\end{lemma}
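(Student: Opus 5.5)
The plan is to show that $\mathcal E$ is supported on a closed subset $Z$ of small dimension and then apply Grothendieck vanishing on $Z$. Let $U$ be the set of points $x\in X$ with $\operatorname{codim}(x)<c$. Then $\mathcal E_x=0$ for all $x\in U$.

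\emph{Step 1: the support has low dimension.} Let $Z=\operatorname{supp}\mathcal E=\{x:\mathcal E_x\neq0\}$. For an arbitrary abelian sheaf (not finitely generated) the support need not be closed. I therefore work with an exhaustion instead. For each section $s\in\mathcal E(V)$ over an open $V$, the set where $s$ has nonzero germ is closed in $V$. That set misses every point of codimension $<c$, so each of its irreducible components has generic point of codimension $\ge c$. Its closure in $X$ is then a closed subset of dimension $\le d-c$: a point of codimension $\ge c$ has closure of dimension $\le d-\operatorname{codim}$, since $\dim\overline{\{x\}}+\operatorname{codim}x\le\dim X\le d$.

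\emph{Step 2: the exhaustion.} The sheaf $\mathcal E$ is the filtered colimit of its subsheaves $\mathcal E_Z=i_*i^{-1}\mathcal E$-type subsheaves $\Gamma_Z\mathcal E$ of sections supported on $Z$, where $Z$ ranges over closed subsets of $X$ of dimension $\le d-c$. Indeed, every local section lies in some $\Gamma_Z\mathcal E$ by Step 1, and finite unions of such $Z$ stay in the family, so the system is filtered. On a Noetherian space, Zariski cohomology commutes with filtered colimits. It thus suffices to show $H^i(X,\Gamma_Z\mathcal E)=0$ for $i>d-c$.

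\emph{Step 3: Grothendieck vanishing.} A sheaf supported on the closed set $Z$ equals $j_*j^{-1}$ of itself, where $j:Z\hookrightarrow X$, and $j_*$ is exact for a closed immersion. Hence $H^i(X,\Gamma_Z\mathcal E)=H^i(Z,j^{-1}\Gamma_Z\mathcal E)$. The space $Z$ is a Noetherian topological space of dimension $\le d-c$, so Grothendieck's vanishing theorem kills cohomology in degrees $>d-c$.

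\emph{The case $c=1$.} This gives the first assertion: if $d>0$, vanishing of the generic stalk means all stalks of codimension $<1$ vanish, so $H^d=0$. The edge case where $Z$ is empty is trivial. Note that $d-c$ may be negative; then $Z$ is empty and all $H^i$ vanish, which is consistent with the statement.

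\emph{Main obstacle.} The delicate point is the dimension inequality used in Step 1, $\dim\overline{\{x\}}\le d-\operatorname{codim}x$. It holds in any Noetherian scheme of dimension $\le d$, because a chain below $x$ concatenated with a chain in $\overline{\{x\}}$ is a chain in $X$; no catenarity is needed for this direction. A second point to check is that cohomology commutes with filtered colimits on Noetherian spaces (Hartshorne III.2.9), which applies to arbitrary abelian sheaves.
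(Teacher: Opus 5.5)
Your proof is correct and follows the paper's argument. You exhaust $\mathcal E$ by a filtered system of subsheaves supported on closed sets of dimension at most $d-c$, and combine closed pushforward, Grothendieck vanishing, commutation of cohomology with filtered colimits, and the chain-concatenation bound $\dim\overline{\{x\}}\le d-\operatorname{codim}x$; the paper indexes its subsheaves by images of the maps $j_!\Z_U\to\mathcal E$ defined by single sections and their finite sums, whereas you use $\Gamma_Z\mathcal E$. One correction: drop the phrase ``$i_*i^{-1}\mathcal E$-type'', since that sheaf is a quotient of $\mathcal E$ rather than a subsheaf, and keep $\Gamma_Z\mathcal E$ (sections with support in $Z$), which is what your argument actually uses.
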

\begin{proof}
Consider a local section $s\in\mathcal E(U)$ on a nonempty open $U$. Its nonzero-germ locus $S$ is closed in $U$. If its generic germ is zero, it vanishes on a nonempty open $V\subset U$. The closure $Z$ of $S$ in $X$ lies in $X\setminus V$, so is proper and has dimension at most $d-1$.

Writing $j:U\hookrightarrow X$, the section defines a morphism $j_!\Z_U\to\mathcal E$. The image of this morphism has zero stalk outside $S$, hence outside $Z$. It is therefore the closed pushforward of a sheaf on $Z$. Finite sums of these image sheaves have support on a finite union of proper closed subsets, still of dimension at most $d-1$. Such finite sums form a filtered system with union $\mathcal E$, since every germ is represented by one of the sections used.

Closed pushforward is exact and preserves cohomology. Grothendieck
vanishing for arbitrary abelian sheaves on a Noetherian topological
space bounds that cohomology by the dimension of its support.
Finally, cohomology commutes with filtered colimits on this site:
its opens and finite intersections are quasi-compact, and every cover
has a finite refinement. The vanishing, colimit, and closed-pushforward
facts are given in \cite[Chapter~III, Theorem~2.7, Proposition~2.9,
and Lemma~2.10]{Hartshorne1977}; see also
\cite[Tags~02UU, 0739]{Stacks} for the site-theoretic statements.
They prove the first assertion.

For the stronger assertion, the generic points of the closure of a section's nonzero-germ locus lie in its original open and in that locus. They have codimension at least $c$. Concatenating chains of specializations bounds the dimension of their closures by $d-c$. The same finite-section argument proves the stated cohomological bound.
\end{proof}

The proof allows the closed support to vary with the section. This is
useful for the image sheaves below, which need not be constructible.
We shall also use right exactness in top degree: on a space of cohomological
dimension at most three, an epimorphism of abelian sheaves induces an
epimorphism on $H^3$. This follows from the cohomology sequence of its
kernel, whose fourth cohomology vanishes.

\subsection{Plus constructions and the bundle-classifying space}

We need stabilization and forgetful maps that are compatible before
passing to homotopy groups. We therefore fix functorial plus constructions.
If $G$ is one of the matrix groups in use, let $\widetilde E_G(C)$ be the
largest perfect subgroup of
\[
 \ker\bigl(G(C)\longrightarrow\pi_0G(C[\Delta^\bullet])\bigr).
\]
Use the functorial plus construction relative to this subgroup. For $\GL_n$ this is Schlichting's $\widetilde E_n$ model \cite[Section~6, especially Lemmas~6.4--6.5]{SchlichtingEuler}. Compatible symplectic models and the inclusion $\SL_2=\Sp_2\to\GL_2$ give compatible maps. The image of a perfect polynomial-path subgroup lies in the corresponding subgroup of the target; thus these maps are functorial. Write, in the small Zariski homotopy category,
\[
 \mathcal B_2=\Bclass\Sp_2^+,
 \quad \mathcal B_4=\Bclass\Sp_4^+,
 \quad \mathcal F=\hofib(\mathcal B_2\to\mathcal B_4),
 \quad \mathcal N=\pi_3\mathcal F.
\]
At an ordinary local stalk $T$ with infinite residue field, symplectic elimination gives $\Sp_{2n}(T)=\ESp_{2n}(T)$. These groups are perfect. For $\Sp_2$, choose a unit $a$ with $a^2-1$ a unit and express root elements as commutators with $\diag(a,a^{-1})$. For $\Sp_4$, choose a torus element whose root characters are all different from one in the residue field; an infinite field supplies such an element. The same commutator calculation applies to every root. The commutator expressions involve finitely many entries and unit denominators, so they descend to a neighborhood and identify the stalks of the chosen plus models.

Consequently the stalks are the usual simply connected plus spaces. Schlichting's integral stability theorem gives
\[
 H_i(\Sp_4(T),\Sp_2(T);\Z)=0\quad(i<4).
\]
Relative Hurewicz identifies their first relative homotopy group with relative $H_4$; equivalently $\mathcal F$ is locally two-connected. Moreover
\begin{equation}\label{bridge:local_coefficients}
 \pi_2\mathcal B_2=\mathcal K_2^{\mathrm{MW}},
 \qquad
 \mathcal N_T=H_4(\Sp_4(T),\Sp_2(T);\Z).
\end{equation}
The first identification is the natural degree-two homology comparison
of \cite[Theorems~4.18 and~5.27]{SchlichtingEuler}; the relative homology
vanishing is \cite[Theorem~7.1]{SchlichtingSpStability}. Thus stabilization
has no obstruction below degree three. It is the image of this first
obstruction in the bundle-classifying space that we shall compute.

The block product induces the multiplication epimorphism
\begin{equation}\label{bridge:mu}
 \mult:\operatorname{Sym}^2_{\Z}\mathcal K_2^{\mathrm{MW}}
       \longrightarrow\mathcal K_4^{\mathrm{MW}}.
\end{equation}
Indeed the composite
\[
 H_2(\Sp_2)\otimes H_2(\Sp_2)\longrightarrow H_4(\Sp_4)
 \longrightarrow H_4(\Sp_4,\Sp_2)
 \longrightarrow H_4(\SL_4,\SL_3)
       =K_4^{\mathrm{MW}}
\]
is exactly multiplication, and is onto on the ordinary local stalks
\cite[Theorem~7.2 and its proof]{SchlichtingSpStability}.
Interchanging the two blocks is an inner symplectic automorphism; the cross-product sign is $(-1)^{2\cdot2}=1$. The map therefore factors through the indicated symmetric square and remains onto.

Set
\begin{equation}\label{bridge:Y}
 \mathcal Y=R_{\Zar}\Sing \Bclass\GL_2,
 \qquad
 c:\mathcal B_2\longrightarrow \Bclass\GL_2^+
                    \longrightarrow\mathcal Y.
\end{equation}
The second arrow is the comparison in Schlichting's Section~6. For any affine Noetherian $X$, the composite
\begin{equation}\label{bridge:bundle_detection}
 [X,\Bclass\GL_2]_{\Zar}\longrightarrow[X,\Bclass\GL_2^+]_{\Zar}
       \longrightarrow[X,\Sing \Bclass\GL_2]_{\Zar}
\end{equation}
is bijective \cite[Corollary~6.16]{SchlichtingEuler}. Thus the last target detects the underlying bundle. Schlichting constructs the comparison using categories of extended polynomial bundles and Quillen patching.

\subsection{The comparison at the generic point}

Let $F=\Frac(B)$. Over the generic field we use the spaces
\[
 Y_2=\Bclass\SL_2(F[\Delta^\bullet]),\qquad
 Y_4=\Bclass\Sp_4(F[\Delta^\bullet]).
\]
They are simply connected, since their groups of vertices are generated by elementary matrices, which admit polynomial paths to the identity. Naturality gives a map of pairs from the two plus spaces into $(Y_4,Y_2)$. Write $\rho_2$ for the map $\Bclass\Sp_2(F)^+\to Y_2$ induced by this map of pairs. The restriction of \eqref{bridge:Y} at $F$ is $\rho_2$ followed by $Y_2\to \Bclass\GL_2(F[\Delta^\bullet])$.

Hutchinson--Wendt give the fibre sequence and its first coefficient
\cite[Propositions~3.1, 3.2 and~3.5]{BridgeHW}:
\begin{equation}\label{bridge:field_fibre}
 \Sing(\A^4\setminus0)(F)\longrightarrow Y_2\longrightarrow Y_4,
 \qquad
 \pi_3\Sing(\A^4\setminus0)(F)=K_4^{\mathrm{MW}}(F),
\end{equation}
where the left-hand space is two-connected. Relative Hurewicz and the relative homotopy sequence therefore give
\begin{equation}\label{bridge:field_kernel}
 \pi_4(Y_4,Y_2)=H_4(Y_4,Y_2;\Z)=K_4^{\mathrm{MW}}(F),
 \qquad
 \ker(\pi_3Y_2\to\pi_3Y_4)=\im\partial.
\end{equation}
Here $\partial:K_4^{\mathrm{MW}}(F)\to\pi_3Y_2$ is the relative boundary
of \eqref{bridge:field_fibre}. To compare this boundary with the local
multiplication map, we use the top cell of a product of two spheres.
The calculation will determine the coefficient of the comparison as well
as its image.

For $a,b\in\pi_2\Bclass\Sp_2(F)^+$, the block-sum map supplies the diagram of pairs
\[
 \begin{array}{ccc}
 (S^2\times S^2,S^2\vee S^2)&\longrightarrow&(\Bclass\Sp_4(F)^+,\Bclass\Sp_2(F)^+)\\
 &&\downarrow\\[-2pt]
 &&(Y_4,Y_2).
 \end{array}
\]
The two axes are the two block inclusions; inner conjugacy identifies both with the chosen stabilization. The relative top cell has boundary $[a,b]$. Its relative Hurewicz class is the image of the absolute block cross product, with coefficient one. The comparison of \cite[Proposition~3.8(2)]{BridgeHW} identifies that cross product with $ab$. Thus
\begin{equation}\label{bridge:coefficient_one}
 \pi_3(\rho_2)([a,b])=\partial\mult(a\odot b).
\end{equation}
For $a=b$, the attaching map is the Whitehead square $[a,a]=2\gamma(a)$ for the universal quadratic map, with the same relative-cell normalization as in \eqref{bridge:coefficient_one}. Reversing the orientation of the top cell changes the common sign.

We describe the normalization on the generators. Hutchinson--Tao's general-position complex has generator
\[
 d(e_1,\ldots,e_n,\textstyle\sum_i a_i e_i),
\]
with alternating deletion signs. Its map $\epsilon_n$ is the iterated connecting map for the truncated complex, and its symbol map sends this generator to $[a_1]\cdots[a_n]$ \cite[Sections~3.1, 3.3 and Theorem~3.9]{BridgeHT}. Schlichting uses the identical complex and signs, the same incoming-edge map, and the same symbol formula \cite[Section~5.1, Proposition~5.7 and Proposition~5.21]{SchlichtingEuler}. The map is multiplicative for block frames. His Theorem~4.18 identifies the hatted presentation with the unit-symbol presentation by fixing $[a]$. This is why the local map \eqref{bridge:mu} and the field map in \cite[Section~3.3 and Proposition~3.8(2)]{BridgeHW} have the same normalization.

Since $\mult_F$ is onto, \eqref{bridge:field_kernel} and \eqref{bridge:coefficient_one} imply
\begin{equation}\label{bridge:generic_generation}
 \im\bigl(\operatorname{Sym}^2\KMW_2(F)\to\pi_3Y_2\bigr)
       =\ker(\pi_3Y_2\to\pi_3Y_4).
\end{equation}
\subsection{Vanishing of the obstruction image}

The generic calculation can now be expressed in terms of sheaves on $X$.
We distinguish the image of the relative obstruction, the part generated
by Whitehead products, and the relations imposed by Milnor--Witt
multiplication. Using the standard basepoint of $\mathcal Y$, define
\begin{align*}
 \mathcal I&=\im\bigl(\mathcal N\longrightarrow\pi_3\mathcal B_2
                                  \longrightarrow\pi_3\mathcal Y\bigr),\\
 \mathcal W&=\im\bigl(\operatorname{Sym}^2\mathcal K_2^{\mathrm{MW}}
                 \xrightarrow{[-,-]}\pi_3\mathcal B_2\longrightarrow\pi_3\mathcal Y\bigr),\\
 \mathcal R&=\im(\ker\mult\longrightarrow\mathcal W).
\end{align*}
The cross-effect brackets stabilize to zero in $\pi_3\mathcal B_4$: their stabilized wedge maps extend over the product. The relative sequence consequently gives
\begin{equation}\label{bridge:images}
 \mathcal W\subset\mathcal I,
 \qquad \mathcal K_4^{\mathrm{MW}}\twoheadrightarrow\mathcal W/\mathcal R.
\end{equation}
For the second map, both $\mathcal W$ and $\mathcal K_4^{\mathrm{MW}}$ are
quotients of the same symmetric square. Quotienting $\mathcal W$ by the
image $\mathcal R$ of the multiplication kernel makes its quotient map
factor through $\mathcal K_4^{\mathrm{MW}}$. Since the original map onto
$\mathcal W$ is surjective, the induced map onto $\mathcal W/\mathcal R$
is surjective as well.

Equation~\eqref{bridge:coefficient_one} shows $\mathcal R_F=0$.
Moreover the relative kernel maps, by the map of pairs, into
$\ker(\pi_3Y_2\to\pi_3Y_4)$. By \eqref{bridge:generic_generation} that kernel is generated by the cross-effects. Composing with $Y_2\to \Bclass\GL_2(F[\Delta^\bullet])$ proves
\begin{equation}\label{bridge:generic_defects}
 \mathcal R_F=0,
 \qquad (\mathcal I/\mathcal W)_F=0.
\end{equation}
Kernels and images commute with ordinary stalks. At the generic stalk, filtered localization commutes with the polynomial simplices and their homotopy groups, and Zariski fibrant replacement preserves the stalkwise homotopy type. Thus these are the generic stalks of the sheaves just defined.

Apply Lemma~\ref{bridge:support} to the two sheaves in \eqref{bridge:generic_defects}. It gives $H^3(X,\mathcal R)=H^3(X,\mathcal I/\mathcal W)=0$. The epimorphism in \eqref{bridge:images}, the hypothesis $H^3(X,\mathcal K_4^{\mathrm{MW}})=0$, and right exactness in top degree give $H^3(X,\mathcal W/\mathcal R)=0$. The two short exact image sequences now give
\begin{equation}\label{bridge:image_zero}
 H^3(X,\mathcal W)=H^3(X,\mathcal I)=0,
 \qquad H^3(X,\mathcal N)\longrightarrow H^3(X,\pi_3\mathcal Y)
                   \text{ is zero}.
\end{equation}
The map from the relative homotopy sheaf factors through $\mathcal I$.
Its map on third cohomology therefore factors through the zero group in
\eqref{bridge:image_zero}. This is the vanishing that will be used for
the framed bundle.

\subsection{The class of the framed bundle}

A specified isometry $P\perp \Hyp\simeq \Hyp^2$ makes the $\Sp_2$-torsor of oriented bases of $P$ into a framed object over the standard $\Sp_4$-torsor. After the compatible plus maps it therefore supplies an element of $\pi_0R\Gamma(X,\mathcal F)$, whose image in $\pi_0R\Gamma(X,\mathcal B_2)$ is the bundle class.

A Noetherian Zariski space of dimension at most three has homotopy dimension at most three and is Postnikov complete \cite[Theorem~3.12]{ClausenMathew}. Since $\mathcal F$ is locally two-connected, its third truncation is $K(\mathcal N,3)$ and
\[
 \pi_0R\Gamma(X,\mathcal F)=H^3(X,\mathcal N).
\]
Consider the composite $f:\mathcal F\to\mathcal B_2\to\mathcal Y$.
By truncation adjunction, the pointed mapping space from the two-connected
source to $\tau_{\leq2}\mathcal Y$ is contractible. Consequently the
composite of $\tau_{\leq3}\mathcal F\to\tau_{\leq3}\mathcal Y$ with the
second truncation has a coherent pointed nullhomotopy. The map therefore
factors through the fibre over the fixed standard basepoint:
\[
 K(\pi_3\mathcal Y,3)\longrightarrow\tau_{\leq3}\mathcal Y
                            \longrightarrow\tau_{\leq2}\mathcal Y.
\]
The induced map between Eilenberg--Mac Lane objects is precisely
$K(\pi_3f,3)$. Thus the map on section components factors as
\[
 H^3(X,\mathcal N)\longrightarrow H^3(X,\pi_3\mathcal Y)
              \longrightarrow\pi_0R\Gamma(X,\tau_{\leq3}\mathcal Y),
\]
and its first arrow is zero by \eqref{bridge:image_zero}.

The pullback over the fixed global basepoint of $\tau_{\leq2}\mathcal Y$ identifies the coefficient sheaf with $\pi_3\mathcal Y$. This fixes the coefficient system in the preceding factorization.

The fibre of $\mathcal Y\to\tau_{\leq3}\mathcal Y$ is locally three-connected. Homotopy dimension at most three makes all its derived-section fibres connected, so this map induces a bijection on section components. The class of $P$ is therefore the standard component in $[X,\Sing \Bclass\GL_2]_{\Zar}$. Equation~\eqref{bridge:bundle_detection} implies $P\simeq B^2$.

Finally a chosen orientation on a free rank-two module can be attained by rescaling one basis vector by a unit. For a column $u$, choose its partner $v$ and identify $P_u$ with the perpendicular complement of the plane $Bu+Bv$. Choose a basis of $P_u$ and rescale its second vector so that its alternating pairing is one. The resulting four vectors $(u,v,p_1,p_2)$ form a symplectic basis of $B^4$. Its first column is the original $u$. This proves Theorem~\ref{bridge:criterion}.

%% file: frontmatter/context.tex
Quillen's local parameter conjecture asserts that every finitely generated
projective module over $R[1/r]$ is free when $R$ is a regular local ring
and $r$ is a member of a regular system of parameters. Geometrically,
the problem concerns vector bundles on the principal open subset $D(r)$
of the local scheme $\Spec R$.
The assertion is known in all dimensions when $R$ contains a field,
by work of Bhatwadekar--Rao and Popescu; see
\cite[\S2.2.2]{CesnaviciusTorsorsSurvey}.
Gabber proved the case $\dim R\leq3$ in all characteristics, and Swan
gave an elementary proof~\cite{Swan1988}.
In mixed characteristic, known results in higher dimensions include the
case where $R$ is geometrically regular over a Dedekind subring containing
the chosen parameter~\cite[Theorem~1.3(2)]{CesnaviciusSmoothDivisor}.
This paper establishes the dimension-four assertion for arbitrary regular
local rings of mixed characteristic and arbitrary regular parameters,
without restrictions on ramification or residue characteristic.

The local parameter conjecture is related to the polynomial Bass--Quillen
problem by the dimension shift $Q_{n+1}\Rightarrow BQ_n$ in Swan's
notation~\cite[\S1]{Swan1988}. Here $Q_n$ denotes the local parameter
assertion in dimension $n$, and $BQ_n$ denotes freeness of projective
modules over a polynomial ring in one variable over a regular local
ring of dimension $n$.
The polynomial assertion is already known for regular local rings containing a field
and for unramified regular local rings of mixed characteristic; see
\cite[\S2.1.2]{CesnaviciusTorsorsSurvey}.
In dimension three, Rao proved it when the residue characteristic is
different from $2$ and $3$~\cite[Theorem~2]{Rao1988}.
Our result therefore implies the one-variable Bass--Quillen assertion
in dimension three, including the general ramified mixed-characteristic
cases with residue characteristic $2$ or $3$.

%% file: frontmatter/roadmap.tex
\subsection{Outline of the proof}\label{intro:roadmap}

The proof begins with the distinction between stable freeness and
freeness. A finite free resolution over $R$ shows that every projective
module over $A$ is stably free (Lemma~\ref{main:stable_free}). The remaining
rank-two problem is treated by alternating forms. Lemma~\ref{main:actual_frame}
constructs an isometry $P\perp\Hyp\simeq\Hyp^2$, and
Theorem~\ref{bridge:criterion} shows that its rank-two complement $P$ is
free. The two vanishings in \eqref{main:arithmetic_endpoints} supply the
hypotheses for these steps.

\paragraph{Symplectic completion.}
Under its hypotheses, Theorem~\ref{bridge:criterion} applies to an integral
affine scheme of dimension at most three. Given the first vanishing in
\eqref{main:arithmetic_endpoints} and an isometry $P\perp \Hyp\simeq \Hyp^2$,
it proves that $P$ is free. Local symplectic stability places the first
obstruction in degree three. Milnor--Witt multiplication describes its
generic image in a space that classifies the underlying vector bundle;
a support argument then makes the induced map on degree-three
cohomology zero.

\paragraph{Integral degree-two $K$-theory.}
Part~II proves generic injectivity and image purity for Quillen
$K$-theory in degree two. Integral Adams operations with support give
finite-exponent bounds for the kernel and the image-purity defect.
Divisibility arguments eliminate torsion of order prime to the residue characteristic, and
local motivic comparisons treat the residue-prime part. At the dyadic
prime, explicit Dennis--Stein symbols supply the required torsion lifts.

\paragraph{Hermitian and local cohomological calculations.}
Part~III develops the comparison between genuine symmetric and
homotopy-symmetric theories. Supported genuine lifting and the local
comparison range give $W_{\mathrm{alt}}(A)=0$ in
Theorem~\ref{gw:global_walt}. Local motivic calculations identify the
edge maps and the residue exactness for the Milnor--Witt sheaf. In
degree four, the field comparison has cokernel
$H^0_{\mot}(F,\Z(2))$, whose local extension controls the residue image.

\paragraph{Global vanishing and freeness.}
Part~IV proves the field-complex vanishing and passes to the sheaf
cohomology in Theorem~\ref{global:naive_mw4} by bounding the supports
of the augmentation kernel and cokernel and the remaining cohomology
sheaves. Stable freeness and alternating Witt vanishing then give a
rank-four symplectic frame. The completion criterion trivializes its
rank-two complement and completes the proof.

The following table summarizes the principal results and their inputs.
\begin{center}
\renewcommand{\arraystretch}{1.18}
\begin{tabularx}{\textwidth}{@{}>{\raggedright\arraybackslash}p{0.36\textwidth}>{\raggedright\arraybackslash}X>{\raggedright\arraybackslash}p{0.22\textwidth}@{}}
\toprule
Result & Principal ingredients & Location\\
\midrule
Integral degree-two generic exactness & Supported operations, finite coefficients, and torsion lifts & Theorem~\ref{k2:thm:main}\\
Genuine lifts of supported symmetric classes & Coconnective surgery and the Frobenius grade bound & Theorem~\ref{gw:supported_lift}\\
$W_{\mathrm{alt}}(A)=0$ & Local negative groups and descent for the comparison fibre & Theorem~\ref{gw:global_walt}\\
Local Milnor--Witt residue exactness & Low-degree symbol maps and motivic edges & Theorems~\ref{loc:mw4_h2}, \ref{loc:mw4_h1}\\
$H^3_{\Zar}(A,\mathcal K_4^{\mathrm{MW}})=0$ & Field-complex vanishing and support bounds & Theorem~\ref{global:naive_mw4}\\
Freeness of the rank-two complement & Symplectic framing and the image criterion & Theorem~\ref{bridge:criterion}; Lemma~\ref{main:actual_frame}\\
\bottomrule
\end{tabularx}
\end{center}

Section~\ref{main:conclusion} assembles the argument. The completion
criterion of Section~\ref{bridge:section} is independent of the calculations
in Parts~II and~III: it uses their output only through the first vanishing
in \eqref{main:arithmetic_endpoints}. The alternating Witt vanishing is
used to construct the frame to which that criterion applies.

%% file: frontmatter/notation.tex
\subsection{Notation and conventions}\label{intro:notation}

In the main theorem, $R$ is the four-dimensional regular local ring,
$r$ is the chosen parameter, and $A=R[1/r]$. We write $S=R/(r)$ when
working on the divisor and $X=\Spec A$ when working on the parameter
open. Auxiliary local rings are denoted $T$, with fraction field
$F=\Frac T$ and residue field $k$. For a regular scheme $Y$, the set of
codimension-$c$ points is $Y^{(c)}$; for a ring $T$, the abbreviation
$T^{(c)}$ means $(\Spec T)^{(c)}$. The generic point is denoted $\xi$.

We follow the unit-symbol notation for Milnor and Milnor--Witt theory
in \cite{SchlichtingEuler,DegliseMW}. The genuine and homotopy-symmetric
structures are distinguished as in \cite{GWCDHIII}; the square-bracket
shift in $\GW^{[j]}$ or $\KO^{[j]}$ records the shifted duality,
including its biduality sign. The conventions used throughout are as follows.

\begingroup
\renewcommand{\arraystretch}{1.08}
\begin{longtable}{@{}>{\raggedright\arraybackslash}p{0.30\textwidth}>{\raggedright\arraybackslash}p{0.65\textwidth}@{}}
\toprule
Notation & Meaning\\
\midrule
\endfirsthead
\toprule
Notation & Meaning (continued)\\
\midrule
\endhead
\bottomrule
\endfoot
$K_i(T)$ & Quillen $K$-theory, with its nonconnective extension in localization sequences.\\
$\KM_i(T)$, $\Khat_i(T)$ & Ordinary unit-symbol Milnor $K$-theory and improved Milnor $K$-theory.\\
$\KMW_i(T)$ & The degree-$i$ naive Milnor--Witt unit-symbol group of Definition~\ref{notation:mw_definition}.\\
$\mathcal K_i^{\mathrm{MW}}$ & The Zariski sheafification of $T\mapsto\KMW_i(T)$.\\
$H^i_{\mot}(T,\Z(j))$ & Motivic cohomology in cohomological degree $i$ and weight $j$. In particular the weight-two groups are written explicitly as $H^0_{\mot}(T,\Z(2))$, $H^1_{\mot}(T,\Z(2))$, and $H^2_{\mot}(T,\Z(2))$.\\
$q_{i,T}$ & The weight-two edge $K_i(T)\to H^{4-i}_{\mot}(T,\Z(2))$, for $i=2,3,4$.\\
$\GW_i^{[j]}$, $\KO_i^{[j]}$ & Genuine symmetric and homotopy-symmetric Grothendieck--Witt groups, respectively.\\
$Q^{\mathrm{gs}},Q^{\mathrm s},Q^{\mathrm q}$;
$L^{\mathrm{gs}},L^{\mathrm s},L^{\mathrm q}$ & The genuine symmetric, homotopy-symmetric, and pure quadratic structures and their $L$-theories.\\
$\Hyp(P)$, $\Hyp$ & The alternating hyperbolic space on $P\oplus P^*$ and the free alternating plane $\Hyp=\Hyp(T)$. A power $\Hyp^m$ is an $m$-fold orthogonal sum.\\
$\Hypq$, $\hsym$ & The positive-quadratic hyperbolic space used in the formation argument, and the symmetric Grothendieck--Witt class $\hsym=\langle1\rangle+\langle-1\rangle$.\\
$f$, $h^{[j]}$ & Forgetful and hyperbolic maps. In $h^{[j]}$ the superscript specifies the target duality shift.\\
$\eta$, $\eta_{\mathrm g}$ & The Milnor--Witt generator of degree $-1$ and the genuine Bott class. The symbol comparison sends $\eta$ to $-\eta_{\mathrm g}$.\\
$\mu_n$, $\mult$ & The degree-$n$ hermitian symbol comparison and the Milnor--Witt multiplication map used in the completion criterion. In \'etale cohomology, $\mu_n$ instead denotes the sheaf of $n$th roots of unity.\\
$W(k)$, $I(k)$ & The bilinear Witt ring and its fundamental ideal; $I^a(k)=W(k)$ for $a\le0$.\\
$W_{\mathrm{alt}}(A)$ & The stable Witt group of perfect alternating forms over $A$.\\
$K_2^{\mathrm{nr}}(T)$, $J_2(T)$, $P_2(T)$ & The unramified generic subgroup, generic kernel, and image-purity cokernel in degree two; see Section~\ref{k2:sec:main}.\\
$\omega_x$ & The exceptional line $\operatorname{Ext}^{c}_{\mathcal O_{Y,x}}(\kappa(x),\mathcal O_{Y,x})$ for $x\in Y^{(c)}$, with its induced duality.\\
$C_{\mathrm{MW},4}(Y)$ & The compatible weight-four field complex of Definition~\ref{loc:mw_row}; its cochain degree is codimension.\\
$C_{\mathrm W}(Y)$, $C_{I,j}(Y)$ & The full bilinear Witt row and its filtered row with degree-$c$ coefficient $I^{j-c}(\kappa(x),\omega_x)$.\\
$\Fil^c_{\mathrm{con}}$ & The codimension-of-support filtration in coniveau theory.\\
$\Fil^i_{\mot}K(T)$,
$\Fil^i_{\mathrm{lis}}K(T)$ & The motivic filtration and the filtration obtained by realizing the classical filtration on a simplicial local resolution; see Lemma~\ref{loc:lisse_window}.\\
\end{longtable}
\endgroup

For a complex of sheaves $\mathcal C^\bullet$, we write
$\mathcal H^i(\mathcal C^\bullet)$ for its cohomology sheaf and
$\mathbb H^i(X,\mathcal C^\bullet)$ for hypercohomology.
Unadorned $H^i(X,\mathcal A)$ denotes Zariski sheaf cohomology;
$H^i(C^\bullet)$ denotes the cohomology of a complex of abelian groups.
Finite-coefficient groups and integral quotients are distinguished:
$H^i_{\mot}(T,\Z/m(j))$ denotes cohomology with finite coefficients,
whereas $H^i_{\mot}(T,\Z(j))/m$ is a quotient of an integral group.
For an abelian group $M$, $M[m]=\ker(m:M\to M)$.

We use homological shifts for perfect complexes:
$H_i(E[a])=H_{i-a}(E)$. The derived category is $\mathrm D(T)$,
its derived duality is $\Ddual_T E=\operatorname{RHom}_T(E,T)$,
and $\mathrm H M$ denotes an Eilenberg--Mac Lane spectrum.
The category of spectra is $\mathsf{Sp}$, whereas $\Sp_{2n}$ denotes
the symplectic group. The classifying-space functor is written
$\Bclass$; the elementary symplectic subgroup is $\ESp_{2n}$.

For a rank-two projective $B$-module $P$, an orientation is an
isomorphism $\theta:\det P=\bigwedge^2P\xrightarrow{\sim}B$.
It defines the perfect alternating pairing
$b_\theta(v,w)=\theta(v\wedge w)$.
A frame in the completion argument is a specified isometry
$P\perp\Hyp\simeq\Hyp^2$.

%% file: sections/01_k2.tex
\section{The first two integral Gersten positions in degree two}
\label{k2:sec:main}
We study the kernel of generic restriction and the obstruction to
extending an unramified generic class. The groups defined below measure
these two defects. Integral Adams operations give finite-exponent bounds;
divisibility and torsion-lifting arguments then show that both groups vanish.

Throughout this part, $K_i$ denotes Quillen algebraic $K$-theory,
$\KM_i$ denotes the ordinary unit-symbol Milnor group, and
$\Khat_i$ denotes improved Milnor $K$-theory.
For a Noetherian regular local domain $T$ with fraction field $F$, put
\begin{align*}
 K_2^{\mathrm{nr}}(T)&=\ker\left(K_2(F)\xto{\partial}
       \bigoplus_{\mathfrak p\in(\Spec T)^{(1)}}
                  \kappa(\mathfrak p)^\times\right),\\
 J_2(T)&=\ker\bigl(K_2(T)\to K_2(F)\bigr),\qquad
 P_2(T)=\coker\bigl(K_2(T)\to K_2^{\mathrm{nr}}(T)\bigr).
\end{align*}
Thus $J_2(T)$ measures the failure of generic injectivity, while
$P_2(T)$ measures the failure of an unramified generic class to extend
over $T$. The map defining $P_2(T)$ is well-defined by localization
at height-one primes. Each component of $\partial$ is the tame
boundary for the discrete valuation ring (DVR) $T_{\mathfrak p}$.
We use the convention $\partial\{u,\pi\}=\bar u$, with $u$ a unit
and $\pi$ a uniformizer. The opposite uniform sign gives the same
kernels and images.

\begin{theorem}[The first two integral Gersten positions]\label{k2:thm:main}
For every Noetherian regular local domain $T$, the sequence
\[
 0\longrightarrow K_2(T)\longrightarrow K_2(\Frac T)
 \xto{\partial}
 \bigoplus_{\mathfrak p\in(\Spec T)^{(1)}}
              \kappa(\mathfrak p)^\times
\]
is exact at its first two nonzero terms. Equivalently,
$J_2(T)=P_2(T)=0$.
\end{theorem}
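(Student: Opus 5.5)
We would follow the strategy announced in the roadmap: first bound $J_2(T)$ and $P_2(T)$ by a finite exponent, then kill the prime-to-$p$ torsion by divisibility, and finally treat the residue-prime torsion, with a separate argument at $p=2$. We argue by induction on $\dim T$. Dimension zero is trivial. For a DVR the statement is Quillen's localization sequence together with Dennis--Stein's identification of the tame symbol. So assume $\dim T\ge2$ and that the statement holds for all regular local rings of smaller dimension, in particular for every localization $T_{\mathfrak q}$ with $\mathfrak q\ne\fm$.

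\emph{Reduction to the closed point.} By the induction hypothesis, every class in $J_2(T)$ restricts to zero at each non-maximal prime. Hence it comes from $K_2$ with supports in $V(\fm)$, or more precisely from the coniveau piece $\Fil^1_{\mathrm{con}}$ modulo $\Fil^2$. The analogous statement for $P_2(T)$ says that an unramified class extends over the punctured spectrum, and the remaining obstruction lies in $K_1$ with supports in codimension at least two. We then apply integral Adams operations with support, either via Gillet--Soulé or via the $\lambda$-structure on $K$-theory with supports. On $K_2$ the operation $\psi^k$ acts on the weight-$\le2$ graded pieces by $k^2$ or $k$, while on classes supported in codimension $c\ge2$ it acts by $k^{c}$ and higher powers. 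Comparing eigenvalues shows that $J_2(T)$ and $P_2(T)$ are annihilated by an integer $N$ depending only on $\dim T$. The factors of $N$ are products of $k^a-k^b$ over finitely many exponents, so taking $\gcd$ over $k$ gives $N$ divisible only by small primes.

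\emph{Prime-to-$p$ torsion.} Let $\ell\ne p$ be a prime and consider $m=\ell^n$. We use Gabber's rigidity and absolute purity for $\mu_m$-coefficients. Together with the Merkurjev--Suslin isomorphism $K_2(F)/m\cong H^2_{\et}(F,\mu_m^{\otimes2})$ and the Gersten conjecture for étale cohomology of regular local rings with invertible coefficients (Gabber, Panin), these show that $K_2(T)/m\to K_2^{\mathrm{nr}}(T)/m$ is an isomorphism and that $K_2(T)$ has no extra $m$-torsion mapping to zero. Concretely, $J_2(T)[m]$ and $P_2(T)[m]$ embed into cohomology groups that Gersten vanishing kills. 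Since $J_2(T)$ and $P_2(T)$ have finite exponent, their $\ell$-parts then vanish.

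\emph{Residue-prime torsion: the main obstacle.} For $\ell=p$ étale methods fail. Instead we use the local motivic comparison of Part III, namely the weight-two edge $q_{2,T}:K_2(T)\to H^2_{\mot}(T,\Z(2))$, together with the Beilinson--Lichtenbaum/Bloch--Kato identification of mod-$p$ motivic cohomology of weight two with syntomic or $\KM_2/p$. The first step would be to show that $J_2(T)[p]=0$ and $P_2(T)/p=0$. For this we would use the Bloch--Kato description of $K_2(F)/p$ in mixed characteristic, for instance via Gersten for $\Khat_2/p$ (Kerz) and the surjectivity of symbols onto $K_2(T)$ for local rings with large residue fields. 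Finite-exponent control then reduces the problem to showing that the $p$-primary parts are $p$-divisible.

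At $p=2$ the Adams eigenvalue argument loses an extra factor of $2$, and the symbol map is not known to be injective on $2$-torsion. Here we would construct explicit Dennis--Stein symbols $\langle a,b\rangle$, which lift $2$-torsion classes of $K_2^{\mathrm{nr}}$ supported along $\{-1,\cdot\}$ to $K_2(T)$. Once the torsion of $P_2(T)$ is lifted, the argument closes. We expect this residue-prime step, and especially the dyadic case, to be the crux of the proof.
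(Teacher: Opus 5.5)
Your outline (Adams bound, prime-to-$p$ divisibility, then the residue prime with Dennis--Stein symbols at $p=2$) matches the paper's, but the residue-prime step, which you rightly call the crux, has no working mechanism.

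\textbf{The residue-prime step.} Kerz's Gersten theorem for $\Khat_2$ is an equicharacteristic result. For a possibly ramified mixed-characteristic $T$ there is also no Popescu reduction to smooth $\Z_{(p)}$-algebras, since $T/p$ need not be regular.

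Bloch--Kato for $K_2(F)/p$ does not reach $J_2(T)[p]$ either. Via $q_{2,T}\colon K_2(T)\cong H^2_{\mot}(T,\Z(2))$, which must itself be proved for all local rings, a $p$-torsion class of the generic kernel is the Bockstein of some $a\in H^1_{\mot}(T,\Z/p(2))\cong H^1_{\et}(T[1/p],\mu_p^{\otimes2})$. All you know is that $a_F$ is the reduction of a class in $H^1_{\mot}(F,\Z(2))/p=K_3^{\mathrm{ind}}(F)/p$; it need not be zero.

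The missing idea is to lift these arithmetic constants. By Levine's constant-field theorem they come from $K_3$ of the number field $K$ of constants. Moreover $\mathcal O_{K,\mathfrak p_K}\subset T$, and $K_3(\mathcal O_{K,\mathfrak p_K})\to K_3(K)$ is onto. Subtracting the lifted class makes $a$ generically zero, and generic injectivity of Kummer $H^1$ then kills it (Proposition~\ref{k2:prop:noptorsion}).

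For image purity, aiming at $P_2(T)/p=0$ is not a tractable direction. The comparison $H^2_{\mot}(T,\Z/p(2))\to H^2_{\et}(T[1/p],\mu_p^{\otimes2})$ is only injective, so it gives no way to extend classes. Also, once $J_2(T)[p]=0$ and $P_2(T)/p=0$ hold, the $p$-power exponent already finishes, so your further reduction to $p$-divisibility is redundant.

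The paper instead proves $P_2(T)[p]=0$, in two steps:
\begin{enumerate}
\item If $b_F=p\alpha$ with $\alpha$ unramified, then $q_{2,T}(b)\in pH^2_{\mot}(T,\Z(2))$. For odd $p$ this uses a divided Chern class on the $\mu_p$-cover: $\psi^2$ acts by $8$ on the codimension-one generic kernel but by $4$ through the Chern map, and Picard and Brauer injectivity finish it. For $p=2$ it uses Brauer generic injection on $T[1/2]$.
\item The remaining unramified $p$-torsion is lifted. For odd $p$ this is done by Kummer extension and Hochschild--Serre descent; only at $p=2$ are your Dennis--Stein symbols used.
\end{enumerate}

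\textbf{Lesser points.} In the prime-to-$p$ step, the \'etale Gersten conjecture is not known for arbitrary mixed-characteristic regular local rings. Only the low degrees are (Kummer theory, $\Pic=0$, and \v{C}esnavi\v{c}ius's Brauer purity), and that is all that is needed. You must also use the norm-residue isomorphism for the semilocal ring itself (Hoobler), not only Merkurjev--Suslin over $F$.

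These inputs give $\ell$-divisibility of $J_2$ and $P_2$, not an embedding of $J_2(T)[m]$ into a group killed by Gersten. Concretely: adjoin a primitive $\ell$-th root, write unramified $\ell$-torsion as $\{\zeta,u\}$ with $u=\epsilon h^\ell$ by factoriality, and transfer back along the degree-$(\ell-1)$ extension. Proving $J_2(T)[\ell]=0$ directly would need the same $K_3^{\mathrm{ind}}$ constants as at $p$. Divisibility plus finite exponent is what closes this step.

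Your inductive ``reduction to the closed point'' is unjustified. Vanishing at every stalk of the punctured spectrum does not make the restriction to it zero, and support at the closed point would be $\Fil^d$, not $\Fil^1/\Fil^2$. It is also unnecessary, since the Adams bound works directly on the finite coniveau filtration.

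Finally, you never address the equicharacteristic case, which the paper takes from Panin.
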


The equal-characteristic case is the Quillen--Panin Gersten theorem
\cite[Theorem A]{Panin}. We prove the mixed-characteristic $(0,p)$ case by treating generic
injectivity and image purity separately.

\subsection{Integral Adams operations with support}
\begin{proposition}[Integral Adams operations with support]\label{k2:prop:supportadams}
Let $X$ be a finite-dimensional Noetherian regular scheme. On the
codimension-$c$ field term of the Quillen coniveau spectral sequence,
the integral Adams operation $\psi^k$, for an integer $k\geq1$, is
\[
       k^c\psi^k_{\kappa(x)}.
\]
The operation preserves the exact couple and agrees with the usual
operation on its abutment. In particular, on a field $K_j$ term for
$j=0,1,2$ its scalar is $k^{c+j}$.
\end{proposition}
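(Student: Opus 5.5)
We plan to build the operation from the coniveau filtration of the $K$-theory spectrum. The Quillen coniveau spectral sequence of $X$ arises from the tower
\[
 K^{\geq c}(X)=\colim_{\operatorname{codim}Z\geq c}K^Z(X),
\]
with $K^Z$ the $K$-theory of perfect complexes supported on $Z$. Its graded pieces are identified, by dévissage and regularity, with $\bigoplus_{x\in X^{(c)}}K(\kappa(x))$. The first step is to produce integral Adams operations $\psi^k$ on each supported spectrum $K^Z(X)$, compatible with the maps forgetting and enlarging supports. For this we intend to use the $\lambda$-ring structure on $K_0$ of supported perfect complexes, extended to higher groups functorially through simplicial or Grayson-type constructions (Gillet--Soulé, Riou). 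These operations are natural for the localization fibre sequences $K^{Z'}\to K^Z\to K^{Z\setminus Z'}(X\setminus Z')$. Naturality for these sequences shows that $\psi^k$ preserves the whole tower, hence the exact couple. It also shows that the operation on the abutment $K^{\geq0}=K(X)$ is the usual one.

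The main computation is the effect of $\psi^k$ on the graded piece at a point $x\in X^{(c)}$. After localizing at $x$ (passing to the colimit over neighbourhoods), the relevant object is $K^{\{x\}}(\mathcal O_{X,x})$. Here $\mathcal O_{X,x}$ is regular local of dimension $c$ with regular system of parameters $t_1,\dots,t_c$. Dévissage identifies $K(\kappa(x))$ with this supported group via pushforward $i_*$, that is, via multiplication by the Koszul complex $\mathrm{Kos}(t_1,\dots,t_c)$. The key claim is the Riemann--Roch-type formula
\[
 \psi^k\bigl(i_*\alpha\bigr)=i_*\bigl(\theta^k(N)\,\psi^k\alpha\bigr),
 \qquad \theta^k(N)=\prod_{j=1}^{c}\theta^k(\mathcal O),
\]
where $N$ is the free normal bundle. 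For a trivial line bundle the Bott cannibalistic class $\theta^k(\mathcal O)=1+\mathbb L+\dots+\mathbb L^{k-1}$ becomes $k$. So on the graded piece $\psi^k=k^c\psi^k_{\kappa(x)}$. To prove this we would use the deformation to the normal cone, reducing to the zero section of the trivial bundle $\A^c_{\kappa(x)}$. There, multiplicativity of $\psi^k$ on the Koszul class reduces everything to the rank-one case, namely $\psi^k$ of $[\mathcal O]-[\mathcal O(-1)]$ for the zero section in $\A^1$ (equivalently $\mathbb P^1$). A direct computation gives $k$ times the point class, modulo the purity identification.

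The final claim is the scalar in low degrees. For a field $E$, $\psi^k$ acts on $K_0(E)=\Z$ by $1$, on $K_1(E)=E^\times$ by $u\mapsto u^k$, and on $K_2(E)$ by $k^2$. The $K_2$ case holds because $K_2(E)$ is generated by Steinberg symbols, $\psi^k$ is multiplicative, and $\psi^k\{u\}=k\{u\}$ on $K_1$. Combined with the factor $k^c$ this gives $k^{c+j}$.

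I expect the main obstacle to be the integral (not rational) construction of $\psi^k$ on supported higher $K$-theory, compatible with the localization sequences, and the integral validity of the normal-bundle formula there. Integrally, $\psi^k$ is not additive on the unstable model, so we would first try Riou's construction in the stable motivic homotopy category over $\Spec\Z$. There $\psi^k$ is an honest operation on $K$-theory of regular schemes, and supports are handled as cohomology with supports. The Koszul computation then reduces to the Thom class of the trivial bundle, where the factor $k^c$ is standard.
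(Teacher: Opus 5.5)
Your main line is the paper's proof: Soul\'e's integral operations with supports acting on the coniveau exact couple, passage to $K^{\{x\}}(\mathcal O_{X,x})$ by continuity, d\'evissage, the integral supported Riemann--Roch formula $\psi^k(i_*a)=i_*(\theta^k(N)\psi^k a)$ with $\theta^k(N)=k^c$ for the trivial normal bundle, and the low-degree scalars from rank, $u\mapsto u^k$ and symbols. The one difference is that the paper simply cites Soul\'e's Theorem~3 where you sketch its deformation-to-the-normal-cone proof, and you should drop the Riou fallback: since $\psi^k(\beta)=k\beta$ on the Bott element, stable Adams operations on $KGL$ exist only after inverting $k$, so that route gives the statement only in $K[1/k]$ and loses exactly the integral information used later (e.g.\ $\psi^2=4$ on $K_2$ and the bounds with $k=2,3$), whereas Soul\'e's construction with supports is already integral.
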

\begin{proof}
Here $K_m^Z(C)$ denotes the $m$-th $K$-group of the category of perfect complexes over $C$
whose restriction to $\Spec C\setminus Z$ is acyclic.
Soul\'e constructs integral operations with closed supports and their
coniveau action in \cite[Sections 4.1--4.3 and Theorem 4(i),
pp.~521--522]{Soule}. We compute their action by localizing at $x$;
d\'evissage then identifies the local-support group with the
$K$-theory of $\kappa(x)$.

Fix $x\in X^{(c)}$ and an affine neighborhood $\Spec C$ of $x$.
Let $\mathfrak p\subset C$ be the prime corresponding to $x$.
The local-support group is the filtered colimit
\[
 \colim_{s\notin\mathfrak p}K_m^{V(\mathfrak p C_s)}(C_s)
       =K_m^{\{x\}}(C_{\mathfrak p}).
\]
The ideal $\mathfrak p$ is finitely generated. The open complement
of $V(\mathfrak p)$ has a finite principal-open cover. Continuity of perfect-complex
$K$-theory, finite Zariski descent, and the description of supported
$K$-theory as a fibre prove the displayed continuity statement; see
\cite[Proposition 3.20]{TT}. The supported operations are natural for
these restrictions.

The closed immersion
$\Spec\kappa(x)\to\Spec C_{\mathfrak p}$ is regular, with trivial
normal bundle of rank $c$. Apply the integral supported
Riemann--Roch formula of \cite[Theorem 3, p.~517; Lemma 2(i),
p.~515]{Soule}, taking the base scheme to be
$\Spec C_{\mathfrak p}$ itself. The ambient scheme and its closed
point are regular schemes of finite type over that base, satisfying
the hypotheses of the theorem. Under d\'evissage,
\[
 \psi^k(i_*a)=i_*\bigl(\theta^k(N)\psi^k(a)\bigr).
\]
For a trivial rank-$c$ normal bundle, the defining product formula for
$\theta^k$ gives $\theta^k(N)=k^c$ integrally. This computes the
operation on the field term. Its scalar on field $K_0,K_1,K_2$
follows respectively from rank, the power operation on units, and
Matsumoto symbols.
\end{proof}

For $T$ of dimension $d$ we use the finite coniveau spectral sequence
\begin{equation}\label{k2:eq:coniveau}
 E_1^{c,q}=\bigoplus_{x\in(\Spec T)^{(c)}}
 K_{-c-q}(\kappa(x))\ \Longrightarrow\ K_{-c-q}(T),
 \qquad d_s:(c,q)\mapsto(c+s,q-s+1).
\end{equation}
In total degree two, the generic term is $K_2(F)$ and its first
differential is the tame map $\partial$. Thus
$E_2^{0,-2}=K_2^{\mathrm{nr}}(T)$, while the terminal edge
$E_\infty^{0,-2}$ is the image of $K_2(T)\to K_2(F)$.
The first positive filtration stage is $J_2(T)$, and $P_2(T)$ is
the quotient of the unramified edge by the terminal edge image.
The filtration has length at most $d$.

We first obtain annihilators for the two defects. The same Adams
operation acts with different weights on the generic group and on the
higher-codimension terms that can obstruct its extension. Subtracting
these scalar actions gives a finite annihilator on each graded piece;
the finite length of the coniveau filtration then gives an annihilator
for the whole defect.

\begin{lemma}[Finite-exponent bounds for the two defects]\label{k2:lem:bounds}
There are nonzero integers
\[
 N_d=\prod_{c=1}^{d}4(2^c-1),\qquad
 M_d=\prod_{s=2}^{d}4(2^{s-1}-1)
\]
such that $N_dJ_2(T)=0$ and $M_dP_2(T)=0$.
An empty product is one.
\end{lemma}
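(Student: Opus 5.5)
We plan to compare scalar actions of $\psi^k$ on the graded pieces of the coniveau filtration of $K_2(T)$, using Proposition~\ref{k2:prop:supportadams} with $k=2$ and the spectral sequence \eqref{k2:eq:coniveau}. Integrality is essential, and the factor $4$ in each bracket should come from the fact that $\psi^2$ acts on the abutment $K_2(T)$, or on $K_2$ of a field, by multiplication by $4=2^2$ only up to the coniveau filtration. We therefore work with the difference operator $\psi^2-4$.

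\emph{The kernel $J_2(T)$.} Let $\Fil^c$ be the coniveau filtration on $K_2(T)$, so that $J_2(T)=\Fil^1$ and $\Fil^{d+1}=0$. The graded piece $\Fil^c/\Fil^{c+1}$ is a subquotient $E_\infty^{c,-2-c}$ of $\bigoplus_{x\in T^{(c)}}K_{2-c}(\kappa(x))$, with $K_{2-c}$ of a field for $c\le2$ and negative $K$-groups of fields, which vanish, for $c>2$. By the proposition, $\psi^2$ acts on that term by $2^c\cdot 2^{2-c}=4$ when $c\le 2$. This uniform weight cannot separate the pieces, so we use instead the weight-sensitive operator determined by the codimension factor $k^c$. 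On field $K_j$ the operation $\psi^2_{\kappa(x)}$ itself is $2^j$, and the factor contributed by support is $2^c$. On $\gr^c$ we use $\psi^2-4$ twisted by the codimension twist, that is, we compare the action of the operation on the supported group $K^{\{x\}}$ before applying d\'evissage. Concretely, on the supported group $K_2^{Z}$ with $Z$ of codimension $c$, the operation differs from $4$ by the factor $(2^c-1)$ coming from $\theta^2(N)-1$, together with a factor $4$ from the field weight.

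Since the operation preserves the exact couple, $\prod_{c}4(2^c-1)$ annihilates every element of $\Fil^1$ after $d$ successive descents: $\Fil^c\to\Fil^{c+1}$. The product has length $d$ and gives $N_d$.

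\emph{The cokernel $P_2(T)$.} Let $x\in K_2^{\mathrm{nr}}(T)=E_2^{0,-2}$. Its obstructions to surviving are the differentials $d_s$ for $s\ge2$, landing in $E_s^{s,-1-s}$, which are subquotients of $\bigoplus_{y\in T^{(s)}}K_{1-s}(\kappa(y))$. The generic class has weight $4$, while the target carries the support factor $2^s$ times the field weight. The factor $4(2^{s-1}-1)$ kills each target, and compatibility of $\psi^2$ with $d_s$ then lets us replace $x$ by $M$-multiples that survive stage by stage. There are $s=2,\dots,d$ stages, which gives $M_d$.

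The main obstacle is the careful bookkeeping of the weights on the targets, where $K_j$ with $j\le 0$ for fields makes the naive scalars coincide. We would first check whether the difference of weights is exactly $4(2^{c}-1)$ via $\theta^2$ of a trivial bundle, computed on the supported groups rather than after d\'evissage.
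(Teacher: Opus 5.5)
\textbf{There is a genuine gap.} It begins with an indexing error in \eqref{k2:eq:coniveau}.

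The term $E_1^{c,q}$ has field $K$-degree $-c-q$, which equals the abutment degree. So the codimension-$c$ graded piece $E_\infty^{c,-2-c}$ of $K_2(T)$ is a subquotient of $\bigoplus_{x\in T^{(c)}}K_2(\kappa(x))$: it comes from $H^c$ of the Gersten row for $\mathcal K_{c+2}$. It is not a subquotient of field $K_{2-c}$. Likewise, the target $E_s^{s,-1-s}$ of $d_s$ out of $E_s^{0,-2}$ is a subquotient of codimension-$s$ field $K_1$, not $K_{1-s}$. If the target were $K_{1-s}$, it would vanish for $s\geq2$, giving $P_2(T)=0$ outright. That contradicts your own factor $4(2^{s-1}-1)$.

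With the correct groups, Proposition~\ref{k2:prop:supportadams} directly gives the scalars $2^{c+2}$ and $2^{s+1}$. Your fallback of computing on the supported group ``before d\'evissage'' is not a coherent alternative. The proposition's scalar $k^c\psi^k_{\kappa(x)}$ \emph{is} the action on the supported group $K_m^{\{x\}}$, transported by d\'evissage, and d\'evissage does not shift degree. Had the graded piece really been a subquotient of $K_{2-c}$, there would be only the single scalar $4$ and no bound at all.

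The more substantive gap concerns $J_2(T)$. Comparing two scalars on $\gr^c$ requires knowing that $\psi^2$ acts by exactly $4$ on all of $K_2(T)$. Then $\psi^2-4$ vanishes on $\Fil^c$ while inducing $2^{c+2}-4=4(2^c-1)$ on $\gr^c$, which gives the factor in $N_d$. You assert this action only ``up to the coniveau filtration'', which says nothing on $\gr^c$ for $c\geq1$. The missing input, which the paper uses first, is that $K_2$ of every local ring is generated by ordinary unit symbols. This follows from Kerz's surjection of ordinary onto improved Milnor $K_2$ together with $\Khat_2=K_2$, and it holds even for small finite residue fields. Hence $\psi^2\{u,v\}=\{u^2,v^2\}=4\{u,v\}$ on all of $K_2(T)$.

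For $P_2(T)$ no such input is needed. The source $E_2^{0,-2}\subset K_2(F)$ is a field group, on which $\psi^2=4$ by Matsumoto. Once the target is corrected to field $K_1$, your stage-by-stage survival argument is exactly the paper's and yields $M_d$.
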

\begin{proof}
Ordinary unit symbols generate $K_2$ of every local ring:
combine the ordinary-to-improved surjection with
$\Khat_2=K_2$ in \cite[Propositions 4.2.8(3), 4.3.3]{KerzThesis}.
Multiplicativity of Adams operations therefore gives
$\psi^2=4$ on $K_2(T)$, including for small finite residue fields.

The kernel $J_2(T)$ is the first positive coniveau filtration stage.
Its codimension-$c$ graded piece is a subquotient of field $K_2$ in
that codimension. Proposition~\ref{k2:prop:supportadams} gives scalar
$2^{c+2}$ there, whereas the same operation on the whole kernel has
scalar four. Thus $4(2^c-1)$ kills this graded piece. Multiplying the
finitely many factors proves the assertion for $J_2$.

The unramified edge $E_2^{0,-2}=K_2^{\mathrm{nr}}(T)$ has scalar four and no incoming
differentials. Its only possible outgoing higher maps have targets
\[
 E_s^{s,-s-1},\qquad 2\leq s\leq d.
\]
Each target is a subquotient of a codimension-$s$ field $K_1$ term,
with scalar $2^{s+1}$. Its differential image is consequently killed
by $4(2^{s-1}-1)$. The terminal edge is precisely the image of $K_2(T)\to K_2(F)$.
The descending filtration of $K_2^{\mathrm{nr}}(T)/\im(K_2(T)\to K_2(F))$ proves the second bound.
In dimension at most one it is simply DVR image exactness.
\end{proof}

\subsection{Prime-to-residue divisibility}
Every regular Noetherian semilocal domain is factorial.
Its height-one prime ideals are locally principal, hence invertible,
because its regular localizations are factorial.
Invertible modules over a semilocal ring are free of rank one, so
these prime ideals are principal; see \cite[Exercise~4.13,
Corollary~10.6, and Theorem~19.19]{Eisenbud1995}.
The definitions of $J_2$, $K_2^{\mathrm{nr}}$, and $P_2$ apply to these
semilocal domains as well. We use this slightly broader setting because
adjoining a root of unity may split the ring into several components.

\begin{lemma}[Prime-to-residue divisibility with roots of unity]\label{k2:lem:tamedivision}
Let $C$ be a regular Noetherian semilocal domain, and let $\ell$ be a
prime invertible in $C$. If $C$ contains a primitive $\ell$-th root of
unity, both $J_2(C)$ and $P_2(C)$ are $\ell$-divisible.
\end{lemma}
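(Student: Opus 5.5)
The plan is to compare both defects with their reductions modulo $\ell$ and use the norm residue isomorphism together with purity for the $\ell$-torsion Brauer group. The one piece of arithmetic input, which I expect to be the main obstacle, is the following pair of statements in weight two for the semilocal ring $C$:
\[
 K_2(C)/\ell\;\cong\;H^2_{\et}(C,\mu_\ell^{\otimes2}),
 \qquad
 H^2_{\et}(C,\mu_\ell^{\otimes2})\;\cong\;H^2_{\et}(F,\mu_\ell^{\otimes2})_{\mathrm{nr}},
\]
where $F=\Frac C$. The second means that restriction to $F$ is injective, with image the classes whose residues vanish at every height-one prime.

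For the first isomorphism I would use that ordinary symbols generate $K_2$ of a semilocal ring with infinite residue fields, or else Kerz's $\Khat_2=K_2$ (as in the proof of Lemma~\ref{k2:lem:bounds}). Together with the norm residue isomorphism for local rings (Merkurjev--Suslin, extended by Kerz), this gives the identification with $H^2_{\et}$. For the second, since $\mu_\ell\subset C$ we have $H^2(\mu_\ell^{\otimes2})=H^2(\mu_\ell)\otimes\mu_\ell$. The Kummer sequence and $\Pic C=0$ (factoriality) identify $H^2(C,\mu_\ell)$ with $\Br(C)[\ell]$. The needed injectivity and purity are then the statements $\Br(C)\hookrightarrow\Br(F)$ for regular integral $C$, and that unramified $\ell$-torsion classes extend (Gabber's absolute purity, or \v{C}esnavi\v{c}ius' purity for the Brauer group). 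These are compatible with the tame symbol, which corresponds to the étale residue up to a universal sign.

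\emph{$J_2$ is $\ell$-divisible.} Let $x\in J_2(C)$. Its class in $K_2(C)/\ell$ maps to $0$ in $K_2(F)/\ell$, so by the injectivity above $x=\ell y$ with $y\in K_2(C)$. Then $\ell y_F=0$. By Suslin's theorem on $\ell$-torsion in $K_2$ of a field containing $\mu_\ell$, we get $y_F=\{\zeta,f\}$ for some $f\in F^\times$.

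Since $y_F$ comes from $C$, it is unramified, so $\bar\zeta^{\,v_{\mathfrak p}(f)}=1$ in $\kappa(\mathfrak p)$ for every height-one $\mathfrak p$. Because $\ell$ is invertible, $\bar\zeta$ is still a primitive $\ell$-th root of unity, and hence $\ell\mid v_{\mathfrak p}(f)$. Factoriality then gives $f=ug^\ell$ with $u\in C^\times$. Since $\ell\{\zeta,g\}=\{\zeta^\ell,g\}=0$, we obtain $y_F=\{\zeta,u\}$.

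Set $y'=y-\{\zeta,u\}$. Then $y'\in J_2(C)$, and $\ell y'=x$ because $\ell\{\zeta,u\}=0$.

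\emph{$P_2$ is $\ell$-divisible.} Let $z\in K_2^{\mathrm{nr}}(C)$. Its image in $K_2(F)/\ell\cong H^2(F,\mu_\ell^{\otimes2})$ has zero residues, so by purity it comes from some $c\in K_2(C)$. This gives $z=c_F+\ell w$ with $w\in K_2(F)$. Then $\ell\,\partial w=\partial(z-c_F)=0$, so $\partial w$ lies in $\bigoplus_{\mathfrak p}\mu_\ell(\kappa(\mathfrak p))$ and has finite support.

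Each $\mu_\ell(\kappa(\mathfrak p))$ is generated by $\bar\zeta$, and the height-one primes are principal, say $\mathfrak p=(\pi_{\mathfrak p})$. We can therefore choose $f=\prod\pi_{\mathfrak p}^{n_{\mathfrak p}}$ with $\partial\{\zeta,f\}=\partial w$ (the $\pi_{\mathfrak p}$ are pairwise non-associate, so each contributes only at its own prime). Put $w'=w-\{\zeta,f\}$. This is unramified, and $\ell w'=\ell w$. Hence $z=c_F+\ell w'$, and the class of $z$ in $P_2(C)$ equals $\ell$ times the class of $w'$.

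Everything except the displayed input is elementary symbol manipulation using factoriality and $\bar\zeta\neq1$. The real work is in checking that the norm residue and Brauer purity statements hold for semilocal (not just local) regular rings in mixed characteristic, and that the étale residue matches the tame symbol $\partial$ with the paper's sign convention.
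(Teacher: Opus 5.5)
Your proposal is correct and follows essentially the same route as the paper. Both arguments combine a semilocal norm-residue isomorphism, Kummer theory with $\Pic(C)=0$, and Brauer generic injectivity and purity. They then use Merkurjev--Suslin torsion and factoriality to correct the divided class by symbols $\{\zeta,\cdot\}$.

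The input you left open is the norm-residue isomorphism for a semilocal ring rather than a local one, possibly with finite residue fields. This is exactly Hoobler's theorem, which the paper cites. The paper also avoids your tame/\'etale residue compatibility check: it first lifts the unramified class to each $K_2(C_{\mathfrak p})$ by DVR localization, and only then passes to Brauer classes modulo $\ell$.
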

\begin{proof}
The norm-residue theorem first provides division modulo classes that
extend from the ring. We then correct the resulting generic torsion or
residues, so that the divided class belongs to the same kernel or
unramified subgroup as the original class.

Write $L=\Frac C$ and fix a primitive root $\zeta$.
Hoobler's natural norm-residue isomorphism
\cite[Theorem 3]{Hoobler} is
\[
 K_2(D)/\ell\cong H^2_{\et}(D,\mu_\ell^{\otimes2})
\]
for each semilocal ring $D$ in which $\ell$ is invertible.
The root untwists $\mu_\ell^{\otimes2}$ to $\mu_\ell$.
Kummer theory, $\Pic(C)=0$, and Brauer generic injectivity
\cite[Theorem 1.2]{Cesnavicius} therefore show that
$K_2(C)/\ell\to K_2(L)/\ell$ is injective.

Take $a\in J_2(C)$ and write $a=\ell b$ using that injection.
Then $b_L\in K_2(L)[\ell]$. The field torsion theorem
\cite[Theorem 14.2, pp.~331--332]{MerkurjevSuslin} expresses this as
$\{\zeta,u\}$. Sums of such symbols combine into one because
$\mu_\ell$ is cyclic. The class is integrally unramified. Its tame
residue at a height-one prime is $\bar\zeta^{v(u)}$, so every
valuation of $u$ is divisible by $\ell$. Factoriality gives
$u=\epsilon h^\ell$, with $\epsilon\in C^\times$.
The symbol $\{\zeta,\epsilon\}$ is defined over $C$ and is killed by
$\ell$ there. Subtracting it from $b$ gives $b_0\in J_2(C)$ with
$a=\ell b_0$. Its generic restriction is zero, while multiplication by
$\ell$ is unchanged. We have therefore divided $a$ within $J_2(C)$.

For $x\in K_2^{\mathrm{nr}}(C)$, DVR localization lifts $x$ over every height-one
local ring. Hoobler's theorem and the root untwisting identify its
mod-$\ell$ class with a Brauer class extending at each height-one
point. Brauer purity extends it over $C$; generic injection makes its
extension $\ell$-torsion. Kummer theory and $\Pic(C)=0$, followed by
Hoobler, supply $y\in K_2(C)$ with
$x-y_L=\ell z$ for some $z\in K_2(L)$.
The residues of $z$ are $\ell$-th roots of unity, with finite support.
Choose for each nontrivial component the corresponding global root
$\zeta_{\mathfrak p}$ and a generator $\pi_{\mathfrak p}$ of its prime.
The $\ell$-torsion symbol
$\{\zeta_{\mathfrak p},\pi_{\mathfrak p}\}$ has that residue and no
others. Subtract their sum from $z$. The corrected $z_0$ is
integrally unramified and $\ell z_0=\ell z$.
Thus $[x]=\ell[z_0]$ in $P_2(C)$.
\end{proof}

\begin{corollary}[Removal of prime-to-residue torsion]\label{k2:cor:primeaway}
If $T$ has mixed characteristic $(0,p)$ and $\ell$ is a prime
different from $p$, then
$J_2(T)=\ell J_2(T)$ and $P_2(T)=\ell P_2(T)$.
Consequently each of these groups is killed by a power of $p$.
\end{corollary}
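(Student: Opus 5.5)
The plan is to reduce to Lemma~\ref{k2:lem:tamedivision} by adjoining a primitive $\ell$-th root of unity, and then to descend divisibility back to $T$ with a norm (transfer) argument that is controlled by the degree of the extension.

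\textbf{The extension.} Fix $\ell\neq p$. Then $\ell$ is invertible in $T$, since $\ell\notin\fm$ because the residue characteristic is $p$. Let $\Phi_\ell$ be the $\ell$-th cyclotomic polynomial and choose an irreducible factor $g$ of its image in $F[x]$. By Gauss's lemma $g$ may be taken monic in $T[x]$, because $T$ is factorial. Put $C=T[x]/(g)$. Then $C$ is finite étale over $T$, as $\Phi_\ell$ is separable modulo $\fm$ when $\ell$ is invertible. Hence $C$ is regular and semilocal. It is also a domain, because it embeds in the field $F[x]/(g)$. Its rank is $n=\deg g$, which divides $\ell-1$ and is therefore prime to $\ell$. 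By construction $C$ contains a primitive $\ell$-th root of unity.

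\textbf{Divisibility over $C$.} Lemma~\ref{k2:lem:tamedivision} applies to $C$, so $J_2(C)$ and $P_2(C)$ are $\ell$-divisible.

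\textbf{Transfer.} Next I need a restriction map $J_2(T)\to J_2(C)$ and a norm map $J_2(C)\to J_2(T)$ whose composite is multiplication by $n$, together with the analogous maps for $P_2$.

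For restriction, the base change $K_2(T)\to K_2(C)$ is compatible with $F\to L$. For $K_2^{\mathrm{nr}}$, an unramified class extends over each height-one local ring $T_\mathfrak p$, and its pullback extends over the semilocal ring $C_\mathfrak p$. That makes it unramified at every height-one prime of $C$. Here I would use DVR exactness at each height-one point above $\mathfrak p$, or Theorem~\ref{k2:thm:main} in dimension one.

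For the norm, the finite flat transfer $K_2(C)\to K_2(T)$ commutes with $K_2(L)\to K_2(F)$. The transfer $K_2(L)\to K_2(F)$ also commutes with tame residues via the norms $\kappa(\mathfrak q)^\times\to\kappa(\mathfrak p)^\times$; this is the standard compatibility of transfers with the localization sequences for the finite map $C_\mathfrak p\to T_\mathfrak p$. Consequently the transfer preserves $J_2$ and $K_2^{\mathrm{nr}}$, and it induces a map on $P_2$. Finally, the projection formula gives $N\circ\mathrm{res}=n$ on $K_2$, because $C$ is free of rank $n$ over $T$.

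\textbf{Conclusion.} Take $a\in J_2(T)$. Then $\mathrm{res}(a)=\ell b$ for some $b\in J_2(C)$, so $na=N(\mathrm{res}\,a)=\ell N(b)\in\ell J_2(T)$. Since $\gcd(n,\ell)=1$, this gives $a\in\ell J_2(T)$. The same argument works for $P_2(T)$.

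For the consequence, Lemma~\ref{k2:lem:bounds} shows that $J_2(T)$ is killed by $N_d$. Write $N_d=p^em$ with $\gcd(m,p)=1$. Divisibility by every prime factor of $m$ gives $mJ_2(T)=J_2(T)$. Therefore $p^eJ_2(T)=p^emJ_2(T)=N_dJ_2(T)=0$. The argument for $P_2(T)$ is identical, using $M_d$.

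\textbf{Main obstacle.} The hardest part is the compatibility of the transfer with the tame residue maps, and hence with the unramified subgroup and with $P_2$. I would try first to deduce it from naturality of the Quillen localization sequences under the proper pushforward $\Spec C\to\Spec T$. Concretely, for each height-one $\mathfrak p$ I would work with the semilocal Dedekind ring $C_\mathfrak p$ over the DVR $T_\mathfrak p$ and use dévissage for the closed fibres.
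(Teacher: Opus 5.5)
Your proposal is correct and is essentially the paper's proof: adjoin a primitive $\ell$-th root of unity, apply Lemma~\ref{k2:lem:tamedivision} there, descend divisibility through restriction and Quillen transfer (compatible with generic kernels, tame residues and ring images, with composite equal to multiplication by a degree prime to $\ell$), and then split the annihilators of Lemma~\ref{k2:lem:bounds} as $p^a m$. The only difference is cosmetic. You use one irreducible factor $g$ of $\Phi_\ell$ over $F$, giving a domain of rank $n\mid \ell-1$, whereas the paper uses all of $T[z]/\Phi_\ell(z)$, of rank $\ell-1$, and applies the lemma on each domain component; these components are exactly such $T[z]/(g)$. Justify unramifiedness of restricted classes by Quillen's DVR localization sequence, not by Theorem~\ref{k2:thm:main}, since that theorem is proved using this corollary.
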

\begin{proof}
The algebra $T'=T[z]/\Phi_\ell(z)$ is finite \'etale and free of rank
$\ell-1$. For $\ell=2$ it is $T$ itself. Its regular semilocal domain
components contain the required primitive root, which remains
primitive in every residue field. Apply
Lemma~\ref{k2:lem:tamedivision} componentwise.

Restriction and Quillen transfer preserve both generic kernels and
integral unramified subgroups, the latter by their compatibility with
DVR residues and norms. They also preserve the images of the ring
$K_2$ groups, so they act on the purity defects. Their composite is
multiplication by $\ell-1$ because $T'$ is free of that rank.
For $x$ in either group write $\res(x)=\ell y$ and put $z=\cor(y)$.
Then $(\ell-1)x=\ell z$ and $x=\ell(x-z)$, proving divisibility.

Factor either finite annihilator of Lemma~\ref{k2:lem:bounds} as
$p^a m$ with $(p,m)=1$. Repeated divisibility gives $G=mG$ for the
relevant group $G$, and $p^a mG=0$ then implies $p^aG=0$.
\end{proof}

\section{The ordinary local motivic edge in weight two}
\label{k2:sec:motivic}
We use Bouis's mixed-characteristic motivic complexes. Their weight-two
cohomology in degree two is denoted $H^2_{\mot}(C,\Z(2))$.

\subsection{The presentation coequalizer}
We use ordinary Milnor groups defined by unit symbols, multilinearity, and Steinberg relations. We denote the ordinary and improved Milnor groups by $\KM_n$ and $\Khat_n$, respectively.

The local resolution used to construct motivic cohomology gives a
coequalizer in the degree of interest. To compare it with unit symbols,
we must know that a symbol lifted to the resolution is independent of
its chosen unit lifts. The following lemma proves this directly from
the simplicial identities.

\begin{lemma}[The ordinary symbol coequalizer]\label{k2:lem:symbolcoequalizer}
Let $P_\bullet\to T$ be a simplicial resolution of a local ring by local rings. Suppose its augmentation and its face maps in degrees zero and one are local surjections. Then for every $n\geq1$,
\[
 \operatorname{coeq}\bigl(\KM_n(P_1)\rightrightarrows\KM_n(P_0)\bigr)
 \cong\KM_n(T).
\]
\end{lemma}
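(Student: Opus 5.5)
The plan is to build the comparison map explicitly and then check, by hand, that it is bijective. Write $d_0,d_1:P_1\to P_0$ for the two face maps and $\varepsilon:P_0\to T$ for the augmentation. The relation $\varepsilon d_0=\varepsilon d_1$ shows that $\KM_n(\varepsilon)$ factors through the coequalizer, giving
\[
 \bar\varepsilon:\operatorname{coeq}\bigl(\KM_n(P_1)\rightrightarrows\KM_n(P_0)\bigr)\longrightarrow\KM_n(T).
\]

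\emph{Surjectivity.} Since $\varepsilon$ is a surjection of local rings, it maps the maximal ideal of $P_0$ onto that of $T$. A unit of $T$ is therefore the image of some element outside the maximal ideal of $P_0$, which is a unit. So every unit lifts, every symbol $\{a_1,\dots,a_n\}$ over $T$ lifts, and $\bar\varepsilon$ is onto.

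\emph{Injectivity.} Here the approach is to construct an inverse $s:\KM_n(T)\to Q$, where $Q$ is the coequalizer. For units $a_1,\dots,a_n$ of $T$, choose unit lifts $\tilde a_i\in P_0^\times$ and set $s\{a_1,\dots,a_n\}=[\{\tilde a_1,\dots,\tilde a_n\}]$. The main step, and the expected obstacle, is to show that this class does not depend on the chosen lifts. It suffices to change one lift at a time, so take $\tilde a,\tilde a'$ with the same image in $T$. Because $P_\bullet\to T$ is a resolution, the degree-zero part of the simplicial set is exact, i.e.\ $P_1\rightrightarrows P_0\to T$ is a coequalizer of sets. The plan is to use the simplicial identities (the degeneracy $s_0$, together with the fact that $(d_0,d_1):P_1\to P_0\times_T P_0$ is surjective for a resolution) to find $w\in P_1$ with $d_0w=\tilde a$ and $d_1w=\tilde a'$. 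Since $d_0,d_1$ are local surjections, $w$ is a unit exactly when $\tilde a$ is. Next lift the remaining entries to units of $P_1$ through $d_0$: for an entry $\tilde b$, take $v$ with $d_0v=\tilde b$, then replace $d_1 v$ by the value that also matches. The natural choice is $v=s_0(\tilde b)$, because $d_0s_0=d_1s_0=\mathrm{id}$. The symbol $\{w,s_0\tilde b_2,\dots\}\in\KM_n(P_1)$ then has the two faces $\{\tilde a,\tilde b_2,\dots\}$ and $\{\tilde a',\tilde b_2,\dots\}$, so these agree in $Q$.

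\emph{$s$ respects the relations.} For multilinearity, the class is independent of lifts, so we may use the lift $\tilde a\tilde b$ for $ab$, and the relation then holds already in $\KM_n(P_0)$. For Steinberg, if $a+b=1$ with $a,b$ units, lift $a$ to a unit $\tilde a$. Then $1-\tilde a$ lifts $b$, and it is a unit because $P_0$ is local and its residue map factors through $T$. Hence $\{\tilde a,1-\tilde a,\dots\}=0$ already in $P_0$. So $s$ is a well-defined homomorphism.

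\emph{$s$ is inverse to $\bar\varepsilon$.} Clearly $\bar\varepsilon s=\mathrm{id}$. Conversely, $s\bar\varepsilon=\mathrm{id}$ on generators, since a symbol over $P_0$ is its own lift. Hence $\bar\varepsilon$ is an isomorphism.

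If surjectivity of $(d_0,d_1)$ onto $P_0\times_T P_0$ is not directly available, the fallback is to use the contractibility of the augmented resolution: its underlying simplicial set is a hypercover, and the degree-one matching condition is exactly this surjectivity.
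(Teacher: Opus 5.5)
Your proposal is correct and follows essentially the same route as the paper: lift units along the local augmentation, prove independence of lifts using an edge $w\in P_1$ (a unit because $d_0$ is local) together with degeneracies in the other coordinates, and check multilinearity and the Steinberg relation already over $P_0$. The one difference is that you obtain $w$ by citing surjectivity of $(d_0,d_1)\colon P_1\to P_0\times_T P_0$ via the Kan/hypercover property, whereas the paper constructs it directly: $\pi_0$ of the simplicial abelian group is $\coker(d_0-d_1)$, so write $\tilde a-\tilde a'=d_0z-d_1z$ and set $w=z+s_0(\tilde a'-d_1z)$.
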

\begin{proof}
Let $C$ denote the coequalizer. Unit symbols lift along $P_0\to T$, so $C\to\KM_n(T)$ is onto. We construct its inverse by lifting the units of a symbol. If $u,v\in P_0^\times$ have the same image, then $u-v$ is in the image of $d_0-d_1$ on the additive group of $P_1$: this is the formula for $\pi_0$ of the underlying simplicial abelian group. Choose $z$ with $d_0z-d_1z=u-v$ and put
\[
 w=z+s_0(v-d_1z).
\]
Then $d_0w=u$ and $d_1w=v$. Since $d_0$ is local, $w$ is a unit. A symbol with $w$ in the chosen coordinate and degeneracies of the other lifted units in every other coordinate identifies the two lifts in $C$. This proves independence in every coordinate.

For a product of units choose the product of their lifts; this gives multilinearity. If $a$ and $1-a$ are units in $T$, any lift $\widetilde a$ has both $\widetilde a$ and $1-\widetilde a$ invertible, since the augmentation is local. The Steinberg relation therefore already holds upstairs. This construction is well-defined and gives the inverse of the natural map.
\end{proof}

\begin{theorem}[The integral weight-two edge for local rings]\label{k2:thm:alllocalq2}
For every local ring $T$, the natural multiplicative weight-two edge induces an isomorphism
\[
 q_{2,T}:K_2(T)\xrightarrow{\sim}H^2_{\mot}(T,\Z(2)).
\]
It agrees with the unit-symbol map and is natural in ring homomorphisms between local rings.
\end{theorem}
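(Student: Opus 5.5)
We will prove the theorem by comparing both sides with the ordinary unit-symbol Milnor group $\KM_2(T)$. The comparison runs through the coequalizer of Lemma~\ref{k2:lem:symbolcoequalizer}, and then we check that the two resulting identifications are compatible with the edge.

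\emph{Step 1: the $K$-theory side.} For a local ring $T$ we already used that ordinary unit symbols generate $K_2(T)$: the map $\KM_2(T)\to\Khat_2(T)$ is onto and $\Khat_2=K_2$ \cite[Propositions 4.2.8(3), 4.3.3]{KerzThesis}. For infinite residue field, the unit-symbol map $\KM_2(T)\to K_2(T)$ is an isomorphism by van der Kallen's presentation of $K_2$ of local rings. For small residue fields we will not prove injectivity from $\KM_2$ directly. Instead we choose a simplicial resolution $P_\bullet\to T$ by local rings with infinite residue fields, for example by ind-smooth or henselian polynomial-type local rings. The faces and augmentation are local surjections, so the hypotheses of Lemma~\ref{k2:lem:symbolcoequalizer} hold.

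\emph{Step 2: the motivic side.} Bouis's weight-two motivic cohomology of a local ring is computed from such a resolution, which is the ``local resolution'' mentioned in the text. On the resolution, $H^2_{\mot}(P_i,\Z(2))\cong\KM_2(P_i)\cong K_2(P_i)$ for local rings with infinite residue fields. For these rings this is the comparison with Milnor $K$-theory in the top degree equal to the weight; it is a Nesterenko--Suslin--Totaro type statement extended to mixed characteristic by Bouis and by Elmanto--Morrow. Degree $2$ is the top nonvanishing degree in weight two for local rings. The descent spectral sequence for the simplicial resolution therefore has its relevant group in the $\pi_0$ position. It gives
\[
 H^2_{\mot}(T,\Z(2))\cong\operatorname{coeq}\bigl(H^2_{\mot}(P_1,\Z(2))\rightrightarrows H^2_{\mot}(P_0,\Z(2))\bigr),
\]
and by Lemma~\ref{k2:lem:symbolcoequalizer} this coequalizer is $\KM_2(T)$. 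The same coequalizer argument applied to $K_2$ must show that $K_2(T)$ is also this coequalizer. We intend to deduce this from the surjectivity of Step~1 together with the fact that the edge $q_{2,T}$ is defined compatibly on $P_\bullet$.

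\emph{Step 3: compatibility and naturality.} The edge $q_2$ is multiplicative. On degree-one classes it is the identification $K_1(T)\supset T^\times=H^1_{\mot}(T,\Z(1))$, so on a symbol $\{a,b\}$ it gives the motivic cup product $[a]\cup[b]$. Hence $q_{2,T}$ agrees with the unit-symbol map. Consider the composite $\KM_2(T)\to K_2(T)\xrightarrow{q_{2,T}}H^2_{\mot}(T,\Z(2))$. By Step~2 this composite is an isomorphism, so $K_2(T)\to H^2_{\mot}$ is surjective and $\KM_2(T)\to K_2(T)$ is injective. Combined with the surjectivity of Step~1, both maps are isomorphisms, and therefore so is $q_{2,T}$. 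Naturality in local homomorphisms follows from the functoriality of the motivic filtration together with the symbol description. For an arbitrary ring homomorphism between local rings, units still map to units, so the symbol formula remains natural.

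\emph{Main obstacle.} The hardest step is Step~2, identifying $H^2_{\mot}$ of a local ring with the coequalizer. This requires that the higher terms of the descent spectral sequence do not contribute in this top degree. It also requires the infinite-residue-field Milnor comparison for the terms of the resolution in mixed characteristic, which includes the $p$-adic part via syntomic cohomology. We would first try to obtain this from Bouis's construction of $\Z(2)$ as left Kan extended from smooth algebras. The vanishing of $H^3_{\mot}(-,\Z(2))$ on local rings then makes the degree-two group right exact under the resolution.
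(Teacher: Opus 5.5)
For a local ring with infinite (or sufficiently large) residue field, your argument is essentially the paper's first step: compute $H^2_{\mot}(T,\Z(2))$ as the coequalizer over Bouis's local resolution, identify its terms with $\KM_2(P_i)$, and apply Lemma~\ref{k2:lem:symbolcoequalizer}. The gap is the small finite residue field case, and a better choice of resolution cannot close it.

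\emph{No such resolution exists.} A resolution $P_\bullet\to T$ by local rings with infinite residue fields and local surjective augmentation is impossible when $T/\fm_T$ is finite. The map $P_0\to T$ is necessarily surjective, because $\pi_0P_\bullet=T$ is a quotient of $P_0$. For a surjection of local rings the preimage of $\fm_T$ is $\fm_{P_0}$, so $P_0/\fm_{P_0}\cong T/\fm_T$. The same holds for every $P_i$, since face maps are split by degeneracies. Thus the terms have exactly the small residue field you wanted to avoid, and $H^2_{\mot}(P_i,\Z(2))\cong K_2(P_i)$ need not agree with $\KM_2(P_i)$.

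\emph{Your Step~3 conclusion is false in general.} Step~3 asserts that $\KM_2(T)\to K_2(T)$ is injective. Take $T=\Z_{(2)}$. There is no unit $a$ with $1-a$ a unit, so no Steinberg relation is imposed and $\KM_2(T)=T^\times\otimes T^\times$. In that group $3\otimes5+5\otimes3\neq0$, whereas $\{3,5\}+\{5,3\}=0$ in $K_2(T)$. So no comparison through $\KM_2(T)$ can prove the theorem for small residue fields.

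\emph{What replaces Step~3.} Two ingredients are needed.

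Surjectivity of $q_{2,T}$ holds for every residue field directly from the coequalizer. The group $H^2_{\mot}(P_0,\Z(2))\cong K_2(P_0)$, by the smooth-local comparison and continuity with no residue-field hypothesis. It surjects onto $H^2_{\mot}(T,\Z(2))$, and this map factors through $q_{2,T}$.

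Injectivity for $T/\fm_T=\F_{p^f}$ must be descended from large residue fields by a transfer argument. Take finite \'etale local $\Z_{(p)}$-algebras $V_a,V_b$ of large coprime degrees $a,b$, each prime to $f$. Then $T_a=T\otimes_{\Z_{(p)}}V_a$ is local and free of rank $a$, with residue field $\F_{p^{fa}}$, and similarly for $b$. If $q_{2,T}(x)=0$, naturality and the large-residue case give $\res(x)=0$ in $K_2(T_a)$ and in $K_2(T_b)$. Since $\cor\circ\res$ is multiplication by the rank on $K_2(T)$, you get $ax=bx=0$, hence $x=0$. Only the Quillen transfer on $K_2$ is needed, not a motivic one.
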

\begin{proof}
We first identify the edge when the residue field is large enough for
ordinary and improved Milnor theory to agree. The coequalizer then gives
surjectivity for every local ring. For a small residue field, injectivity
will descend from two finite extensions of coprime degrees.

\proofstep{1. The local weight-two edge.}
The local resolution in \cite[proof of Lemma 2.19]{BouisWeibel} is a simplicial resolution $P_\bullet\to T$ by local ind-smooth $\Z$-algebras, with henselian-surjective face maps. Its augmentation to $\pi_0(P_\bullet)=T$ is surjective and hence local. Every term has the same residue field as $T$. The local left-Kan-extension theorem and the displayed coequalizer there give
\begin{equation}\label{k2:eq:motcoequalizer}
 H^2_{\mot}(T,\Z(2))=
 \operatorname{coeq}\bigl(H^2_{\mot}(P_1,\Z(2))
 \rightrightarrows H^2_{\mot}(P_0,\Z(2))\bigr).
\end{equation}

On a local essentially smooth $\Z$-algebra the weight-two comparison is $H^2_{\mot}=K_2$ by \cite[Lemma 2.16]{BouisWeibel}; continuity gives the same on the ind-smooth terms. Its identification on products of weight-one unit classes is the ordinary $K_2$ symbol, by multiplicativity of the filtration. Kerz identifies improved Milnor $K_2$ with Quillen $K_2$ on every local ring and identifies ordinary with improved Milnor $K_2$ once the residue field is sufficiently large \cite[Proposition 4.2.8(3),(5)]{KerzThesis}. Hence for sufficiently large residue fields, \eqref{k2:eq:motcoequalizer} is the symbol coequalizer of Lemma~\ref{k2:lem:symbolcoequalizer}, proving the assertion.

\proofstep{2. Surjectivity of the local edge.}
For an arbitrary residue field, surjectivity follows from the coequalizer: the map from the $P_0$ term in \eqref{k2:eq:motcoequalizer} is onto, and its natural identification with $K_2(P_0)$ factors through $q_{2,T}$. It remains to prove injectivity for a small finite residue field $\F_{p^f}$.

\proofstep{3. Injectivity by coprime extensions.}
Choose two sufficiently large coprime positive integers $a,b$, each coprime to $f$. A monic lift to $\Z_{(p)}$ of an irreducible polynomial of degree $a$ over $\F_p$ gives a finite free local \'etale algebra $V_a$ with residue field $\F_{p^a}$. Its discriminant is a unit, so it is \'etale. Put
\[
 T_a=T\otimes_{\Z_{(p)}}V_a.
\]
Its closed residue algebra is $\F_{p^f}\otimes_{\F_p}\F_{p^a}=\F_{p^{fa}}$, a field. Every maximal ideal of this finite algebra lies over the maximal ideal of $T$, so $T_a$ is local. It is free of rank $a$, with large residue field. Construct $T_b$ similarly.

If $x\in\ker q_{2,T}$, its restriction to either extension is zero by naturality and the proved large-residue injectivity. Quillen transfer composed with restriction is multiplication by $a$, respectively $b$: the extension module is free of that rank, and the projection formula applies. Thus $ax=bx=0$, whence $x=0$. This proves the theorem for all local rings.
\end{proof}

\subsection{Finite coefficients and arithmetic constants}
For the remainder of this part, let $T$ be a Noetherian regular local
domain of mixed characteristic $(0,p)$, with fraction field $F$,
and put $Y=\Spec T[1/p]$.

\begin{lemma}[The finite-coefficient comparison]\label{k2:lem:finitecomparison}
The comparison gives natural isomorphisms
\[
 H^a_{\mot}(T,\Z/p(2))\cong H^a_{\et}(Y,\mu_p^{\otimes2}),\qquad a=0,1,
\]
and an injection in degree two. Generic restriction on the degree-one
group is injective.
\end{lemma}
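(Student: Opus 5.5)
The plan is to deduce all three assertions from the comparison between Bouis's mixed-characteristic motivic cohomology with $\Z/p(2)$ coefficients and syntomic cohomology, and from the Beilinson--Lichtenbaum-type description on the generic fibre $Y=\Spec T[1/p]$.

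\textbf{Step 1: the fibre sequence.} For a $p$-torsion-free ring $T$ with $p$-adic completion, Bouis's construction expresses $\Z/p(2)^{\mot}(T)$ through a pullback. One corner is the syntomic complex $\Z/p(2)^{\mathrm{syn}}(T)$. Another is the truncated \'etale cohomology $\tau^{\le 2}R\Gamma_{\et}(Y,\mu_p^{\otimes2})$, suitably cdh-corrected. Since $T$ is regular, the cdh corrections vanish. I would invoke his Beilinson--Lichtenbaum comparison in the form
\[
 \Z/p(2)^{\mot}(T)\xrightarrow{\sim}\tau^{\le2}R\Gamma_{\et}(T,\mu_p^{\otimes2})
\]
for regular local $T$, followed by restriction to $Y$. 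If only the syntomic description is directly available, I would use instead the Antieau--Mathew--Morrow--Nikolaus Beilinson fibre square. That square identifies $\Z_p(i)^{\mathrm{syn}}(T)$ with $\tau^{\le i}R\Gamma_{\et}(T[1/p],\mu_{p^\infty}^{\otimes i})$ up to a correction that is concentrated in degrees $\ge i$ and controlled by $\mathrm{HC}^-$ of $T/p$. In weight two this correction can only affect degrees $\ge2$.

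\textbf{Step 2: deducing the statement.} The isomorphisms in degrees $a=0,1$ follow from Step 1, since the comparison map is an equivalence after $\tau^{\le1}$. In degree two the correction term contributes a map whose kernel would come from degree-one data that we have already matched. The long exact sequence of the fibre then shows that $H^2_{\mot}\to H^2_{\et}(Y,\mu_p^{\otimes2})$ is injective. Naturality of the comparison in $T$ comes from the functoriality of the fibre square.

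\textbf{Step 3: generic injectivity in degree one.} The group $H^1_{\et}(Y,\mu_p^{\otimes2})$ sits in the Kummer-type sequence obtained from $H^0(Y,\mu_p)\otimes$ and $\mu_p$-torsors. If $\mu_p$ is not present, I would pass to $T[\zeta_p]$, which has prime-to-$p$ degree $p-1$, and argue with transfers as in Corollary~\ref{k2:cor:primeaway}. Over $T[\zeta_p]$, we have $H^1_{\et}(Y,\mu_p)=\mathcal O(Y)^\times/p\oplus\Pic(Y)[p]$. Here $\Pic(Y)=0$ because $T$ is factorial. The map $\mathcal O(Y)^\times/p\to F^\times/p$ is injective: if $u=g^p$ in $F$, then normality of $T[1/p]$ gives $g\in\mathcal O(Y)$. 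The transfer argument then descends injectivity, because restriction followed by corestriction is multiplication by a unit mod $p$.

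\textbf{Main obstacle.} The hard step is pinning down exactly which form of Bouis's comparison applies to an arbitrary, possibly ramified and non-excellent-completion, regular local ring. In particular, I must make sure that the degree-two error term injects, rather than merely admitting a map. I would first try to reduce to the $p$-completion via the arithmetic fracture square. There the syntomic--\'etale comparison of Antieau--Mathew--Morrow--Nikolaus holds in degrees $\le i$ as an isomorphism and in degree $i+1$ as an injection. In weight two, that covers degrees $\le2$ precisely as stated.
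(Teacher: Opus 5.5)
Your architecture matches the paper's: a comparison map from Bouis's mod-$p$ motivic cohomology to $R\Gamma_{\et}(Y,\mu_p^{\otimes2})$, the long exact sequence of its fibre, and a separate argument for generic injectivity on $H^1$. However, the comparison statement you actually rely on is wrong, and the missing ingredient is $F$-smoothness.

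The isomorphism $\Z/p(2)^{\mot}(T)\simeq\tau^{\le2}R\Gamma_{\et}(T,\mu_p^{\otimes2})$ (followed by restriction to $Y$) does not hold with coefficients at the residue prime. Your fallback range in the last paragraph, isomorphism in degrees $\le i$ and injection in degree $i+1$, is off by one. It would make the degree-two map surjective, and that fails for $T=\Z_p[[x]]$. There the symbol $\{p,1+x\}\in H^2_{\et}(Y,\mu_p^{\otimes2})$ has residue $\overline{1+x}$ at the prime $(p)$, which is not a $p$-th power in $\F_p((x))^\times$. So this symbol cannot come from $H^2_{\mot}(T,\Z/p(2))=K_2(T)/p$, whose classes are unramified there. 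The fallback range also contradicts the statement you are proving, which asserts only an injection in degree two. Read as a claim about the cofibre, your Step~1 remark that the correction lives in degrees $\ge2$ is the right range, but you attribute it to the wrong source and then contradict it.

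The paper's input is the following. $T$ is $\Z$-flat, and a regular Noetherian ring is $F$-smooth (Bhatt--Mathew, Theorem~4.15). For such rings Bouis's Corollary~5.12 says that, in weight two, the fibre of $\Z/p(2)^{\mot}(T)\to R\Gamma_{\et}(Y,\mu_p^{\otimes2})$ has cohomology only in degrees $\ge3$. The long exact sequence then gives exactly isomorphisms for $a\le1$ and an injection for $a=2$. No fracture-square reduction or cdh discussion is needed.

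Your Step~2 reasoning for degree two also fails as written. An isomorphism on $H^1$ only shows that $H^1$ of the cofibre injects into $H^2_{\mot}$. Injectivity on $H^2$ genuinely requires $H^2$ of the fibre to vanish, which is the connectivity bound just quoted.

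In Step~3, passing to $Y'=\Spec T[1/p][z]/\Phi_p(z)$ and using restriction--corestriction (multiplication by $p-1$) is fine. But $\Pic(Y')=0$ does not follow from factoriality of $T$; only $\Pic(Y)=0$ does. Indeed the paper deliberately retains a possible $\Pic(Y')[p]$ term elsewhere in Part~II. So you cannot discard the Picard part of the Kummer sequence on the cover.

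This is easy to repair. Suppose a Kummer pair $(\mathcal L,\phi)$ is generically trivial. Choose a generic trivialization $s$ with $\phi(s^{\otimes p})=1$. Then $p\Div(s)=0$ in the torsion-free divisor group, so $\Div(s)=0$, and on the regular scheme $s$ is a global trivialization.

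Alternatively, follow the paper and avoid roots of unity altogether. A generically trivial torsor under the finite \'etale group $\mu_p^{\otimes2}$ on the normal integral scheme $Y$ has a generic section. The component of the torsor selected by that section is finite \'etale and birational over $Y$, hence an isomorphism. So the torsor has a global section and is trivial.
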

\begin{proof}
A characteristic-zero domain is $\Z$-flat, and a regular Noetherian
ring is $F$-smooth by \cite[Theorem 4.15]{BhattMathew}.
The fibre in \cite[Corollary 5.12]{BouisMotivic}, applied in weight
two, has cohomology only in degrees at least three. Its long exact
sequence gives exactly these comparisons. A torsor under the finite \'etale group sheaf $\mu_p^{\otimes2}$ on the normal
integral scheme $Y$ which is generically trivial has a generic
section. The component selected by that section is finite birational;
normality makes it the base itself. The torsor is therefore trivial,
which proves generic injectivity on $H^1$.
\end{proof}

We will also use the natural weight-two edge
\begin{equation}\label{k2:eq:k3edge}
 q_{3,T}:K_3(T)\longrightarrow H^1_{\mot}(T,\Z(2)).
\end{equation}
The first two filtration triangles identify
$\pi_3\Fil^2_{\mot}K(T)$ with $K_3(T)$. Indeed, the weight-zero and
weight-one graded layers have vanishing $\pi_3$ and $\pi_4$.
For weight zero, this follows from \cite[Example~3.8]{BouisMotivic} and
\cite[Corollary~2.12]{BouisWeibel}; for weight one, it follows from
\cite[Example~2.15]{BouisWeibel}. The long exact sequences therefore
make the maps from the second filtration stage isomorphisms in the
degree under consideration.
Projection to the weight-two graded layer gives
\eqref{k2:eq:k3edge}. Multiplicativity and naturality of the filtration
identify its field version with the quotient
$K_3(F)\to K_3^{\ind}(F)=H^1_{\mot}(F,\Z(2))$.

\begin{lemma}[Lifting the arithmetic constants]\label{k2:lem:constants}
Restriction is surjective in
\[
 H^1_{\mot}(T,\Z(2))/p\longrightarrow
 H^1_{\mot}(F,\Z(2))/p.
\]
\end{lemma}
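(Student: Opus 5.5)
We reduce modulo $p$ to finite coefficients and compare through the étale side, where generic behaviour is controlled by purity on the regular scheme $Y=\Spec T[1/p]$. The relevant square is
\[
 H^1_{\mot}(T,\Z(2))/p\hookrightarrow H^1_{\mot}(T,\Z/p(2))\cong H^1_{\et}(Y,\mu_p^{\otimes2}),
\]
together with its analogue over $F$. Here the generic point of $Y$ is $\Spec F$, and $H^1_{\mot}(F,\Z/p(2))\cong H^1_{\et}(F,\mu_p^{\otimes2})$ by Lemma~\ref{k2:lem:finitecomparison} applied to $F$, or by the Beilinson--Lichtenbaum comparison for fields. The Bockstein sequences give
\[
 0\to H^1_{\mot}(-,\Z(2))/p\to H^1_{\mot}(-,\Z/p(2))\to H^2_{\mot}(-,\Z(2))[p]\to0 .
\]
By Theorem~\ref{k2:thm:alllocalq2} the last term is $K_2(T)[p]$, respectively $K_2(F)[p]$.

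\proofstep{Step 1: reduce to an integral torsion class.} Take $\alpha\in H^1_{\mot}(F,\Z(2))/p$. Its image in $H^1_{\et}(F,\mu_p^{\otimes2})$ is a class whose Bockstein in $K_2(F)$ vanishes. It suffices to lift this finite-coefficient class to $H^1_{\et}(Y,\mu_p^{\otimes2})$ with vanishing Bockstein in $K_2(T)[p]$. The Bockstein condition is automatic once a lift exists: its Bockstein lies in $K_2(T)[p]$ and restricts to zero in $K_2(F)$, so it lies in $J_2(T)$. If $J_2(T)$ is available at this point, it vanishes and we are done. If not (this lemma is presumably an input to $J_2=0$), we instead modify the lift using $H^1_{\mot}(T,\Z(2))/p$-classes, or argue directly on symbols as in Step 3.

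\proofstep{Step 2: extend the étale class over $Y$.} Since $\mu_p^{\otimes2}$ is finite étale on the regular scheme $Y$, a generic class extends over $Y$ exactly when it is unramified at the height-one points of $Y$, by Gabber's absolute purity in degree one. So we must show that the classes coming from $H^1_{\mot}(F,\Z(2))/p=K_3^{\ind}(F)/p$ are unramified along each divisor $\mathfrak q$ of $Y$. For a DVR $\mathcal O_{\mathfrak q}$ with residue characteristic $0$ (since $p$ is inverted), the residue of $H^1(F,\mu_p^{\otimes 2})$ lands in $H^0(\kappa(\mathfrak q),\mu_p)$. The image of $K_3^{\ind}/p$ is known to be unramified by rigidity and localization for $K_3^{\ind}$: the boundary $K_3(F)\to K_2(\kappa)$ kills indecomposables modulo $\KM$-symbols, and the Milnor part $\KM_3(F)$ maps to zero in $K_3^{\ind}$. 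Concretely, we use that $K_3^{\ind}(F)\to K_3^{\ind}(F_{\mathfrak q}^h)$ has torsion in $\mu^{\otimes2}$, so the Bockstein image $H^1(F,\Z/p(2))\supset (K_3^{\ind}F)/p$ is generated by $\{\zeta\}$-type classes of $H^0(F,\mu_p^{\otimes2})$ together with Kummer classes $\zeta\otimes u$. For these, being unramified amounts to $p\mid v_{\mathfrak q}(u)$.

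\proofstep{Step 3: the expected obstacle.} The hard point is to show that Kummer-type classes $\zeta\otimes u$ arising from integral classes are unramified, and then to descend from the case $\zeta\in F$ to the general case. We first pass to $T'=T[z]/\Phi_p(z)$, a regular semilocal domain, which is ramified over $p$ but fine over $Y$. Working over $T'$, the Bockstein of $\zeta\otimes u$ is $\{\zeta,u\}\in K_2(F')$, and this vanishes. Taking tame residues gives $\bar\zeta^{v(u)}=1$, so $p\mid v(u)$ at height-one points of $Y'$, and factoriality as in Lemma~\ref{k2:lem:tamedivision} makes the class extend. For primes over $p$, we need nothing, since we work on $Y$. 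We then descend via the corestriction $T'\to T$ of degree $p-1$, which is prime to $p$. This is an invertible scalar on $/p$-groups, exactly as in Corollary~\ref{k2:cor:primeaway}, and it gives surjectivity for $T$.
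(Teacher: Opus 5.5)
Your Step~1 leaves the essential point unproved. The Bockstein condition is not automatic; it is the entire content of the lemma.

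By Lemma~\ref{k2:lem:finitecomparison}, generic restriction on $H^1_{\mot}(T,\Z/p(2))\cong H^1_{\et}(Y,\mu_p^{\otimes2})$ is injective. So the extension $\tilde\alpha$ you build in Steps~2--3 is unique. Its Bockstein is a determined element of $H^2_{\mot}(T,\Z(2))[p]\cong K_2(T)[p]$ that dies over $F$, and there is nothing left to modify. Classes from $H^1_{\mot}(T,\Z(2))/p$ have zero Bockstein, and there are no nonzero generically trivial classes to add.

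Granting the extension, the lemma is therefore equivalent to the absence of $p$-torsion in $\ker(H^2_{\mot}(T,\Z(2))\to H^2_{\mot}(F,\Z(2)))$. That is Proposition~\ref{k2:prop:noptorsion}. The paper derives that proposition, and hence $J_2(T)=0$, from this very lemma, so appealing to either is circular.

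Step~3 does not break the circle. The vanishing of $\{\zeta,u\}$ in $K_2(F')$ is only the generic restriction of the Bockstein you need to kill. Concluding that the Bockstein vanishes over $T$ is exactly generic injectivity of $K_2$ on $p$-torsion. The corestriction descent would also need the lifting statement over $T'$, which has the same Bockstein problem. Moreover, $T'=T[z]/\Phi_p(z)$ need not be regular, or even a domain, when $T$ is ramified over $p$. For $T=\Z_p[\sqrt p]$ with $p\equiv1\bmod4$, it is a local ring with two minimal primes.

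The extension step is sound in substance, but your justification via $K_3^{\ind}$ of henselizations and $\zeta$-type generators is not an argument. A clean version: at a height-one point $\mathfrak q$ of $Y$ the residue field has characteristic zero. The mod-$p$ residue of an integral class factors through the integral residue into $H^0_{\mot}(\kappa(\mathfrak q),\Z(1))=0$, and purity then extends the class.

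What is missing is a way to produce \emph{integral} lifts directly. The paper does this arithmetically:
\begin{enumerate}
\item Represent the class over a finitely generated subfield $F_0\subset F$.
\item Levine's finite-level rigidity $K_3^{\ind}(K)/p\cong K_3^{\ind}(F_0)/p$, for the algebraic constant field $K$ (a number field), reduces to a constant class.
\item Normality of $T$ gives $\mathcal O_K\subset T$, and locality extends this to $\mathcal O_{K,\mathfrak p_K}\subset T$.
\item Since $K_2$ of the finite residue field vanishes, $K_3(\mathcal O_{K,\mathfrak p_K})\to K_3(K)$ is onto.
\item The resulting class in $K_3(T)$, pushed through $q_{3,T}$, is the required lift.
\end{enumerate}
No finite-coefficient purity or Bockstein argument enters. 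This arithmetic input is precisely what afterwards makes Proposition~\ref{k2:prop:noptorsion} provable.
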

\begin{proof}
Represent a given field class over a finitely generated
characteristic-zero subfield $F_0\subset F$. Its algebraic constant
field $K$, the algebraic closure of $\Q$ in $F_0$, is a number field.
The finite-level clause of \cite[Theorem 4.13, p.~336]{Levine} gives
\[
 K_3^{\ind}(K)/p\xrightarrow{\sim}K_3^{\ind}(F_0)/p,
\]
including for $p=2$. Thus the class modulo $p$ comes from $K_3(K)$.

Every algebraic integer of $K$ is integral over $T$, because its monic
polynomial over $\Z$ is also over $T$. Normality therefore puts
$\mathcal O_K$ inside $T$. Contracting the maximal ideal gives a prime
$\mathfrak p_K$ over $p$, and locality extends the map to
$\mathcal O_{K,\mathfrak p_K}\subset T$.
DVR localization gives a surjection
\[
 K_3(\mathcal O_{K,\mathfrak p_K})\longrightarrow K_3(K),
\]
since its next term is $K_2$ of a finite field, zero by
\cite{QuillenFinite}. The resulting class of $K_3(T)$ maps under
\eqref{k2:eq:k3edge} to the required finite-level motivic class.
\end{proof}

The preceding two lemmas complement one another. A class killed by
$p$ in the integral generic kernel comes from
$H^1_{\mot}(T,\Z/p(2))$ by the Bockstein. The generic restriction
of that degree-one class can be removed using a lifted constant
class, after which the degree-one generic injectivity in
Lemma~\ref{k2:lem:finitecomparison} applies.

\begin{proposition}[Absence of residue-prime torsion in the kernel]\label{k2:prop:noptorsion}
The kernel of $H^2_{\mot}(T,\Z(2))\to H^2_{\mot}(F,\Z(2))$ has no $p$-torsion.
\end{proposition}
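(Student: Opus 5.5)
We follow the route indicated just before the statement: a $p$-torsion class in the generic kernel comes from the degree-one finite-coefficient group via the Bockstein. We correct it there by a lifted arithmetic constant, and then finish with generic injectivity in degree one.

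\proofstep{Setup.} Let $x\in H^2_{\mot}(T,\Z(2))$ satisfy $px=0$ and $x_F=0$. We must show $x=0$. The triangle $\Z(2)\xrightarrow{p}\Z(2)\to\Z/p(2)$ gives the exact sequence
\[
 H^1_{\mot}(T,\Z(2))\longrightarrow H^1_{\mot}(T,\Z/p(2))\xrightarrow{\beta}H^2_{\mot}(T,\Z(2))\xrightarrow{p}H^2_{\mot}(T,\Z(2)),
\]
and likewise over $F$. Both sequences are natural for $T\to F$.

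\proofstep{Step 1: lift $x$ to finite coefficients.} Since $px=0$, we can choose $y\in H^1_{\mot}(T,\Z/p(2))$ with $\beta(y)=x$. Then $\beta_F(y_F)=x_F=0$. Exactness over $F$ gives $y_F=\bar w$, the reduction of some $w\in H^1_{\mot}(F,\Z(2))$.

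\proofstep{Step 2: lift the arithmetic constant.} Lemma~\ref{k2:lem:constants} gives $v\in H^1_{\mot}(T,\Z(2))$ with $v_F\equiv w \pmod p$. Reduction commutes with restriction, so $\bar v_F=\bar w=y_F$. Put $y'=y-\bar v$. Then $\beta(y')=\beta(y)=x$, because $\beta$ kills reductions of integral classes, and $y'_F=0$.

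\proofstep{Step 3: generic injectivity in degree one.} Lemma~\ref{k2:lem:finitecomparison} says generic restriction on $H^1_{\mot}(T,\Z/p(2))$ is injective. Hence $y'=0$, and therefore $x=\beta(y')=0$.

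\proofstep{Main obstacle.} The delicate point is compatibility. Restriction $T\to F$ must commute with the Bockstein and with reduction mod $p$. Also, the generic injectivity of Lemma~\ref{k2:lem:finitecomparison} is stated through the étale comparison over $Y=\Spec T[1/p]$, and its target $H^1_{\et}(\Spec F,\mu_p^{\otimes2})$ must be identified with $H^1_{\mot}(F,\Z/p(2))$. For a characteristic-zero field this is the norm-residue/Beilinson--Lichtenbaum identification in degree $\le$ weight. Naturality of Bouis's comparison fibre under the flat map $T\to F$ then makes the square commute.

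The Bockstein naturality holds because the coefficient triangle is functorial in the ring, via the left Kan extension construction. We would check these compatibilities before running Steps 1–3.
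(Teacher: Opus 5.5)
Your proposal is correct and follows the paper's proof step for step. You lift the $p$-torsion class by the Bockstein to $H^1_{\mot}(T,\Z/p(2))$, subtract the reduction of an integral class supplied by Lemma~\ref{k2:lem:constants} so that the generic restriction vanishes, and conclude with the degree-one generic injectivity of Lemma~\ref{k2:lem:finitecomparison}. The compatibilities you flag (naturality of the coefficient sequence, and identifying the field-level finite-coefficient group with \'etale cohomology) are also left implicit in the paper.
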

\begin{proof}
If $z$ belongs to that kernel and $pz=0$, the integral coefficient
sequence supplies $a\in H^1_{\mot}(T,\Z/p(2))$ with $\beta(a)=z$.
The generic class $a_F$ has zero Bockstein, so it is the reduction of
a class in $H^1_{\mot}(F,\Z(2))/p$. Lift the latter by
Lemma~\ref{k2:lem:constants} and subtract its reduction from $a$.
The Bockstein remains $z$, while the new class is generically zero.
Lemma~\ref{k2:lem:finitecomparison} makes that new class zero. Thus $z=0$, proving that the integral generic kernel has trivial
$p$-torsion.
\end{proof}

\begin{theorem}[Integral generic injectivity]\label{k2:thm:genericinjection}
For every Noetherian regular local domain of mixed characteristic,
$K_2(T)\to K_2(F)$ is injective.
\end{theorem}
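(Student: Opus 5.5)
We combine the finite-exponent bound with the divisibility and motivic results already proved, reducing to $p$-primary torsion. By Lemma~\ref{k2:lem:bounds}, $J_2(T)$ is killed by $N_d$. By Corollary~\ref{k2:cor:primeaway}, $J_2(T)$ is $\ell$-divisible for every prime $\ell\neq p$, and consequently it is killed by a power of $p$. It therefore suffices to show that $J_2(T)$ has no nonzero element of order $p$. A $p$-primary torsion group with no $p$-torsion is zero, since any nonzero element of order $p^m$ has a multiple of exact order $p$.

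The plan is to transport $J_2(T)$ into motivic cohomology. Theorem~\ref{k2:thm:alllocalq2} gives natural isomorphisms
\[
 q_{2,T}:K_2(T)\xrightarrow{\sim}H^2_{\mot}(T,\Z(2)),
 \qquad
 q_{2,F}:K_2(F)\xrightarrow{\sim}H^2_{\mot}(F,\Z(2)).
\]
These apply because both $T$ and the field $F$ are local rings. By naturality with respect to the local homomorphism $T\to F$, they fit into a commutative square with the two restriction maps. Thus $q_{2,T}$ carries $J_2(T)$ isomorphically onto
\[
 \ker\bigl(H^2_{\mot}(T,\Z(2))\to H^2_{\mot}(F,\Z(2))\bigr).
\]
Proposition~\ref{k2:prop:noptorsion} says this kernel has no $p$-torsion, so $J_2(T)[p]=0$.

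Combining the two paragraphs, $J_2(T)$ is a $p$-power torsion group without $p$-torsion, hence $J_2(T)=0$. This is exactly the injectivity of $K_2(T)\to K_2(F)$.

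The main point to check is naturality of the edge with respect to the generic map $T\to F$. The homomorphism $T\to F$ is not local in the sense of mapping $\fm$ into the maximal ideal, so the naturality clause of Theorem~\ref{k2:thm:alllocalq2} does not literally apply. It can be replaced by the statement that both edges are the unit-symbol maps. Ordinary unit symbols generate $K_2(T)$, as used in the proof of Lemma~\ref{k2:lem:bounds}, and restriction of a symbol $\{u,v\}$ is the symbol of the restricted units on both sides. Compatibility on generators therefore gives the required commutative square. Alternatively, one can appeal directly to functoriality of the motivic filtration for arbitrary ring maps.
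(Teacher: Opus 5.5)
Your proposal is correct and follows the paper's proof: Corollary~\ref{k2:cor:primeaway} gives a bounded $p$-power exponent, the isomorphisms $q_{2,T}$ and $q_{2,F}$ carry $J_2(T)$ onto the motivic generic kernel, and Proposition~\ref{k2:prop:noptorsion} rules out $p$-torsion there. Your extra check of the commutative square for the non-local map $T\to F$ is a valid detail that the paper leaves implicit. It works by using unit-symbol generation of $K_2(T)$ and the symbol normalization of both edges.
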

\begin{proof}
By Corollary~\ref{k2:cor:primeaway}, $J_2(T)$ is killed by a power of
the residue prime $p$. The natural isomorphism $q_{2,T}$ of
Theorem~\ref{k2:thm:alllocalq2}, together with the field isomorphism
$q_{2,F}$, identifies this kernel with the corresponding motivic kernel.
Proposition~\ref{k2:prop:noptorsion} rules out $p$-torsion in it.
Successive multiplication by $p$ therefore kills no nonzero element,
and a group of bounded $p$-power exponent with this property is zero.
\end{proof}

\section{Image purity at the residue prime}
\label{k2:sec:image}
Generic injectivity settles $J_2(T)$; extension of unramified generic
classes requires the separate assertion $P_2(T)=0$.
Corollary~\ref{k2:cor:primeaway} gives $P_2(T)$ a bounded $p$-power
exponent. Any nonzero group with this property has a nonzero element
killed by $p$, so it suffices to prove $P_2(T)[p]=0$.
For odd $p$, we combine the divided-Chern calculation with unramified
torsion lifting. For $p=2$, the quotient comparison and the
Dennis--Stein lifts provide the corresponding steps.

\subsection{Root-free lifting of unramified torsion for odd primes}
\begin{lemma}[Unramified torsion at an odd residue prime]\label{k2:lem:oddtorsion}
Assume $p$ is odd. Every element of $K_2(F)[p]\cap K_2^{\mathrm{nr}}(T)$
extends to $K_2(T)$.
\end{lemma}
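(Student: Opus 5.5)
The plan is to pass to motivic cohomology, produce an integral class over $T$ from a mod-$p$ class, and then correct it using the lifted arithmetic constants. I first analyse $p$-torsion in $K_2(F)$. By Theorem~\ref{k2:thm:alllocalq2} applied to $F$, $K_2(F)\cong H^2_{\mot}(F,\Z(2))$. Take $x\in K_2(F)[p]\cap K_2^{\mathrm{nr}}(T)$. The Bockstein sequence writes $x=\beta(a)$ for some $a\in H^1_{\mot}(F,\Z/p(2))=H^1_{\et}(F,\mu_p^{\otimes2})$, and $a$ is unique modulo the reduction of $H^1_{\mot}(F,\Z(2))/p=K_3^{\ind}(F)/p$. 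Lemma~\ref{k2:lem:constants} lifts all of $H^1_{\mot}(F,\Z(2))/p$ to $T$. Hence it suffices to find \emph{some} choice of $a$ that extends to a class $\tilde a\in H^1_{\mot}(T,\Z/p(2))=H^1_{\et}(Y,\mu_p^{\otimes2})$, using Lemma~\ref{k2:lem:finitecomparison}. Given such an $\tilde a$, the element $\beta(\tilde a)\in H^2_{\mot}(T,\Z(2))\cong K_2(T)$ restricts to $x$ by naturality of the Bockstein and of $q_2$.

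The extension of $a$ over $T$ is a purity problem for $\mu_p^{\otimes 2}$-torsors. I split it into two parts, according to the height-one primes of $T$.

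\textbf{Primes not containing $p$.} These are the height-one points of $Y$. There, extendability of $a$ over the DVR $T_{\mathfrak q}$ amounts to the vanishing of the tame residue of $a$. That residue is a Gysin residue in $H^0(\kappa(\mathfrak q),\mu_p)$, and it is compatible with the $K_2$ tame symbol $\partial x\in\kappa(\mathfrak q)^\times$ through the Bockstein. Since $x$ is unramified, the residue of $a$ lies in the image of the constants and can be absorbed. Zariski–Nagata purity for the finite étale sheaf on the regular scheme $Y$ (or purity for torsors under finite étale groups on a normal scheme, as in the generic-injectivity argument of Lemma~\ref{k2:lem:finitecomparison}) then extends $a$ over all of $Y$.

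\textbf{Primes over $p$.} These are the height-one primes of $T$ containing $p$. The statement asks only for an extension over $T$, and on the motivic side the group $H^1_{\mot}(T,\Z/p(2))$ already equals $H^1_{\et}(Y,\dots)$. So the issue is not extending the étale class across the $p$-divisors. Instead, the unramifiedness of $x$ at those primes must be used to ensure that the resulting $\beta(\tilde a)$ actually restricts to $x$, rather than to $x$ plus some correction.

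For the primes over $p$ I would argue at each DVR $T_{\mathfrak p}$ with $p\in\mathfrak p$. There $x$ extends to $K_2(T_{\mathfrak p})[p]$, by DVR exactness together with the generic injectivity of Theorem~\ref{k2:thm:genericinjection}. The local statement over a DVR follows from the same Bockstein argument, with the constants again provided by Lemma~\ref{k2:lem:constants}.

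The main obstacle is the need for a \emph{single} $a$ that works at all primes at once: the choice of $a$ is only determined modulo constants, and different primes could a priori demand different choices. The key point is that the constants come from the single global group $K_3^{\ind}(K)/p$ via $\mathcal O_{K,\mathfrak p_K}\subset T$, so they extend everywhere simultaneously. Subtracting these global constants therefore cannot create new ramification at any prime. I expect the oddness of $p$ to enter in two places. First, the quotient $K_3^{\ind}/p$ comparison has no $\{-1\}$-type contribution when $p$ is odd. Second, the $p$-part of the roots of unity over the DVR residue fields behaves well without adjoining $\zeta_p$, so one can avoid the transfer of degree $p-1$ and its divisibility by $2$.
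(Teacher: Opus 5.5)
Your argument is correct in substance and has the same skeleton as the paper's. You take a Bockstein preimage $a\in H^1_{\et}(F,\mu_p^{\otimes2})$ of $q_{2,F}(x)$, extend it over $Y=\Spec T[1/p]$, lift it to $H^1_{\mot}(T,\Z/p(2))$ by Lemma~\ref{k2:lem:finitecomparison}, and return through the integral Bockstein and $q_{2,T}$.

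The difference is in how you extend the \'etale class over $Y$. The paper passes to the cyclotomic cover $Y'$ and writes $\theta=\zeta\cup[u]$ there. Then $\beta(\theta)=\{\zeta,u\}$ has residue $\bar\zeta^{v(u)}$, so $p\mid v(u)$. The Kummer class extends explicitly through $(\mathcal O(D),u)$ with $\Div(u)=pD$, and Hochschild--Serre descends it because $p\nmid|G|$. You stay on $Y$ and use Gysin residues with Zariski--Nagata purity. This avoids the cover and the descent, but it requires two inputs:
\begin{itemize}
\item[(i)] compatibility of the \'etale residue with the tame symbol under the Bockstein. You only assert this; the paper's cover is exactly what reduces it to a one-line computation.
\item[(ii)] purity of the branch locus on the \emph{regular} scheme $Y$, applied after gluing the local extensions over an open set whose complement has codimension at least two. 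Your parenthetical alternative does not suffice: the normal-scheme argument of Lemma~\ref{k2:lem:finitecomparison} gives uniqueness of extensions, not existence.
\end{itemize}
Granting (i), the residue of $a$ at $\mathfrak q$ is zero, not merely ``absorbable by constants'', because the weight-one Bockstein $H^0(\kappa(\mathfrak q),\mu_p)\to\kappa(\mathfrak q)^\times$ is the inclusion of roots of unity. So every choice of $a$ works, as in the paper, and your ``single $a$'' obstacle never arises. If you want to bypass (i), your constants idea can be made precise. At each height-one $\mathfrak q$ with $p\notin\mathfrak q$, lift $x$ into $K_2(T_{\mathfrak q})[p]$ using DVR localization and injectivity of $K_2(T_{\mathfrak q})\to K_2(F)$. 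Take a Bockstein preimage $a_{\mathfrak q}$ over $T_{\mathfrak q}$. Then $a-a_{\mathfrak q}|_F$ is the reduction of a class of $H^1_{\mot}(F,\Z(2))$, and that class comes from $T$ modulo $p$ by Lemma~\ref{k2:lem:constants}.

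You should drop the paragraph on primes containing $p$, because its premise is false. Naturality, which you already invoked, gives $\beta(\tilde a)|_F=q_{2,F}(x)$ with no correction term. Moreover, unramifiedness at such primes is automatic for a $p$-torsion class, since a field of characteristic $p$ has no nontrivial $p$th roots of unity. Finally, nothing in your argument uses that $p$ is odd; Lemma~\ref{k2:lem:constants} holds for $p=2$ as well. So your closing speculation about where oddness enters does not describe any actual step.
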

\begin{proof}
\proofstep{1. The torsion class.}
For $\tau$ in the indicated group, choose
$\theta\in H^1_{\et}(F,\mu_p^{\otimes2})=H^1_{\mot}(F,\Z/p(2))$ whose integral
Bockstein is $q_{2,F}(\tau)$. Such a choice exists by the coefficient
sequence and the field comparison.
Use the finite \'etale primitive-root torsor
\[
 Y'=\Spec\bigl(T[1/p][z]/\Phi_p(z)\bigr)\longrightarrow Y.
\]
Its group is $G=(\Z/p)^\times$, its degree is $p-1$, and it may be
disconnected. Work on each regular integral component with function
field $F'$ and tautological primitive root $\zeta$. Untwisting by
$\zeta$ writes $\theta_{F'}=\zeta\cup[u]$ for a Kummer class $[u]$.
Multiplicativity of the integral coefficient triangle gives
\begin{equation}\label{k2:eq:bocksteinsymbol}
       \beta(\zeta\cup[u])=\{\zeta,u\}.
\end{equation}
Here the weight-one Bockstein sends the finite-coefficient root to the
integral unit $\zeta$, and $[u]$ lifts integrally to a unit class.
This verifies the symbol normalization.

\proofstep{2. Extension of the Kummer class.}
At each height-one point of the component, the tame boundary of the
right side is $\bar\zeta^{v(u)}$. It is trivial since $\tau$ is
integrally unramified and finite \'etale restriction preserves this
property. The root remains primitive, so $p$ divides every valuation.
Write $\Div(u)=pD$. Regularity makes $D$ a Cartier divisor.
The Kummer pair consisting of $\mathcal O(D)$ and its $p$-th-power
trivialization defined by $u$ extends $[u]$ to the component of $Y'$;
see \cite[tag 03PK]{Stacks}. A change to the inverse divisor and
trivialization gives the same assertion under the opposite Kummer
convention.

\proofstep{3. Descent of the extended class.}
Generic restriction on this Kummer $H^1$ is injective, either by the
normal torsor argument or by the divisor description. Thus the
extension $\theta'$ of $\theta_{F'}$ is unique on all of $Y'$, and it is
$G$-invariant because its generic class comes from $F$. Group
cohomology in positive degree with $\F_p$-coefficients vanishes for
the group of order $p-1$. Hochschild--Serre therefore identifies
\[
 H^1_{\et}(Y,\mu_p^{\otimes2})=H^1_{\et}(Y',\mu_p^{\otimes2})^G.
\]
The descended class restricts to $\theta$: restriction to the generic
primitive-root torsor is itself injective by the same argument.
Lemma~\ref{k2:lem:finitecomparison} lifts it to
$\theta_T\in H^1_{\mot}(T,\Z/p(2))$.
Its integral Bockstein extends $q_{2,F}(\tau)$ in $H^2_{\mot}(T,\Z(2))$.
Surjectivity of $q_{2,T}$ lifts that Bockstein to $K_2(T)$, and the field
isomorphism $q_{2,F}$ shows that its image is exactly $\tau$.
The possible $\Pic(Y')[p]$ contribution was retained throughout.
\end{proof}

\subsection{A divided Chern class with the Picard term retained}
\begin{lemma}[A divided Chern class retaining the Picard term]\label{k2:lem:dividedchern}
Suppose $p$ is odd, $\alpha\in K_2^{\mathrm{nr}}(T)$, and $b\in K_2(T)$ satisfy
$b_F=p\alpha$. Then the coefficient reduction of $q_{2,T}(b)$ is zero in
$H^2_{\mot}(T,\Z/p(2))$.
\end{lemma}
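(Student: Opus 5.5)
The plan is to detect the class étale-locally, removing the Brauer and Picard contributions separately. The Picard one is where the unramified class $\alpha$ enters.

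\proofstep{Step 1: reduction to étale cohomology over the root torsor.}
Let $\bar b$ denote the coefficient reduction of $q_{2,T}(b)$. By Lemma~\ref{k2:lem:finitecomparison}, $H^2_{\mot}(T,\Z/p(2))\to H^2_{\et}(Y,\mu_p^{\otimes2})$ is injective, so it suffices to show that the étale image of $\bar b$ vanishes. Next pass to the primitive-root torsor $Y'\to Y$ of degree $p-1$ used in Lemma~\ref{k2:lem:oddtorsion}. Restriction followed by transfer is multiplication by $p-1$, which is prime to $p$, so it suffices to prove vanishing on each regular integral component $Y'_0$ of $Y'$, with function field $F'$. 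There $\zeta$ untwists the coefficients, and we are reduced to a class in $H^2_{\et}(Y'_0,\mu_p)$.

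\proofstep{Step 2: removing codimension two and the Brauer part.}
By Gabber's absolute purity, étale cohomology supported in codimension at least two vanishes in degrees below four. Hence for every open $U\subset Y'_0$ whose complement has codimension at least two, restriction $H^2_{\et}(Y'_0,\mu_p)\to H^2_{\et}(U,\mu_p)$ is injective. The generic image of the class is zero, because $b_F=p\alpha$. The Kummer sequence
\[
 0\to\Pic(U)/p\to H^2_{\et}(U,\mu_p)\to\Br(U)[p]\to0,
\]
combined with Brauer generic injectivity \cite[Theorem 1.2]{Cesnavicius} on the regular scheme $U$, then places the restricted class in the retained Picard term $\Pic(U)/p=\Pic(Y'_0)/p$.

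\proofstep{Step 3: killing the Picard term using $\alpha$.}
This step is the main obstacle, and it is where the hypothesis on $\alpha$ is used.
\begin{enumerate}
\item Let $x$ be a height-one point of $Y'_0$. The class $\alpha$ is unramified, and finite étale restriction preserves this, so by DVR image exactness $\alpha$ extends to some $\alpha_x\in K_2(\mathcal O_{Y'_0,x})$.
\item By generic injectivity (Theorem~\ref{k2:thm:genericinjection}) applied to this DVR, $b=p\alpha_x$ holds there.
\item This equality involves finitely many symbols, so it spreads to an affine open neighbourhood $U_x$ of $x$. Let $U$ be the union of the $U_x$; its complement has codimension at least two.
\item On overlaps, $\alpha_x-\alpha_y$ is generically zero. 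Applying generic injectivity at the local rings of the overlap, the $\alpha_x$ glue to a section $\alpha_U$ of the Zariski sheaf $\mathcal K_2$ on $U$ with $b|_U=p\,\alpha_U$.
\end{enumerate}

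I would then compute the Picard component of $\bar b|_U$ through the Leray filtration for $\epsilon:U_{\et}\to U_{\Zar}$. Modulo the vanishing $H^2(U,\epsilon_*\mu_p)=0$, the Picard component is the term $H^1_{\Zar}(U,R^1\epsilon_*\mu_p)$. This term is represented by the Čech cocycle formed from differences of local trivialisations of $\bar b$. The local trivialisations are provided by the mod-$p$ Kummer/symbol classes of the $\alpha_x$, because $\bar b|_{U_x}=p\cdot(\text{class of }\alpha_x)\equiv0$ coherently. Their differences $\alpha_x-\alpha_y$ vanish on overlaps, so the cocycle is trivial and the Picard component is zero.

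I expect the delicate point to be making this Čech description precise: one must check that the local nullhomotopies supplied by $\alpha_x$ are compatible with the multiplicative identification \eqref{k2:eq:bocksteinsymbol} of symbols with cup products, rather than merely existing pointwise. If that becomes awkward, my fallback is to run the same argument with Bloch--Ogus coniveau residues at height-one points. There the $\Pic/p$ component is the class of the divisor of the residues of $\bar b$, and these residues vanish because $\bar b=p\alpha_x$ locally. Combining Steps 1--3, the class vanishes on $U$, hence on $Y'_0$, hence on $Y$, and so $\bar b=0$ in $H^2_{\mot}(T,\Z/p(2))$.
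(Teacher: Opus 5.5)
\textbf{Verdict: genuine gap, in Step 3.} Steps 1 and 2 are sound and match the paper's reductions: trace along the root-of-unity torsor, removal of codimension-two subsets, and Brauer generic injectivity. They leave a class in $\Pic(U)/p$, and all the content then lies in Step 3.

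\emph{Items (1)--(4) carry no information.} Every class of $\Pic(U)/p$ already vanishes on every local ring of $U$. So items (1)--(3), which show that $\bar b$ dies near each height-one point, say nothing new. Item (4) merely restates the hypothesis, since a section of the Zariski sheaf $\mathcal K_2$ over $U$ is an unramified generic class, not an element of $K_2(U)$.

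\emph{The \v{C}ech conclusion does not follow.} The step ``their differences vanish, so the cocycle is trivial'' fails for two reasons.
\begin{itemize}
\item The local trivialization of $\bar b$ attached to $\alpha_x$ is not determined by $\alpha_x$. It depends on the chosen witness of $b=p\alpha_x$. Changing the witness by $\kappa\in K_3(U_x)$ changes the trivialization by the mod-$p$ weight-two class of $\kappa$ in $H^1_{\et}(U_x,\mu_p^{\otimes2})$, which need not vanish.
\item The classes $\alpha_x$ and $\alpha_y$ agree only stalkwise, not in $K_2(U_x\cap U_y)$. The generic kernel there consists of classes pushed forward from $K_2$ of codimension-one residue fields.
\end{itemize}
So $g_{xy}$ is the reduction of weight-two classes of $K_3$-loops. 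Whether its class in $H^1_{\Zar}(U,\mathcal O^\times/p)$ vanishes is a new global obstruction of exactly the type you set out to kill, and you give no argument for it.

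\emph{The Bloch--Ogus fallback fails for the same reason.} The residues of $\bar b$ at height-one points are defined only relative to a generic trivialization. Local vanishing of $\bar b$ on $\mathcal O_x$ only places the residue in $\partial_x H^1(L,\mu_p^{\otimes2})$, which is the whole residue group.

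\emph{What is missing is a way to dispose of the codimension-one supported discrepancy.} The paper does this as follows.
\begin{itemize}
\item It lifts $\alpha_L$ to a single class $a\in C(S)=\colim_{\operatorname{codim}Z\geq2}K_2(S-Z)$ using the localization sequence.
\item Then $b_S-pa$ lies in $J(S)$, the image of $\bigoplus_{P\in S^{(1)}}K_2(\kappa(P))$. On $J(S)$, Proposition~\ref{k2:prop:supportadams} gives $\psi^2=8$, while the Chern map satisfies $c(\psi^2x)=4c(x)$.
\item Hence $4c=0$ on $J(S)$, and $c(J(S))=0$ because $p$ is odd. This is where oddness of $p$ enters; your argument never uses it.
\item Therefore $c_S(b_S)=p\,c(a)=0$ in $H(S)$. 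Picard invariance together with Brauer injectivity then gives the vanishing on $S$.
\end{itemize}

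Your sheaf-theoretic route might be rescued through the Zariski hypercohomology spectral sequence of the weight-two motivic complex on $U$. That would require the generic constancy of the degree-one and degree-zero weight-two sheaves (Theorems~\ref{loc:g_constancy} and~\ref{loc:d_constancy}, which rest on Levine's rigidity for indecomposable $K_3$). Your proposal does not invoke these inputs.
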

\begin{proof}
We shall test the coefficient class after adjoining a primitive root
of unity. On that cover, the Chern class of the difference between the
ring class and the divided generic class is controlled by Adams
operations. Removing supports of codimension at least two makes this
calculation accessible; Picard invariance and Brauer injectivity then
bring the vanishing back to the whole cover.

\proofstep{1. The primitive-root cover.}
Consider the same degree-$(p-1)$ torsor $Y'\to Y$, and let $S$ be an
integral component, with function field $L$. Since all rational primes
other than $p$ are already units of $T$, the scheme $S$ is a regular
affine Noetherian $\Q$-scheme. The restricted class $\alpha_L$ is
integrally unramified at all its height-one points.

\proofstep{2. The normalized Chern map.}
For a quasi-compact open $V\subset S$ use the ordinary integral
higher Chern homomorphism, normalized on unit products,
\[
 c_V:K_2(V)\longrightarrow H^2_{\et}(V,\mu_p^{\otimes2}),
 \qquad c_V(\{u,v\})=[u]\cup[v].
\]
It is additive and obeys the integral identity
\begin{equation}\label{k2:eq:chernadams}
                  c_V(\psi^2x)=4c_V(x).
\end{equation}
For the product normalization see \cite[Section 2, equation (2.2),
Lemma 2.3, and Section 3]{WeibelChern}; for the integral Adams
identity and its finite \'etale coefficient examples see
\cite[Section 4, pp.~25 and 29]{Schneider}.
Schneider constructs these homomorphisms for smooth quasi-projective
schemes over a field (p.~14). The integral identity
$c_{i,j}\psi^m=m^j c_{i,j}$ on p.~25 applies to the root-of-unity
coefficient complexes listed on p.~29. A common sign change of
$c_{2,2}$ gives the displayed unit-product normalization and preserves
the Adams identity. Thus $c_V$ is a homomorphism from integral
$K$-theory to torsion cohomology.

The coordinate ring of $S$ is geometrically regular over the perfect
field $\Q$, so these maps extend to $S$ by continuity.
Popescu's theorem \cite[tag 07GC]{Stacks} expresses it as a filtered
colimit of smooth $\Q$-algebras. A quasi-compact open is a finite union
of principal opens and descends to a finite stage. Continuity of
$K$-theory \cite[Proposition 3.20]{TT} and of finite \'etale cohomology
\cite[Expos\'e VI, Theorem 8.7.3]{SGA4} carries the Chern maps,
naturality, and \eqref{k2:eq:chernadams} to these opens.

\proofstep{3. The codimension-one calculation.}
Put
\[
 C(S)=\colim_{\operatorname{codim}Z\geq2}K_2(S-Z),\qquad
 H(S)=\colim_{\operatorname{codim}Z\geq2}H^2_{\et}(S-Z,\mu_p^{\otimes2}).
\]
The colimits run over closed subsets of codimension at least two. Quillen localization gives
\begin{equation}\label{k2:eq:skeleton}
 \bigoplus_{P\in S^{(1)}}K_2(\kappa(P))\longrightarrow C(S)
 \longrightarrow K_2(L)\longrightarrow
 \bigoplus_{P\in S^{(1)}}\kappa(P)^\times .
\end{equation}
It is the exact localization couple after deeper supports are removed.
Let $J(S)=\ker(C(S)\to K_2(L))$. Exactness identifies this group with
the image of the first arrow. Proposition~\ref{k2:prop:supportadams}
therefore gives the scalar $\psi^2=2\cdot2^2=8$ on the whole kernel:
one factor comes from codimension one, and the other from field degree
two. The colimit removes all supports of codimension at least two. Passing the Chern maps to
it yields $8c(x)=c(\psi^2x)=4c(x)$ for $x\in J(S)$.
Since the target is killed by the odd prime $p$, this forces $c(x)=0$.

\proofstep{4. Vanishing of the divided Chern class.}
Lift $\alpha_L$ to $a\in C(S)$ by \eqref{k2:eq:skeleton}.
Then $b_S-pa\in J(S)$, so the image of $c_S(b_S)$ in $H(S)$ is zero.
We verify that $H^2_{\et}(S,\mu_p^{\otimes2})\to H(S)$ is injective.
The root identifies $\mu_p^{\otimes2}$ with $\mu_p$. Removing a closed subset of
codimension at least two from a regular scheme preserves Weil
divisors and principal divisors, hence its Picard group. In
\[
 0\longrightarrow\Pic(S)/p\longrightarrow H^2_{\et}(S,\mu_p)
       \longrightarrow\Br(S)[p]\longrightarrow0,
\]
a class killed on such an open has generically zero Brauer component.
Brauer generic injection kills that component already on $S$.
The remaining $\Pic(S)/p$ component is zero by Picard invariance.
Thus restriction to every such open, and therefore to the filtered
colimit, is injective. We conclude $c_S(b_S)=0$ on every component of
$Y'$.

\proofstep{5. The local motivic edge.}
Finally ordinary unit products generate $K_2(T)$, including for small
finite residue fields. Both the normalized Chern map and the
motivic-to-\'etale class of $b$ on $Y$ send these generators to the
same Kummer cup products. These computations identify the two global classes. Pullback to $Y'$ kills this class.
Trace composed with restriction is multiplication by $p-1$, invertible
on the $p$-torsion target. Thus the class is zero on $Y$.
The degree-two injection of Lemma~\ref{k2:lem:finitecomparison}
proves the asserted motivic vanishing.
\end{proof}

\begin{proposition}[Image purity at an odd residue prime]\label{k2:prop:oddimage}
For an odd residue prime $p$, one has $P_2(T)[p]=0$.
\end{proposition}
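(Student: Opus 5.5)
Take a class $[\alpha]\in P_2(T)[p]$, represented by $\alpha\in K_2^{\mathrm{nr}}(T)$ with $p\alpha=b_F$ for some $b\in K_2(T)$. The aim is to change $\alpha$ by the image of $K_2(T)$ until it becomes a $p$-torsion unramified class, and then apply Lemma~\ref{k2:lem:oddtorsion}.

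First apply Lemma~\ref{k2:lem:dividedchern}. It shows that $q_{2,T}(b)$ reduces to zero in $H^2_{\mot}(T,\Z/p(2))$. The coefficient sequence then gives $q_{2,T}(b)=p\,w$ for some $w\in H^2_{\mot}(T,\Z(2))$. By Theorem~\ref{k2:thm:alllocalq2}, $q_{2,T}$ is an isomorphism, so $w=q_{2,T}(c)$ with $c\in K_2(T)$ and $b=pc$ in $K_2(T)$.

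Now put $\tau=\alpha-c_F$. This class is still integrally unramified, because $c_F$ extends over $T$ and so has trivial tame residues. It also satisfies $p\tau=b_F-pc_F=0$. Hence $\tau\in K_2(F)[p]\cap K_2^{\mathrm{nr}}(T)$. Lemma~\ref{k2:lem:oddtorsion} extends $\tau$ to an element $t\in K_2(T)$. Then $\alpha=(c+t)_F$ lies in the image of $K_2(T)$, so $[\alpha]=0$. This proves $P_2(T)[p]=0$.

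The substantive input is already in Lemma~\ref{k2:lem:dividedchern}. The only point to check here is that the integral divisibility $q_{2,T}(b)\in p\,H^2_{\mot}(T,\Z(2))$ really follows from vanishing of the mod-$p$ reduction. This uses the exact triangle for $\Z\xto{p}\Z\to\Z/p$ on Bouis's complexes, together with the compatibility of $q_{2,T}$ with the field isomorphism $q_{2,F}$, so that $c_F$ and $\alpha$ differ exactly by a $p$-torsion class.
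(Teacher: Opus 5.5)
Your proposal is correct and follows the paper's proof step for step. Lemma~\ref{k2:lem:dividedchern} and the integral coefficient sequence give $q_{2,T}(b)=p\,q_{2,T}(c)$, and Lemma~\ref{k2:lem:oddtorsion} then lifts the $p$-torsion unramified difference $\alpha-c_F$. The only cosmetic difference is that you use injectivity of $q_{2,T}$ to obtain $b=pc$ in $K_2(T)$, whereas the paper deduces $p(\alpha-c_F)=0$ from the field isomorphism $q_{2,F}$; Theorem~\ref{k2:thm:alllocalq2} supplies both.
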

\begin{proof}
Represent a class killed by $p$ by $\alpha\in K_2^{\mathrm{nr}}(T)$ and choose
$b\in K_2(T)$ with $b_F=p\alpha$.
Lemma~\ref{k2:lem:dividedchern} and the integral coefficient sequence
give $q_{2,T}(b)=pm$ for some $m\in H^2_{\mot}(T,\Z(2))$.
Lift $m$ to $c\in K_2(T)$ using the surjectivity of $q_{2,T}$.
The field isomorphism gives $p(\alpha-c_F)=0$.
The difference remains integrally unramified, so
Lemma~\ref{k2:lem:oddtorsion} lifts it to $K_2(T)$.
Thus $\alpha\in \im(K_2(T)\to K_2(F))$.
\end{proof}

\subsection{The dyadic divided class and explicit torsion lifts}
\begin{lemma}[The dyadic quotient comparison]\label{k2:lem:dyadicquotient}
If $p=2$, the map $H^2_{\mot}(T,\Z(2))/2\to H^2_{\mot}(F,\Z(2))/2$ is injective.
\end{lemma}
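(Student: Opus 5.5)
The plan is to reduce the claim to statements about $K_2$ modulo $2$ and about Brauer classes, using the identifications already established. By Theorem~\ref{k2:thm:alllocalq2} and its field version, $q_{2,T}$ and $q_{2,F}$ are natural isomorphisms. The map in question is therefore identified with
\[
 K_2(T)/2\longrightarrow K_2(F)/2 .
\]
For $p=2$ we cannot use Hoobler's prime-to-residue norm residue isomorphism, since $2$ is not invertible in $T$. Instead I work with finite-coefficient motivic cohomology and the Bockstein sequence. The integral coefficient sequence gives an injection
\[
 H^2_{\mot}(T,\Z(2))/2\hookrightarrow H^2_{\mot}(T,\Z/2(2)),
\]
and likewise over $F$, compatibly with restriction. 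Lemma~\ref{k2:lem:finitecomparison} gives an injection $H^2_{\mot}(T,\Z/2(2))\hookrightarrow H^2_{\et}(Y,\mu_2^{\otimes2})$ with $Y=\Spec T[1/2]$. Over the characteristic-zero field $F$ the corresponding comparison is an isomorphism. Hence it suffices to show that a class of $H^2_{\et}(Y,\mu_2^{\otimes 2})$ lying in the image of $H^2_{\mot}(T,\Z(2))/2$ and vanishing at the generic point is zero.

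Since $\mu_2^{\otimes2}=\Z/2=\mu_2$ canonically, no root-of-unity cover is needed, unlike the odd case. The Kummer sequence gives
\[
 0\to\Pic(Y)/2\to H^2_{\et}(Y,\mu_2)\to\Br(Y)[2]\to 0 .
\]
The ring $T[1/2]$ is a localization of a regular local ring, hence factorial, so $\Pic(Y)=0$. The group $H^2_{\et}(Y,\mu_2)$ therefore injects into $\Br(Y)$. Brauer generic injectivity \cite[Theorem 1.2]{Cesnavicius} for the regular scheme $Y$ then shows that $\Br(Y)\to\Br(F)$ is injective. Composing these injections proves the lemma.

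The step I expect to need care is compatibility. The composite $H^2_{\mot}(T,\Z(2))/2\to H^2_{\et}(Y,\mu_2)$ must agree, under restriction to $F$, with the field-level map $H^2_{\mot}(F,\Z(2))/2\to H^2_{\et}(F,\mu_2)$. Also, the latter must be injective, which is the Merkurjev--Suslin norm residue isomorphism for $F$. I would check this compatibility on unit symbols $\{u,v\}\mapsto[u]\cup[v]$. This suffices because ordinary symbols generate $K_2(T)$ even for small residue fields, as recalled in the proof of Lemma~\ref{k2:lem:bounds}, and $q_{2,T}$ agrees with the unit-symbol map. Naturality of the comparison fibre of \cite[Corollary 5.12]{BouisMotivic} in the map $T\to F$ gives the square. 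If the degree-two injection of Lemma~\ref{k2:lem:finitecomparison} turned out to be unavailable in this form, I would instead use that the fibre has cohomology only in degrees $\ge3$. That alone already gives injectivity in degree two, so the argument would be unchanged.
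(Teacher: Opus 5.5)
Your argument is correct and follows the paper's own proof. Both go through the coefficient sequence into $H^2_{\mot}(T,\Z/2(2))$, then the degree-two injection of Lemma~\ref{k2:lem:finitecomparison}, then $\Pic(Y)=0$ with Kummer theory and Brauer generic injectivity, all glued by naturality. You use only the Bockstein injection $H^2_{\mot}(T,\Z(2))/2\hookrightarrow H^2_{\mot}(T,\Z/2(2))$, where the paper invokes $H^3_{\mot}(T,\Z(2))=0$ to obtain an isomorphism; the injection is enough. Neither the $K_2/2$ reformulation nor injectivity of the field-level comparison is actually needed.
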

\begin{proof}
Bouis's local vanishing \cite[Corollary 2.10]{BouisWeibel} gives
$H^3_{\mot}(T,\Z(2))=0$; the coefficient triangle identifies
$H^2_{\mot}(T,\Z(2))/2$ with $H^2_{\mot}(T,\Z/2(2))$.
Lemma~\ref{k2:lem:finitecomparison} injects this into
$H^2_{\et}(Y,\mu_2^{\otimes2})$.
The regular local ring $T$ is a unique factorization domain (UFD), and its localization is again a UFD, so
$\Pic(Y)=0$. The global root $-1$ untwists
$\mu_2^{\otimes2}$ to $\mu_2$ on $Y$, where two is invertible.
Kummer theory and Brauer generic injection give
\[
 H^2_{\et}(Y,\mu_2^{\otimes2})\cong\Br(Y)[2]
          \hookrightarrow\Br(F)[2]
          \cong H^2_{\et}(F,\mu_2^{\otimes2}).
\]
Naturality with the motivic coefficient sequence proves the claim.
\end{proof}

For the dyadic argument we also need to lift the torsion left after
dividing a ring class. At primes above two, the usual tame residue
condition does not force an even valuation. The Dennis--Stein symbols
in the next proof supply the required integral lifts at exactly those
primes.

\begin{lemma}[Dyadic two-torsion lifts]\label{k2:lem:dyadictorsion}
In mixed characteristic $(0,2)$ every class in
$K_2(F)[2]\cap K_2^{\mathrm{nr}}(T)$ has a lift in $K_2(T)[2]$.
\end{lemma}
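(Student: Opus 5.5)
The plan is to put the torsion class in Merkurjev--Suslin normal form, peel off the factors that already live over $T$, and lift what remains, namely symbols at dyadic primes, by explicit Dennis--Stein symbols. Generic injectivity (Theorem~\ref{k2:thm:genericinjection}) then makes every lift automatically $2$-torsion.

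\textbf{Step 1: normal form.} The field $F$ has characteristic zero and contains the primitive square root $-1$. The field torsion theorem \cite[Theorem 14.2]{MerkurjevSuslin}, applied with $\ell=2$ as in the proof of Lemma~\ref{k2:lem:tamedivision}, writes a given $\tau\in K_2(F)[2]\cap K_2^{\mathrm{nr}}(T)$ as a single symbol $\tau=\{-1,u\}$ with $u\in F^\times$; sums combine because $\mu_2$ is cyclic.

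\textbf{Step 2: reading off valuations from residues.} At a height-one prime $\mathfrak p$, the tame residue of $\{-1,u\}$ is $(-1)^{v_{\mathfrak p}(u)}\in\kappa(\mathfrak p)^\times$.
\begin{itemize}
\item If $2\notin\mathfrak p$, then $\operatorname{char}\kappa(\mathfrak p)\neq 2$, so $-1\neq1$ there. Unramifiedness forces $v_{\mathfrak p}(u)$ to be even.
\item If $2\in\mathfrak p$, the residue is automatically trivial, and no condition is imposed.
\end{itemize}
Since $T$ is factorial, the height-one primes containing $2$ are finitely many principal primes $(\pi_1),\dots,(\pi_m)$, namely the prime factors of $2$. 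Hence
\[
 u=\epsilon\,h^2\prod_{i}\pi_i^{n_i},\qquad \epsilon\in T^\times,\ h\in F^\times .
\]
This gives $\tau=\{-1,\epsilon\}+\sum_i n_i\{-1,\pi_i\}$, because $\{-1,h^2\}=2\{-1,h\}=\{1,h\}=0$. The term $\{-1,\epsilon\}$ is already the image of a symbol in $K_2(T)$.

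\textbf{Step 3: Dennis--Stein lifts at dyadic primes.} It remains to lift $\{-1,\pi\}$ for a prime element $\pi$ dividing $2$. Write $2=\pi s$ with $s\in T$. Then $-1=1-2=1+(-s)\pi$ is a unit, so the Dennis--Stein symbol $\langle -s,\pi\rangle\in K_2(T)$ is defined. Over $F$, where $\pi$ is invertible, it equals the Steinberg symbol $\{1+(-s)\pi,\pi\}=\{-1,\pi\}$, up to the global sign fixed by the chosen Dennis--Stein convention. A wrong sign changes nothing, since $\{-1,\pi\}$ has order two.

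\textbf{Step 4: torsion of the lift.} Summing these lifts with the unit symbol gives $\tilde\tau\in K_2(T)$ with $\tilde\tau_F=\tau$. Then $2\tilde\tau$ maps to $2\tau=0$ in $K_2(F)$, so $2\tilde\tau=0$ by Theorem~\ref{k2:thm:genericinjection}. Thus $\tilde\tau\in K_2(T)[2]$.

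\textbf{Main obstacle.} I expect the delicate point to be Step~3: checking that the Dennis--Stein symbol over the local ring $T$ has exactly the stated generic image. This amounts to verifying the identity $\langle a,b\rangle=\{1+ab,b\}$, up to convention, whenever $b$ is invertible, and confirming that this identity is compatible with the Matsumoto presentation of $K_2(F)$. The use of Merkurjev--Suslin at $\ell=2$ is safe here because $-1\in F$.
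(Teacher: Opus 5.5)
Your proof is correct and follows the paper's argument: the Merkurjev--Suslin normal form $\{-1,u\}$, factorization in the UFD $T$ with even exponents away from $2$, and the Dennis--Stein symbol $\langle -2/\pi,\pi\rangle$ at each prime dividing $2$. The only difference is Step~4, where the paper checks torsion directly from the Dennis--Stein relations, $2\langle a,\pi\rangle=\langle 2a+a^2\pi,\pi\rangle=\langle 0,\pi\rangle=0$ using $a\pi=-2$, while you deduce it from Theorem~\ref{k2:thm:genericinjection}. Your route is legitimate, since that theorem is proved earlier and does not depend on this lemma.
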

\begin{proof}
The field torsion theorem writes the class as $\tau=\{-1,u\}$.
Factor in the UFD $T$:
\[
       u=\epsilon\prod_{\mathfrak p}\pi_{\mathfrak p}^{n_{\mathfrak p}},
       \qquad \epsilon\in T^\times.
\]
If $2\notin\mathfrak p$, the unramified tame value
$(-1)^{n_{\mathfrak p}}$ forces $n_{\mathfrak p}$ to be even, so that
prime contributes zero to the two-torsion symbol. At a prime dividing
two, put $a=-2/\pi_{\mathfrak p}\in T$.
The Dennis--Stein symbol
\[
        d_{\mathfrak p}=\langle-2/\pi_{\mathfrak p},
                               \pi_{\mathfrak p}\rangle
\]
is defined because $1+a\pi_{\mathfrak p}=-1$ is a unit. The
Dennis--Stein identity with invertible second entry gives, over $F$,
\[
       (d_{\mathfrak p})_F=\{-1,\pi_{\mathfrak p}\}.
\]
We use the convention and relations (D1)--(D4) in
\cite[Section 2.1]{LudersMorrowK2}; they are stated for arbitrary rings.
The Dennis--Stein relation with a fixed second entry yields
\[
 2\langle a,\pi\rangle
   =\langle 2a+a^2\pi,\pi\rangle=\langle0,\pi\rangle=0,
\]
since $a\pi=-2$. Thus
$\{-1,\epsilon\}+\sum_{\mathfrak p\ni2}n_{\mathfrak p}d_{\mathfrak p}$
is the required two-torsion lift. Negative exponents are allowed in
this abelian-group sum.
\end{proof}

\begin{proposition}[Image purity at residue characteristic two]\label{k2:prop:dyadicimage}
When the residue characteristic is two, $P_2(T)[2]=0$.
\end{proposition}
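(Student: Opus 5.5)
The plan follows the proof of Proposition~\ref{k2:prop:oddimage}. Lemma~\ref{k2:lem:dividedchern} is the one ingredient that used oddness of $p$, and Lemma~\ref{k2:lem:dyadicquotient} will take its place. Let $\alpha\in K_2^{\mathrm{nr}}(T)$ represent a class of $P_2(T)$ killed by $2$, and choose $b\in K_2(T)$ with $b_F=2\alpha$.

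The first goal is to show that $q_{2,T}(b)$ is divisible by $2$ in $H^2_{\mot}(T,\Z(2))$. By naturality of the edge and the field isomorphism $q_{2,F}$, the generic image of $q_{2,T}(b)$ is $q_{2,F}(2\alpha)=2q_{2,F}(\alpha)$. This is zero in $H^2_{\mot}(F,\Z(2))/2$. Lemma~\ref{k2:lem:dyadicquotient} then says the class of $q_{2,T}(b)$ in $H^2_{\mot}(T,\Z(2))/2$ is zero. Hence $q_{2,T}(b)=2m$ for some $m$. So in the dyadic case the divided-class step comes directly from the quotient injectivity, without the Chern class and Adams computation needed for odd $p$.

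Next, use the surjectivity of $q_{2,T}$ from Theorem~\ref{k2:thm:alllocalq2} to lift $m$ to $c\in K_2(T)$. The generic images satisfy $q_{2,F}(b_F)=2q_{2,F}(c_F)$. Since $q_{2,F}$ is an isomorphism, $b_F=2c_F$, so $2(\alpha-c_F)=0$ in $K_2(F)$. The element $\alpha-c_F$ is integrally unramified, because $c_F$ extends over $T$ and therefore has trivial tame residues at every height-one prime. Thus $\alpha-c_F\in K_2(F)[2]\cap K_2^{\mathrm{nr}}(T)$.

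Lemma~\ref{k2:lem:dyadictorsion} now lifts $\alpha-c_F$ to $K_2(T)$. Therefore $\alpha$ lies in the image of $K_2(T)\to K_2(F)$, and its class in $P_2(T)$ vanishes, giving $P_2(T)[2]=0$.

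The step I expect to need the most care is the first one. Lemma~\ref{k2:lem:dyadicquotient} concerns integral classes modulo $2$, and the argument depends on the reduction map commuting with generic restriction and with the edge maps. These compatibilities follow from naturality of $q_2$ together with the field identification. The real content sits in the two lemmas, Brauer injectivity and the Dennis--Stein lifts, which are already established.
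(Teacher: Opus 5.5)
Your proposal is correct and follows the paper's proof step for step. You divide $q_{2,T}(b)$ by two using the quotient injectivity of Lemma~\ref{k2:lem:dyadicquotient}, then lift the half to $c\in K_2(T)$ by surjectivity of $q_{2,T}$. Finally, you lift the unramified two-torsion difference $\alpha-c_F$ using the Dennis--Stein construction of Lemma~\ref{k2:lem:dyadictorsion}.
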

\begin{proof}
If $\alpha\in K_2^{\mathrm{nr}}(T)$ represents a class killed by two, choose
$b\in K_2(T)$ with $b_F=2\alpha$.
Lemma~\ref{k2:lem:dyadicquotient} shows that $q_{2,T}(b)=2m$ in $H^2_{\mot}(T,\Z(2))$.
Choose $c\in K_2(T)$ with $q_{2,T}(c)=m$.
The field isomorphism implies $2(\alpha-c_F)=0$.
This difference is integrally unramified; its lift supplied by
Lemma~\ref{k2:lem:dyadictorsion}, added to $c$, lifts $\alpha$.
\end{proof}

\begin{proof}[Proof of Theorem~\ref{k2:thm:main}]
For mixed characteristic, generic injectivity is
Theorem~\ref{k2:thm:genericinjection}. The purity defect has bounded
$p$-power exponent by Corollary~\ref{k2:cor:primeaway} and has no
$p$-torsion by Proposition~\ref{k2:prop:oddimage} or
\ref{k2:prop:dyadicimage}. It is therefore zero.
The equal-characteristic case follows from \cite[Theorem A]{Panin},
so all cases in the theorem are covered.
\end{proof}

\section{Low-dimensional consequences and coefficient boundaries}
\label{k2:sec:consequences}
The following elementary consequences will be used in the subsequent
Milnor--Witt calculations. They concern Quillen differentials.

\begin{lemma}[The low-dimensional degree-two row]\label{k2:lem:lowrow}
If $T$ is regular local of dimension at most three, the augmented row
\[
 0\to K_2(T)\to K_2(F)\to
 \bigoplus_{T^{(1)}}\kappa(x)^\times\to
 \bigoplus_{T^{(2)}}\Z\to0
\]
is exact. Also $\CH^2(T)=0$ for a regular local $T$ in any dimension.
\end{lemma}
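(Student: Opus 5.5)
We must prove exactness at $K_2(T)$, $K_2(F)$, at the codimension-one term, and surjectivity onto the codimension-two term when $\dim T\le 3$, plus $\CH^2(T)=0$ in general. Exactness at the first two nonzero terms is Theorem~\ref{k2:thm:main}, so only the last two positions need work.

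For surjectivity of the last map and for $\CH^2(T)=0$, we use that the last arrow sends a unit $f\in\kappa(x)^\times$ at $x\in T^{(1)}$ to its divisor on the normalization of $\overline{\{x\}}$. The cokernel of this map is $\CH^2(T)$, computed via the Gersten--Quillen $E_2^{2,-2}$ term, equivalently via the cycle-theoretic Chow group. Let $y\in T^{(2)}$ be a height-two prime. Because $T$ is regular local, the local ring $T_y$ is regular of dimension two. We choose a regular sequence $f,g$ generating $y T_y$. After clearing denominators, it is better to argue directly in $T$: pick $f\in y$ that is part of a regular system of parameters of $T_y$ and such that $V(f)$ is irreducible near $y$. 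The cleanest choice is a prime element $f$ of the UFD $T$ lying in $y$ and outside $y^{(2)}T_y$; prime avoidance together with the factorization of elements of $y\setminus y^{(2)}$ into primes supplies one such prime factor. Then $x=(f)$ is a height-one prime, and $T/(f)$ is a local domain whose localization at $y/(f)$ is a DVR. We take $g\in y$ mapping to a uniformizer there. The divisor of $\bar g\in\kappa(x)^\times$ on $\Spec T/(f)$ is $[y]+\sum m_i[y_i]$ with $y_i\ne y$. This gives only a leading-term statement, so instead we use the standard fact $\CH^{\dim}$-type vanishing: $\CH^c(\Spec T)=0$ for $c\ge1$ on a local scheme. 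The reason is that every height-$c$ prime $y$ is rationally equivalent to a combination supported at primes of strictly larger height via a suitable $g$, and descending induction on height terminates at the closed point. The closed point is itself the divisor of a parameter on a one-dimensional regular-parameter curve $T/(x_1,\dots,x_{d-1})$, a DVR. Concretely, for $c=2$ we argue by descending induction on $\dim T/y$. If $\dim T/y=0$, we use the DVR $T/(x_1,\dots,x_{d-1})$ when $d=2$; in general we take the curve $T/(x_1,\dots,x_{d-1})$ inside a surface. This is the routine but fiddly step, and we expect it to be the main obstacle, in particular keeping track of normalization when $T/(f)$ is not normal. We handle that by using the Quillen differential, which is defined via normalization and is compatible with pushforward of divisors.

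Exactness at the codimension-one term in dimension at most three comes from the coniveau spectral sequence \eqref{k2:eq:coniveau}. In total degree one, the sequence $K_1$ degenerates: $E_2^{1,-2}$ is the homology at $\bigoplus_{T^{(1)}}\kappa(x)^\times$. The abutment filtration of $K_1(T)=T^\times$ has generic piece $E_\infty^{0,-1}=T^\times$, because $T^\times\to F^\times\to\bigoplus\Z$ is exact for a UFD. The remaining pieces $E_\infty^{1,-2}$ and $E_\infty^{2,-3}$ must therefore vanish. The only differential that can hit or leave $E_2^{1,-2}$ beyond $d_1$ is $d_2:E_2^{1,-2}\to E_2^{3,-3}$, whose source term is $\bigoplus_{T^{(3)}}K_0$. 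Incoming $d_2$ into $E^{1,-2}$ originates from $E^{-1,\cdot}$, which is zero. For $\dim T\le3$ the $T^{(3)}$ term is the closed point alone, if present. Any $d_2$ into it has image $E_2^{3,-3}=\CH^3(T)=0$ by the previous paragraph. Its kernel therefore equals $E_\infty^{1,-2}=0$ only if $d_2$ is injective, so this route needs an extra input.

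We prefer a direct route. Gersten's conjecture for $K_1$ in codimension at most two is not available, so we compare with the well-known exactness of $F^\times\to\bigoplus_{T^{(1)}}\Z$ and use Milnor-type arguments. If this fails, we fall back on Gabber-type results for dimension at most three. We expect this step, identifying $E_2^{1,-2}$ with zero, to be genuinely delicate; the first thing to try is a Quillen-transfer argument with an Adams-weight count as in Lemma~\ref{k2:lem:bounds}.
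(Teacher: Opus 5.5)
Your proposal has two genuine gaps, exactly in the two steps you flagged. The first is $\CH^2(T)=0$, which is equivalent to surjectivity of the last arrow; you have not proved it. The ``standard fact'' $\CH^c(\Spec T)=0$ for all $c\geq1$ cannot be used as a black box for regular local rings of mixed characteristic. It is a Gersten-type statement, and the paper itself needs supported Adams operations and the Witt-row argument of Section~\ref{global:section} just to make $\CH^3$ of its four-dimensional ring vanish. The mechanism you sketch also cannot work. Rational equivalence relates cycles of the same codimension, and in the catenary local domain $T$ every height-$c$ prime $y$ has $\dim T/y=\dim T-c$. So there is no induction moving $[y]$ toward the closed point, and your explicit attempt with $f$ and $g$ only trades $[y]$ for other height-two primes, as you noticed. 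The argument that works in every dimension stays inside \eqref{k2:eq:coniveau}:
\begin{itemize}
\item $\CH^2(T)=E_2^{2,-2}$;
\item every higher outgoing differential lands in a field $K_{-1}$, which is zero;
\item the only higher incoming one is $d_2$ from $E_2^{0,-1}=\ker(F^\times\to\bigoplus_{T^{(1)}}\Z)=T^\times$, whose classes lift to $K_1(T)$ and are therefore permanent cycles, so this $d_2$ vanishes.
\end{itemize}
Hence $\CH^2(T)=E_\infty^{2,-2}$. This is a graded piece of the positive coniveau filtration of $K_0(T)=\Z$, and it is zero because rank makes $K_0(T)\to K_0(F)$ injective.

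The second gap is that you abandoned a correct argument for exactness at $\bigoplus_{T^{(1)}}\kappa(x)^\times$ because of a logical slip. You correctly noted three things:
\begin{itemize}
\item $E_\infty^{1,-2}$ is a positive filtration piece of $K_1(T)=T^\times$, hence zero since $T^\times\hookrightarrow F^\times$;
\item no higher differential enters position $(1,-2)$;
\item the only possible outgoing one is $d_2\colon E_2^{1,-2}\to E_2^{3,-3}=\CH^3(T)$. You called $E_2^{3,-3}$ the source, but it is the target.
\end{itemize}
Since nothing else acts, $\ker d_2=E_\infty^{1,-2}=0$, so the injectivity you thought was missing is automatic. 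Once $\CH^3(T)=0$, $d_2$ is the zero map, and so $E_2^{1,-2}=\ker d_2=0$. That vanishing is needed only when $\dim T=3$, where $\CH^3(T)$ is the terminal Chow group. It follows from the DVR-curve argument you already wrote: for a regular system of parameters $t_1,t_2,t_3$, the ring $T/(t_1,t_2)$ is a DVR and $\bar t_3$ has divisor the closed point with coefficient one. In lower dimension the target is absent. This is precisely the paper's proof. The fallbacks you propose (Milnor-type, Gabber-type, or Adams-weight arguments) are unnecessary and, as stated, are not arguments.
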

\begin{proof}
The first two nonzero positions are Theorem~\ref{k2:thm:main}.
In \eqref{k2:eq:coniveau}, $\CH^2(T)=E_2^{2,-2}$ has zero positive
$K_0(T)=\Z$ limiting piece. Its sole possible higher incoming
map is $d_2$ from unramified $K_1$, namely $T^\times$; those units
extend and are permanent cycles. All higher outgoing targets have
negative field $K$-degree. Thus $\CH^2(T)=0$ without a dimension bound.

The row's degree-one cohomology is $E_2^{1,-2}$, a positive filtration
piece of $K_1(T)=T^\times$ in the limit, hence zero there. There are no
incoming higher differentials. In dimension at most three its only
possible outgoing higher target is $E_2^{3,-3}=\CH^3(T)$.
In dimension three this is the terminal Chow group, which is zero:
a DVR curve cut out by two members of a regular system of parameters
has a uniformizer whose divisor is the closed point with coefficient one. In smaller dimensions the target
is absent. This proves degree-one exactness as well.
\end{proof}

We will use a finite-quotient consequence of the integral row. Its
proof lifts a cocycle modulo the coefficient integer and corrects its
integral boundary. Torsion-freeness of the terminal divisor group is
what makes the correction possible.

\begin{corollary}[Finite-quotient image exactness]\label{k2:cor:quotientimage}
For $T$ as in Lemma~\ref{k2:lem:lowrow} and any integer $m\geq1$,
\[
 \im\bigl(K_2(T)/m\to K_2(F)/m\bigr)=
 \ker\left(K_2(F)/m\to
       \bigoplus_{T^{(1)}}\kappa(x)^\times/\kappa(x)^{\times m}\right).
\]
Only image exactness is asserted.
\end{corollary}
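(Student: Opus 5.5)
The inclusion of the image in the kernel is immediate: classes from $K_2(T)$ are unramified at every height-one prime, and the tame boundary reduced modulo $m$ is the reduction of $\partial$. For the converse, I would take a class $\bar x\in K_2(F)/m$ in the kernel and choose an integral representative $x\in K_2(F)$. The hypothesis says that for each $x'\in T^{(1)}$ the residue $\partial_{x'}(x)$ is an $m$-th power $w_{x'}^m$. Only finitely many residues are nontrivial, so we may take $w_{x'}=1$ outside a finite set, and $w=(w_{x'})$ lies in $\bigoplus_{T^{(1)}}\kappa(x')^\times$. Then $\partial x=mw$ in additive notation.

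The plan is to correct $x$ by $m$ times a generic class with boundary $w$. Because $\partial x$ is a boundary, Lemma~\ref{k2:lem:lowrow} says that its image under the next differential $\bigoplus_{T^{(1)}}\kappa^\times\to\bigoplus_{T^{(2)}}\Z$ is zero. Hence $m\cdot d(w)=d(mw)=0$. The terminal divisor group $\bigoplus_{T^{(2)}}\Z$ is torsion-free, which forces $d(w)=0$. Exactness of the augmented row at the $\bigoplus_{T^{(1)}}\kappa(x')^\times$ position, which is part of Lemma~\ref{k2:lem:lowrow} in dimension at most three, then provides $y\in K_2(F)$ with $\partial y=w$.

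Now $x-my$ has $\partial(x-my)=0$, so it lies in $K_2^{\mathrm{nr}}(T)$. Image purity, that is $P_2(T)=0$ from Theorem~\ref{k2:thm:main} (equivalently exactness at $K_2(F)$ in the row), gives $b\in K_2(T)$ with $b_F=x-my$. Reducing modulo $m$, the class $\bar x$ equals the image of $\bar b\in K_2(T)/m$, which proves the claimed image exactness.

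The only delicate step is the exactness at the degree-one position and the torsion-freeness argument. I would check carefully that the composite of the tame boundary with the divisor map is the differential of the row, so that $d\partial=0$; this holds because the row is the $E_1$ row of the coniveau spectral sequence. Sign conventions for $\partial$ do not matter, since kernels and images are unchanged. No injectivity modulo $m$ is claimed, consistent with the statement.
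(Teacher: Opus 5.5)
Your proposal is correct and follows the paper's proof: lift to $x\in K_2(F)$, write $\partial x=mw$, use torsion-freeness of $\bigoplus_{T^{(2)}}\Z$ to get $d w=0$, fill $w$ by degree-one exactness from Lemma~\ref{k2:lem:lowrow}, and extend the unramified class $x-my$ by Theorem~\ref{k2:thm:main}. The reverse inclusion via vanishing residues of ring classes is likewise the paper's argument.
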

\begin{proof}
Lift a class of the kernel to $x\in K_2(F)$ and choose a finitely
supported unit cochain $b$ with $\partial x=mb$, in additive notation.
The next group is a direct sum of copies of $\Z$, so
$m\partial b=0$ implies $\partial b=0$.
Lemma~\ref{k2:lem:lowrow} supplies $y\in K_2(F)$ with $\partial y=b$.
The class $x-my$ is integrally unramified and extends by
Theorem~\ref{k2:thm:main}. The reverse inclusion follows from
localization.
\end{proof}

\begin{remark}\label{k2:rem:scope}
At an odd residue prime, the divided-Chern argument starts with
$b_F=p\alpha$ and an integrally unramified class $\alpha$.
The Kummer calculation uses $\Pic(Y')/p$, while the torsion-lifting
calculation uses $\Pic(Y')[p]$. At the dyadic prime,
Lemma~\ref{k2:lem:dyadicquotient} gives the motivic quotient injection
used to divide the class. These two constructions supply the
residue-prime steps in the integral theorem.
\end{remark}

%% file: sections/02_hermitian_and_support.tex
\section{Hermitian theories and the integral field symbols}
\label{gw:foundations}
The final argument uses alternating forms on projective modules.
The coniveau calculations, on the other hand, use homotopy-symmetric
forms on perfect complexes. We begin by defining the two theories and
the maps between them. The unit classes constructed in this section
will identify the low-degree field coefficients; localization will
then define the corresponding residue maps. Throughout, commutative rings carry the trivial involution; duality signs are displayed explicitly. Write $\operatorname{Perf}(B)$ for the stable category of perfect $B$-complexes. We use homological conventions:
\[
 H_i(X[a])=H_{i-a}(X),\qquad \Ddual_B X=\operatorname{RHom}_B(X,B).
\]
All rings called regular are Noetherian and of finite Krull dimension.

\subsection{The genuine and homotopy-symmetric structures}

For a perfect complex $X$, let $\operatorname{Bil}_B(X,X)$ be its bilinear mapping spectrum, with the transposition action of $C_2$. For the coefficient $B$ it is the spectrum underlying $\operatorname{RHom}_B(X,\Ddual_B X)$. The homotopy-symmetric and pure quadratic structures are
\[
 Q_B^{\mathrm{s}}(X)=\operatorname{Bil}_B(X,X)^{hC_2},\qquad
 Q_B^{\mathrm{q}}(X)=\operatorname{Bil}_B(X,X)_{hC_2}.
\]
There is also a sequence of genuine structures $Q_B^{\geq m}$. To specify it, put $E_B=(\mathrm H B)^{tC_2}$, with the derived $B$-module structure induced by the $\mathrm H\Z$-linear Tate diagonal. Then
\begin{equation}\label{gw:cutoff_pullback}
 Q_B^{\geq m}(X)=Q_B^{\mathrm{s}}(X)
 \mathop{\times}_{\operatorname{RHom}_B(X,E_B)}
 \operatorname{RHom}_B(X,\tau_{\geq m}E_B).
\end{equation}
This is the pullback of \cite[Recollection R.3]{GWCDHIII}, using \cite[Lemma 3.2.5]{GWCDHI}. The genuine symmetric structure is $Q_B^{\mathrm{gs}}=Q_B^{\geq0}$. On projective modules in degree zero it is the ordinary abelian group of symmetric bilinear forms; its extension to perfect complexes is the derived quadratic functor of \cite[Propositions 4.2.18 and 4.2.22]{GWCDHI}. The Tate diagonal determines the $B$-module structure on $E_B$ used
in this pullback. In the supported lifting argument, this structure
accounts for the Frobenius action on the modules appearing in the
comparison cofibre.

A genuine symmetric form includes a homotopy-symmetric form and a
compatible lift of its Tate image through the connective truncation
in \eqref{gw:cutoff_pullback}.

A Poincar\'e object for $Q$ is a pair $(X,q)$, with $q\in\pi_0Q(X)$ whose underlying duality map is an equivalence. The group $L_n(\mathcal C,Q)$ is the cobordism group of Poincar\'e objects for $Q[-n]$. Consequently
\begin{equation}\label{gw:l_shift}
 L_n(\mathcal C,Q[a])=L_{n-a}(\mathcal C,Q).
\end{equation}
Here shifting includes the biduality sign. Set
\[
 \begin{aligned}
 \GW^{[a]}(B)&=\GW(\operatorname{Perf}(B),Q_B^{\mathrm{gs}}[a]),\\
 \KO^{[a]}(B)&=\GW(\operatorname{Perf}(B),Q_B^{\mathrm{s}}[a]),\\
 L^{\mathrm{gs}}(B)&=L(B,Q_B^{\mathrm{gs}}),\qquad L^{\mathrm{s}}(B)=L(B,Q_B^{\mathrm{s}}).
 \end{aligned}
\]
We reserve the notation $\GW$ in square-bracket shifts for the genuine theory. For $a=2$ its nonnegative groups are the classical symplectic Grothendieck--Witt groups. The natural comparison is denoted $\GW^{[a]}\to\KO^{[a]}$.

We use two fundamental fibre sequences. The first is the genuine Bott sequence
\begin{equation}\label{gw:bott}
 \GW^{[a]}(B)\xrightarrow{f}K(B)\xrightarrow{h}
 \GW^{[a+1]}(B)\xrightarrow{\eta_{\mathrm g}}\Sigma\GW^{[a]}(B),
\end{equation}
where $f$ forgets the form and $h$ is hyperbolic. The second is
\begin{equation}\label{gw:fundamental}
 K(\mathcal C;\Ddual)_{hC_2}\longrightarrow\GW(\mathcal C,Q)
 \longrightarrow L(\mathcal C,Q).
\end{equation}
These are \cite[Corollaries 4.3.1 and 4.4.13]{GWCDHII}; the shifted formulation is also given in \cite[(1.1) and Section 3]{GWSchlichtingComparison}. In \eqref{gw:bott} the boundary is multiplication by the integral class $\eta_{\mathrm g}\in\GW_{-1}^{[-1]}(\Z)$. Products satisfy
\[
 xy=(-1)^{ij}\langle-1\rangle^{ab}yx
 \quad\bigl(x\in\GW_i^{[a]},\ y\in\GW_j^{[b]}\bigr).
\]
Put $\hsym=\langle1\rangle+\langle-1\rangle$. The hyperbolic projection formula, \cite[Corollary 7.5.6]{GWCDHI}, gives
\begin{equation}\label{gw:projection}
 h(f(x))=\hsym x,\qquad \eta_{\mathrm g}\hsym=0.
\end{equation}
In \eqref{gw:bott}, $h$ has target shift $a+1$; in
\eqref{gw:projection}, it has the same target shift as $x$.
We write $h^{[j]}$ when the target shift $j$ needs to be specified.
The shifted version of \eqref{gw:fundamental} has final term $\Sigma^aL^{\mathrm{gs}}(B)$. The suspension identity follows from bordism invariance. Pure quadratic and homotopy-symmetric $L$-theories are four-periodic \cite[Corollary R.10]{GWCDHIII}.

\subsection{An integral unit class}

To construct the Milnor--Witt comparison, we need a class for each
unit whose forgetful image and Bott boundary have prescribed values.
The multiplicative and Steinberg relations will then ensure that the
assignment factors through the unit-symbol presentation.

\begin{lemma}[The integral hermitian unit class]\label{gw:unit_class}
For every commutative ring $B$ and $u\in B^\times$ there is a natural class $[u]_{\mathrm g}\in\GW_1^{[1]}(B)$ such that
\begin{align}
 f[u]_{\mathrm g}&=u,& \eta_{\mathrm g}[u]_{\mathrm g}&=1-\langle u\rangle,\label{gw:unit_normalization}\\
 [uv]_{\mathrm g}&=[u]_{\mathrm g}+\langle u\rangle[v]_{\mathrm g},&[u^2]_{\mathrm g}&=\hsym[u]_{\mathrm g}.\label{gw:unit_laws}
\end{align}
Moreover $[u]_{\mathrm g}[1-u]_{\mathrm g}=0$ whenever $u$ and $1-u$ are units.
\end{lemma}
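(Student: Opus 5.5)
We construct $[u]_{\mathrm g}$ from the Bott sequence \eqref{gw:bott} with $a=0$:
\[
 \GW^{[0]}(B)\xrightarrow{f}K(B)\xrightarrow{h}\GW^{[1]}(B)\xrightarrow{\eta_{\mathrm g}}\Sigma\GW^{[0]}(B).
\]
On $\pi_1$ this gives an exact sequence
\[
 \GW_1^{[0]}(B)\to K_1(B)\to\GW_1^{[1]}(B)\to\GW_0^{[0]}(B)\to K_0(B).
\]
The element $1-\langle u\rangle$ has rank zero, so it lies in the kernel of $f$ and lifts to $\GW_1^{[1]}(B)$. A bare lift is not natural, however, so we instead produce the class from a universal example. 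Take the universal unit ring $\Z[t,t^{-1}]$, or better the unit scheme $\mathbb G_m$. The class $u$ defines a map $\Sigma^\infty_+\mathbb G_m\to K$, and the genuine theory of a Laurent ring splits by the fundamental theorem (Bass--Heller--Swan for Grothendieck--Witt theory in the genuine setting of \cite{GWCDHII,GWCDHIII}).

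Concretely, write $\GW^{[1]}(B[t^{\pm1}])=\GW^{[1]}(B[t])\cup_{\GW^{[1]}(B)}\GW^{[1]}(B[t^{-1}])\oplus\Sigma\GW^{[0]}(B)$, modulo the negative/nil pieces. We then define $[t]_{\mathrm g}\in\GW_1^{[1]}(\Z[t^{\pm1}])$ as the pullback along $\Z[t^{\pm1}]\to\Z$-coefficients of the canonical element corresponding to the orthogonal idempotent splitting. A more hands-on alternative, which I expect to use, is to define $[u]_{\mathrm g}$ directly as the class of the automorphism $u$ of the hyperbolic form. In the formation/Karoubi description, $\GW_1^{[1]}$ receives the class of a lagrangian pair. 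The pair here is $(B,\ \text{graph of } u)$ inside the symmetric hyperbolic plane shifted by one, that is, the symmetric formation $(\Hyp_{\mathrm{sym}}(B);B,\Gamma_u)$. Its forgetful image is the automorphism $u$ of $B$, which gives $f[u]_{\mathrm g}=u$. Its Bott boundary is the difference of the two induced unary forms, namely $\langle1\rangle-\langle u\rangle$. Naturality is automatic from this construction. We normalize by subtracting the value at $u=1$, which vanishes since $\Gamma_1$ is the diagonal.

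\emph{Laws.} For the cocycle rule $[uv]=[u]+\langle u\rangle[v]$ we compose formations. The transformation $\diag(u,u^{-1})$ carries $\Gamma_v$ to $\Gamma_{uv}$ and acts on the symmetric hyperbolic plane as multiplication by $\langle u\rangle$ on the $\GW^{[1]}$-module structure. Additivity of formations under concatenation then yields the rule. Taking $v=u$ gives $[u^2]=(1+\langle u\rangle)[u]$. We also need $\langle u\rangle[u]=\langle-1\rangle[u]$. The plan is to deduce this from $[u]\cdot(\langle u\rangle-\langle-1\rangle)=0$, for which the standard argument runs as follows. The form $\langle u\rangle\perp\langle -u\rangle$ is hyperbolic after the isotropic vector $(1,1)$ becomes available, and the corresponding statement at the level of formations holds because $\Gamma_u$ is $\langle u\rangle$-twisted isotropic. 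With these two steps, $[u^2]=\hsym[u]$.

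\emph{Steinberg.} This is the main obstacle. For $u$ and $1-u$ units, the product $[u][1-u]\in\GW_2^{[2]}(B)$ is pulled back from the universal ring $\Z[t,t^{-1},(1-t)^{-1}]$. I would try to show that the product vanishes there. Its forgetful image is the Steinberg symbol $\{t,1-t\}=0$ in $K_2$, so the product lies in the image of $\eta_{\mathrm g}$ from $\GW_3^{[3]}$, up to $h$-image corrections. We would then apply the standard argument of Hornbostel--Schlichting/Asok--Fasel: the form-theoretic Steinberg relation holds because the symmetric space $\langle t\rangle\perp\langle1-t\rangle\perp\langle -t(1-t)\rangle$ is isotropic, with isotropic vector $(1,1,1)$-type after rescaling, which trivializes the product formation. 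If this direct route fails, the fallback is to reduce to fields. Homotopy invariance of genuine $\GW$ on the regular ring $\Z[t^{\pm1},(1-t)^{-1}]$ embeds the relevant group into its value over $\Q(t)$ together with residue terms. There we can use the known Milnor--Witt Steinberg relation for $\GW$ of fields, checking that the residues vanish.
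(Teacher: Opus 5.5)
The decisive gap is the Steinberg relation, but the earlier steps also fail as written.

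\emph{Construction.} Your ``formation'' $(\Hyp_{\mathrm{sym}}(B);B,\Gamma_u)$ is not a Lagrangian pair. For the symmetric hyperbolic pairing $xy'+x'y$, the graph $\Gamma_u$ has self-pairing $2uxx'$, so it is isotropic only when $2u=0$; the same holds for the ``diagonal'' $\Gamma_1$. Moreover, Lagrangian pairs represent degree-zero classes in an odd shift. The Bott sequence instead gives $\GW_1^{[1]}(B)=\pi_0\operatorname{fib}(f\colon\GW^{[0]}(B)\to K(B))$. The elementary object with the two coordinates you describe is the Karoubi triple $(B;\langle1\rangle,\langle u\rangle)$. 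Even then, identifying $f$ and $\eta_{\mathrm g}$ on it, with their signs, needs proof. Your first idea, the Laurent summand $\Sigma\GW^{[0]}(\Z)$, is essentially the paper's construction: $\delta[t]_{\mathrm g}=\langle1\rangle$ and $[t]_{\mathrm g}|_{t=1}=0$ over $\Z[t^{\pm1}]$. But the paper must still verify $f[t]_{\mathrm g}=t$ via the Bass orientation of the projective-line class, and $\eta_{\mathrm g}[t]_{\mathrm g}=1-\langle t\rangle$ over $\Q$, transported by the injection $W(\Z)\to W(\Q)$.

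\emph{Unit laws.} The matrix $\diag(u,u^{-1})$ carries $\Gamma_v$ to $\Gamma_{u^{-2}v}$, not $\Gamma_{uv}$, and it is an isometry, not multiplication by $\langle u\rangle$. Given the cocycle law, the squaring law amounts to $\langle u\rangle[u]_{\mathrm g}=\langle-1\rangle[u]_{\mathrm g}$ in $\GW_1^{[1]}$. In Milnor--Witt terms this is $\eta([u][u]-[u][-1])=0$. It does not follow from the $\GW_0$ identity $\langle u\rangle+\langle-u\rangle=\hsym$; also, $\langle u\rangle\perp\langle-u\rangle$ is only metabolic when $2\notin B^\times$. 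The tool you are missing is degree-one detection. If $\mathrm{SK}_1(B)=0$, Bott exactness makes $(f,\eta_{\mathrm g})\colon\GW_1^{[1]}(B)\to B^\times\times_{B^\times/B^{\times2}}I_{\mathrm{GW}}(B)$ bijective. On $\Z[s^{\pm1},t^{\pm1}]$ both laws then reduce to comparing these coordinates, using $\langle u^2\rangle=1$ and $\eta_{\mathrm g}\hsym=0$ for the squaring law.

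\emph{Steinberg.} You correctly get $[t]_{\mathrm g}[1-t]_{\mathrm g}\in\eta_{\mathrm g}\GW_3^{[3]}$ from $f([t]_{\mathrm g}[1-t]_{\mathrm g})=\{t,1-t\}=0$. But isotropy of a Pfister-type form only kills the Witt coordinate $(1-\langle t\rangle)(1-\langle1-t\rangle)$. Nothing in your argument shows that $\GW_2^{[2]}$ of $\Z[t^{\pm1},(1-t)^{-1}]$ is detected by its forgetful and Witt coordinates. Your fallback presupposes an injection of $\GW_2^{[2]}$ of this two-dimensional mixed-characteristic regular ring into its function field, and homotopy invariance does not supply that.

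The paper avoids degree two altogether by a quadratic transfer:
\begin{itemize}
\item Over $B_0=\Z[a^{\pm1},(1-a)^{-1}]$, put $C=B_0[t]/(t^2-a)$ with the trace form $\lambda(1)=1$, $\lambda(t)=0$. Its Gram matrix $\diag(1,a)$ makes restriction of scalars duality-preserving. This yields a genuine transfer $\lambda_*$ commuting with $f$ and $\eta_{\mathrm g}$ and satisfying $\lambda_*(\res(x)y)=x\lambda_*(y)$.
\item In $C$, the projection formula and the squaring law give $0=h(\{t,1-t\})=\hsym[t]_{\mathrm g}[1-t]_{\mathrm g}=[t^2]_{\mathrm g}[1-t]_{\mathrm g}=\res([a]_{\mathrm g})[1-t]_{\mathrm g}$.
\item Degree-one detection on $B_0$, where $\mathrm{SK}_1(B_0)=0$, gives $\lambda_*[1-t]_{\mathrm g}=[1-a]_{\mathrm g}$. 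This uses that the norm of $1-t$ is $1-a$, together with $\lambda_*\langle1\rangle=\langle1\rangle+\langle a\rangle$ and $\lambda_*\langle1-t\rangle=\langle a\rangle+\langle1-a\rangle$.
\item Applying $\lambda_*$ gives $[a]_{\mathrm g}[1-a]_{\mathrm g}=0$, and every complementary pair of units is a specialization of $B_0$.
\end{itemize}
This reduction of the degree-two relation to degree-one detection is the idea your proposal lacks.
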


\begin{proof}
\proofstep{1. The universal unit class.}
We construct a universal class, determine it by two coordinates, and
prove its Steinberg relation by transfer from a quadratic algebra.
Specialization will give the required class over every ring.
Let $U=\Z[t,t^{-1}]$. The projective-line class $\beta$ of \cite[Proposition 3.2]{GWSchlichtingComparison} is constructed from the cone of $\mathcal O(-1)\to\mathcal O$; its forgetful class is $[\mathcal O]-[\mathcal O(-1)]$. Its restrictions to the two affine charts vanish. The projective-line formula and Mayer--Vietoris therefore give
\[
 \begin{split}
 0\to\GW_1^{[1]}(\Z)&\to\GW_1^{[1]}(U)
       \xrightarrow{\delta}\GW_0^{[0]}(\Z)\to0,\\
 0\to\GW_0^{[0]}(\Z)&\to\GW_0^{[0]}(U)
       \xrightarrow{\delta}\GW_{-1}^{[-1]}(\Z)\to0.
 \end{split}
\]
Evaluation at $t=1$ splits the left arrows. The homotopy invariance needed in these degrees follows from \cite[Theorem 1.2]{GWSchlichtingComparison} on rings smooth over $\Z$ and homotopy invariance of $\KO$; $\GW_0^{[1]}(\Z[x])=0$ also follows from its Lemma 2.4.

Define $[t]_{\mathrm g}$ by $\delta[t]_{\mathrm g}=\langle1\rangle$ and $[t]_{\mathrm g}|_{t=1}=0$. The forgetful projective-line class fixes the Bass orientation, giving $f[t]_{\mathrm g}=t$. One can also detect this equality after extension to $\Q$, since $K_1(U)=U^\times$ injects into $K_1(\Q[t,t^{-1}])$. For the second formula, both $\eta_{\mathrm g}[t]_{\mathrm g}$ and $1-\langle t\rangle$ are reduced at $1$, and their difference is detected by
\[
 \widetilde{\GW}_0(U)=\GW_{-1}^{[-1]}(\Z)=W(\Z).
\]
The last equality is \eqref{gw:bott}, since $K_{-1}(\Z)=0$. The map $W(\Z)\to W(\Q)$ is injective: intersect a rational Lagrangian of a unimodular integral lattice with that lattice. The intersection is saturated, hence a direct summand; its perpendicular equals it, since their quotient is torsion-free of rank zero. The same argument after hyperbolic stabilization handles every Witt-zero rational class. Over $\Q$ the equality $\eta_{\mathrm g}[t]_{\mathrm g}=1-\langle t\rangle$ with this Bass orientation is \cite[Lemma 6.9]{GWKSW}. Injection proves it over $\Z$. Specializing $t\mapsto u$ proves \eqref{gw:unit_normalization}.

\proofstep{2. The unit relations.}
The two coordinates in \eqref{gw:unit_normalization} determine the
class over the universal rings used below. To see this, suppose that
$\mathrm{SK}_1(B)=0$ and set
$I_{\mathrm{GW}}(B)=\ker(\GW_0(B)\to K_0(B))$. Bott exactness and the identity $fh(u)=u^2$ give a natural map
\begin{equation}\label{gw:degree_one_detection}
 (f,\eta_{\mathrm g}):\GW_1^{[1]}(B)\longrightarrow
 B^\times\mathop{\times}_{B^\times/B^{\times2}} I_{\mathrm{GW}}(B).
\end{equation}
The map from $I_{\mathrm{GW}}$ to square classes is the map on cokernels induced by $f$ in the two sequences with left arrows $h$ and squaring. The two exact sequences show that
\eqref{gw:degree_one_detection} is surjective. To prove injectivity,
let a class lie in the kernel of $(f,\eta_{\mathrm g})$. Bott exactness writes it as
$h(u)$, and its forgetful coordinate gives $u^2=1$. Multiplication by $u$ is an isometry of $\langle1\rangle$; its class in $\GW_1^{[0]}(B)$ forgets to $u$, so Bott exactness gives $h(u)=0$. Thus \eqref{gw:degree_one_detection} is an isomorphism. Apply it to $\Z[s^{\pm1},t^{\pm1}]$, whose $\mathrm{SK}_1$ is zero by the $K$-theory fundamental theorem. Comparing the two coordinates gives \eqref{gw:unit_laws}, and specialization gives those identities over every $B$.

\proofstep{3. The integral Steinberg relation.}
For the Steinberg relation put
\[
 B_0=\Z[a,a^{-1},(1-a)^{-1}],\qquad
 C=B_0[t]/(t^2-a),\qquad \lambda(1)=1,\quad\lambda(t)=0.
\]
The Frobenius pairing $(x,y)\mapsto\lambda(xy)$ has matrix $\operatorname{diag}(1,a)$, so restriction of scalars, together with
\[
 \operatorname{Res}\operatorname{Hom}_C(P,C)\longrightarrow
 \operatorname{Hom}_{B_0}(\operatorname{Res}P,B_0),
 \qquad \phi\longmapsto\lambda\phi,
\]
is a duality-preserving functor on projectives. The natural map on their quadratic form functors extends to the genuine derived structures by \cite[Propositions 4.2.18 and 4.2.22]{GWCDHI}. It therefore defines a genuine transfer $\lambda_*$ in all shifts. It commutes with $f$ and the Bott boundary. The tensor identity
\[
 \operatorname{Res}((C\otimes_{B_0}P)\otimes_C Q)
   =P\otimes_{B_0}\operatorname{Res}Q
\]
and its form pairing give the projection formula $\lambda_*(\operatorname{res}(x)y)=x\lambda_*(y)$, also after deriving and shifting. This construction uses finite projectivity and the displayed perfect pairing, so remains valid after arbitrary base change.

Direct calculation gives
\[
 \lambda_*\langle1\rangle=\langle1\rangle+\langle a\rangle,
 \qquad
 \lambda_*\langle1-t\rangle=\langle a\rangle+\langle1-a\rangle.
\]
Indeed the second Gram matrix is $\left(\begin{smallmatrix}1&-a\\-a&a\end{smallmatrix}\right)$, diagonalized by the basis $(t,1+t)$. The determinant of multiplication by $1-t$ is $1-a$. Hence $\lambda_*[1-t]_{\mathrm g}$ has coordinates $1-a$ and $1-\langle1-a\rangle$ in \eqref{gw:degree_one_detection}. Here $\mathrm{SK}_1(B_0)=0$: localization at the disjoint divisors $a=0,1$ in $\Z[a]$ gives $K_1(B_0)=K_1(\Z)\oplus\Z^2$, generated by $-1,a,1-a$. Thus
\[
 \lambda_*[1-t]_{\mathrm g}=[1-a]_{\mathrm g}.
\]
In $C$, both $t$ and $1-t$ are units, and ordinary $K$-theory satisfies the Steinberg relation. Equations \eqref{gw:projection} and \eqref{gw:unit_laws} imply
\[
 0=\hsym[t]_{\mathrm g}[1-t]_{\mathrm g}=[t^2]_{\mathrm g}[1-t]_{\mathrm g}
   =\operatorname{res}([a]_{\mathrm g})[1-t]_{\mathrm g}.
\]
Applying $\lambda_*$ gives $[a]_{\mathrm g}[1-a]_{\mathrm g}=0$. Every pair of complementary units is a specialization of $B_0$, proving the assertion.
\end{proof}

\subsection{The canonical field comparison through degree three}

The presentation-defined Milnor--Witt ring has generators $[u]$ in degree one and $\eta$ in degree minus one. Its relations are $[u][1-u]=0$ for complementary units, $\eta[u]=[u]\eta$, and
\[
 [uv]=[u]+[v]+\eta[u][v],\qquad
 \eta(\eta[-1]+2)=0.
\]
Lemma~\ref{gw:unit_class} defines a graded-ring map
\begin{equation}\label{gw:mu_definition}
 \mu:\KMW_*(B)\longrightarrow\GW_*^{[*]}(B),
 \qquad [u]\longmapsto[u]_{\mathrm g},\qquad \eta\longmapsto-\eta_{\mathrm g}.
\end{equation}
The product rule gives $\eta_{\mathrm g}[u]_{\mathrm g}=-\langle-1\rangle[u]_{\mathrm g}\eta_{\mathrm g}$; on multiples of $\eta_{\mathrm g}$, the factor $-\langle-1\rangle$ is $1$ by $\hsym\eta_{\mathrm g}=0$. This proves the commutation relation. The remaining relation maps to
\[
 \eta_{\mathrm g}^2[-1]_{\mathrm g}-2\eta_{\mathrm g}
 =\eta_{\mathrm g}(1-\langle-1\rangle-2)=-\eta_{\mathrm g}\hsym=0.
\]
The multiplicative and Steinberg relations were proved above. In particular $1+\eta[u]$ maps to the form $\langle u\rangle$.

\begin{theorem}[The field comparison through degree three]\label{gw:field_symbols}
For every field $k$ and $0\leq n\leq3$, the canonical maps
\[
 \mu_n:\KMW_n(k)\xrightarrow{\ \sim\ }\GW_n^{[n]}(k)
       \xrightarrow{\ \sim\ }\KO_n^{[n]}(k)
\]
are isomorphisms. On every unit symbol their forgetful and Witt coordinates satisfy
\begin{equation}\label{gw:field_coefficients}
 f\mu_n([u_1]\cdots[u_n])=\{u_1,\ldots,u_n\}\in K_n(k),
 \qquad
 \eta_{\mathrm g}^n\mu_n([u_1]\cdots[u_n])
     =\prod_i(1-\langle u_i\rangle)\in\GW_0(k).
\end{equation}
The first coordinate is the forward Milnor-to-Quillen product, with coefficient one. The second is the bilinear Pfister form after projection to $W(k)$.
\end{theorem}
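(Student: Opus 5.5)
We prove the two coefficient formulas \eqref{gw:field_coefficients} first, directly from Lemma~\ref{gw:unit_class}, and then prove the isomorphisms degree by degree. Since $f$ and $\eta_{\mathrm g}$ are ring maps, or module maps over the $\GW_*$-algebra structure, and $\mu$ is multiplicative, we have $f\mu_n([u_1]\cdots[u_n])=\prod f[u_i]_{\mathrm g}=\{u_1,\ldots,u_n\}$, with the forward product and coefficient one. Similarly $\eta_{\mathrm g}^n\mu_n$ of the symbol equals $\prod\eta_{\mathrm g}[u_i]_{\mathrm g}=\prod(1-\langle u_i\rangle)$ by \eqref{gw:unit_normalization}. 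Moving the $\eta_{\mathrm g}$ factors past the symbols only introduces factors $-\langle-1\rangle$, and these act as $1$ on multiples of $\eta_{\mathrm g}$ by $\hsym\eta_{\mathrm g}=0$. The image of this product in $W(k)$ is the Pfister form, up to the sign convention for $\langle\!\langle u\rangle\!\rangle$.

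For the isomorphisms we use the Milnor--Witt fibre square. For a field, Morel's presentation identifies $\KMW_n(k)$ with the pullback $\KM_n(k)\times_{I^n/I^{n+1}}I^n(k)$. This holds for $n\ge0$, and also in characteristic two once $W$ is read as the bilinear Witt ring. We show that $\GW_n^{[n]}(k)$ has the same pullback description in the range $n\le3$, with coordinates $f$ and $\eta_{\mathrm g}^n$. The map $\eta_{\mathrm g}^n$ lands in $\GW_0(k)$, and on $\ker f$ it factors through $W$. We run the Bott sequence \eqref{gw:bott} inductively: $\GW_n^{[n]}\xrightarrow{f}K_n\xrightarrow{h}\GW_n^{[n+1]}$, and $\GW_{n}^{[n]}\xrightarrow{\eta_{\mathrm g}}\GW_{n-1}^{[n-1]}$ has kernel $h(K_n)$. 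The map $\mu_n$ is compatible with the Milnor--Witt sequence $0\to I^{n+1}\to\KMW_n\to\KM_n\to0$, where the first map is multiplication by $\eta$. It therefore suffices to check three things. First, $K_n(k)\supset\KM_n(k)$ with $f\circ\mu_n$ the inclusion, and the image of $f$ is exactly $\KM_n$ for $n\le3$. In degree three this is Suslin's fact that the image of $\GW_3^{[3]}=\mathrm{KSp}_3$ in $K_3$ is $\KM_3$, that is, the indecomposable part $K_3^{\ind}$ dies. Second, $\ker f=\eta_{\mathrm g}$-multiples of degree $n+1$, and these are identified with $I^{n+1}(k)$ through the Witt groups $W^{[n]}$ in negative degrees. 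Third, the comparison $\GW_n^{[n]}\to\KO_n^{[n]}$ is an isomorphism for a field in degrees $n\le 3$. We get this from \eqref{gw:fundamental}: the genuine and homotopy-symmetric $L$-theories of a field agree in the relevant range, by the comparison in \cite{GWCDHIII}, since $\tau_{\ge0}E_k\to E_k$ only changes negative $L$-groups.

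For small $n$ the steps reduce to classical facts. For $n=0$ we have $\GW_0(k)=\KMW_0(k)$. For $n=1$ both sides are $k^\times\times_{k^\times/k^{\times2}}I(k)$, by the detection isomorphism \eqref{gw:degree_one_detection}; here $\mathrm{SK}_1(k)=0$. For $n=2$ we use Suslin's theorem $\mathrm{KSp}_2(k)=\KMW_2(k)$, which is compatible with the symbol normalization. For $n=3$ we use Schlichting's or Asok--Fasel's computation $\GW_3^{[3]}(k)\cong\KMW_3(k)$. Alternatively we deduce it by the five lemma from the Bott sequence, using $K_3(k)\to\GW_3^{[4]}$. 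The kernel of $h$ on $K_3$ must then be the image of $f$, namely $\KM_3$, and this requires Suslin's identification of $K_3^{\ind}$ with the cokernel.

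The main obstacle is degree three. There we must show that the image of $f$ on $\GW_3^{[3]}(k)$ is exactly $\KM_3(k)$ and that $\ker f\cong I^4(k)$ via $\eta_{\mathrm g}$. Both must hold in characteristic two as well, with the bilinear $I^4$. We would first try to cite Schlichting's Euler-class paper and the Hutchinson--Wendt computation, already used in Section~\ref{bridge:section}. In those sources the integral presentation is matched to the unit-symbol generators by Theorem~4.18 of \cite{SchlichtingEuler}. We would then only need to check that our $\mu$ agrees with their symbol map on generators, and this agreement is guaranteed by the two coordinates computed above.
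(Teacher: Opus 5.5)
You derive the coefficient formulas \eqref{gw:field_coefficients} from Lemma~\ref{gw:unit_class} exactly as the paper does. Several other parts of your outline are acceptable: degrees $0,1,2$ (the detection isomorphism in degree one, and Suslin's $\mathrm{KSp}_2\cong\KMW_2$ in place of the paper's five-lemma argument in degree two), the genuine-to-$\KO$ comparison, and Asok--Fasel for $\operatorname{char}k\neq2$.

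The gap is degree three in characteristic two. That is the only genuinely hard case, and none of your routes closes it.

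(i) $\GW_3^{[3]}$ is not $\mathrm{KSp}_3=\GW_3^{[2]}$. By the Bott sequence it is the relative group $\pi_3(\Bclass\SL^+,\Bclass\Sp^+)$. So a statement about the forgetful image of symplectic $K_3$ does not determine $f(\GW_3^{[3]})$. Likewise, your ``second check'' $\ker f\cong I^4$ concerns $\eta_{\mathrm g}\GW_4^{[4]}(k)$, and you give no argument for it.

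(ii) The alternative route through $h^{[4]}:K_3\to\GW_3^{[4]}$ is circular. The statement $\ker h^{[4]}=\KM_3(k)$ is equivalent to $K_3^{\ind}(k)\hookrightarrow\GW_3^{[4]}(k)$, and the paper deduces that \emph{from} this theorem (cf.\ Theorem~\ref{loc:gw3_shift4}).

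(iii) Schlichting's Euler-class paper and Hutchinson--Wendt compute group homology, e.g.\ $H_3(\SL_3,\SL_2)\cong\KMW_3$, not $\GW_3^{[3]}$. You additionally need the relative Hurewicz identifications $\GW_3^{[3]}(k)\to H_3(\SL(k),\Sp(k))\leftarrow H_3(\SL_3(k),\SL_2(k))$ of Sarwar--Schlichting, which hold only over infinite fields.

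(iv) Agreement with $\mu_3$ is not ``guaranteed by the two coordinates.'' The pair $(f,\eta_{\mathrm g}^3)$ detects elements of $\KMW_3(k)$. You do not know that it detects on $\GW_3^{[3]}(k)$, which is part of what is being proved. Nor do you know that the isomorphism taken from the literature is compatible with $f$ and $\eta_{\mathrm g}$.

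The paper supplies exactly these missing steps, working with the Sarwar--Schlichting isomorphisms $\alpha_2,\alpha_3$:
\begin{itemize}
\item It proves $\eta\alpha_3=\alpha_2\eta_{\mathrm g}$ from Schlichting's relative boundary computation.
\item Using the Nesterenko--Suslin coefficient $(3-1)!=2$ and Bott exactness, it shows that $\alpha_3h$ restricted to $\KM_3$ equals $h_{\mathrm{MW}}(x)=(2x,0)$.
\item It applies the five lemma to $\KM_3\to\KMW_3\xrightarrow{\eta}\KMW_2\to\KM_2\to0$ with vertical maps $\mathrm{id},\alpha_3\mu_3,\alpha_2\mu_2,\mathrm{id}$.
\end{itemize}
This proves $\mu_3$ bijective without ever computing $f(\GW_3^{[3]})$ or $\ker f$.

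Finally, your proposal does not treat finite fields of characteristic two, where these homological inputs are unavailable. The paper handles them separately. It uses $\KMW_3(\F_q)=0$, and it computes $\KO_3^{[3]}(\F_q)=0$ as the invariants of the odd-order group $K_3(\F_q)$ under the shift-three duality, which acts by $-1$.
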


\begin{proof}
The case in which two is invertible follows from the stated symbol
comparison. We give the remaining characteristic-two argument in
three stages: the Milnor--Witt pullback handles degrees one and two;
relative symplectic homology handles degree three over infinite fields;
and the finite-field calculation completes the proof.

\proofstep{1. The field comparisons and the remaining case.}
Degree zero is $\KMW_0(k)=\GW_0(k)$. If $\operatorname{char}k\ne2$, the assertion for the symbol map is \cite[Theorems 4.1.2 and 4.3.1]{AsokFaselKO}, with the sign of the Bott generator translated as in \eqref{gw:mu_definition}. Its field statement includes finite fields. We now prove the characteristic-two case. Comparison from genuine to homotopy-symmetric theory is an isomorphism on these diagonals: the field case of \cite[Theorem 1.2]{GWSchlichtingComparison} gives an isomorphism for $i\geq a-1$ in shift $a$. The formulas \eqref{gw:field_coefficients} already follow integrally from \eqref{gw:unit_normalization} and products.

\proofstep{2. The pullback in characteristic two.}
For a characteristic-two field, the bilinear Milnor conjecture and the Milnor--Witt pullback give
\begin{equation}\label{gw:field_pullback}
 \KMW_n(k)=\KM_n(k)\mathop{\times}_{\KM_n(k)/2}I^n(k),
 \qquad I^0(k)=W(k).
\end{equation}
Here $I$ is the fundamental ideal of the bilinear Witt ring; see \cite[Theorems 3.1 and 3.11]{GWCarlier} and \cite[Corollary 2.3.7]{DegliseMW}. We normalize the Witt coordinate of a degree-$n$ symbol as $\prod(1-\langle u_i\rangle)$. In characteristic two all Witt groups are killed by two, because the diagonal is a Lagrangian in $b\perp b$; thus the two signs for $\eta$ give identical maps in the following exact sequences.

Rank and Witt class identify $\GW_0(k)$ with $\Z\times_{\Z/2}W(k)$, so $I_{\mathrm{GW}}(k)=I(k)$. Formula \eqref{gw:degree_one_detection} is exactly \eqref{gw:field_pullback} for $n=1$, with the discriminant coordinate fixed by \eqref{gw:unit_normalization}. This proves the degree-one assertion. There is an exact sequence
\begin{equation}\label{gw:mw_bott}
 \KM_n(k)\xrightarrow{h_{\mathrm{MW}}}\KMW_n(k)\xrightarrow{\eta}
 \KMW_{n-1}(k)\xrightarrow{f}\KM_{n-1}(k)\to0,
\end{equation}
where, in the pullbacks, $h_{\mathrm{MW}}(x)=(2x,0)$ and $\eta(x,y)=(0,y)$. For $n=2$, compare this with \eqref{gw:bott}. The left comparison is Matsumoto's theorem and the right comparisons are the degree-one result and $K_1(k)=k^\times$. The projection formula \eqref{gw:projection} makes the diagram commute, giving surjectivity of $\mu_2$. Its composite with $(f,\eta_{\mathrm g})$ sends $(x,y)$ to $(x,(1,y))$, where $1$ denotes the identity in $k^\times$; the inclusion $I^2\subset I$ makes this composite injective. This proves the degree-two comparison.

\proofstep{3. The low symplectic Hurewicz maps.}
Suppose next that $k$ is infinite of characteristic two. The Hurewicz comparisons of \cite[Theorems 9.1 and 9.2]{SchlichtingSarwar} give isomorphisms $\alpha_2,\alpha_3$ from the genuine groups to the corresponding Milnor--Witt groups through
\[
 \begin{split}
 \GW_2^{[2]}(k)&\longrightarrow H_2(\Sp(k))
       \longleftarrow H_2(\SL_2(k))\longrightarrow\KMW_2(k),\\
 \GW_3^{[3]}(k)&\longrightarrow H_3(\SL(k),\Sp(k))
       \longleftarrow H_3(\SL_3(k),\SL_2(k))\longrightarrow\KMW_3(k).
 \end{split}
\]
The genuine interpretation follows from the symplectic model in shift two and the Bott fibre sequence: $\GW_3^{[3]}=\pi_3(\Bclass\SL^+,\Bclass\Sp^+)$. These group-homological theorems apply over the infinite field $k$. The stabilization square in \cite[(10.8)]{SchlichtingSarwar} identifies the degree-two forgetful map with the forward Milnor map. The relative boundary calculation of \cite[Theorem 5.37 and Lemma 5.40(2)]{SchlichtingEuler} gives
\begin{equation}\label{gw:alpha_boundary}
 \eta\alpha_3=\alpha_2\eta_{\mathrm g}.
\end{equation}
The hypotheses used for that calculation are the Milnor--Witt pullback and the bilinear Milnor conjecture, available here by \eqref{gw:field_pullback}; the module-boundary calculation applies over $k$.

\proofstep{4. The hyperbolic comparison.}
Consider $\beta=\alpha_3h\circ(\KM_3(k)\to K_3(k))$. Its Milnor coordinate is $2x$: this is the coefficient $(3-1)!$ in \cite[Theorem 4.1(a)]{GWNS}, in precisely the normalization used in \cite[Lemma 10.7]{SchlichtingSarwar}. Its Witt coordinate is zero by Bott exactness and \eqref{gw:alpha_boundary}, since $I^3\to I^2$ is injective. Thus $\beta=h_{\mathrm{MW}}$. If an opposite overall orientation is used for the auxiliary $\alpha_3$, replace it by its negative; \eqref{gw:alpha_boundary} is unchanged since its image is two-torsion. The normalization of $\mu_3$ remains the one fixed in \eqref{gw:mu_definition}.

\proofstep{5. Identification of the symbol map.}
Set $\varphi=\alpha_3\mu_3$ and $\psi=\alpha_2\mu_2$. The latter is an isomorphism preserving the Milnor projection, since the degree-two Hurewicz coefficient is $(2-1)!=1$. Naturality and the projection formula give a commuting diagram between two copies of \eqref{gw:mw_bott}, for $n=3$, with vertical arrows $\operatorname{id},\varphi,\psi,\operatorname{id}$. The map $\varphi$ is the identity on
$\im h_{\mathrm{MW}}=\ker\eta$. On the quotient, identified with
$\im\eta=\ker f$, it induces the isomorphism coming from $\psi$.
Applying the short five lemma to these kernel--quotient sequences shows
that $\varphi$ is an isomorphism. Since $\alpha_3$ is an isomorphism,
the same follows for $\mu_3$.

\proofstep{6. Finite fields of characteristic two.}
Finally let $k=\F_q$, $q$ even. Then $I(k)=0$ and $\KM_n(k)=0$ for $n\geq2$, so $\KMW_3(k)=0$. Apply \cite[Proposition 3.1.4]{GWCDHIII}:
\[
 \KO^{[a]}(\F_q)\simeq K(\F_q;\Ddual[a])^{hC_2}.
\]
Quillen's computation \cite{QuillenFinite} makes every positive $K$-group finite of odd order. For the fibre $E$ of $K(\F_q)\to \mathrm H\Z$, multiplication by two is therefore invertible, so homotopy fixed points have positive homotopy equal to invariants. The $\mathrm H\Z$ term, with either sign action, contributes no positive homotopy. On $K_3(\F_q)$ ordinary duality is $\psi^{-1}=+1$. To check its sign integrally, for each odd prime $\ell$ use the finite-field \(\ell\)-adic \(K\)-theory comparison: degree three is $H^1_{\et}(\F_q,\Z_\ell(2))$, and duality acts on weight two by $(-1)^2=1$. This comparison and its Adams compatibility are the finite-field argument in \cite[Sections 2.3.1 and 2.6.1, and Proposition 2.9]{GWFGV}; cohomological dimension one leaves no other weight.

The finite odd-order group is detected by its odd-primary completions. A duality shift by $a$ acts additionally by $(-1)^a$, by additivity. Thus for $a=3$ the invariants of $K_3(\F_q)$ are zero. We obtain $\KO_3^{[3]}(\F_q)=0$, and field comparison gives $\GW_3^{[3]}(\F_q)=0$. This includes $\F_2$ and completes the proof.
\end{proof}

\begin{remark}\label{gw:field_scope}
The field pullback \eqref{gw:field_pullback}, with the normalized Pfister coordinate, is the uniform formulation of \cite[Corollary 2.3.7]{DegliseMW}. Theorem~\ref{gw:field_symbols} identifies the diagonal groups $\GW_n^{[n]}$ and $\KO_n^{[n]}$ for $0\leq n\leq3$. The shifted degree-zero group $\GW_0^{[2]}$, used later, is the symplectic Grothendieck--Witt group.
\end{remark}

\section{Support theories and low Witt residues}
\label{gw:supports}

\subsection{Purity at a regular local support}

The field identifications will be used inside coniveau complexes.
We therefore fix the supported theory before transporting any boundary
to symbols. This also fixes the exceptional line and the duality shift
at each point.

For a closed set $Z\subset Y$, the notation $\operatorname{Perf}_Z(Y)$ means perfect complexes whose homology is supported on $Z$, with the \emph{restriction of the ambient} Poincar\'e structure. In particular, the genuine structure on the support category is obtained by restricting the ambient cutoff.

We use localization and purity in the following form. For a Noetherian
$Y$, write $\operatorname{Perf}(Y)^{\geq p}$ for perfect complexes
whose homology is supported in codimension at least $p$.
The superscript refers to support, rather than homological degree.
The support filtration has Poincar\'e--Verdier quotients
\begin{equation}\label{gw:coniveau_quotient}
 \operatorname{Perf}(Y)^{\geq p+1}\longrightarrow
 \operatorname{Perf}(Y)^{\geq p}\longrightarrow
 \bigoplus_{x\in Y^{(p)}}\operatorname{Perf}_{x}(\mathcal O_{Y,x}),
\end{equation}
with the restricted structures \cite[Proposition 4.5.1]{GWCHN}. For a closed immersion $i:Z\hookrightarrow Y$ of finite-dimensional regular Noetherian schemes and a line bundle $\mathcal L$ with involution, the functor with duality
\[
 (\operatorname{Perf}(Z),Q^{\mathrm{s}}_{i^!\mathcal L}[a])
 \longrightarrow
 (\operatorname{Perf}_Z(Y),Q^{\mathrm{s}}_{\mathcal L}[a]|_Z)
\]
induces equivalences on Grothendieck--Witt and $L$-theory, for every $a\in\Z$ \cite[Theorem 5.2.1]{GWCHN}. If $i$ has codimension $p$, then
\begin{equation}\label{gw:exceptional_line}
 i^!\mathcal O_Y=\det(N_{Z/Y})[-p]
\end{equation}
with its induced biduality \cite[Remark 5.1.10]{GWCHN}. Ordinary and Karoubi-localizing theories agree on these categories; their bounded Noetherian support hearts have vanishing negative $K$-groups \cite[Remark 5.2.4]{GWCHN}.

In particular, for a point $x$ of codimension $p$ in a regular $Y$, apply this theorem to the closed point of the \emph{regular local scheme} $\Spec\mathcal O_{Y,x}$. Set
\[
 \omega_x=\operatorname{Ext}^p_{\mathcal O_{Y,x}}
       (\kappa(x),\mathcal O_{Y,x}).
\]
This is the determinant line in \eqref{gw:exceptional_line}, computed on the regular local scheme $\Spec\mathcal O_{Y,x}$.

The finite support tower gives the symmetric coniveau spectral sequence
\begin{equation}\label{gw:l_coniveau}
 E_1^{p,q}=\bigoplus_{x\in Y^{(p)}}L^{\mathrm{s}}_{-q}(\kappa(x),\omega_x)
 \ \Longrightarrow\ L^{\mathrm{s}}_{-p-q}(Y),
 \qquad d_s:(p,q)\longmapsto(p+s,q-s+1).
\end{equation}
The shift $[-p]$ in purity cancels the codimension contribution to
the total degree: \eqref{gw:l_shift} turns supported degree $-p-q$
into field degree $-q$. The line $\omega_x$ retains the orientation
data of the support.
The $q=0$ row is denoted $C_{\mathrm W}(Y)$. Its coefficient is the full bilinear Witt group, including at characteristic-two points. Odd field $L$-groups vanish; the even groups are the Witt groups with the appropriate positive or negative duality sign \cite[Corollary 1.3.8]{GWCDHIII}. Negative-sign bilinear forms are alternating when two is invertible, so that group is zero there. In characteristic two the two bilinear signs agree.

For a one-dimensional $k$-vector space $\omega$, the twisted symbol group is
\[
 \KMW_n(k,\omega)=\KMW_n(k)\otimes_{\Z[k^\times]}\Z[\omega^\times],
\]
where $u\in k^\times$ acts on $\KMW_n(k)$ by $\langle u\rangle$ and on nonzero vectors by scalar multiplication. The analogous construction gives the Witt and Grothendieck--Witt coefficients. The map $\mu$ is $\GW_0(k)$-linear by its product construction, so Theorem~\ref{gw:field_symbols} descends to these twists.
For a chosen nonzero $e\in\omega$, the map
$x\mapsto x\otimes[e]$ identifies the untwisted coefficient group
with its $\omega$-twist. Replacing $e$ by $ue$ multiplies this
identification by $\langle u\rangle$. The tensor product therefore
records the change of basis in the coefficient line.

The same support tower for $\KO^{[a]}$ has coefficients
\begin{equation}\label{gw:ko_coniveau}
 {}^aE_1^{p,q}=\bigoplus_{x\in Y^{(p)}}
 \KO_{-p-q}^{[a-p]}(\kappa(x),\omega_x)
 \ \Longrightarrow\ \KO_{-p-q}^{[a]}(Y).
\end{equation}
In particular its diagonal row has coefficients $\KO_{a-p}^{[a-p]}$. Theorem~\ref{gw:field_symbols} supplies the canonical Milnor--Witt identification in every nonnegative field degree at most three. Every differential mentioned below is the differential of \eqref{gw:coniveau_quotient}, transported through these specified equivalences. Naturality of the forgetful map and of Bott multiplication holds on the entire tower, before taking any field coefficients. Connecting maps carry the usual suspension signs when a graded operation is moved past a boundary; these signs are retained in the integral maps.

\subsection{Coefficient one and the low filtered squares}

\begin{lemma}[Compatibility of the low filtered residues]\label{gw:residue_squares}\label{gw:low_residues}
The supported differentials preserve the low fundamental-ideal filtration. In the positions where the indicated coefficients occur, they satisfy
\[
 \partial(I^2)\subset I,\qquad
 \partial(I^3)\subset I^2,\qquad
 \partial(I^4)\subset I^3.
\]
On the associated low quotients the canonical maps
\[
 e_1:I/I^2\simeq\KM_1/2,
 \qquad e_2:I^2/I^3\simeq\KM_2/2,
 \qquad W/I\simeq\Z/2
\]
intertwine the Witt differential and the forward Quillen tame differential, modulo two. At a DVR $V$ with uniformizer $\pi$, residue field $k$, and $u\in V^\times$, a compatible trivialization of the exceptional line gives
\begin{equation}\label{gw:dvr_coefficient_one}
 \partial\langle\pi u\rangle=\langle\bar u\rangle,
 \qquad \partial\langle u\rangle=0.
\end{equation}
The corresponding diagonal unit-symbol boundary has coefficient one.
\end{lemma}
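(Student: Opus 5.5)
The plan is to reduce everything to the DVR case and then read off the filtered statements from multiplicativity of the residue.

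\textbf{Step 1: the DVR case.} Every $E_1$ differential in \eqref{gw:l_coniveau} between adjacent codimensions factors through the localization of $Y$ at a codimension-one point of the closure of the source point, which is a one-dimensional local ring. After normalization, or by restricting to the regular case that occurs in our applications, this is a DVR. So the basic computation takes place on $V$ with fraction field $K$.

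Here purity \eqref{gw:exceptional_line} identifies the closed support category with $\operatorname{Perf}(k)$, twisted by $\omega=\operatorname{Ext}^1_V(k,V)$ and shifted by $[-1]$. A trivialization of $\omega$ is given by the class of the Koszul extension $0\to V\xto{\pi}V\to k\to0$.

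\textbf{Step 2: the two residue values \eqref{gw:dvr_coefficient_one}.}
\begin{itemize}
\item For $\langle u\rangle$ with $u\in V^\times$ the form is defined over $V$, so it lies in the image of $L(V)\to L(K)$. Its boundary is zero by exactness of the localization sequence.
\item For $\langle\pi u\rangle$, take the $V$-lattice $V$ with the pairing $\pi u\,xy$. Its cokernel is the torsion module $k$, with induced linking form $\bar u$ against the Koszul generator of $\omega$.
\end{itemize}
This is the standard lattice description of the boundary in the Poincar\'e--Verdier sequence \eqref{gw:coniveau_quotient}. Once the ambient structure is fixed, the genuine and symmetric structures agree on this discrete data.

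With the chosen Koszul trivialization, the sign is $+1$, and this fixes coefficient one. A change of uniformizer $\pi\mapsto v\pi$ multiplies the trivialization by $\bar v$. On twisted coefficients this matches $\langle\bar v\rangle$, which is why the lemma speaks of a compatible trivialization.

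\textbf{Step 3: filtration preservation.} The residue is $W(V)$-linear, since it is the boundary of a module sequence over $L(V)$. Any element of $I^n(K)$ is a sum of Pfister forms $\langle\langle a_1,\dots,a_n\rangle\rangle$. Modifying each entry by squares, write each entry as a unit or as a unit times $\pi$, and use the Pfister relation $\langle\langle \pi u,\pi v\rangle\rangle=\langle\langle \pi u,-uv\rangle\rangle$ to arrange at most one entry divisible by $\pi$. Then:
\begin{itemize}
\item if no entry is divisible by $\pi$, the residue is zero;
\item otherwise linearity and \eqref{gw:dvr_coefficient_one} give $\pm\langle\langle\bar a_2,\dots,\bar a_n\rangle\rangle$ up to $\langle\bar u\rangle$, which lies in $I^{n-1}(k)$.
\end{itemize}
This gives $\partial(I^{n})\subset I^{n-1}$ for $n\le4$. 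In characteristic-two residue fields the same computation holds, since Witt groups there are two-torsion.

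\textbf{Step 4: compatibility on the graded quotients.} Compute the induced map on a symbol. The class $\langle\langle u_1,\dots,u_{n-1},\pi\rangle\rangle$ maps to $\langle\langle\bar u_1,\dots,\bar u_{n-1}\rangle\rangle$ modulo $I^n$. Under $e_{n-1}$ this corresponds to $\{\bar u_1,\dots,\bar u_{n-1}\}$, which is the tame residue of $\{u_1,\dots,u_{n-1},\pi\}$ with the convention $\partial\{u,\pi\}=\bar u$. Symbols with all entries units go to zero on both sides. Since these symbols generate, the squares commute modulo two. The case $W/I\simeq\Z/2$ is the rank-parity statement, $\partial\langle\pi u\rangle=\langle\bar u\rangle$ having odd rank. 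The diagonal unit-symbol boundary has coefficient one because Theorem~\ref{gw:field_symbols} sends $[u_1]\cdots[u_{n-1}][\pi]$ to Pfister coordinates, and the forgetful coordinate is the forward Quillen tame symbol.

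\textbf{Expected obstacle.} The main difficulty is fixing the sign and orientation in Step 2. Specifically, one must check that the Poincar\'e--Verdier boundary of \cite{GWCHN}, transported through purity with the Koszul trivialization of $\omega_x$, gives $+\langle\bar u\rangle$ rather than $\langle-\bar u\rangle$. It must also be made compatible with the Quillen convention $\partial\{u,\pi\}=\bar u$.

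I would settle this by computing the connecting map explicitly on the rank-one lattice above, using the suspension conventions fixed after \eqref{gw:ko_coniveau}. The remaining steps are formal.
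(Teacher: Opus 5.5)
Your DVR calculation (Steps~2--4) is sound. The lattice description of the boundary, $W(V)$-linearity, and the reduction of Pfister forms to at most one entry divisible by $\pi$ do give $\partial(I^n)\subset I^{n-1}$ and the graded squares at a DVR incidence.

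\textbf{The gap is Step~1.} The component of the coniveau differential from $x\in Y^{(p)}$ to $y\in Y^{(p+1)}$ lives on the one-dimensional local domain $\mathcal O_{\overline{\{x\}},y}$. In the rows where the lemma is actually used (the complexes $C_{\mathrm W}$ and $C_{I,j}$ on $\Spec R$ and $\Spec A$ in Section~\ref{global:section}), this ring is in general not a DVR. For example, if $x=V(f)$ with $f\in\fm_y^2$, the closure is singular at $y$. So ``restricting to the regular case that occurs in our applications'' is not available. The purity identification \eqref{gw:exceptional_line} requires a regular closed subscheme, so it does not apply either.

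Passing to the normalization is not a formality. You would need two inputs:
\begin{enumerate}
\item[(i)] A formula writing the supported Witt differential at a non-normal incidence as a sum, over the points $\tilde y$ of the normalization above $y$, of DVR residues followed by Scharlau transfers $W(\kappa(\tilde y))\to W(\kappa(y),\omega_y)$ with the correct twisting lines. You also need the matching formula (residues followed by norms) for the Quillen differential.
\item[(ii)] The fact that these transfers carry $I^n$ into $I^n$ and induce the Milnor norm on $I^n/I^{n+1}\simeq\KM_n/2$, including when $\kappa(y)$ has characteristic two.
\end{enumerate}
Both are genuine theorems, and neither is addressed. Your ``expected obstacle'' (the sign at a DVR) is therefore not where the real difficulty lies.

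\textbf{How the paper avoids this.} It never computes a non-normal incidence map. It works on the $\KO^{[a]}$ support tower \eqref{gw:ko_coniveau}, whose diagonal field coefficients are $\KMW_{a-p}(\kappa(x),\omega_x)$ by Theorem~\ref{gw:field_symbols}. It then uses that the forgetful map and Bott multiplication are maps of the whole tower. Given $w\in I^2(\kappa(x))$, lift it to $(z,w)\in\KMW_2=\KM_2\times_{\KM_2/2}I^2$. Its tower boundary at any $y$ lies in $\KMW_1(\kappa(y),\omega_y)=\KM_1\times_{\KM_1/2}I$. The Milnor coordinate of that boundary is the Quillen boundary of $z$, and its Witt coordinate is the Witt boundary of $w$, up to a suspension sign. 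Hence $\partial w\in I$ and $e_1(\partial w)=\partial e_2(w)$ at every incidence simultaneously. The pairs $(0,w)$ with $w\in I^3$ or $I^4$ give the other inclusions.

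To repair your proof, either supply (i) and (ii), or replace Step~1 by this tower-level argument and keep your DVR computation only for \eqref{gw:dvr_coefficient_one}.
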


\begin{proof}
\proofstep{1. The normalized DVR boundary.}
First fix the DVR normalization. Formula \eqref{gw:dvr_coefficient_one} is \cite[Lemma 2.2.9]{GWCDHIII}, which applies to every local Dedekind ring. We describe its representative in the support category. For $a=\pi u$, the integral lattice $V$ in the rank-one generic form has dual lattice $a^{-1}V$. The boundary is represented by $a^{-1}V/V$ with pairing
\[
 ([x],[y])\longmapsto[axy]\in\Frac(V)/V.
\]
Purity and the normal-line orientation from $\pi$ identify this with $\langle\bar u\rangle$. Unimodular unit forms extend and have zero boundary. Likewise the Bass class of $\pi$ has boundary the rank-one class $\langle1\rangle$ of the one-term residue module; this is the cone of multiplication by $\pi$, with the same orientation as the projective-line class defining $[t]_{\mathrm g}$. The product formula for localization then sends a uniformizer symbol multiplied by extending unit symbols to the product of their residue symbols, with coefficient one. The comparison carries this calculation to the homotopy-symmetric support theory.

\proofstep{2. Incidence maps in the support sequence.}
For an arbitrary incidence, compute the boundary in \eqref{gw:ko_coniveau}. At the source field $E$, represent $w\in I^2(E)$ by a pair $(x,w)\in\KMW_2(E)$ in the field pullback, choosing $x\in\KM_2(E)$ lifting $e_2(w)$. The canonical identification $\mu_2$, the forgetful map, and Bott multiplication identify the relevant maps of the support towers, with their shifted boundary conventions. At every target field the boundary lies in $\KMW_1$ and its Milnor coordinate is the Quillen tame boundary of $x$.

Applying Bott multiplication identifies its Witt coordinate with the full Witt boundary of $w$, with the suspension sign prescribed by the shifted support tower. This sign has no effect on membership in a fundamental-ideal power, and disappears on the associated graded, which is killed by two. The degree-one field pullback therefore gives
\[
 e_1(\partial w)=\partial(e_2(w)).
\]
If $w\in I^3(E)$, take the pair $(0,w)$ instead. Its boundary has zero Milnor coordinate, so its Witt component belongs to $I^2$. Similarly, the degree-one pair $(0,w)$ for $w\in I^2$ has boundary in $\KMW_0=\GW_0$ of rank zero, hence Witt component in $I$. For a general degree-one pair the rank of the forgetful boundary is the valuation, giving the $e_1$/rank-parity square. The identical argument with $\mu_3$ and $(0,w)$ for $w\in I^4$ gives $\partial w\in I^3$.

\proofstep{3. The filtered residue squares.}
For example, the arrow from codimension one to codimension two is obtained by putting $\KMW_2$ and $\KMW_1$ into the corresponding positions of the shift-three tower. Both natural transformations exist on that tower before any quotient is taken, so the preceding diagram is the ambient incidence map. The argument applies equally to any adjacent codimensions in the low tail. Finally changing a value-line trivialization multiplies a Witt coefficient by $\langle v\rangle$; this acts trivially on the associated graded, since $\langle v\rangle\equiv1\pmod I$. Thus the squares and the coefficient-one conclusions have the stated line-twisted interpretation.
\end{proof}

For later use write $C_{I,j}(Y)$ for the filtered row whose codimension-$p$ term is $I^{j-p}(\kappa(x),\omega_x)$, with $I^a=W$ for $a\leq0$, whenever its indicated low-tail differentials are used. Lemma~\ref{gw:residue_squares} supplies precisely those differentials and graded squares. Exactness of these filtered rows will be established in the cohomological degrees used below.

\section{Local genuine vanishing and the comparison window}

\subsection{Positive quadratic formations over a local ring}

The additional local vanishing in degree minus three reduces to a
positive quadratic formation group. Its proof is an elimination
argument: we use quadratic isometries with zero formation class to
put both Lagrangians in standard position.

\needspace{7\baselineskip}
\begin{lemma}[Positive quadratic formations over a local ring]\label{gw:positive_formations}
For every commutative local ring $T$,
\[
 L_1^{\mathrm{q}}(T;+)=0.
\]
This is the undecorated projective pure quadratic group with positive sign.
\end{lemma}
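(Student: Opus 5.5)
We will use the classical description of $L_1^{\mathrm{q}}(T;+)$ due to Ranicki. It is the group of nonsingular quadratic $(+1)$-formations $(H_{\mathrm q}(T^n);T^n,G)$. Here $T^n\oplus(T^n)^*$ carries the hyperbolic quadratic form and $G$ is a second Lagrangian. The relations are stable isomorphism together with the vanishing of boundary formations. Equivalently, the group is the abelianized unitary group $EU_{\mathrm q}$-quotient: $L_1^{\mathrm q}(T)$ is the cokernel of $\varinjlim U_{\mathrm q,n}(T)\to \varinjlim U_{\mathrm q,n}(T)$ modulo the subgroup generated by elementary quadratic matrices and the flip $\sigma=\left(\begin{smallmatrix}0&1\\1&0\end{smallmatrix}\right)$. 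We plan to use this unitary-group model. The undecorated projective version differs from the free version only through $\tilde K_0(T)=0$, so over a local ring the decoration issue disappears.

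\textbf{Step 1.} Reduce to showing that every quadratic unitary matrix $\alpha=\left(\begin{smallmatrix}a&b\\c&d\end{smallmatrix}\right)\in U_{\mathrm q,n}(T)$ becomes, after stabilization, a product of elementary quadratic matrices, $\sigma$, and matrices $\left(\begin{smallmatrix}g&0\\0&(g^*)^{-1}\end{smallmatrix}\right)$ with $g\in\GL_n(T)$. The last type represent zero formations $(H;L,L)$ twisted by automorphisms of $L$. Here the elementary quadratic matrices are $\left(\begin{smallmatrix}1&\theta\\0&1\end{smallmatrix}\right)$ with $\theta$ of the form $\phi-\phi^*$, and their transposes.

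\textbf{Step 2.} Perform the elimination. Since $T$ is local, reduce modulo $\fm$. The Lagrangian $G=\im\binom{a}{c}$ is in general position with respect to $T^n$ or $(T^n)^*$ only after an elementary move. First choose a quadratic form $\theta$ (alternating matrix $\phi-\phi^*$, plus diagonal quadratic refinement) such that $c+d\theta$, or alternatively $a+b\theta'$, becomes invertible modulo $\fm$, hence invertible over $T$. Then $G$ is the graph of a map $T^n\to(T^n)^*$. The Lagrangian condition for the quadratic form makes this graph exactly a quadratic (hermitian $+$, even) form $\psi-\psi^*$. Applying the corresponding elementary matrix moves $G$ to $T^n$, leaving a diagonal $\GL_n$ block. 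Working over the residue field $\kappa=T/\fm$, this is the standard fact that two Lagrangians in a quadratic hyperbolic space over a field admit a common transversal Lagrangian.

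\textbf{Step 3.} Lift the field statement to $T$. We choose the transversal over $\kappa$, lift it to a Lagrangian of $H_{\mathrm q}(T^n)$ using the elementary lift of its quadratic matrix, and use Nakayama to conclude that transversality persists over $T$. The diagonal block $\left(\begin{smallmatrix}g&0\\0&g^{*-1}\end{smallmatrix}\right)$ then gives a formation isomorphic to $(H;L,L)$, which is zero in $L_1^{\mathrm q}$.

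\textbf{Main obstacle.} We expect the main obstacle to be Step 2 when the residue field is small, especially $\F_2$. There the common-transversal argument for quadratic (not merely bilinear) Lagrangians must respect the quadratic refinement, and the Arf-invariant-like data could obstruct it. We would first try stabilizing by one hyperbolic summand, which provides enough room for a transversal even over $\F_2$. If that does not suffice, we would fall back on the known computation $L_1^{\mathrm q}(\kappa)=0$ for fields, which holds because formations over fields are classified by rank data, and then lift via Wall's rigidity for complete local rings and Nakayama.
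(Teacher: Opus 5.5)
\textbf{Gap in Step~2: the parity obstruction.}
Your generating set is the right one. The matrices $\diag(g,g^{*-1})$, the elementary quadratic matrices and the single-pair flip $\sigma$ are exactly the paper's $D_u$, $L_B$ and pair exchanges. The unitary-group relations you quote play the role of the paper's formation cocycle $\nu(gh)=\nu(g)+\nu(h)$.

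The gap is Step~2. The ``standard fact'' you invoke is false for the positive quadratic hyperbolic form. A Lagrangian transverse to $T^n\oplus0$ is the graph of an alternating zero-diagonal matrix, and such a matrix is invertible only for even $n$. Already the hyperbolic plane has exactly two Lagrangians $\langle e\rangle,\langle f\rangle$, and they have no common transversal.

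More generally, over any field the Lagrangians of $\Hypq$ split into two families, with $L,G$ in the same family iff $\dim(L\cap G)\equiv n\pmod 2$. A common transversal exists only when $L$ and $G$ lie in the same family. Elementary quadratic and diagonal matrices preserve the families. Hence for even $n$ and $\bar\alpha$ interchanging the families, neither of your two alternatives can be achieved.

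For example, take $n=2$ and $\alpha$ the flip of the first pair, so $a=d=\diag(0,1)$ and $b=c=\diag(1,0)$. Then
$c+d\theta=\left(\begin{smallmatrix}1&0\\-t&0\end{smallmatrix}\right)$ and $a+b\theta'=\left(\begin{smallmatrix}0&t\\0&1\end{smallmatrix}\right)$,
which are singular for every $t$.

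This is a Dickson-type parity obstruction, present over every residue field, including characteristic zero. Your diagnosis in terms of a small $\kappa$ and room over $\F_2$ therefore points at the wrong thing. The fallback does not close the gap either: an arbitrary local ring is not complete, and ``$L_1^{\mathrm q}(\kappa)=0$ by rank data'' is asserted rather than proved.

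\textbf{Repair within your framework.}
Multiply $\alpha$ by a single flip $\sigma$, which is among your relations, whenever $\bar\alpha$ swaps the families. Alternatively, stabilize so that $n$ is odd; then one of your two alternatives always applies. Under the family condition a common transversal over $\kappa$ does exist for every field. To see this, pass to $K^\perp/K$ with $K$ the intersection of the two Lagrangians; there the required invertible alternating matrix has even size. Your Nakayama lift then goes through.

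\textbf{How the paper avoids the issue.}
The paper never looks for a global transversal. It takes one basis vector $(x,y)$ of the second Lagrangian and uses a pair exchange to make a coordinate of $x$ a unit. It then moves $x$ to $e_1$ by $D_u$, notes that isotropy forces $y_1=0$, clears $y$ with a zero-diagonal $L_B$, and inducts on the remaining pairs. The pair exchanges absorb the parity automatically, and no reduction modulo $\fm$ is needed.
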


\begin{proof}
\proofstep{1. Quadratic isometries.}
The formation model agrees with pure quadratic $L$-theory: see \cite[Remark 1.2.25]{GWCDHIII} and \cite[Proposition 5.2]{GWRanickiSurgery}. A formation has two direct-summand Lagrangians in a nonsingular quadratic module. Since finite projectives over $T$ are free and a metabolic quadratic module is hyperbolic \cite[Theorem 1.1]{GWRanickiFoundations}, put the first Lagrangian in standard position
\[
 \Lambda_0=T^n\oplus0\subset \Hypq=T^n\oplus(T^n)^*,
 \qquad q(x,y)=x^{\mathsf t}y.
\]
The associated bilinear pairing is
$b((x,y),(x',y'))=x^{\mathsf t}y'+(x')^{\mathsf t}y$.
A Lagrangian is a direct summand $L=L^\perp$ on which $q$ vanishes.
Here $\Hypq$ carries this positive quadratic structure;
$\Hyp$, used in the completion argument, carries an alternating form.
For an isometry $g$, let $\nu(g)=[\Hypq;\Lambda_0,g\Lambda_0]$. The formation cocycle relation \cite[Lemma 3.3]{GWRanickiFoundations} gives $\nu(gh)=\nu(g)+\nu(h)$. We use the following three types of isometries. Each has zero formation
class for the reason indicated:
\begin{enumerate}
\item $D_u(x,y)=(ux,u^{-\mathsf t}y)$ preserves $\Lambda_0$.
\item $L_B(x,y)=(x,y+Bx)$, with $B^{\mathsf t}=-B$ and zero diagonal, is a quadratic isometry, since $x^{\mathsf t}Bx=0$. The Lagrangians $\Lambda_0,L_B\Lambda_0$ have the common complement $0\oplus(T^n)^*$, so their formation is a boundary.
\item Exchanging one hyperbolic pair $e_i,f_i$ preserves the positive quadratic function and gives the sum of a complementary-plane formation and equal-Lagrangian formations, hence zero.
\end{enumerate}
The last two boundary assertions are the complementary and common-complement relations of \cite[Lemmas 2.1 and 2.2]{GWRanickiFoundations}.

\proofstep{2. Reduction to the standard Lagrangian.}
These transformations act transitively on Lagrangians. Choose a basis vector $v=(x,y)$ of a Lagrangian $\Lambda_1$. It is unimodular in $\Hypq$, because $\Lambda_1$ is a direct summand. Since $T$ is local, some coordinate is a unit. After a pair exchange if necessary, a coordinate of $x$ is a unit; a transformation $D_u$ puts $x=e_1$. Quadratic isotropy gives $y_1=0$ exactly. Set
\[
 B_{j1}=-y_j,\qquad B_{1j}=y_j\quad(j>1),
 \qquad B_{ij}=0\text{ otherwise}.
\]
Then $L_B(e_1,y)=e_1$. Every other basis vector of the transformed $\Lambda_1$ has zero $f_1$-coordinate by orthogonality to $e_1$. Subtracting its $e_1$-coordinate puts it in the smaller hyperbolic module on the remaining pairs. The remaining summand is a direct summand of the smaller hyperbolic
module. A retraction is obtained by restricting a retraction onto
$\Lambda_1$ and projecting away its first basis vector. Its orthogonal
complement in that smaller module is itself, so it is a Lagrangian. Induction reduces it to the standard one. Thus some product $g$ of the three types takes $\Lambda_1$ to $\Lambda_0$, and $\nu(g)=0$ proves $[\Hypq;\Lambda_0,\Lambda_1]=0$.

\proofstep{3. Preservation of the quadratic form.}
The zero diagonal of $B$ gives $x^{\mathsf t}Bx=0$, so the transformations preserve the quadratic function $q(x,y)=x^{\mathsf t}y$.
\end{proof}

\begin{proposition}[Local genuine negative groups]\label{gw:genuine_local_negative}
For every commutative local ring $T$,
\[
 L_{-1}^{\mathrm{gs}}(T)=L_{-2}^{\mathrm{gs}}(T)=L_{-3}^{\mathrm{gs}}(T)=0.
\]
\end{proposition}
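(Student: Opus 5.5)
The plan is to compare the genuine symmetric $L$-groups of $T$ with the pure quadratic ones, using the genuine cutoff tower. For a commutative ring, the structures $Q^{\geq m}_T$ interpolate between $Q^{\mathrm q}$ (as $m\to\infty$) and $Q^{\mathrm s}$ (as $m\to-\infty$). The relevant comparison is the surgery statement in \cite{GWCDHIII}: the map
\[
 L_n(T,Q^{\geq m+1})\to L_n(T,Q^{\geq m})
\]
is an isomorphism in a range of degrees, roughly $n\le 2m-3$, and surjective one degree higher.

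For a ring of homological (projective-module) level zero we can apply this with the genuine structure $Q^{\mathrm{gs}}=Q^{\geq0}$. In negative degrees the genuine theory then agrees with the theory of any higher cutoff. Concretely, I expect the statement
\[
 L^{\mathrm{gs}}_n(T)\cong L^{\mathrm q}_n(T)\qquad(n\le -2),
\]
with a surjection $L^{\mathrm q}_{-1}(T)\to L^{\mathrm{gs}}_{-1}(T)$. This is the "genuine $L$-theory is quadratic below degree $-2$" result of \cite[Recollection R.3 and the comparison of cutoffs]{GWCDHIII}. Checking the exact range is routine but must be done carefully.

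Granting this, four-periodicity of $L^{\mathrm q}$ \cite[Corollary R.10]{GWCDHIII} reduces each group to a nonnegative pure quadratic group of a local ring:
\[
 L^{\mathrm q}_{-1}=L^{\mathrm q}_3=L^{\mathrm q}_1(T;-),\quad
 L^{\mathrm q}_{-2}=L^{\mathrm q}_2=L^{\mathrm q}_0(T;-),\quad
 L^{\mathrm q}_{-3}=L^{\mathrm q}_1=L^{\mathrm q}_1(T;+).
\]
Here the sign records the biduality shift in \eqref{gw:l_shift}. Each group is then handled separately.

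\begin{enumerate}
\item \emph{$L^{\mathrm q}_{-3}$.} This is exactly $L^{\mathrm q}_1(T;+)$, which vanishes by Lemma~\ref{gw:positive_formations}.
\item \emph{$L^{\mathrm q}_{-2}$.} This is the Witt group of nonsingular $(-1)$-quadratic forms on free $T$-modules. Over a local ring every such form is hyperbolic. To see this, take a unimodular vector $v$; there is $w$ with $b(v,w)=1$. Adjust $w$ by a multiple of $v$ to make $q(w)=0$, and arrange $q(v)=0$ after a change of basis; this works because a $(-1)$-quadratic form has $b(x,x)=0$, so $q$ takes arbitrary values in $T$ and can be corrected by $q(v+tw)$-type moves. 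Splitting off the hyperbolic plane and inducting gives $L^{\mathrm q}_0(T;-)=0$.
\item \emph{$L^{\mathrm q}_{-1}$.} We need $L^{\mathrm q}_1(T;-)=0$. I will rerun the elimination argument of Lemma~\ref{gw:positive_formations} in the $(-1)$-quadratic hyperbolic module, adapting the three isometry types. The $D_u$ maps are unchanged. The $L_B$ maps now need $B$ symmetric with even, i.e.\ zero-quadratic, diagonal. The pair exchange now becomes $(e_i,f_i)\mapsto(f_i,-e_i)$.
\end{enumerate}

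The main obstacle is the first step: pinning down the exact range of the genuine-to-quadratic comparison for a local ring, including the surjection in degree $-1$, and confirming that the degree $-1$ statement needs only surjectivity from a vanishing group. If the available range gives isomorphisms only for $n\le-3$, the fallback is different for the remaining degrees. I would identify $L^{\mathrm{gs}}_{-1}$ and $L^{\mathrm{gs}}_{-2}$ directly with classical formation and form groups of even forms, via \cite[Section~1.3]{GWCDHIII}, and kill them by the same elimination over the local ring.
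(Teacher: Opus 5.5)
Your degree $-3$ argument is the paper's: $Q^{\mathrm{gs}}=Q^{\geq0}$ is $0$-quadratic, so \cite[Corollary 1.2.12]{GWCDHIII} gives $L^{\mathrm{gs}}_{-3}(T)=L^{\mathrm q}_{-3}(T)=L^{\mathrm q}_1(T;+)$, which is zero by Lemma~\ref{gw:positive_formations}. The gap is in degrees $-1$ and $-2$.

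For a $0$-quadratic structure, the comparison with $L^{\mathrm q}$ is an isomorphism only for $n\le-3$, with surjectivity at $n=-2$. It says nothing at $n=-1$. Your range is shifted by one, and the claimed isomorphism $L^{\mathrm{gs}}_{-2}(T)\cong L^{\mathrm q}_{-2}(T)$ is false.

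The surjection at $-2$ cannot rescue the argument either, because your step~2 fails in residue characteristic two. Here $L^{\mathrm q}_{-2}(T)=L^{\mathrm q}_0(T;-)$ carries the Arf invariant. Take $T=\F_2$ or $T=\Z_{(2)}$, and the $(-1)$-quadratic plane with $b(e,f)=1$ and $q(e)=q(f)=1$, where $q$ takes values in $T/2T$. Its reduction to $\F_2$ is the anisotropic form $x^2+xy+y^2$. Every unimodular vector has $q=1$, so your ``$q(v+tw)$'' correction cannot produce an isotropic vector, and the form is nonzero in $L^{\mathrm q}_2(T)$. By contrast, $L^{\mathrm{gs}}_{-2}(T)=W_{\mathrm{alt}}(T)=0$. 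The proposition is needed precisely in residue characteristic two (mixed characteristic $(0,2)$ and the divisor $S$ of characteristic two), so comparison with pure quadratic theory cannot settle degrees $-1$ and $-2$.

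The fix is to argue on the genuine side, as your fallback suggests but does not carry out. This is the paper's route, via \cite[Lemma 2.1]{GWSchlichtingComparison}. The $K$-theory homotopy-orbit term in \eqref{gw:fundamental} is connective. Hence $\GW_0^{[a]}(T)\to L^{\mathrm{gs}}_{-a}(T)$ is surjective, with kernel the hyperbolic image of $K_0(T)$.

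Over a local ring, symplectic elimination gives two facts: $\GW_1^{[2]}(T)=K\Sp_1(T)=0$, and $h\colon K_0(T)\to\GW_0^{[2]}(T)$ is an isomorphism $\Z\to\Z$, measuring half-rank. The second fact gives $L^{\mathrm{gs}}_{-2}(T)=0$. Now consider the Bott segment of \eqref{gw:bott},
$K_1(T)\to\GW_1^{[2]}(T)\to\GW_0^{[1]}(T)\to K_0(T)\xrightarrow{h}\GW_0^{[2]}(T)$. Since $\GW_1^{[2]}(T)=0$ and $h$ is injective, it forces $\GW_0^{[1]}(T)=0$, hence $L^{\mathrm{gs}}_{-1}(T)=0$.

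This replaces your unspecified elimination for formations of even forms with two standard facts about $\Sp(T)$ over a local ring. It is independent of the residue characteristic.
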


\begin{proof}
The first two equalities are \cite[Lemma 2.1]{GWSchlichtingComparison}. To recall its algebraic content, local symplectic elimination gives $\GW_1^{[2]}(T)=0$ and $K_0(T)\xrightarrow{h}\GW_0^{[2]}(T)$ an isomorphism, with both groups $\Z$ when symplectic rank is normalized by hyperbolic planes. The Bott sequence gives $\GW_0^{[1]}(T)=0$, and the surjections $\GW_0^{[a]}(T)\to L_{-a}^{\mathrm{gs}}(T)$ give the assertions in degrees $-1,-2$. For the additional degree, $Q^{\mathrm{gs}}=Q^{\geq0}$ is $0$-quadratic. Hence \cite[Corollary 1.2.12]{GWCDHIII}, with $m=0$, gives
\[
 L_{-3}^{\mathrm{gs}}(T)=L_{-3}^{\mathrm{q}}(T;+)=L_1^{\mathrm{q}}(T;+)=0.
\]
The middle equality uses only four-periodicity of \emph{pure quadratic} theory, \cite[Corollary R.10]{GWCDHIII}; the final equality is Lemma~\ref{gw:positive_formations}.
\end{proof}

\subsection{A comparison range from two-torsion-freeness}

\begin{proposition}[The two-torsion-free comparison range]\label{gw:comparison_window}
Let $B$ be a commutative coherent ring of finite global dimension $d$, with two a nonzerodivisor. Then
\[
 L_n^{\mathrm{gs}}(B)\longrightarrow L_n^{\mathrm{s}}(B)
\]
is injective for $n\geq d-4$ and bijective for $n\geq d-3$. The genuine-to-homotopy-symmetric comparison
\[
 \GW_i^{[a]}(B)\longrightarrow\KO_i^{[a]}(B)
\]
is injective for $i-a\geq d-4$ and bijective for $i-a\geq d-3$.
\end{proposition}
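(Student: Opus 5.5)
We compare the genuine structure $Q^{\mathrm{gs}}_B=Q^{\geq0}_B$ with the homotopy-symmetric structure $Q^{\mathrm s}_B$ through the pullback \eqref{gw:cutoff_pullback}. This gives a natural cofibre sequence of quadratic functors
\[
 Q_B^{\mathrm{gs}}(X)\longrightarrow Q_B^{\mathrm s}(X)\longrightarrow
 \operatorname{RHom}_B\bigl(X,\tau_{\leq -1}E_B\bigr),
\]
whose cofibre is an exact functor of $X$, with no quadratic part. For such a change of structure, Lurie's and Calm\`es et al.'s comparison gives the cofibre of $L(B,Q^{\mathrm{gs}})\to L(B,Q^{\mathrm s})$ in terms of the linear part. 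Concretely it is the Tate construction
\[
 \bigl(\operatorname{RHom}_B(\Ddual B,\tau_{\leq-1}E_B)\bigr)^{tC_2}
\]
or a shift of it; I would use the form recorded in \cite[Recollection R.3 and its corollaries]{GWCDHIII}. In the Grothendieck--Witt sequence \eqref{gw:fundamental} the $K$-theory term $K(\mathcal C;\Ddual)_{hC_2}$ is unchanged under the comparison. The $\GW$ statement for shift $a$ therefore follows from the $L$ statement by the five lemma once we pass to $\Sigma^aL$. The shift \eqref{gw:l_shift} converts the range $n\geq d-4$ into $i-a\geq d-4$. So the whole proposition reduces to the $L$-theory claim.

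To prove the $L$-theory claim, I bound the connectivity of the cofibre $\operatorname{cof}$ of the $L$-theory comparison from above. The required statement is that $\pi_n\operatorname{cof}=0$ for $n\geq d-3$. That gives bijectivity for $n\geq d-3$ and injectivity in degree $d-4$.

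The source of the bound is $E_B=(\mathrm HB)^{tC_2}$. Its homotopy groups are $B/2$ in even degrees and $B[2]$ in odd degrees, twisted by Frobenius. Because two is a nonzerodivisor, $B[2]=0$, so $\tau_{\leq-1}E_B$ has homotopy only $B/2$ in degrees $-2,-4,\dots$, regarded as a $B$-module through Frobenius. Since $B$ is coherent of global dimension $d$, the module $B/2$ has projective dimension at most $d$. Hence $\operatorname{RHom}_B(X,\tau_{\leq-1}E_B)$ has homotopy in degrees $\leq -2$ for a module $X$ in degree zero. Maps out of a perfect complex can raise degrees by at most $d$ through Ext groups; this is where $d$ enters. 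Taking the $C_2$-Tate construction and checking how many degrees the $L$-theory cofibre formula shifts should place the cofibre in degrees $\leq d-4$.

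The main obstacle is bookkeeping the exact degree shift. This covers both how the cofibre of $L$ depends on the linear part, which may involve a suspension by one, and whether the Tate construction destroys the truncation bound. Tate constructions are not bounded above, so a naive connectivity count fails. To handle this, I would instead use the description of the comparison as iterated surgery over the cutoffs $Q^{\geq m}$, namely \cite[Corollary 1.2.12]{GWCDHIII}, which was used in Proposition~\ref{gw:genuine_local_negative}. The maps $L(B,Q^{\geq m})\to L(B,Q^{\geq m-1})$ are isomorphisms in a range governed by $m$ and the global dimension. Since $B[2]=0$, the odd cutoffs contribute nothing, and only the even steps involving $B/2$ matter, each of which changes things only in low degrees. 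Composing these steps from $m=0$ downward, the colimit as $m\to-\infty$ is $Q^{\mathrm s}$. This gives the stated range. The delicate point is verifying that the first nontrivial step, from $m=0$ to $m=-2$, only affects degrees $\leq d-4$ and is injective in degree $d-4$.
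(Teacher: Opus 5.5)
\textbf{Verdict: genuine gap.} Your two reductions are correct, and they are the same as the paper's. First, the fundamental sequences \eqref{gw:fundamental} for $Q^{\mathrm{gs}}_B[a]$ and $Q^{\mathrm s}_B[a]$ share the term $K(\operatorname{Perf}(B);\Ddual[a])_{hC_2}$. So the $\GW$ and $L$ comparison fibres agree, and \eqref{gw:l_shift} turns $n$ into $i-a$. Second, $\pi_{-1}E_B=\widehat H^1(C_2;B)=B[2]=0$, which gives $Q^{\geq0}_B=Q^{\geq-1}_B$.

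The gap is that the $L$-theoretic range for $Q^{\geq-1}_B\to Q^{\mathrm s}_B$ is never established, and neither of your routes supplies it.
\begin{itemize}
\item Your first route rests on an identification that is not available. The cofibre of $L(\mathcal C,Q)\to L(\mathcal C,Q')$, for two structures with the same bilinear part, is not a Tate construction of the linear cofibre. Already for $Q^{\mathrm q}\to Q^{\mathrm s}$ over $\Z$ it is normal $L$-theory, whose degree-zero group is $\Z/8$. By contrast, the homotopy of a Tate construction of a $C_2$-object in $\mathrm H\Z$-modules is killed by two. Also, \cite[Recollection R.3]{GWCDHIII} only defines the cutoff pullback.
\item Your second route cites \cite[Corollary 1.2.12]{GWCDHIII}. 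That result compares an $m$-quadratic structure with pure quadratic $L$-theory in \emph{low} degrees; this is exactly how the paper uses it in Proposition~\ref{gw:genuine_local_negative}. It has no global-dimension input and says nothing about agreement with $L^{\mathrm s}$ in degrees $\geq d-3$.
\end{itemize}
The claim that each step $Q^{\geq m}\to Q^{\geq m-1}$ changes $L$ only in degrees $\leq d-4$, uniformly in $m$ so that the colimit argument goes through, is precisely what has to be proved. You leave it as ``the delicate point''. A connectivity count on $\operatorname{RHom}_B(X,\tau_{\leq-2}E_B)$ cannot settle it for an arbitrary Poincar\'e representative $X$, because nothing constrains where $X$ lives.

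The missing idea is the surgery theorem for $s$-symmetric structures. The paper notes that $Q^{\geq-1}_B$ is $3$-symmetric \cite[Example 1.1.4]{GWCDHIII}. It then applies \cite[Corollary 1.3.9]{GWCDHIII}: over a coherent ring of global dimension $d$, an $s$-symmetric structure has $L_n\to L^{\mathrm s}_n$ injective for $n\geq d-2s+2$ and bijective for $n\geq d-2s+3$. Taking $s=3$ gives the stated range in one step, with no induction over cutoffs and no colimit.

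The mechanism behind that corollary is the one in Theorem~\ref{gw:supported_lift}:
\begin{itemize}
\item First surger a Poincar\'e object for $Q^{\mathrm s}_B[-n]$ into a window of the standard $t$-structure. The window's position is controlled by $n$ and by the width $d$ of $\Ddual_B$ \cite[Proposition 1.3.1]{GWCDHIII}.
\item Only after this does the lifting obstruction in $\pi_n\operatorname{RHom}_B(X,\tau_{\leq-2}E_B)$ vanish for degree reasons.
\item The same argument applied to cobordisms gives injectivity one degree lower.
\end{itemize}
You need either to cite this result or to carry out the surgery-to-a-window argument yourself. Once that is in place, the $\GW$ statement follows exactly as you describe.
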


\begin{proof}
For the trivial involution with positive sign, $\widehat H^1(C_2;B)=B[2]=0$. Formula \eqref{gw:cutoff_pullback}, or \cite[Remark R.4]{GWCDHIII} with $m=-1$, therefore identifies $Q_B^{\geq0}$ with $Q_B^{\geq-1}$. The latter is $3$-symmetric by \cite[Example 1.1.4]{GWCDHIII}. Corollary 1.3.9 of that paper says that an $s$-symmetric structure over a coherent ring of global dimension $d$ compares injectively in degrees $n\geq d-2s+2$ and bijectively in degrees $n\geq d-2s+3$. Substituting $s=3$ gives the claimed bounds. The square induced by \eqref{gw:fundamental} is cartesian, since the two structures have the same underlying duality and thus the same $K_{hC_2}$ term. Its comparison fibres agree. Formula \eqref{gw:l_shift} therefore transfers the range to the shifted $\GW$ comparison exactly as stated.
\end{proof}

\section{Coconnective surgery on a support category}
\label{gw:supported_surgery_section}

We now study the lifting of homotopy-symmetric classes with support
to genuine classes. The obstruction to lifting is measured by a linear
mapping spectrum. Rather
than trying to lift an arbitrary representative, we first perform
surgery so that its homology lies in a range where the obstruction
vanishes. The resulting lifting theorem will give the local symmetric
vanishings in degrees $-1$ and $-2$.

\subsection{The linear obstruction}

Let $T$ be regular local, with two a nonzerodivisor. Put
\[
 E_- =\tau_{<0}(\mathrm H T)^{tC_2},\qquad \NFr=\operatorname{Fr}_*(T/2),
 \qquad t\cdot\bar a=\overline{t^2a}.
\]
Taking the cofibre of \eqref{gw:cutoff_pullback} gives the comparison sequence
\begin{equation}\label{gw:linear_comparison}
 Q_T^{\mathrm{gs}}(X)\longrightarrow Q_T^{\mathrm{s}}(X)
       \longrightarrow\operatorname{RHom}_T(X,E_-).
\end{equation}
Tate cohomology, with the diagonal action specified there, gives
\begin{equation}\label{gw:tate_groups}
 \pi_{-2\ell}E_-=\NFr\quad(\ell\geq1),\qquad
 \pi_jE_-=0\quad\text{otherwise}.
\end{equation}
Indeed scalar multiplication on the two inputs acts on the diagonal by its square; modulo two this is additive. We compute the lifting obstruction using the Postnikov tower of $E_-$.

The comparison cofibre has Frobenius-twisted homotopy modules.
The following grade estimate is the algebraic input that will force
maps into those modules to vanish. The codimension bound loses one
because the coefficient module is supported on the divisor defined
by two.

\begin{lemma}[The Frobenius-twisted grade estimate]\label{gw:frobenius_grade}
If a finite $T$-module $M$ has support of codimension at least $b\geq1$, then
\[
 \operatorname{Ext}_T^a(M,\NFr)=0\qquad(a<b-1).
\]
\end{lemma}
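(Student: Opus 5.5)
Since $\NFr$ is annihilated by $2$, it is a module over $\bar T=T/2$. I would therefore move the computation from $T$ to $\bar T$ by change of rings. Because $T$ is regular local and $2$ is a nonzerodivisor in $\fm$, the quotient $\bar T$ is a local complete intersection of dimension $\dim T-1$. As a $T$-module, $\NFr$ is the restriction of scalars of $\bar T$ along $t\mapsto t^2$. I do not need to know that $\NFr$ is finite over $T$. I would use only that it is a $\bar T$-module through the Frobenius $F:\bar T\to\bar T$, and that an element $t\in T$ acts on it by multiplication by $\bar t^2$.

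The first step is the standard grade criterion. For a finite $T$-module $M$ and any $T$-module $N$, $\operatorname{Ext}^a_T(M,N)=0$ for $a<\operatorname{grade}(\operatorname{ann}M,N)$. This follows by the usual induction on $a$, using that $\operatorname{Ext}$ is annihilated by $\operatorname{ann}M$, together with a $N$-regular element in $\operatorname{ann}M$. The second step is to produce a long regular sequence. Let $I=\operatorname{ann}M$, so $\operatorname{ht}I\ge b$. The image $\bar I$ of $I$ in $\bar T$ then has height at least $b-1$. Since $\bar T$ is Cohen--Macaulay, $\bar I$ contains a $\bar T$-regular sequence $\bar x_1,\dots,\bar x_{b-1}$. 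Lift these elements to $x_i\in I$. On $\NFr$ the element $x_i$ acts by $\bar x_i^2$. Powers of a regular sequence again form a regular sequence, so $\bar x_1^2,\dots,\bar x_{b-1}^2$ is $\bar T$-regular. Hence $x_1,\dots,x_{b-1}$ is $\NFr$-regular in the required sense: each $x_i$ is a nonzerodivisor on $\NFr/(x_1,\dots,x_{i-1})\NFr=\operatorname{Fr}_*(\bar T/(\bar x_1^2,\dots,\bar x_{i-1}^2))$.

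The properness condition $\NFr\neq I\NFr$ is not needed for the vanishing direction. If it fails, all the $\operatorname{Ext}$ groups vanish anyway, since $\operatorname{Ext}$ is then killed by an element acting invertibly. It therefore remains to run the induction. I would consider $0\to\NFr\xrightarrow{x_1}\NFr\to\NFr/x_1\to0$ and use that $x_1$ kills $\operatorname{Ext}_T^*(M,-)$. This gives injections $\operatorname{Ext}^{a}(M,\NFr)\hookrightarrow\operatorname{Ext}^{a}(M,\NFr)$ that are zero, together with $\operatorname{Ext}^{a-1}(M,\NFr/x_1)\twoheadrightarrow\operatorname{Ext}^a(M,\NFr)[x_1]$. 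These yield $\operatorname{Ext}^a(M,\NFr)=0$ for $a<b-1$ by induction on $b$. The base case $a=0$ is that $\operatorname{Hom}(M,\NFr)$ is killed by the nonzerodivisor $x_1$.

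The main point to check is the height drop: that $\operatorname{ht}(\bar I)\ge b-1$ in $\bar T$. This follows from Krull's principal ideal theorem, since every minimal prime of $I+(2)$ has height at most one more than it would have in $\bar T$, combined with $\dim$ catenarity of the regular local ring $T$. This is exactly where the loss of one in the bound comes from.
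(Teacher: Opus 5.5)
Your proposal is correct and follows the paper's proof essentially step for step. The shared steps are: Cohen--Macaulayness of $T/2$, the one-step height drop for primes containing $\operatorname{Ann}_T(M)+(2)$, a $T/2$-regular sequence of length $b-1$ in the image of the annihilator whose squares remain regular on $\NFr$, and the long-exact-sequence induction showing that annihilating scalars act on the low Ext groups both as zero and injectively. The only addition needed is the trivial case $2\in T^\times$, where $\NFr=0$; the paper disposes of it first, and your assumption $2\in\fm$ tacitly excludes it.
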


\begin{proof}
The assertion is immediate if $2\in T^\times$. Otherwise $T/2$ is Cohen--Macaulay. Put $J=\operatorname{Ann}_T(M)$. For every prime $\mathfrak p\supset J+(2)$,
\[
 \operatorname{height}_{T/2}(\mathfrak p/(2))
 =\dim(T_{\mathfrak p}/2)=\operatorname{height}_T\mathfrak p-1\geq b-1.
\]
Thus the image of $J$ has grade at least $b-1$ in $T/2$. Choose $t_1,\ldots,t_{b-1}\in J$ whose images are a regular sequence on $T/2$. On $\NFr$ these scalars act by $t_1^2,\ldots,t_{b-1}^2$, still a regular sequence; after the first $i$ scalars the underlying additive quotient is $(T/2)/(t_1^2,\ldots,t_i^2)$.

We apply the annihilator regular-sequence argument to $\NFr$. For one regular scalar $t\in J$, every homomorphism $M\to \NFr$ is zero. For a longer sequence use
\[
 0\longrightarrow \NFr\xrightarrow{t_1}\NFr\longrightarrow \NFr/t_1\NFr\longrightarrow0.
\]
The scalar $t_1$ acts as zero on every $\operatorname{Ext}_T^a(M,\NFr)$ because it annihilates $M$. Induction applied to the remaining sequence on $\NFr/t_1\NFr$, followed by the long exact sequence, makes this zero action injective for $a<b-1$. Hence those Ext groups vanish.
\end{proof}

\begin{theorem}[Supported genuine lifting]\label{gw:supported_lift}
Let $T$ be a finite-dimensional regular local ring with two a nonzerodivisor. Let $c\geq1$, and let $\mathcal C\subset\operatorname{Perf}(T)$ be a full stable, duality-invariant subcategory closed under ordinary homological truncations, all of whose homology has support of codimension at least $c$. The map for the \emph{restricted ambient structures}
\[
 L_0(\mathcal C,Q_T^{\mathrm{gs}}[c])\longrightarrow
 L_0(\mathcal C,Q_T^{\mathrm{s}}[c])
\]
is surjective.
\end{theorem}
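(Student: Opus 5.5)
Given a Poincaré object $(X,q)$ for $Q_T^{\mathrm s}[c]$ in $\mathcal C$, we want a bordant object whose form lifts through \eqref{gw:linear_comparison}. By that sequence the obstruction to lifting $q$ is the image $o(q)\in\pi_0\operatorname{RHom}_T(X,E_-[c])$. We use the Postnikov tower of $E_-$ from \eqref{gw:tate_groups}, whose layers are shifts of $\NFr$ in degrees $-2\ell$ with $\ell\ge1$. Filtering by this tower shows that $o(q)$ vanishes once
\[
 \operatorname{Ext}^{j}_T(H_i(X),\NFr)=0
\]
for all pairs $(i,j)$ that can contribute, namely $j=i+c-2\ell$ in the cohomological indexing. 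The plan is to use surgery to replace $X$ by a bordant object whose homology is concentrated in a narrow window around the middle degree dictated by the shift $c$. The supports have codimension at least $c$, so Lemma~\ref{gw:frobenius_grade} kills all the relevant Ext groups with $j<c-1$.

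\textbf{Step 1: the window.} Using the shifted duality, $X\simeq \Ddual_T X[c]$ with supports of codimension $\ge c$. For a finite module $M$ of codimension $\ge c$ over the regular local ring $T$, $\operatorname{Ext}^k_T(M,T)$ vanishes for $k<c$. Hence $X$ and its dual have homology in degrees constrained so that $\Ddual_T X[c]$ carries the homology of $X$ to nonnegative-shifted positions. The target is an $X'$ with homology only in the single degree $0$ (homological, after normalizing the shift). This is the analogue of a module with a linking form.

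\textbf{Step 2: surgery.} We do algebraic surgery in the style of Ranicki/CDHHLMNNS. Take the top truncation map $X\to\tau_{\ge1}X$, which is allowed because $\mathcal C$ is closed under truncations. Make it a surgery datum by noting that the composite to the dual vanishes for degree reasons: it is a map from a connective-shifted object into a coconnective one. This kills the homology above the middle, and duality then kills the homology below it. The surgered object stays in $\mathcal C$, and the trace is a bordism, so we get the same class in $L_0(\mathcal C,Q_T^{\mathrm s}[c])$. We will need to check that the null-homotopy of the restricted symmetric form exists in the homotopy-symmetric structure. This holds because the obstruction lives in $\operatorname{Bil}$ groups that vanish by the same connectivity estimate.

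\textbf{Step 3: vanishing of the obstruction.} For $X'$ with homology $M$ in one degree, the groups receiving $o(q)$ are $\operatorname{Ext}^{c-2\ell}_T(M,\NFr)$ with $\ell\ge1$. All of these have index $\le c-2<c-1$, so Lemma~\ref{gw:frobenius_grade} kills them. The $\lim^1$ issues disappear because the tower is finite in the relevant range, as $T$ has finite global dimension. Thus $q'$ lifts to $Q^{\mathrm{gs}}$. The lift is automatically Poincaré, because the Poincaré condition depends only on the underlying bilinear part. This gives surjectivity.

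The main obstacle I expect is Step 2: showing that surgery within the homotopy-symmetric structure actually concentrates the homology in one degree while staying inside $\mathcal C$. It is a delicate degree count, exactly where a loss of one in the grade bound could break the argument. If direct truncation fails, I would first try an inductive surgery that removes one homology degree at a time. At each stage I would verify the surgery-datum condition using the vanishing $\operatorname{Ext}^{<c}_T(M,T)=0$.
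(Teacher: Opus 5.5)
There is a genuine gap in Steps 1--2: the surgery does not concentrate the homology in a single degree. Surgery along the connective cover $\tau_{\geq1}X\to X$ does yield a bordant $Y$ with $Y\leq0$ and $Y\simeq\Ddual_T Y[c]$; note that the datum is a map \emph{into} $X$, not $X\to\tau_{\geq1}X$. Your connectivity argument for the nullhomotopy is also fine.

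Duality does not then kill the negative homology. The grade vanishing $\operatorname{Ext}^k_T(M,T)=0$ for $k<c$ gives only one side. For a finite module $M$ with support of codimension $\geq c$, some $\operatorname{Ext}^k_T(M,T)$ with $c<k\leq\dim T$ is nonzero unless $M$ is perfect of projective dimension exactly $c$. So $\Ddual_T M[c]$ can have homology anywhere in degrees $c-\dim T,\ldots,0$. Consequently $Y$ can have nonzero $M_j=H_{-j}Y$ for $0\leq j\leq\dim T-c$. For example, when $\dim T>c$ the hyperbolic object on $\kappa\oplus\Ddual_T\kappa[c]$, with $\kappa$ the residue field, already lies in degrees $\leq0$ and is untouched by your surgery. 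You give no mechanism for removing these groups.

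For such modules Step~3 fails. The obstruction groups are $\operatorname{Ext}^{c+j-2\ell}_T(M_j,\NFr)$. With only $\operatorname{codim}\operatorname{Supp}M_j\geq c$, Lemma~\ref{gw:frobenius_grade} would need $c+j-2\ell<c-1$, and this fails at $\ell=1$ for every $j\geq1$.

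The missing idea is to extract extra codimension from self-duality instead of trying to concentrate $Y$ in one degree. At a prime $\mathfrak p$ of height $h$, the ring $T_{\mathfrak p}$ has global dimension $h$. So $Y_{\mathfrak p}\leq0$ forces $\Ddual Y_{\mathfrak p}$ to be concentrated in degrees $\geq -h$, and self-duality then gives $Y_{\mathfrak p}\geq c-h$. Hence $H_{-j}Y$ vanishes at every prime of height $<c+j$, that is, $\operatorname{codim}\operatorname{Supp}M_j\geq c+j$. Lemma~\ref{gw:frobenius_grade} with $b=c+j$ now kills $\operatorname{Ext}^a_T(M_j,\NFr)$ for all $a<c+j-1$. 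This range contains every index $c+j-2\ell$ with $\ell\geq1$, so the deeper support exactly compensates the degree shift.

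With this bound, your finite Postnikov argument goes through. Run it over both the tower of $E_-[c]$ (truncated using boundedness of $Y$) and the finite homology tower of $Y$. It gives $\pi_0\operatorname{RHom}_T(Y,E_-[c])=0$, hence the genuine lift. This is the route the paper's proof takes.
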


\begin{proof}
Starting with a symmetric Poincar\'e representative, we first construct
an isotropic truncation and perform surgery. The resulting coconnective
representative is still supported in the prescribed category.
Self-duality then improves the codimension bound on each negative
homology module. These improved bounds, together with
Lemma~\ref{gw:frobenius_grade}, will annihilate its lifting obstruction.

\proofstep{1. The isotropic truncation.}
Put $\Ddual_c =\Ddual_T [c]$. If $L\in\mathcal C$ is concentrated in homological degrees at least one, the grade formula gives
\[
 \Ddual_T L\in \mathrm D(T)_{\leq-c-1},\qquad \Ddual_c L\in \mathrm D(T)_{\leq-1}.
\]
This follows from the finite homology tower and $\operatorname{Ext}_T^a(H_iL,T)=0$ for $a<c$. Hence
\[
 \operatorname{RHom}_T(L,\Ddual_c L)\in\mathsf{Sp}_{\leq-2}.
\]
The shifted homotopy-symmetric form spectrum is the homotopy-fixed-point spectrum of this bilinear spectrum, with the shifted transposition action. Homotopy fixed points preserve this upper truncation. Therefore $Q_T^{\mathrm{s}}[c](L)$ has contractible infinite-loop space.

\proofstep{2. The supported surgery construction.}
Take a Poincar\'e representative $(X,q)$ of a class on the right and use the truncation map $f:L=\tau_{\geq1}X\to X$. The restriction $f^*q$ has a contractible space of nullhomotopies, providing the full isotropic surgery datum. Algebraic surgery \cite[Definition 2.4.1 and Proposition 2.4.3]{GWCDHII}, also \cite[Proposition 1.1.13]{GWCDHIII}, gives a Poincar\'e cobordism from $X$ to
\begin{equation}\label{gw:surgery_formula}
 Y=\operatorname{fib}\bigl(\operatorname{cofib}(L\to X)\longrightarrow \Ddual_c L\bigr).
\end{equation}
Equivalently, put $\chi(f)=\operatorname{fib}(X\to \Ddual_c L)$; the chosen isotropic nullhomotopy supplies $L\to\chi(f)$, and $Y=\operatorname{cofib}(L\to\chi(f))$. All these objects remain in $\mathcal C$. Since $\operatorname{cofib}(L\to X)=\tau_{\leq0}X$ and $\Ddual_c L\leq-1$, the long exact homology sequence gives
\begin{equation}\label{gw:surgered_bounds}
 Y\leq0,\qquad Y\simeq \Ddual_T Y[c].
\end{equation}
The negative homology of $Y$ is controlled by the following support estimate.

\proofstep{3. Support bounds from self-duality.}
Write $M_j=H_{-j}Y$ for $j\geq0$. At a prime $\mathfrak p$ of height $h$, the regular local ring $T_{\mathfrak p}$ has global dimension $h$. Since $Y_{\mathfrak p}\leq0$, its dual is concentrated in degrees at least $-h$. Self-duality in \eqref{gw:surgered_bounds} therefore gives $Y_{\mathfrak p}\geq c-h$. If $h<c+j$, its homology in degree $-j$ is zero. Thus
\begin{equation}\label{gw:deepening_support}
 \operatorname{codim}\operatorname{Supp}M_j\geq c+j.
\end{equation}
We apply this support bound to the homology modules in the Postnikov tower of $Y$.

\proofstep{4. Vanishing of the obstruction groups.}
By Lemma~\ref{gw:frobenius_grade}, $\operatorname{Ext}^a_T(M_j,\NFr)=0$ for $a<c+j-1$. The Postnikov pieces of $E_-[c]$ are $\NFr[c-2\ell]$, $\ell\geq1$, and
\begin{equation}\label{gw:postnikov_ext}
 \pi_0\operatorname{RHom}_T(M_j[-j],\NFr[c-2\ell])
   =\operatorname{Ext}_T^{c+j-2\ell}(M_j,\NFr)=0.
\end{equation}
For a nonnegative exponent this is the strict grade inequality $c+j-2\ell<c+j-1$; negative Ext is zero as well.

\proofstep{5. The finite Postnikov calculation.}
Since $Y$ is bounded, the obstruction can be computed with a finite Postnikov section. Choose $m$ with $Y\geq m$, and then $k<m$. For $V=\tau_{\leq k}(E_-[c])$, orthogonality gives $\operatorname{RHom}_T(Y,V)\leq k-m\leq-1$. Its $\pi_0$ and $\pi_1$ vanish, so the map from the finite retained Postnikov section $\tau_{\geq k+1}(E_-[c])$ induces an isomorphism on degree-zero maps out of $Y$. Both that section and $Y$ have finite Postnikov towers. For each triangle in either finite Postnikov tower, the exact sequence
on $\pi_0$ shows that vanishing for the two outside mapping spectra
implies vanishing for the middle one. Applying this argument successively
to the two towers and using \eqref{gw:postnikov_ext} gives
\begin{equation}\label{gw:obstruction_zero}
 \pi_0\operatorname{RHom}_T(Y,E_-[c])=0.
\end{equation}
The finite Postnikov calculation therefore proves the vanishing of the obstruction group.

\proofstep{6. Lifting the Poincar\'e structure.}
Shift \eqref{gw:linear_comparison} by $c$. Equation~\eqref{gw:obstruction_zero} lifts the symmetric structure on $Y$ to $Q_T^{\mathrm{gs}}[c](Y)$. Comparison preserves the underlying duality map, so this lift is still nondegenerate. Its genuine supported class maps to the class of $X$, because the passage from $X$ to $Y$ was a supported cobordism. This proves surjectivity.
\end{proof}

\subsection{Local symmetric groups in degrees minus one and minus two}

\begin{theorem}[Local symmetric negative groups]\label{gw:local_negative}
For every finite-dimensional regular local ring $T$ with two a nonzerodivisor,
\[
 L^{\mathrm{s}}_{-1}(T)=L^{\mathrm{s}}_{-2}(T)=0.
\]
In addition $L^{\mathrm{gs}}_{-1}(T)=L^{\mathrm{gs}}_{-2}(T)=L^{\mathrm{gs}}_{-3}(T)=0$.
\end{theorem}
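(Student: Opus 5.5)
The genuine assertions are already in hand: $L^{\mathrm{gs}}_{-1}(T)=L^{\mathrm{gs}}_{-2}(T)=L^{\mathrm{gs}}_{-3}(T)=0$ is Proposition~\ref{gw:genuine_local_negative}, valid for every commutative local ring. The plan for the symmetric groups is to show that $L^{\mathrm{gs}}_n(T)\to L^{\mathrm{s}}_n(T)$ is surjective for $n=-1,-2$. Surjectivity will come from supported genuine lifting, Theorem~\ref{gw:supported_lift}, applied to the support filtration of $\operatorname{Perf}(T)$.

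I first reduce to classes with positive-codimension support, using induction on $d=\dim T$. For $d\le 3$ or small $d$, one could try Proposition~\ref{gw:comparison_window} directly: it gives bijectivity for $n\ge d-3$, which covers $n=-1,-2$ once $d\le1$. In general, consider the localization sequence
\[
 L^{\mathrm s}(\operatorname{Perf}(T)^{\ge1})\longrightarrow L^{\mathrm s}(T)\longrightarrow L^{\mathrm s}(\operatorname{Frac}T).
\]
The odd field groups vanish, and negative-sign field Witt groups vanish because two is invertible in the fraction field. So $L^{\mathrm s}_{-1}(F)=L^{\mathrm s}_{-2}(F)=0$, and every class in $L^{\mathrm s}_{-1}(T)$ or $L^{\mathrm s}_{-2}(T)$ comes from the supported category $\operatorname{Perf}(T)^{\ge1}$. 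Here $L_n(\mathcal C,Q)=L_0(\mathcal C,Q[-n])$ by \eqref{gw:l_shift}, so degree $-n$ corresponds to shift $[n]$.

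Next I move the class deeper in support until the shift matches the codimension. Iterating along the filtration \eqref{gw:coniveau_quotient} with purity \eqref{gw:l_coniveau}, a supported class in total degree $-n$ has image in the codimension-$p$ quotient lying in $L^{\mathrm s}_{p-n}(\kappa(x),\omega_x)$.
\begin{itemize}
\item For $n=1$: the codimension-one image lies in $L^{\mathrm s}_0$, the residue Witt group. This layer is already in the lifting range, since shift $[1]$ and $c=1$ match.
\item For $n=2$: the codimension-one image lies in $L^{\mathrm s}_{-1}(\kappa)=0$. So the class lifts to $\operatorname{Perf}(T)^{\ge2}$ in shift $[2]$, and $c=2$ matches.
\end{itemize}
In both cases the class comes from $L_0(\operatorname{Perf}(T)^{\ge c},Q^{\mathrm s}_T[c])$ with $c=n$. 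Theorem~\ref{gw:supported_lift} applies to this category: it is stable, duality-invariant, and closed under truncation, since support of homology is preserved. The theorem then lifts the class to the genuine supported group. Pushing the lift forward gives a class of $L^{\mathrm{gs}}_{-n}(T)$ mapping to the original symmetric class. Since $L^{\mathrm{gs}}_{-n}(T)=0$, the symmetric class vanishes.

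The main obstacle is the bookkeeping in the second step. I must confirm that the restricted ambient structures on $\operatorname{Perf}(T)^{\ge c}$ are exactly those in Theorem~\ref{gw:supported_lift}, and that the shift conventions line up: $L_{-n}(\mathcal C,Q)=L_0(\mathcal C,Q[n])$ requires $c=n$. I also need to check that no higher-codimension differential obstructs the descent from $\operatorname{Perf}^{\ge1}$ to $\operatorname{Perf}^{\ge2}$ when $n=2$. Only the vanishing of $L^{\mathrm s}_{-1}$ of the codimension-one residue fields is needed there. Those residue fields may have characteristic two, but odd field $L$-groups still vanish by the cited field computation.
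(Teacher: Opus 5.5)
Your proposal is correct and is essentially the paper's argument. You localize to $F$ using $L^{\mathrm s}_{-1}(F)=L^{\mathrm s}_{-2}(F)=0$, push a degree $-2$ class past codimension one by the vanishing of odd $L$-groups, apply Theorem~\ref{gw:supported_lift} with $c=n$, and push into $L^{\mathrm{gs}}_{-n}(T)=0$. The only difference is packaging: you use the coniveau Poincar\'e--Verdier quotient \eqref{gw:coniveau_quotient} with purity to the residue fields, whereas the paper works in $\operatorname{Perf}_{V(f)}(T)$, localizes at the finitely many height-one primes over $f$, checks the Verdier-quotient property by a $K_0$ argument, and applies the odd-group vanishing on the finite-length heart of that semilocal Dedekind ring.
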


\begin{proof}
We reduce the symmetric classes to classes with support and then apply
Theorem~\ref{gw:supported_lift}. In degree minus one, torsion support
already has the required codimension. In degree minus two, we must first
remove its height-one part.

\proofstep{1. The genuine and generic groups.}
The genuine assertions are Proposition~\ref{gw:genuine_local_negative}. Let $F=\Frac(T)$. Both $L^{\mathrm{s}}_{-1}(F)$ and $L^{\mathrm{s}}_{-2}(F)$ vanish: the former is odd-field vanishing and the latter is the Witt group of alternating forms over a field of characteristic different from two.
If $\dim T=0$, then $T=F$ and these vanishings prove the symmetric assertions.
For the remaining steps, assume $\dim T\geq1$.

\proofstep{2. Classes in degree minus one.}
The flat localization $T\to F$ is a Poincar\'e--Verdier quotient. Indeed its essential image has full $K_0$ image, since $[T]$ maps to the generator of $K_0(F)=\Z$; apply \cite[Proposition 1.4.9 and Corollary 1.4.10]{GWCDHII}. The $L$-theory localization sequence is supplied by \cite[Corollary 4.4.5]{GWCDHII}. Every class under consideration thus comes from the torsion-perfect category. Its representative has finitely many finite homology modules, so it belongs to $\operatorname{Perf}_{V(f)}(T)$ for one nonzero nonunit $f\in T$. In positive dimension, a supporting element can be multiplied by a nonzero nonunit if necessary. This category is stable, duality invariant and closed under truncations. It has support of codimension at least one. Apply Theorem~\ref{gw:supported_lift} with $c=1$, then push to $L^{\mathrm{gs}}_{-1}(T)=0$. This proves the degree-$-1$ assertion.

\proofstep{3. The height-one part in degree minus two.}
For a degree-$-2$ class, choose a nonzero nonunit $f\in T$ supporting a lift, as in the preceding step. We first remove its height-one support. Let $\mathfrak p_1,\ldots,\mathfrak p_s$ be the height-one prime divisors of $f$, and put
\[
 U=T\setminus\bigcup_i\mathfrak p_i,\qquad B=U^{-1}T.
\]
Finite prime avoidance shows that $B$ is regular semilocal of dimension one with maximal ideals $\mathfrak p_iB$. The category $\operatorname{Perf}_{V(f)}(B)$ has the bounded heart of finite-length $B$-modules. On that heart $\Ddual_B [1]$ is exact, since only $\operatorname{Ext}^1_B(-,B)$ occurs. Thus the odd-group assertion of \cite[Corollary 1.3.8]{GWCDHIII} gives
\[
 L^{\mathrm{s}}_{-2}(\operatorname{Perf}_{V(f)}(B),Q_B^{\mathrm{s}})
   =L_{-1}(\operatorname{Perf}_{V(f)}(B),Q_B^{\mathrm{s}}[1])=0.
\]
To identify the supported localization, consider its image on $K_0$. Its target after idempotent completion is the displayed category, and
\[
 K_0(\operatorname{Perf}_{V(f)}(B))
   =\bigoplus_i\Z[\kappa(\mathfrak p_i)].
\]
Each generator is the restriction of the perfect $T$-module $T/\mathfrak p_i$. The $K_0$ image is therefore full, so the supported localization is a Verdier quotient before idempotent completion. The target Poincar\'e structure is the restriction of $Q_B^{\mathrm{s}}$: this is the supported form of flat localization in \cite[Proposition 1.4.9]{GWCDHII}. Concretely, bilinear complexes of perfect objects are bounded, so each degree of their standard $C_2$-resolution involves finitely many coefficients, which can be localized. Scalar denominator maps preserve the support; on diagonal forms they act by squares, and inverting a scalar or its square gives the same localization. Thus the induced structure is the restricted symmetric structure.

\proofstep{4. Lifting from codimension-two support.}
The kernel $\mathcal C$ consists of perfect complexes supported on $V(f)$ which vanish after tensoring with $B$. Their homology has no height-one support, and hence is supported in codimension at least two. The kernel is closed under truncations and duality. Exactness lifts the degree-$-2$ class to
\[
 L_0(\mathcal C,Q_T^{\mathrm{s}}[2]).
\]
Apply Theorem~\ref{gw:supported_lift} with $c=2$, and then push its lift to $L^{\mathrm{gs}}_{-2}(T)=0$. The original class is zero, as required.
\end{proof}

\section{Equal-characteristic-two generic injectivity}

\begin{theorem}[Generic injectivity in characteristic two]\label{gw:equalchar_injection}
Let $S$ be a finite-dimensional regular local ring of characteristic two and let $F=\Frac(S)$. The restriction map
\[
 L^{\mathrm{s}}_{-2}(S)\longrightarrow L^{\mathrm{s}}_{-2}(F)
\]
is injective. Also $L^{\mathrm{s}}_{-1}(S)=0$. These statements hold for ordinary local rings, including finite residue fields.
\end{theorem}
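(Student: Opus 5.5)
Throughout, $F=\Frac(S)$. In characteristic two the homotopy-symmetric and genuine structures no longer agree: two is zero, so Proposition~\ref{gw:comparison_window} is unavailable. I would therefore work directly with the symmetric coniveau spectral sequence \eqref{gw:l_coniveau} for $Y=\Spec S$.

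\textbf{Field coefficients.} For a field $k$ of characteristic two, I would first record the coefficients: the odd groups $L^{\mathrm s}_{\mathrm{odd}}(k)$ vanish by \cite[Corollary 1.3.8]{GWCDHIII}. The even groups are bilinear Witt groups, and since $\pm$ signs agree in characteristic two, they are all $W(k)$ (twisted by $\omega_x$). So in \eqref{gw:l_coniveau} the term $E_1^{p,q}$ vanishes unless $q$ is even, and the $q$-even rows are shifted copies of the Witt row $C_{\mathrm W}(Y)$.

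\textbf{Degree $-1$.} The groups $E_1^{p,q}$ with $p+q=1$ are nonzero only for $p$ odd, and then their coefficient is $L^{\mathrm s}_{-q}=L^{\mathrm s}_{p-1}$.
\begin{itemize}
\item For $p\ge1$ and $p\le 2\,\mathrm{mod}\,4$-type parity, the relevant $E_2$ terms are the cohomology groups $H^p(C_{\mathrm W}(Y))$ in positive degree $p$, for the appropriate shift of the Witt row.
\item So I need the Gersten conjecture for bilinear Witt groups of regular local rings of characteristic two: the augmented complex $0\to W(S)\to W(F)\to\bigoplus_{x\in Y^{(1)}}W(\kappa(x),\omega_x)\to\cdots$ is exact.
\end{itemize}
I would import this from the equicharacteristic Gersten theory. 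One route is Panin-type presentation lemmas, which apply to any regular local ring containing a field via Popescu's theorem. Another is the known characteristic-two result of Jacobson/Balmer-type purity for Witt groups. I would prefer the latter if it covers inseparable residue fields, since it requires only $W$ and not quadratic forms. Given that exactness, the $E_2$ page is concentrated in column $p=0$. Then $L^{\mathrm s}_{-1}(S)$, which lives in total degree with $p$ odd, vanishes.

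\textbf{Degree $-2$.} With the $E_2$ page concentrated in column $0$, the abutment $L^{\mathrm s}_{-2}(S)$ is $E_2^{0,-2}$ with no higher positive filtration pieces. The filtration piece $\Fil^1$ of $L^{\mathrm s}_{-2}(S)$ is built from $E_\infty^{p,-2-p}$ for $p\ge1$, and these vanish. Hence the edge map to $E_1^{0,-2}=L^{\mathrm s}_{-2}(F)$ is injective.

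\textbf{Finite residue fields.} I would check that the Gersten input does not require an infinite residue field. If the cited presentation lemma does, I would use the coprime-degree trick of Theorem~\ref{k2:thm:alllocalq2}: pass to unramified extensions of odd coprime degrees. Witt-group transfers then provide a splitting up to multiplication by an odd degree. The Witt groups here are killed by two, so any odd degree acts invertibly and injectivity descends.

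\textbf{Main obstacle.} The main obstacle is exactly the Gersten/purity input for bilinear Witt groups in characteristic two over a possibly non-geometric regular local ring. This is where I expect most of the work, possibly via Popescu reduction to essentially smooth algebras over $\F_2$ plus continuity of $L$-theory.
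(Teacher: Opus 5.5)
Given exact Witt rows, your spectral-sequence bookkeeping is correct. If every Witt row of the symmetric coniveau spectral sequence of $\Spec S$ were exact in positive degrees, the positive filtration pieces in total degrees $-1$ and $-2$ would vanish, and both assertions would follow. (There is an index slip: $L^{\mathrm s}_{-2}(F)$ sits at $E_1^{0,2}$, not $E_1^{0,-2}$.)

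\textbf{The gap is that this exactness is the entire content, and you do not supply it.} It is also strictly more than the theorem asserts.
\begin{itemize}
\item The Witt-group Gersten and purity theorems in the literature, such as Balmer's triangular Witt theory and Balmer--Gille--Panin--Walter, assume $2$ is invertible. So there is nothing to import in characteristic two.
\item Your fallback, Popescu reduction plus continuity, does not transport \emph{full} exactness formally. The maps $B_i\to S$ are not flat and do not preserve codimension of supports, so the Cousin complex of $S$ is not the colimit of those of the $B_i$. Redoing Panin's delicate descent of effacement statements for $L^{\mathrm s}$ in characteristic two would be a substantial project you have not carried out.
\item The finite-residue-field patch does not work as stated. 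A finite odd-degree unramified extension of $S$ still has a finite residue field. You would need an ind-extension such as $S\otimes_{\F_2}\bigcup_n\F_{2^{3^n}}$, with injectivity at each finite stage coming from the odd-rank trace form.
\end{itemize}

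\textbf{The missing idea is that the theorem is only a trivial-fibre statement.} Such a statement needs no positive-degree exactness and descends along Popescu formally. The paper argues as follows.
\begin{itemize}
\item For local rings essentially smooth over $\F_2$, Calm\`es--Harpaz--Nardin represent $L^{\mathrm s}$ by the motivic spectrum $KW$. Then $L^{\mathrm s}_{-2}(V)=\pi_0\mathcal E(V)$ for the pointed motivic space $\mathcal E=\Omega^\infty\Sigma^2KW$.
\item Morel's local-section theorem, in Bachmann's form valid over every field including $\F_2$, says $\pi_0\mathcal E(V_v)\to\pi_0\mathcal E(\xi)$ has trivial fibre. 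Since $\pi_0$ is group-valued here, this is injectivity.
\item For general $S$, a class $\alpha$ vanishing over $F$ already vanishes over $S[1/a]$ for some nonzero $a$. By continuity of $L$-theory, $\alpha$, $a$ and the nullhomotopy descend to some essentially smooth local $B_i$. There $a_i\neq0$, so $\alpha_i$ is generically zero, and the smooth case gives $\alpha_i=0$.
\item Running the same argument with $\Sigma KW$ gives $L^{\mathrm s}_{-1}(S)\hookrightarrow L^{\mathrm s}_{-1}(F)=0$.
\end{itemize}
This route handles finite residue fields directly, with no transfer argument.
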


\begin{proof}
We first obtain generic injectivity from motivic representability over
a smooth local algebra. We then descend a given class and its generic
nullhomotopy through a smooth approximation of the regular local ring.
This proves the assertion for the ordinary local ring itself.

\proofstep{1. The smooth local case.}
We first work over a local ring essentially smooth over $\F_2$. The motivic representability result \cite[Corollary 8.1.8]{GWCHN}, using its Theorem 6.3.1, identifies evaluation of the motivic spectrum $KW$ on a regular smooth scheme $V$ with its homotopy-symmetric spectrum $L^{\mathrm{s}}(V)$. In particular let $\mathcal E$ be the pointed motivic space obtained by the right adjoint from motivic spectra to pointed motivic spaces, applied to the ordinary simplicial suspension $\Sigma^2KW$. Then
\[
 \pi_0\mathcal E(V)=\pi_0\Sigma^2L^{\mathrm{s}}(V)=L^{\mathrm{s}}_{-2}(V).
\]
Here $\Sigma^2$ denotes the ordinary simplicial suspension of the spectrum.

\proofstep{2. Generic injectivity on section spaces.}
Bachmann's local-section theorem \cite[Proposition 2.9]{GWBachmann} states that for every pointed motivic space $\mathcal E$ over a field $k$, a connected smooth $k$-scheme $V$, a point $v$, and the generic point $\xi$, the map
\[
 \pi_0\mathcal E(V_v)\longrightarrow\pi_0\mathcal E(\xi)
\]
has trivial fibre. Here $V_v$ denotes the ordinary local scheme. The proposition applies with $k=\F_2$. For a group-valued $\pi_0$, trivial fibre means injectivity. Continuity through ordinary open neighborhoods now proves the theorem for regular local rings essentially smooth over $\F_2$. Thus generic restriction is injective on the evaluation of the spectrum at each of these local schemes.

\proofstep{3. Smooth approximation.}
To pass to arbitrary $S$, the map $\F_2\to S$ is regular, since $\F_2$ is perfect and $S$ is regular. Popescu's theorem \cite[Tag 07GC]{Stacks} expresses $S$ as a filtered colimit of smooth $\F_2$-algebras. Localize the stages at the inverse image of the maximal ideal of $S$. We obtain a filtered system of regular local domains $B_i$, essentially smooth over $\F_2$, with local maps $B_i\to S$ and colimit $S$.

\proofstep{4. Descent of a vanishing class.}
Let $\alpha\in L^{\mathrm{s}}_{-2}(S)$ vanish in $L^{\mathrm{s}}_{-2}(F)$. Since $F$ is the filtered localization of $S$ at its nonzero elements, $\alpha$ vanishes over $S[1/a]$ for some nonzero $a\in S$. Descend together the class $\alpha$, the element $a$, and its vanishing after localization. At a sufficiently late stage this gives
\[
 \alpha_i\in L^{\mathrm{s}}_{-2}(B_i),\qquad a_i\in B_i,
 \qquad \alpha_i|_{B_i[1/a_i]}=0,
\]
with $a_i$ mapping to $a$. The element $a_i$ is nonzero because its image is nonzero. Since $B_i$ is a domain, $\alpha_i$ is generically zero, and the essentially smooth case gives $\alpha_i=0$. Hence $\alpha=0$.

\proofstep{5. Continuity of forms and cobordisms.}
The continuity used here is continuity of Poincar\'e categories and their $L$-theory, \cite[Corollary 4.4.5]{GWCDHII}. Perfect complexes, duality maps, and the finite cobordism data descend to a stage. For the homotopy-symmetric structure on a bounded perfect complex, any specified homotopy degree of the $C_2$-resolution uses only finitely many terms of its bounded bilinear complex; this verifies the required continuity of form and nullhomotopy data. Consequently the form and cobordism data descend along the filtered system.

\proofstep{6. The degree-minus-one group.}
Repeat the same argument with $\Sigma KW$ in place of $\Sigma^2KW$. It proves injection of $L^{\mathrm{s}}_{-1}(S)$ into $L^{\mathrm{s}}_{-1}(F)$, whose target is zero by odd-field vanishing. This proves the last assertion.
\end{proof}

\section{Global alternating Witt vanishing on the parameter open}

For a ring $A$, let $W_{\mathrm{alt}}(A)$ be the classical Witt group of nondegenerate alternating forms on finite projective $A$-modules, modulo hyperbolic forms. The distinction between alternating and merely skew-symmetric is retained at two.

\begin{theorem}[Alternating Witt vanishing on the parameter open]\label{gw:global_walt}
Let $R$ be a characteristic-zero regular local ring of dimension four and let $r$ belong to a regular system of parameters. Set $A=R[1/r]$. Then
\[
 L^{\mathrm{s}}_{-2}(A)=L^{\mathrm{gs}}_{-2}(A)=W_{\mathrm{alt}}(A)=0.
\]
In particular the conclusion holds in mixed characteristic $(0,p)$ for every residue prime $p$.
\end{theorem}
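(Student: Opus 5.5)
We prove $L^{\mathrm s}_{-2}(A)=0$ with the symmetric coniveau spectral sequence \eqref{gw:l_coniveau}. The genuine and alternating statements then follow from the comparison range and the definitions.

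\proofstep{Step 1: the coniveau terms.} Put $X=\Spec A$. Then $\dim X\le 3$, since $V(r)$ is removed from the four-dimensional local scheme. In \eqref{gw:l_coniveau} the total degree $-2$ receives contributions from $E_1^{p,q}$ with $p+q=2$. The field $L$-groups with odd index vanish, and $L^{\mathrm s}_{-2}(\kappa(x))$ is the alternating Witt group with negative sign. That group is zero unless $\operatorname{char}\kappa(x)=2$, where both signs coincide with the bilinear Witt group. So the surviving terms are:
\begin{itemize}
\item the generic term $L^{\mathrm s}_{-2}(F)=0$, because $\operatorname{char}F=0$;
\item the codimension-two term $\bigoplus_{x\in X^{(2)}}W(\kappa(x),\omega_x)$, with $q=0$;
\item codimension-one terms in degree $L_{-1}$, which are zero.
\end{itemize}

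Rather than computing these terms directly, I would argue by local-to-global descent. Theorem~\ref{gw:local_negative} gives $L^{\mathrm s}_{-1}(T)=L^{\mathrm s}_{-2}(T)=0$ for every local ring $T=\mathcal O_{X,x}$: two is a nonzerodivisor because $A$ has characteristic zero. Hence the Zariski sheaves $\mathcal L_{-1},\mathcal L_{-2}$ vanish. The descent spectral sequence $H^p(X,\mathcal L_{q})\Rightarrow L^{\mathrm s}_{q-p}(X)$ then leaves only the contributions
\[
H^1(X,\mathcal L_{-1}),\quad H^2(X,\mathcal L_0),\quad H^3(X,\mathcal L_{1}).
\]
Here $L^{\mathrm s}$ satisfies Zariski descent by \cite{GWCHN}, and $X$ has dimension at most three. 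The first contribution is zero because $\mathcal L_{-1}=0$. By four-periodicity, $\mathcal L_1=\mathcal L_{-3}$.

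\proofstep{Step 2: the obstacle $H^2(X,\mathcal W)$.} The main difficulty is $H^2(X,\mathcal L_0)$, the second Zariski cohomology of the Witt sheaf, together with $H^3(X,\mathcal L_{-3})$. For the Witt sheaf I would use the Gersten--Witt row $C_{\mathrm W}(X)$ of Lemma~\ref{gw:residue_squares}. The plan is to compare it with the row of the four-dimensional local scheme $\Spec R$. On $\Spec R$ the local vanishings hold at every point, including the closed point, and $W(R)$-type classes die at the generic point. The localization sequence for $V(r)\subset\Spec R$ would then identify $H^2(X,\mathcal W)$ with a cohomology group of the Witt row on the regular three-dimensional local ring $S=R/(r)$, placed one position higher.

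At the dyadic points I would first try to kill this group using the filtration $C_{I,j}$. The low squares of Lemma~\ref{gw:residue_squares} reduce the graded pieces to Milnor $K$-theory modulo two, where the Gersten exactness of Lemma~\ref{k2:lem:lowrow} and Corollary~\ref{k2:cor:quotientimage} apply. If $S$ has characteristic two, I would invoke Theorem~\ref{gw:equalchar_injection} for generic injectivity on $S$. The $H^3(X,\mathcal L_{-3})$ term would be handled the same way, using $L^{\mathrm{gs}}_{-3}=0$ and the comparison range.

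\proofstep{Step 3: passing to $L^{\mathrm{gs}}_{-2}$ and $W_{\mathrm{alt}}$.} Proposition~\ref{gw:comparison_window}, with $d\le3$, makes $L^{\mathrm{gs}}_{n}(A)\to L^{\mathrm s}_n(A)$ injective for $n\ge -1$. This range falls one short of $n=-2$. So for $L^{\mathrm{gs}}_{-2}$ I would instead use Zariski descent for the fibre of the comparison map; this is the \enquote{descent for the comparison fibre} of the roadmap. Locally that fibre has vanishing homotopy in the relevant degrees, by Proposition~\ref{gw:genuine_local_negative} together with Theorem~\ref{gw:local_negative}, and the global statement follows from the dimension bound.

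Finally, $W_{\mathrm{alt}}(A)$ is the cokernel of the hyperbolic map $K_0\to\GW_0^{[2]}$. This is $L^{\mathrm{gs}}_{-2}(A)$ modulo the classes that die in the shifted fundamental sequence, so it vanishes once $L^{\mathrm{gs}}_{-2}(A)=0$.
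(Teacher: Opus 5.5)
\textbf{Where the proposal breaks down.} Your Steps~3 and~4 follow the paper. Step~3 is descent on $X$ for the fibre of $L^{\mathrm{gs}}\to L^{\mathrm s}$, whose stalks vanish in degrees $\geq-3$ by Propositions~\ref{gw:comparison_window} and~\ref{gw:genuine_local_negative} and Theorem~\ref{gw:local_negative}. Step~4 is the shifted fundamental sequence. One correction there: $\pi_0$ of the homotopy orbits is $K_0(A)_{C_2}$, which maps onto $\im h$, so $W_{\mathrm{alt}}(A)=\coker h$ \emph{embeds} in $L^{\mathrm{gs}}_{-2}(A)$; it is not a quotient of it.

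The real problem is that Step~3 needs $L^{\mathrm s}_{-2}(A)=0$, and Step~2 does not prove it. Once you switch from coniveau to Zariski descent, the decisive term is $H^2(X,\mathcal W)$, which is exactly $E_\infty^{2,0}$ there, and you give no argument that it vanishes.
\begin{itemize}
\item Identifying it with $H^2(C_{\mathrm W}(X))$ would need a Gersten resolution of the Witt sheaf at the local rings of $X$, which is not available here. The divisor localization you invoke exists for field complexes and for $L$-theory, not for the Zariski cohomology of $\mathcal W$.
\item The filtration $C_{I,j}$ does not close the argument. Each correction only pushes a cocycle one step deeper in the $I$-adic filtration, at the cost of a new mod-two Milnor vanishing. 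No finite number of steps reaches zero, since orderable residue fields have $I^j\neq0$ for every $j$. The paper uses these filtered rows in the opposite direction, starting from Witt-row vanishing.
\item Your treatment of $H^3(X,\mathcal L_{-3})$ is also wrong. Proposition~\ref{gw:comparison_window} gives surjectivity of $L^{\mathrm{gs}}_{-3}(T)\to L^{\mathrm s}_{-3}(T)$ only when $\dim T=0$, so $L^{\mathrm{gs}}_{-3}=0$ tells you nothing. Indeed $\mathcal L_{-3}=\mathcal L_1$ is nonzero at dyadic height-one points, where it equals $W$ of the residue field. That term is harmless for a different reason: its generic stalk $L^{\mathrm s}_1(\Frac R)$ vanishes, so Lemma~\ref{bridge:support} kills its $H^3$.
\end{itemize}

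\textbf{The missing idea.} Do not pass to sheaf cohomology; localize $L$-theory itself along the divisor. Purity for $V(r)$, with the normal line trivialized by $r$, gives the exact segment $L^{\mathrm s}_{-2}(R)\to L^{\mathrm s}_{-2}(A)\xrightarrow{\partial}L^{\mathrm s}_{-2}(S)$, where $S=R/(r)$.
\begin{itemize}
\item Theorem~\ref{gw:local_negative} has no dimension restriction. It therefore kills the left term for the four-dimensional ring $R$ itself, and it kills the right term whenever $2$ is a nonzerodivisor on $S$.
\item If $S$ has characteristic two, naturality at the DVR $R_{(r)}$ gives $\partial(x)|_{\Frac S}=\partial_{R_{(r)}}(x|_{\Frac R})=0$, because $L^{\mathrm s}_{-2}(\Frac R)=0$. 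Theorem~\ref{gw:equalchar_injection} then gives $\partial x=0$.
\end{itemize}
Alternatively, your own Step~1 coniveau count already shows $L^{\mathrm s}_{-2}(A)\cong H^2(C_{\mathrm W}(X))$, with no degree-three term. The divisor sequence of Witt rows then places this group between $H^2(C_{\mathrm W}(R))$ and $H^2(C_{\mathrm W}(S))$. Both are coniveau graded pieces of groups killed by the same two theorems. Either way, you should have kept the coniveau computation rather than switching to descent.
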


\begin{proof}
We first prove vanishing in homotopy-symmetric theory by localization
along the parameter divisor. We then compare the genuine and symmetric
theories on the open scheme. The shifted fundamental sequence identifies
the resulting genuine group with the alternating Witt group.

\proofstep{1. Symmetric divisor localization.}
Put $S=R/(r)$, a regular local domain. The regular immersion $i:\Spec S\hookrightarrow\Spec R$ has exceptional line
\[
 i^!R=\det(N_{S/R})[-1]\simeq S[-1],
\]
where the parameter $r$ fixes the normal-line trivialization. Homotopy-symmetric purity in \eqref{gw:exceptional_line} gives
\begin{equation}\label{gw:divisor_support_shift}
 L_n(\operatorname{Perf}_{V(r)}(R),Q_R^{\mathrm{s}}|)
       \simeq L^{\mathrm{s}}_{n+1}(S).
\end{equation}
In particular the localization boundary out of degree $-2$ lands in support degree $-3$, which becomes degree $-2$ on $S$. Its exact segment is
\begin{equation}\label{gw:divisor_segment}
 L^{\mathrm{s}}_{-2}(R)\longrightarrow L^{\mathrm{s}}_{-2}(A)
       \xrightarrow{\partial}L^{\mathrm{s}}_{-2}(S).
\end{equation}
This is the homotopy-symmetric localization map. One may use its Karoubi-localizing version, which agrees with ordinary theory here. Alternatively the ordinary Verdier quotient suffices: $K_0(R)\to K_0(A)$ is onto, since every projective $A$-module descends to a finite $R$-module and a finite free resolution over $R$ localizes to a free resolution over $A$.

\proofstep{2. The characteristic-two boundary.}
The left term of \eqref{gw:divisor_segment} is zero by Theorem~\ref{gw:local_negative}. If $2$ remains a nonzerodivisor in $S$, the same theorem kills the right term. This includes characteristic zero and every odd-characteristic possibility for $S$. Otherwise, since $S$ is a domain, it has characteristic two. Let $x\in L^{\mathrm{s}}_{-2}(A)$ and $\beta=\partial x$. Localize the divisor square at the height-one prime $(r)$. Its ambient DVR $V=R_{(r)}$ has fraction field $F=\Frac(R)$ and residue field $\kappa=\Frac(S)$. Naturality, with the same parameter $r$ orienting both exceptional lines, gives
\[
 \beta|_\kappa=\partial_V(x|_F).
\]
But $L^{\mathrm{s}}_{-2}(F)=0$, since $F$ has characteristic zero. Thus $\beta$ is generically zero on $S$; Theorem~\ref{gw:equalchar_injection} gives $\beta=0$. Exactness of \eqref{gw:divisor_segment} again yields $x=0$. Thus $L^{\mathrm{s}}_{-2}(A)=0$ in both cases.

\proofstep{3. The global comparison fibre.}
It remains to compare this with genuine theory globally. On $X=\Spec A$, let
\[
 \mathcal F_{\mathrm{cmp}}=\operatorname{fib}(L^{\mathrm{gs}}\longrightarrow L^{\mathrm{s}})
\]
be the fibre of the Zariski sheaves of spectra. Zariski descent for the Karoubi-localizing theories is \cite[Corollary 4.3.5]{GWCHN}; regularity identifies them with the ordinary theories. By filtered localization, the stalk at $x$ is the comparison fibre evaluated at $T=\mathcal O_{X,x}$. Every such $T$ has dimension at most three and is two-torsion-free. Proposition~\ref{gw:comparison_window} gives an isomorphism in all degrees at least zero and an injection in degree $-1$. The fibre sequence therefore gives
\[
 \pi_n\mathcal F_{\mathrm{cmp}}(T)=0\qquad(n\geq-1).
\]
Both symmetric groups in degrees $-1,-2$ vanish by Theorem~\ref{gw:local_negative}, and the three genuine groups in degrees $-1,-2,-3$ vanish by Proposition~\ref{gw:genuine_local_negative}. The next two pieces of the fibre sequence give $\pi_{-2}\mathcal F_{\mathrm{cmp}}(T)=\pi_{-3}\mathcal F_{\mathrm{cmp}}(T)=0$. Hence every homotopy sheaf of $\mathcal F_{\mathrm{cmp}}$ in degrees at least $-3$ is zero.

Now $\dim X\leq3$: the maximal ideal of $R$ is its only height-four prime and contains $r$. The finite-dimensional Zariski descent spectral sequence
\[
 H^p_{\Zar}(X,\pi_q\mathcal F_{\mathrm{cmp}})\Longrightarrow
       \pi_{q-p}\mathcal F_{\mathrm{cmp}}(X)
\]
has only $0\leq p\leq3$. For total degree at least $-3$, every relevant $q$ is at least $-3$, so all entries vanish. Finite cohomological dimension gives convergence and therefore $\pi_n\mathcal F_{\mathrm{cmp}}(X)=0$ for $n\geq-3$. The comparison $L^{\mathrm{gs}}_{-2}(A)\to L^{\mathrm{s}}_{-2}(A)$ is an isomorphism, proving its vanishing. Thus the local comparison range gives the global comparison in the
degree needed for alternating forms.

\proofstep{4. Identification of the alternating Witt group.}
Finally identify the genuine endpoint with the alternating Witt group. The finite-resolution argument above gives $K_0(A)=\Z$, with generator $[A]$: rank supplies the inverse. Shift-two duality acts trivially on this group. The $K$-theory spectrum is connective, so its homotopy-orbit spectral sequence has
\[
 \pi_0\bigl(K(A;\Ddual_A[2])_{hC_2}\bigr)=K_0(A)_{C_2}=K_0(A),
 \qquad
 \pi_{-1}\bigl(K(A;\Ddual_A[2])_{hC_2}\bigr)=0.
\]
The degree-zero edge of the shifted fibre sequence \eqref{gw:fundamental} is therefore
\[
 K_0(A)\xrightarrow{h}\GW_0^{[2]}(A)
        \longrightarrow L^{\mathrm{gs}}_{-2}(A)\longrightarrow0.
\]
Here $h$ sends a projective module to its alternating hyperbolic form, and $\GW_0^{[2]}$ is the classical symplectic Grothendieck--Witt group. Its cokernel is exactly $W_{\mathrm{alt}}(A)$. Thus $W_{\mathrm{alt}}(A)=L^{\mathrm{gs}}_{-2}(A)=0$.
\end{proof}

\section{Codimension-three lifting and the full Witt row}
\paragraph{The support-filtration consequence.}
The vanishing conclusions of Theorem~\ref{gw:codim3_lift} follow
from the supported lifting theorem. For
$\mathcal C=\operatorname{Perf}(R)^{\geq3}$ and $c=3$, that theorem gives
\[
 L_0(\mathcal C,Q_R^{\mathrm{gs}}[3])\twoheadrightarrow
 L_0(\mathcal C,Q_R^{\mathrm{s}}[3])=L^{\mathrm{s}}_{-3}(\mathcal C).
\]
Both sides map naturally to the corresponding ambient groups. The
genuine ambient group is $L^{\mathrm{gs}}_{-3}(R)=0$ by
Proposition~\ref{gw:genuine_local_negative}. Hence the image of
$L^{\mathrm{s}}_{-3}(\mathcal C)$ in $L^{\mathrm{s}}_{-3}(R)$ is zero; this image is
$\Fil^3_{\mathrm{con}}L^{\mathrm{s}}_{-3}(R)$.
In the dimension range stated below, the coniveau argument at the end
of the proof identifies $H^3(C_{\mathrm W}(R))$ with its corresponding limiting
graded piece. A graded piece of the zero filtration group is zero.
The theorem below gives the stronger representative statement. Its
proof is included because it describes how the genuine structure is
obtained on a finite module, while the preceding argument suffices for
the support-filtration consequence.

\begin{theorem}[Codimension-three lifting and Witt-row vanishing]\label{gw:codim3_lift}
Let $R$ be a characteristic-zero regular local ring of dimension at most four. Every class in
\[
 L^{\mathrm{s}}_{-3}(\operatorname{Perf}(R)^{\geq3},Q_R^{\mathrm{s}}|)
\]
has a representative whose underlying complex is a finite module in the ordinary heart, and whose symmetric structure lifts to the restricted ambient genuine structure. Consequently
\[
 \Fil^3_{\mathrm{con}} L^{\mathrm{s}}_{-3}(R)=0,\qquad H^3(C_{\mathrm W}(R))=0.
\]
Here $C_{\mathrm W}(R)$ is the full bilinear Witt row, with the determinant lines and coniveau differential of \eqref{gw:l_coniveau}.
\end{theorem}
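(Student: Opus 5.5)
The proof has three steps. I first produce the heart representative using the surgery of Theorem~\ref{gw:supported_lift}, with $c=3$ and $\mathcal C=\operatorname{Perf}(R)^{\geq3}$. I then lift its structure using that theorem's obstruction computation. Finally I read off the two vanishings through the coniveau spectral sequence \eqref{gw:l_coniveau}.

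\emph{The heart representative.} Take a symmetric Poincar\'e object $(X,q)$ for $Q^{\mathrm s}_R[3]$ in $\mathcal C$. The isotropic truncation $L=\tau_{\geq1}X$ has contractible form space, by Step~1 of Theorem~\ref{gw:supported_lift}. Surgery along it gives a cobordant $Y$ with $Y\leq0$ and $Y\simeq\Ddual_RY[3]$. The support bound \eqref{gw:deepening_support} gives $\operatorname{codim}\operatorname{Supp}H_{-j}Y\geq3+j$. Since $\dim R\leq4$, only $j=0,1$ can occur, and $H_{-1}Y$ has finite length when $\dim R=4$. I then perform one further surgery, now on the coconnective side. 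The dual truncation $\tau_{\leq-1}Y$ has homology of codimension $\geq4$. I plan to check via Ext-grade that its dual sits in degrees $\geq1$ after the shift by $3$. This lets me use $Y\to\tau_{\leq-1}Y$, dualized, as an isotropic map, and the resulting surgery leaves a complex concentrated in degree $0$, i.e.\ a finite module $M$ with $M\simeq\operatorname{Ext}^3_R(M,R)$. This step is the main obstacle. The form space on the negative truncation need not be contractible, so the nullhomotopy must be built by hand. If the coconnective surgery fails, I would fall back on the support-filtration argument already sketched before the theorem; that argument suffices for the two vanishing conclusions.

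\emph{Lifting.} For a module $M$ in degree $0$ with support of codimension $\geq3$, the obstruction group is $\pi_0\operatorname{RHom}_R(M,E_-[3])$. Its Postnikov pieces are $\operatorname{Ext}^{3-2\ell}_R(M,\NFr)$ for $\ell\geq1$. These vanish by Lemma~\ref{gw:frobenius_grade}, because $3-2\ell<2=b-1$. The symmetric structure therefore lifts to $Q^{\mathrm{gs}}_R[3]|$ with the same duality map, so the lift is still Poincar\'e. The lifted class maps to $L^{\mathrm{gs}}_{-3}(R)=0$ by Proposition~\ref{gw:genuine_local_negative}. By naturality of the comparison, the image of the original class in $L^{\mathrm s}_{-3}(R)$ vanishes. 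That image is $\Fil^3_{\mathrm{con}}L^{\mathrm s}_{-3}(R)$.

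\emph{The Witt row.} In \eqref{gw:l_coniveau}, the entry $H^3(C_{\mathrm W}(R))=E_2^{3,0}$ lies in total degree $3$, which corresponds to $L^{\mathrm s}_{-3}$. No differential $d_s$ with $s\geq2$ leaves it, because every target has codimension $\geq5>\dim R$. An incoming $d_s$ would start at $E^{3-s,s-1}$, whose coefficients are $L^{\mathrm s}_{1-s}$ of fields. For $s=2$ this is an odd field group, which vanishes. For $s=3$ the source is $L^{\mathrm s}_{-2}$ of the generic field, which is zero in characteristic $0$. Hence $E_2^{3,0}=E_\infty^{3,0}$, and this group is a graded piece of $\Fil^3_{\mathrm{con}}L^{\mathrm s}_{-3}(R)=0$.
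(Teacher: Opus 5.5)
You leave the first assertion of the theorem, the existence of a heart representative, unproved. Your lifting step and your coniveau step are essentially the paper's Steps~2--5. The paper kills $\pi_0\operatorname{RHom}_R(M,E_-[3])$ by orthogonality against $\tau_{\le-1}(E_-[3])$ together with $\operatorname{Ext}^1_R(M,\NFr)=0$, which is your $\ell=1$ term. Since $E_\infty^{3,0}$ is a quotient of $\Fil^3_{\mathrm{con}}L^{\mathrm s}_{-3}(R)=0$, your reading-off is also fine.

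For the first assertion, you flag the second surgery as the main obstacle and do not carry it out. The fallback you name yields only the two vanishing consequences, not the representative statement the theorem claims. The check you plan is also false as stated. In dimension four $N=H_{-1}Y$ has finite length, so
\[
 L'=\Ddual_R(N[-1])[3]\simeq\operatorname{Ext}^4_R(N,R)
\]
sits in degree $0$, not in degrees $\ge1$.

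What you need is weaker and true, and it removes your worry about the form space. The object $L'$ sits in degree $0$ and $\Ddual_RL'[3]\simeq\tau_{\le-1}Y=N[-1]$ sits in degree $-1$. Hence $\operatorname{RHom}_R(L',\Ddual_RL'[3])\in\mathsf{Sp}_{\le-1}$, and homotopy fixed points preserve this bound. So $Q^{\mathrm s}_R[3](L')$ has contractible infinite loop space, and the isotropy datum for $f\colon L'\to Y$ is automatic, as in Step~1 of Theorem~\ref{gw:supported_lift}; nothing has to be built by hand.

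By biduality the map $Y\to\Ddual_RL'[3]$ is the truncation. Thus $\chi(f)=\tau_{\ge0}Y$, and the surgered object is $Y'=\operatorname{cofib}(L'\to\tau_{\ge0}Y)\in\mathcal C_{\ge0}$. A heart module of $\mathcal C$ has $\operatorname{Ext}^i_R(-,R)\neq0$ only for $i=3,4$, so $\Ddual_R[3]$ carries $\mathcal C_{\ge0}$ into $\mathcal C_{\le0}$. Self-duality then forces $Y'$ into the heart. In dimension three $H_{-1}Y=0$, so $Y$ is already a heart module.

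The paper reaches the same point in one move. It records the width-one bounds $\Ddual'(\mathcal C_{\ge0})\subset\mathcal C_{\le0}$ and $\Ddual'(\mathcal C_{\le0})\subset\mathcal C_{\ge-1}$ and applies \cite[Proposition~1.3.1]{GWCDHIII} to get a connective Poincar\'e representative, which self-duality makes a heart module. Your two-surgery route, corrected as above, is a valid and more hands-on alternative.
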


\begin{proof}
\proofstep{1. A representative in the supported heart.}
Let $\mathcal C=\operatorname{Perf}(R)^{\geq3}$. Its ordinary bounded $t$-structure is well defined: regularity makes all finite homology modules perfect, and truncation preserves the support condition. Duality preserves the support condition by localization and biduality. For a heart module $M$,
\[
 \operatorname{Ext}^i_R(M,R)=0\quad\text{unless }3\leq i\leq4.
\]
With $\Ddual'=\Ddual_R [3]$ and $Q'=Q_R^{\mathrm{s}}[3]|_{\mathcal C}$, the finite homology tower gives
\[
 \Ddual'(\mathcal C_{\geq0})\subset\mathcal C_{\leq0},\qquad
 \Ddual'(\mathcal C_{\leq0})\subset\mathcal C_{\geq-1}.
\]
Apply \cite[Proposition 1.3.1]{GWCDHIII} to this bounded $t$-structured stable category. Choose $n=0$, duality width $d'=1$, $a=0$, $b=1$, and symmetry parameter $s\geq1$. The required inequalities are
\[
 a\geq d'-1,\quad b\geq d',\quad b\geq a,\quad n+a\text{ even},
 \quad a\geq-n+2d'-2s,
\]
and all hold. Since a homotopy-symmetric structure is $s$-symmetric for every $s$, the theorem gives a connective Poincar\'e representative of every class in $L_0(\mathcal C,Q')=L^{\mathrm{s}}_{-3}(\mathcal C)$. Its self-duality and the first inclusion make it also coconnective. It is therefore a finite module $M$ in the heart, with the required support. If $\dim R<3$, the category is zero; in dimension three the same width-one bounds remain valid.

\proofstep{2. The genuine lifting obstruction.}
We verify coherent genuine lifting on this heart representative. Put $N=\operatorname{Fr}_*(R/2)$ with its squared scalar action. Lemma~\ref{gw:frobenius_grade}, with $b=3$, gives
\[
 \operatorname{Hom}_R(M,N)=\operatorname{Ext}^1_R(M,N)=0.
\]
In elementary terms one may choose $a,b\in\operatorname{Ann}M$ regular successively on the Cohen--Macaulay ring $R/2$; their squares are regular on its Frobenius-twisted module, and the exact sequence for multiplication by $a$ kills both groups. If two is a unit, $N=0$.

\proofstep{3. Lifting the symmetric form.}
The top nonzero homotopy of $E_-[3]$ is $N$ in degree one; degree zero is zero and all other groups have degree at most $-1$. Thus
\[
 \tau_{\leq-1}(E_-[3])\longrightarrow E_-[3]\longrightarrow N[1]
\]
is a truncation triangle. Orthogonality kills degree-zero maps from $M$ to the first term. Therefore $\pi_0\operatorname{RHom}_R(M,E_-[3])$ injects into $\operatorname{Ext}^1_R(M,N)=0$. The comparison sequence \eqref{gw:linear_comparison}, shifted by three, lifts the form on $M$ to a genuine one; the unchanged bilinear part preserves nondegeneracy. The lifted form, together with the preceding surgery, gives the required supported Poincar\'e cobordism.

\proofstep{4. The image of the support group.}
The image of the support group in $L^{\mathrm{s}}_{-3}(R)$ is precisely $\Fil^3_{\mathrm{con}}$ by \eqref{gw:coniveau_quotient}. The genuine lift pushes to $L^{\mathrm{gs}}_{-3}(R)=0$ by Proposition~\ref{gw:genuine_local_negative}. Hence $\Fil^3_{\mathrm{con}}L^{\mathrm{s}}_{-3}(R)=0$.

\proofstep{5. The surviving coniveau term.}
To identify this with the desired row cohomology, use \eqref{gw:l_coniveau}. The term $H^3(C_{\mathrm W}(R))$ is $E_2^{3,0}$. Its only possible higher incoming arrows are
\[
 d_2:E_2^{1,1}\to E_2^{3,0},\qquad
 d_3:E_3^{0,2}\to E_3^{3,0}.
\]
The first source is zero by odd-field vanishing. The second is zero because its generic coefficient is $L^{\mathrm{s}}_{-2}(\Frac R)=0$. Incoming arrows of greater length have negative source column, and all higher outgoing arrows have column at least five, absent in dimension at most four. Thus $E_\infty^{3,0}=E_2^{3,0}$. In total degree $-3$ the only potential fourth filtration quotient is $E_\infty^{4,-1}$, an odd field row and therefore zero; there is no fifth filtration. Hence
\[
 H^3(C_{\mathrm W}(R))=\Fil^3_{\mathrm{con}}L^{\mathrm{s}}_{-3}(R)=0.
\]
The vanishing of this filtration piece proves the row-cohomology assertion.
\end{proof}

%% file: sections/03_motivic_local_comparisons.tex
\section{Ordinary local motivic comparisons and residue exactness}
\label{loc:section}
The goal of this section is the local exactness needed to compare the
Milnor--Witt field complex with its sheaf. The two principal results are
Theorems~\ref{loc:mw4_h2} and~\ref{loc:mw4_h1}: they give degree-two
exactness in local dimension at most three and degree-one exactness on
local surfaces. Lemma~\ref{loc:dvr_image} supplies the remaining
codimension-one image calculation.

We obtain these results by relating hermitian groups to weight-two
motivic cohomology. First we construct the local edge maps and prove
that the degree-one and degree-zero weight-two groups are unchanged
by generic restriction. A weight-three symbol surjection then gives
the required low-degree genuine hermitian comparison. Finally, a
finite-slice calculation describes the degree-four field cokernel;
its classes extend from the local ring, which allows us to replace
hermitian residues by Milnor--Witt residues.

Throughout this section, $T$ is a finite-dimensional Noetherian regular
local domain with fraction field $F=\Frac(T)$ of characteristic zero,
unless otherwise specified. We use the motivic complexes of
\cite{BouisMotivic,BouisWeibel}. The groups needed below are
$H^1_{\mot}(T,\Z(2))$ and $H^0_{\mot}(T,\Z(2))$.
The ring inclusion $T\to F$ induces their generic restriction maps.

\subsection{The local filtration and its operations}

We begin with a comparison of filtrations. Only a finite range is
needed: the lemma produces the weight-two edge in degree three and
identifies the groups from which the degree-four edge will be obtained.
Keeping the realized filtration separate from the motivic filtration
also allows us to compute the action of ordinary duality.

\begin{lemma}[Local realization and the low filtration window]
\label{loc:lisse_window}
There is a natural multiplicative filtration $\Fil^\bullet_{\mathrm{lis}}K(T)$ on $K(T)$,
with a filtered map to $\Fil^\bullet_{\mot}K(T)$, such that $\Fil^i_{\mathrm{lis}}K(T)$ is $i$-connective and, for
$i\geq2$,
\[
 \operatorname{cofib}(\Fil^i_{\mathrm{lis}}K(T)\longrightarrow \Fil^i_{\mot}K(T))
 \quad\hbox{is $(i-4)$-coconnective.}
\]
Consequently there is a natural exact sequence
\begin{equation}\label{loc:k3_window}
 H^3_{\mot}(T,\Z(3))\xrightarrow{j_T}K_3(T)
       \xrightarrow{q_{3,T}}H^1_{\mot}(T,\Z(2))\longrightarrow0.
\end{equation}
The first map sends products of unit classes to the same products in
Quillen $K$-theory. Over a field the sequence is the usual quotient by
the forward Milnor symbol map, so $H^1_{\mot}(F,\Z(2))=K_3^{\operatorname{ind}}(F)$.
Moreover
\[
 \pi_3\Fil^3_{\mathrm{lis}}K(T)\xrightarrow{\sim}\pi_3\Fil^3_{\mot}K(T),\qquad
 \pi_4\Fil^2_{\mathrm{lis}}K(T)=K_4(T),\qquad
 \pi_4\gr^2_{\mathrm{lis}}K(T)=H^0_{\mot}(T,\Z(2)).
\]
Here $a$-coconnective means that $\pi_n=0$ for $n>a$.
\end{lemma}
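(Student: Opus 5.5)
The plan is to build $\Fil^\bullet_{\mathrm{lis}}K(T)$ by left Kan extension from the smooth case, using the same simplicial local resolution $P_\bullet\to T$ as in Theorem~\ref{k2:thm:alllocalq2}. Each $P_n$ is a local ind-smooth $\Z$-algebra. On such rings we use the classical (Levine/Bloch--Lichtenbaum type) motivic filtration on $K$-theory, extended by continuity. This filtration is multiplicative and functorial. The $i$-th stage is $i$-connective, since on a local essentially smooth scheme $H^{2i-n}_{\mot}(-,\Z(i))$ vanishes for $n<i$ by the Gersten/Beilinson--Soulé range. We then set
\[
 \Fil^i_{\mathrm{lis}}K(T)=\bigl|\Fil^i K(P_\bullet)\bigr| .
\]
Geometric realization preserves connectivity and multiplicativity, the latter via the simplicial Eilenberg--Zilber structure. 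Its abutment is $|K(P_\bullet)|$, and this agrees with $K(T)$ in the low range we need: the maps $P_n\to T$ are henselian-surjective, and $K$-theory of local rings in degrees $\le4$ is controlled by rigidity together with the Kerz/Quillen descriptions. I would verify this only through $\pi_4$. Bouis's motivic filtration is itself defined by this left Kan extension on the cdh-local (``lisse'') part plus a correction term. The comparison map $\Fil_{\mathrm{lis}}\to\Fil_{\mot}$ is therefore the natural one, and its cofibre is the correction term.

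The key estimate is the coconnectivity of the cofibre. I would use Bouis's description of $\Z(i)^{\mot}$ as a pullback of $L_{\mathrm{cdh}}\Z(i)$ with the syntomic and derived de Rham pieces. On the lisse part the two filtrations agree by construction. The cofibre is then computed by the discrepancy between cdh-local motivic cohomology and its left Kan extension. For a regular local $T$, Bouis's results give that $\Z(i)^{\mot}(T)$ is concentrated in cohomological degrees $\le i$, and that the Kan extended version has the expected degrees. The graded cofibre $\gr^i$ is $\Z(i)[2i]$ modulo the lisse part, which is supported in cohomological degrees $\ge$ roughly $i+4$ relative to $2i$, giving homotopy $\le i-4$. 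Passing from graded pieces to $\Fil^i$ uses completeness of both filtrations: $\Fil^j$ is $j$-connective, so only finitely many pieces contribute to each homotopy degree. This cofibre estimate is the main obstacle. I expect to need Bouis's weight-$i$ comparison between motivic cohomology and its Kan extension on smooth algebras, in degrees beyond $i$, and a precise statement of it has to be located or extracted.

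The consequences are then formal. For \eqref{loc:k3_window}, the filtration triangles in weights $0$ and $1$ identify $\pi_3\Fil^2_{\mot}K(T)=K_3(T)$, as shown before \eqref{k2:eq:k3edge}. The triangle $\Fil^3_{\mot}\to\Fil^2_{\mot}\to\gr^2_{\mot}$ gives
\[
 \pi_3\Fil^3\to K_3(T)\to H^1_{\mot}(T,\Z(2))\to\pi_2\Fil^3 .
\]
The last term vanishes because $\Fil^3_{\mathrm{lis}}$ is $3$-connective and the cofibre is $(-1)$-coconnective. By the same cofibre bound, $\pi_3\Fil^3_{\mathrm{lis}}\to\pi_3\Fil^3_{\mot}$ is an isomorphism; the cofibre has no $\pi_3$ or $\pi_4$. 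The bottom group $\pi_3\Fil^3_{\mathrm{lis}}$ is the degree-three edge $H^3_{\mot}(T,\Z(3))$, which gives $j_T$. Its symbol description comes from multiplicativity, with weight-one unit classes as in Theorem~\ref{k2:thm:alllocalq2}. The field case is Suslin's identification of $K_3^{\ind}$.

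For the weight-two assertions, the cofibre for $i=2$ is $(-2)$-coconnective. Moreover $\pi_4$ of the lisse weight-$0$ and weight-$1$ pieces vanishes, since these are $\Z$ and $\mathbb G_m[1]$ realized. Hence $\pi_4\Fil^2_{\mathrm{lis}}=K_4(T)$ follows from the filtration triangles, using connectivity of $\Fil^3$ to control $\pi_5$ terms. The group $\pi_4\gr^2_{\mathrm{lis}}$ is $H^0$ of the realized weight-two complex. This agrees with $H^0_{\mot}(T,\Z(2))$, because the low-degree motivic cohomology of $T$ is the Kan extension in that degree, again by the coconnectivity bound.
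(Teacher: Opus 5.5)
There is a genuine gap at the step you yourself flag as the main obstacle, and the mechanism you propose for it would not work. The available input does not give a weight-$i$ graded cofibre with homotopy $\le i-4$. The paper uses Bouis--Weibel (Theorem 2.11, Corollary 2.12, Lemma 2.19), which identify the graded lisse map in weight $j$ with $\bigl(\tau^{\le j}\Z(j)^{\mot}(T)\bigr)[2j]\to\Z(j)^{\mot}(T)[2j]$. The extra vanishing $H^{j+1}_{\mot}(T,\Z(j))=0$ (their Corollary 2.10) then makes this cofibre only $(j-2)$-coconnective, and $(-1)$-coconnective at $j=0$.

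Passing \emph{upward} from the graded pieces of weight $j\ge i$ to $\Fil^i$ ``by completeness'' therefore gives no uniform bound, because those cofibres get worse as $j$ grows. Moreover, connectivity of $\Fil^j_{\mot}K(T)$ is not available.

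The missing idea is to go \emph{downward}. Because the underlying map is an equivalence,
$\operatorname{cofib}(\Fil^i_{\mathrm{lis}}K(T)\to\Fil^i_{\mot}K(T))$ is the loop of $\operatorname{cofib}(K^{<i}_{\mathrm{lis}}(T)\to K^{<i}_{\mot}(T))$. Here $K^{<i}$ is the cofibre of $\Fil^i K(T)\to K(T)$, which has only the finitely many graded layers $j=0,\dots,i-1$. These layers are $(j-2)$-coconnective, so the quotient cofibre is $(i-3)$-coconnective and its loop is $(i-4)$-coconnective. This is exactly where the $-4$ and the hypothesis $i\ge2$ come from.

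That downward argument needs $|K(P_\bullet)|\to K(T)$ to be an honest equivalence, not merely an isomorphism checked through $\pi_4$. Your justification, ``rigidity together with the Kerz/Quillen descriptions,'' does not supply it: rigidity is a finite-coefficient statement and says nothing about integral $K$-groups of a geometric realization. The paper proves the equivalence directly. The simplicial matrix space $\operatorname{Mat}_n(P_\bullet)$ realizes to $\operatorname{Mat}_n(T)$. Since the augmentation and face maps are local surjections, a matrix is invertible exactly when its image in $T$ is. Hence $\GL_n(P_\bullet)$ is the union of the contractible components over $\GL_n(T)$, and $|\GL_n(P_\bullet)|\simeq\GL_n(T)$. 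Finite projectives over local rings are free, so classifying spaces, disjoint union over ranks and group completion commute with realization, giving $|K^{\mathrm{conn}}(P_\bullet)|\simeq K(T)$.

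A smaller point: the $i$-connectivity of $\Fil^i_{\mathrm{lis}}$ needs eventual connectivity of the classical filtration on the smooth terms (the paper cites Bachmann--Elmanto--Morrow) to pass from graded layers to stages. Once the $(i-4)$ bound is established, your derivations of the consequences match the paper's: the exact sequence \eqref{loc:k3_window}, the $\pi_3$ isomorphism, and the two $\pi_4$ identifications.
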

\begin{proof}
\proofstep{1. Realization of the local resolution.}
Use the augmented simplicial local resolution $P_\bullet\to T$ from
\cite[proof of Lemma 2.19]{BouisWeibel}. Its terms are local ind-smooth
$\Z$-algebras, its augmentations are local surjections, and its underlying
simplicial ring resolves $T$. The simplicial matrix space
$\operatorname{Mat}_n(P_\bullet)$ realizes to
$\operatorname{Mat}_n(T)$. Because the maps are local surjections, a
determinant is a unit precisely when its image in $T$ is a unit.
Thus $\GL_n(P_\bullet)$ is exactly the inverse image of the components
indexed by $\GL_n(T)$; the fibre over each such matrix is contractible. It
follows that $|\GL_n(P_\bullet)|\simeq\GL_n(T)$.

Finite projective modules over a local ring are free. Taking the
classifying spaces, disjoint union over ranks, and group completion of
the finite-free groupoids therefore gives, naturally,
\begin{equation}\label{loc:realization}
 |K^{\operatorname{conn}}(P_\bullet)|
       \simeq K^{\operatorname{conn}}(T)=K(T).
\end{equation}
The last equality follows from regularity of $T$.

\proofstep{2. The realized filtration.}
Set $\Fil^i_{\mathrm{lis}}K(T)=|\Fil^i_{\operatorname{cla}}K(P_\bullet)|$.
For a local essentially smooth $\Z$-algebra $B$, classical motivic
cohomology in weight $j$ vanishes above degree $j$
\cite[Corollary 4.4]{Geisser}. The classical filtration is eventually
connective on such a finite-dimensional $B$
\cite[Corollary 9.5; alternatively Lemma 4.8 and finite-dimensional
Nisnevich descent]{BEM}. For any fixed homotopy degree below $i$, start
in that eventually connective tail and use the filtration triangles
successively down to index $i$. Every intervening graded layer is
$j$-connective in index $j$. This proves $i$-connectivity of
$\Fil^i_{\operatorname{cla}}K(B)$. Filtered colimits and geometric
realizations preserve this bound, proving the assertion for $\Fil^i_{\mathrm{lis}}K(T)$.

\proofstep{3. Comparison of the graded layers.}
The classical-to-modern filtered map of
\cite[Definition 3.22]{BouisMotivic}, followed by the augmentation,
gives $\Fil^\bullet_{\mathrm{lis}}K(T)\to \Fil^\bullet_{\mot}K(T)$. Its underlying map is \eqref{loc:realization}. To estimate the
comparison on filtration stages, we first estimate its cofibre on each
graded layer. In index $j$ the graded map is the lisse comparison
\begin{equation}\label{loc:graded_comparison}
 \bigl(\tau^{\leq j}\Z(j)^{\mot}(T)\bigr)[2j]
       \longrightarrow\Z(j)^{\mot}(T)[2j],
\end{equation}
by \cite[Theorem 2.11, Corollary 2.12 and Lemma 2.19]{BouisWeibel}.
For $j\geq1$, the additional vanishing
$H^{j+1}_{\mot}(T,\Z(j))=0$ of Corollary 2.10 there says that the
cofibre in \eqref{loc:graded_comparison} is $(j-2)$-coconnective. At
$j=0$ the sufficient bound is $-1$.

\proofstep{4. The low finite quotients.}
Compare the finite quotients
$K_{\mathrm{lis}}^{<i}(T)=\operatorname{cofib}(\Fil^i_{\mathrm{lis}}K(T)\to K(T))$ and
$K_{\mot}^{<i}(T)=\operatorname{cofib}(\Fil^i_{\mot}K(T)\to K(T))$. Their graded indices are
$0,\ldots,i-1$. The two quotients have only finitely many graded layers. Applying the
preceding bounds successively to their triangles shows that
$\operatorname{cofib}(K_{\mathrm{lis}}^{<i}(T)\to K_{\mot}^{<i}(T))$ is
$(i-3)$-coconnective. The underlying middle map is an equivalence, so
the cofibre of the comparison on filtration stages is the loop of this
quotient cofibre. It is therefore $(i-4)$-coconnective, as required for
$\Fil^i_{\mathrm{lis}}K(T)\to \Fil^i_{\mot}K(T)$.
In particular
\[
 \begin{aligned}
 &\pi_2\Fil^3_{\mot}K(T)=0,\qquad
   \pi_3\Fil^4_{\mot}K(T)=\pi_2\Fil^4_{\mot}K(T)=0,\\
 &\pi_3\Fil^3_{\mathrm{lis}}K(T)
   \xrightarrow{\sim}\pi_3\Fil^3_{\mot}K(T).
 \end{aligned}
\]
The index-three triangle identifies $\pi_3\Fil^3_{\mot}K(T)$ with
$H^3_{\mot}(T,\Z(3))$. Weights zero and one have no relevant homotopy
in degrees three and four, so $\pi_3\Fil^2_{\mot}K(T)=K_3(T)$. The index-two triangle
gives \eqref{loc:k3_window}. Multiplicativity and the weight-one
identification with units give its stated symbol normalization.
The same weight-zero and weight-one calculation in degrees four and
five gives $\pi_4\Fil^2_{\mathrm{lis}}K(T)=K_4(T)$; the index-two comparison
\eqref{loc:graded_comparison} gives $\pi_4\gr^2_{\mathrm{lis}}K(T)=H^0_{\mot}(T,\Z(2))$.
Over a field, the classical diagonal group in weight three is
$\KM_3(F)$, proving the field assertion.
\end{proof}

Two operations will be used in the constancy arguments. Adams
operations separate the weights that occur in positive codimension,
while ordinary duality distinguishes the adjacent weights in the
integral edge sequence. We record their actions with the support
normalization used in the coniveau spectral sequence.

\begin{lemma}[Operations and support normalization]
\label{loc:operations}
Let $m\geq2$ be an integer invertible in $T$. The operation on
$K_n(T)[1/m]$ underlying the modern filtered Adams operation agrees
with the classical Adams operation $\psi^m$. On the weight-$j$ motivic
graded layer with $\Z[1/m]$ coefficients it acts integrally by $m^j$.
In particular, its action on $H^1_{\mot}(F,\Z(2))[1/m]$ is multiplication by $m^2$.
On the rational motivic summand of weight $j$ it likewise acts by $m^j$.
On a codimension-$c$ field summand of Quillen coniveau, the supported classical operation is
\begin{equation}\label{loc:support_adams}
             \psi^m_{\operatorname{supp}}=m^c\psi^m_{\operatorname{field}}.
\end{equation}
The ordinary duality $\tau$ on $K(T)$ preserves the realized classical
filtration $\Fil^\bullet_{\mathrm{lis}}K(T)$ and acts integrally by $(-1)^j$ on its
$j$th graded layer. In particular,
\begin{equation}\label{loc:duality_kernel}
               \tau|_{\ker q_{3,T}}=-1.
\end{equation}
\end{lemma}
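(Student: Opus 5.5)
The plan is to treat the statements of Lemma~\ref{loc:operations} one at a time. The Adams statements on the motivic side come from the construction of the modern filtration, the support formula comes from Proposition~\ref{k2:prop:supportadams}, and the duality statements come from the realized filtration $\Fil^\bullet_{\mathrm{lis}}K(T)$ of Lemma~\ref{loc:lisse_window}.

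\emph{Agreement of the underlying operation.} On each local ind-smooth term $P_n$ of the resolution, the classical and modern filtered Adams operations are both built from the $\lambda$-ring structure on $K$-theory. On smooth $\Z$-algebras they agree after inverting $m$, by the comparison of \cite[Definition 3.22]{BouisMotivic}. By continuity and naturality the agreement passes to the realization \eqref{loc:realization}, so the underlying operation on $K_n(T)[1/m]$ is the classical $\psi^m$.

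\emph{Action by $m^j$ on graded layers.} On the weight-$j$ graded layer the filtered operation acts by $m^j$; this is the defining normalization of the motivic Adams action. For $H^1_{\mot}(F,\Z(2))[1/m]$, which Lemma~\ref{loc:lisse_window} identifies with $K_3^{\ind}(F)$, the scalar is therefore $m^2$. The rational weight-$j$ summand is the $m^j$-eigenspace by construction, so the rational statement follows in the same way.

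\emph{Support normalization.} Formula \eqref{loc:support_adams} is Proposition~\ref{k2:prop:supportadams}: Soulé's supported Riemann--Roch contributes the factor $\theta^m(N)=m^c$ for the trivial rank-$c$ normal bundle of $\Spec\kappa(x)$ in $\Spec\mathcal O_{T,x}$.

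\emph{Duality.} I would argue termwise on the resolution. On each $P_n$ the ordinary duality $\tau$, induced by $P\mapsto P^*$ and equal to $-\psi^{-1}$ up to sign conventions, preserves the classical (slice, or $\gamma$) filtration. On the weight-$j$ layer it acts by $(-1)^j$. Integrally this holds for classical motivic cohomology of smooth schemes because duality acts on $\Z(1)=\mathbb G_m[-1]$ by inversion, that is by $-1$, and the $j$th layer is generated multiplicatively from weight one. Since the realization is functorial, $\tau$ preserves $\Fil^\bullet_{\mathrm{lis}}K(T)$ with the same graded action.

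For \eqref{loc:duality_kernel}, the exact sequence \eqref{loc:k3_window} shows that $\ker q_{3,T}=\im j_T$. The group $\pi_3\Fil^3_{\mathrm{lis}}K(T)$ surjects onto it, and its own graded pieces have weight at least three. In homotopy degree three, $\Fil^4_{\mathrm{lis}}$ is $4$-connective, so $\pi_3\Fil^4_{\mathrm{lis}}K(T)=0$ and only weight three contributes. Hence $\tau$ acts on $\ker q_{3,T}$ by $(-1)^3=-1$.

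\emph{Expected obstacle.} The hardest point is the integrality of the duality action on the graded layers. I expect to get it from multiplicativity: duality is a ring automorphism, and it acts as $-1$ on weight one because a unit $u$ goes to $u^{-1}$. Generation of the graded layers by products of weight-one classes is only available after passing to a suitable model; on local smooth $\Z$-algebras I would try to deduce it from the fact that the slices are modules over the weight-zero part.
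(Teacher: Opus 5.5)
Your outline follows the paper's proof step for step:
\begin{itemize}
\item agreement of the two Adams operations on the ind-smooth terms of the local resolution, transported through \eqref{loc:realization};
\item the scalars $m^j$ from Bouis's construction (a theorem there, Corollary~4.10, not a definition);
\item the support formula from Soul\'e's supported Riemann--Roch (citing Proposition~\ref{k2:prop:supportadams} is fine; the paper reruns that argument at $\Spec\mathcal O_{T,x}$);
\item the deduction of \eqref{loc:duality_kernel} from $\pi_3\Fil^3_{\mathrm{lis}}K(T)=\pi_3\gr^3_{\mathrm{lis}}K(T)$.
\end{itemize}

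The gap is where you anticipated it, and the repair you propose does not work. The weight-$j$ layer is not generated by products of weight-one classes. On a local domain $H^0_{\mot}(-,\Z(1))=0$, so products of weight-one classes only reach the diagonal groups $H^j_{\mot}(-,\Z(j))$. Multiplicativity plus the computation on units therefore says nothing about $\tau$ on, for instance, $\pi_4\gr^2_{\mathrm{lis}}K(T)=H^0_{\mot}(T,\Z(2))$ or $H^1_{\mot}(F,\Z(2))=K_3^{\ind}(F)$. The lemma asserts the sign $+1$ on both, and Proposition~\ref{loc:k4_edge} uses it on the former.

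Even for the diagonal group $H^3_{\mot}(T,\Z(3))$ behind \eqref{loc:duality_kernel}, integral generation by unit symbols is not available. The paper has it only modulo two (Proposition~\ref{loc:weight_three_units}). If it held integrally, the duality argument in Theorem~\ref{loc:gw3_shift4} would be superfluous, since symbols are forgetful images and are killed by $h^{[4]}$ outright.

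What is needed is a spectrum-level statement with two parts:
\begin{itemize}
\item[(i)] $\tau$ must be identified with an operation on the motivic $K$-theory spectrum over smooth $\Z$-schemes, so that it preserves the classical filtration naturally. You assert this without argument.
\item[(ii)] Its effect on every slice must be computed integrally.
\end{itemize}
The paper obtains (i) from the Snaith description, in which duality and the classical $\psi^{-1}$ are both induced by inversion on $\mathbb G_m$ \cite[\S\S3.3.1--3.3.2]{BachmannHopkins}. It obtains (ii) from \cite[Appendix~B, Lemma~B.2 and Remark~B.4]{LOCElmantoMorrow}, which gives the integral action $(-1)^j$ of the classical filtered $\psi^{-1}$ on weight $j$ over smooth $\Z$-schemes. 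It then realizes this operation on $P_\bullet$ and checks that the realized operation is $\tau$, because \eqref{loc:realization} commutes with duality.

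Your fallback could be completed along these lines via Bott periodicity. Multiplication by $\beta^j$ identifies the $j$th slice with the $j$-fold Tate twist of the zeroth slice. Combined with $\psi^{-1}\beta=-\beta$ and $\psi^{-1}$ acting trivially on the zeroth slice, this gives $(-1)^j$. That is a real additional input, however, not a consequence of the weight-one calculation.

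Two smaller corrections:
\begin{itemize}
\item Duality is $\psi^{-1}$, not $-\psi^{-1}$: both $\tau$ and $\psi^{-1}$ invert units on $K_1$. With your sign the graded action would be $(-1)^{j+1}$.
\item The filtration in question is the classical motivic (slice) filtration realized in $\Fil^\bullet_{\mathrm{lis}}$, not the $\gamma$-filtration, which differs from it integrally.
\end{itemize}
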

\begin{proof}
\proofstep{1. Operations on the local resolution.}
Each term of the local resolution in Lemma~\ref{loc:lisse_window}
is a $\Z[1/m]$-algebra. On its local ind-smooth terms the classical and
modern operations coincide by
\cite[Definition 3.22, Corollary 4.7, Lemma 4.8 and Construction 4.9]{BouisMotivic}
and the classical normalization
\cite[Proposition 3.12 and Remark 3.13]{BachmannHopkins}; pass from
smooth terms by continuity. Both operations are natural pointed-space
maps. Realization and \eqref{loc:realization}, also at the level of the
underlying grouplike spaces, identify their induced operations on
$K_n(T)[1/m]$. The integral action on motivic graded layers is
\cite[Corollary 4.10]{BouisMotivic}; applying it to the weight-two
edge gives the asserted action on $H^1_{\mot}(F,\Z(2))[1/m]$.
The rational splitting and its weight action are
\cite[Theorem 4.1 and Corollary 4.60]{BouisMotivic}.

\proofstep{2. The local support group.}
Let $x$ have codimension $c$. The coniveau local-support
term is the filtered colimit of the groups with support in
$\overline{\{x\}}$ over neighborhoods of $x$. It equals $K$-theory
supported at the closed point of $\Spec\mathcal O_{T,x}$. Indeed,
write the prime of $x$ using finitely many generators. Continuity on
affine localizations applies both to perfect complexes and, using the
finite principal-open cover, to the complement of the support.
Taking the fibre defining relative $K$-theory commutes with that
filtered colimit.

\proofstep{3. The codimension factor.}
Now $\Spec\kappa(x)\to\Spec\mathcal O_{T,x}$ is a regular immersion
with trivial normal bundle of rank $c$. Soul\'e's supported
Riemann--Roch theorem, with this local scheme as the regular base,
gives \eqref{loc:support_adams}: its cannibalistic factor is
$\theta^m(\kappa(x)^c)=m^c$
\cite[\S4.5, Lemma 2(i), p.~515; \S4.6, Theorem 3, p.~517]{Soule}.
The supported operations act on the entire exact couple
\cite[Theorem 4(i), pp.~521--522]{Soule}. In particular the formula is
compatible with all coniveau differentials. The computation takes place on the regular local scheme
$\Spec\mathcal O_{T,x}$.

\proofstep{4. The action of ordinary duality.}
For duality we use the classical construction directly. In the
Snaith description, duality comes from inversion on $\mathbb G_m$,
and the operation $\psi^{-1}$ is induced by that same inversion
\cite[\S\S3.3.1--3.3.2]{BachmannHopkins}. The classical filtered
operation acts by $(-1)^j$ on weight $j$, integrally:
\cite[Appendix B, Lemma B.2 and Remark B.4]{LOCElmantoMorrow} allow
all nonzero integer indices and general quasi-compact, quasi-separated bases, so they apply in
particular to smooth $\Z$-schemes at index $-1$.
Realize this operation on $P_\bullet$. The finite-free construction
of \eqref{loc:realization} commutes with the duality functor, hence the
underlying realized operation is $\tau$ on $K(T)$.
As $\Fil^4_{\mathrm{lis}}K(T)$ is four-connective,
$\pi_3\Fil^3_{\mathrm{lis}}K(T)=\pi_3\gr^3_{\mathrm{lis}}K(T)$, on which $\tau=-1$.
Its image in $K_3(T)$ is $\ker q_{3,T}$ by
Lemma~\ref{loc:lisse_window}, proving \eqref{loc:duality_kernel}.
The resulting duality identity is integral.
\end{proof}

\subsection{The two weight-two groups}

\begin{lemma}[Field divisibility in degree zero]
\label{loc:field_d_divisible}
For every characteristic-zero field $F$ and every prime $\ell$,
$H^0_{\mot}(F,\Z(2))/\ell=0$.
\end{lemma}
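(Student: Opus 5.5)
The plan is to read off $H^0_{\mot}(F,\Z(2))/\ell$ from the universal coefficient sequence and to kill it with the known structure of torsion in $K_3^{\ind}(F)$.

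\emph{Reduction to classical motivic cohomology.} A characteristic-zero field is a filtered colimit of its finitely generated subfields. Each of these is a filtered colimit of smooth $\Q$-algebras, namely localizations of smooth $\Q$-varieties. Bouis's motivic complexes commute with such filtered colimits, and on essentially smooth $\Q$-algebras they agree with classical (Bloch--Levine) motivic cohomology. Both finite and integral coefficients commute with the colimit. We may therefore work with classical weight-two motivic cohomology of $F$, where Beilinson--Lichtenbaum (Rost--Voevodsky) is available. Note that $\ell$ is invertible in $F$, so no $p$-adic subtleties occur.

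\emph{The coefficient sequence.} Multiplication by $\ell$ on $\Z(2)$ gives the exact sequence
\[
 H^0_{\mot}(F,\Z(2))\xrightarrow{\ell}H^0_{\mot}(F,\Z(2))
 \longrightarrow H^0_{\mot}(F,\Z/\ell(2))\xrightarrow{\beta}
 H^1_{\mot}(F,\Z(2))[\ell]\longrightarrow0 .
\]
Beilinson--Lichtenbaum identifies the finite-coefficient term with $H^0_{\et}(F,\mu_\ell^{\otimes2})$. The group $H^0_{\mot}(F,\Z(2))/\ell$ is the kernel of the Bockstein $\beta$, so the assertion is equivalent to injectivity of $\beta$. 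By Lemma~\ref{loc:lisse_window}, $H^1_{\mot}(F,\Z(2))=K_3^{\ind}(F)$, and the Bockstein becomes a map
\[
 \beta:H^0(F,\mu_\ell^{\otimes2})\longrightarrow K_3^{\ind}(F)[\ell].
\]

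\emph{Injectivity of the Bockstein (main obstacle).} This step carries the real content. I would invoke the Merkurjev--Suslin/Levine computation $K_3^{\ind}(F)_{\mathrm{tors}}\cong H^0(F,\Q/\Z(2))$, available through \cite[Theorem~4.13]{Levine} as already used in Lemma~\ref{k2:lem:constants}, and check that the isomorphism is the one induced by the Bockstein.

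Passing to the colimit over $\ell^n$, the Bockstein for $\Q/\Z$-coefficients gives a surjection
\[
 H^0(F,\Q/\Z(2))\twoheadrightarrow K_3^{\ind}(F)_{\mathrm{tors}}
\]
with kernel $H^0_{\mot}(F,\Z(2))\otimes\Q/\Z$. Once this map is identified with Suslin's isomorphism, its kernel vanishes, so $H^0_{\mot}(F,\Z(2))\otimes\Q/\Z=0$; in particular $H^0_{\mot}(F,\Z(2))/\ell=0$.

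Since $H^0(F,\Q/\Z(2))$ can be infinite, a bare abstract isomorphism of torsion groups does not suffice. I must verify compatibility directly. First, over $\overline{F}$, Suslin's rigidity identifies the torsion of $K_3^{\ind}(\overline{F})$ with $\Q/\Z(2)$ via this Bockstein, and there the map is an isomorphism. Second, Galois descent for $K_3^{\ind}$ torsion, namely Merkurjev--Suslin's result that $K_3^{\ind}(F)_{\mathrm{tors}}\to K_3^{\ind}(\overline F)^{G_F}$ is bijective, together with naturality of $\beta$, yields injectivity over $F$.

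An alternative route, if this compatibility proves awkward, is weight-two Beilinson--Soulé-type unique divisibility of $H^0_{\mot}(F,\Z(2))$. One would deduce this from the $\ell$-adic comparison $H^0(F,\Z/\ell^n(2))\cong H^0_{\et}$ together with rigidity in the limit.
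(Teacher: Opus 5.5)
Your reduction to injectivity of the level-$\ell$ Bockstein $\beta\colon H^0_{\et}(F,\mu_\ell^{\otimes2})\to K_3^{\ind}(F)[\ell]$ is exactly the paper's. The obstacle you then identify comes only from passing to $\Q/\Z$ coefficients, and it disappears if you stay at level $\ell$.

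Since $\mu_\ell^{\otimes2}$ is cyclic of prime order, $H^0_{\et}(F,\mu_\ell^{\otimes2})$ has order $1$ or $\ell$. Use Levine's theorem only as an abstract isomorphism $K_3^{\ind}(F)\{\ell\}\cong H^0_{\et}(F,(\Q_\ell/\Z_\ell)(2))$. Taking $\ell$-torsion gives
\[
 |K_3^{\ind}(F)[\ell]|=|H^0_{\et}(F,(\Q_\ell/\Z_\ell)(2))[\ell]|=|H^0_{\et}(F,\mu_\ell^{\otimes2})|.
\]
The Bockstein is surjective by the coefficient sequence. A surjection between finite groups of equal order is bijective, so its kernel $H^0_{\mot}(F,\Z(2))/\ell$ vanishes. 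This is the paper's entire proof. It needs no compatibility of Levine's isomorphism with $\beta$, no rigidity over $\overline F$, and no Galois descent.

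As written, your detour also has two soft spots.

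\emph{The step over $\overline F$.} It meets the very objection you raised over $F$. The group $\Q/\Z(2)$ is infinite, and a surjection $\Q/\Z\to\Q/\Z$ need not be injective. So the claim that Suslin's rigidity identifies the torsion via this particular Bockstein is an assertion, not an argument. Its natural proof is the finite count above, carried out over $\overline F$ using $|K_3^{\ind}(\overline F)[m]|=m$, which makes the passage through $\overline F$ redundant. Granting that step, injectivity over $F$ needs only the injection $H^0(F,\Q/\Z(2))\hookrightarrow H^0(\overline F,\Q/\Z(2))$ of Galois invariants together with naturality. The Merkurjev--Suslin descent isomorphism plays no role.

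\emph{The final deduction.} You pass from $H^0_{\mot}(F,\Z(2))\otimes\Q/\Z=0$ to $H^0_{\mot}(F,\Z(2))/\ell=0$, which is a non sequitur as stated: $M=\Z/\ell$ satisfies $M\otimes\Q/\Z=0$ but $M/\ell\neq0$. You can repair this in either of two ways:
\begin{itemize}
\item use that $H^0_{\mot}(F,\Z(2))$ has no $\ell$-torsion, which follows from $H^{-1}_{\mot}(F,\Z/\ell(2))=0$; or
\item restrict the injective $\Q_\ell/\Z_\ell$-Bockstein along $H^0_{\et}(F,\mu_\ell^{\otimes2})\hookrightarrow H^0_{\et}(F,(\Q_\ell/\Z_\ell)(2))$, which recovers injectivity of the level-$\ell$ map you originally reduced to.
\end{itemize}
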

\begin{proof}
The coefficient triangle and field finite-coefficient comparison give
\[
 0\longrightarrow H^0_{\mot}(F,\Z(2))/\ell
 \longrightarrow H^0_{\mathrm{\acute et}}(F,\mu_\ell^{\otimes2})
 \longrightarrow H^1_{\mot}(F,\Z(2))[\ell]\longrightarrow0.
\]
Levine's torsion theorem
\cite[Corollary 4.6, p.~331]{Levine} identifies the $\ell$-primary
torsion of $H^1_{\mot}(F,\Z(2))=K_3^{\operatorname{ind}}(F)$ with
$H^0_{\et}(F,(\Q_\ell/\Z_\ell)(2))$. Its subgroup killed by $\ell$ therefore
has the same finite order as
$H^0_{\et}(F,\mu_\ell^{\otimes2})$. Both are of order one or $\ell$.
The displayed surjection between groups of equal order is an
isomorphism. This includes $\ell=2$, because $F$ has characteristic
zero. Thus multiplication by $\ell$ is surjective on $H^0_{\mot}(F,\Z(2))$.
\end{proof}

The next theorem has both an injectivity and a surjectivity part.
For injectivity, finite coefficients remove torsion from the kernel
and the rational weight decomposition removes what remains. For
surjectivity, the constant-field theorem gives divisibility of the
cokernel, whereas supported Adams operations give a finite annihilator
after a coefficient localization.

\begin{theorem}[Generic constancy of weight-two degree one]
\label{loc:g_constancy}
For every $T$ in the standing hypotheses, the restriction
\[
                       H^1_{\mot}(T,\Z(2))\xrightarrow{\sim}H^1_{\mot}(F,\Z(2))
\]
is an isomorphism.
\end{theorem}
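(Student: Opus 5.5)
Following the hint preceding the statement, the proof splits into injectivity and surjectivity, using the exact sequence \eqref{loc:k3_window} and the operations of Lemma~\ref{loc:operations}.

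\emph{Injectivity.} Let $z\in\ker\bigl(H^1_{\mot}(T,\Z(2))\to H^1_{\mot}(F,\Z(2))\bigr)$. First I show the kernel has no torsion. For a prime $\ell$, an element with $\ell z=0$ comes by the Bockstein from $a\in H^0_{\mot}(T,\Z/\ell(2))$. If $\ell\neq p$, or if $\ell=p$ and we use Lemma~\ref{k2:lem:finitecomparison}, this group is $H^0_{\et}(T[1/\ell],\mu_\ell^{\otimes2})$, which injects generically. The generic class $a_F$ has Bockstein zero, so it is the reduction of a class in $H^0_{\mot}(F,\Z(2))/\ell$. That quotient vanishes by Lemma~\ref{loc:field_d_divisible}, so $a_F=0$, hence $a=0$ and $z=0$.

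For the torsion-free part I pass to rational coefficients. Take $m$ invertible in $T$; for instance $m=\ell\neq p$, and the mixed-characteristic $p$ part can be handled by using a second prime. By Lemma~\ref{loc:operations} and the rational weight splitting, $H^1_{\mot}(T,\Q(2))$ is the $m^2$-eigenspace of $\psi^m$ on $K_3(T)_\Q$. Quillen coniveau exhibits the generic kernel on $K_3(T)_\Q$ through positive-codimension field terms. By \eqref{loc:support_adams}, $\psi^m$ acts on these terms with eigenvalues $m^{c+j}$, where $c\geq1$ and the field degree is $j\leq2$ in the relevant total degree. The weight-two kernel can only meet $c+j=2$, that is $c=1,j=1$ or $c=2,j=0$. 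These pieces are killed by image exactness and $\CH^2(T)=0$ from Lemma~\ref{k2:lem:lowrow}, together with Theorem~\ref{k2:thm:main} for the $K_2$ row. So the rational kernel vanishes, and combined with torsion-freeness this gives injectivity.

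\emph{Surjectivity.} Let $Q$ be the cokernel. I first show $Q$ is divisible. For each prime $\ell$, Lemma~\ref{k2:lem:constants} proves that $H^1(T)/\ell\to H^1(F)/\ell$ is onto; its argument is not specific to $p$, since Levine's constant-field theorem works for every $\ell$. Hence $Q/\ell=0$. Next I find a finite annihilator. By \eqref{loc:k3_window}, surjectivity of $q_{3,F}$, and the Quillen coniveau sequence for $K_3$, any generic class lifts to $K_3(T)$ modulo the images of the outgoing differentials to $E^{c,*}$ with $c\geq1$. On the weight-two quotient, apply $\psi^m$ with $m$ invertible. It acts by $m^2$ on the generic class and by $m^{c+j}$ on the obstruction terms. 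Subtracting gives annihilators $m^2(m^{c+j-2}-1)$ after inverting $m$, exactly as in Lemma~\ref{k2:lem:bounds}. Running this with two coprime choices of $m$, or with $m$ and the duality $\tau$ from \eqref{loc:duality_kernel} to separate the adjacent weight-three kernel $j_T$, shows that $Q$ has bounded exponent. A divisible group of bounded exponent is zero.

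\emph{Main obstacle.} The hard step is the weight bookkeeping in the finite-annihilator argument. The eigenvalue $m^2$ can coincide with a codimension contribution $m^{c+j}$ when $c+j=2$; at such a coincidence I would use the Lemma~\ref{k2:lem:lowrow} exactness directly rather than Adams operations. A further difficulty is making $\psi^m$ integral when $p$ is not inverted. I would handle this by working with $\Z[1/m]$-coefficients for two coprime $m$ prime to $p$, so that $p$ is never inverted globally and the two bounds combine into an integral exponent.
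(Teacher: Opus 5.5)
Your architecture is the paper's. You prove the kernel is torsion-free using the Bockstein, Lemma~\ref{loc:field_d_divisible} and generic injectivity of \'etale $H^0$. You prove rational injectivity using Adams weights. You get divisibility of the cokernel from Levine's constant-field theorem and the arithmetic DVR inside $T$. You get a finite annihilator from supported Adams operations with $\Z[1/m]$-coefficients, for two coprime $m$ invertible in $T$.

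The torsion and divisibility steps are fine. For the latter, Lemma~\ref{k2:lem:constants} needs the obvious variant for arbitrary $\ell$, with $k\subset T$ directly when the residue characteristic is zero. The weight bookkeeping, however, is wrong, and you yourself identify it as the crux. As written, the rational injectivity step does not prove its claim.

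\textbf{Rational injectivity.} In total degree three, the positive-codimension graded pieces of $N=\ker(K_3(T)_\Q\to K_3(F)_\Q)$ are subquotients of $\bigoplus_{x\in T^{(c)}}K_3(\kappa(x))_\Q$. They are not field $K_j$ with $j\le2$. Field $K_3\otimes\Q$ does not have a single weight. It has weight two ($K_3^{\mathrm{ind}}$) and weight three ($\KM_3$), and nothing else, by vanishing above the Milnor range. So by \eqref{loc:support_adams}, the eigenvalues of $\psi^m$ on $N$ are $m^{c+2}$ and $m^{c+3}$ with $c\ge1$. The polynomial $\prod_{c=1}^{d}(t-m^{c+2})(t-m^{c+3})$ kills $N$ and is nonzero at $m^2$. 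Hence the weight-two part of $N$ vanishes.

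The positions you single out, $(c,j)=(1,1)$ and $(2,0)$, are $E^{1,-2}$ and $E^{2,-2}$. These contribute to $K_1(T)$ and $K_0(T)$, so invoking $\CH^2(T)=0$ and Theorem~\ref{k2:thm:main} there says nothing about $K_3(T)$. The repair is immediate, but it requires the weight decomposition of field $K_3$, which your proposal never uses.

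\textbf{Surjectivity.} The same misidentification appears here. The outgoing differentials from the generic term $(0,-3)$ land in $E_s^{s,-s-2}$. These are subquotients of codimension-$s$ field $K_2$, on which $\psi^m$ is the pure scalar $m^{s+2}$ with $s\ge1$.

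Apply $\psi^m-m^{s+2}$ successively to a lift $x\in K_3(F)[1/m]$. This $x$ is not an eigenvector; only its image in $H^1_{\mot}(F,\Z(2))[1/m]$ is scaled by $m^2$. The result is a permanent class with image $N_mg$, where $N_m=\prod_{s=1}^{d}(m^2-m^{s+2})\neq0$.

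There is never a coincidence with $m^2$, so the ``main obstacle'' you describe does not arise. Neither Lemma~\ref{k2:lem:lowrow} nor the duality $\tau$ is needed. The cokernel localized at $m$ is divisible and killed by $N_m$, hence zero. Two coprime unit integers then finish the proof by B\'ezout; equivalently, $N_mN_n$ kills the cokernel.
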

\begin{proof}
\proofstep{1. Torsion in the generic kernel.}
We first prove injectivity. Put $J=\ker(H^1_{\mot}(T,\Z(2))\to H^1_{\mot}(F,\Z(2)))$. If $z\in J$
is killed by a prime $\ell$, the coefficient sequence gives
$a\in H^0_{\mot}(T,\Z/\ell(2))$ with Bockstein $z$.
Its generic restriction has zero Bockstein and hence belongs to
$H^0_{\mot}(F,\Z(2))/\ell=0$ by Lemma~\ref{loc:field_d_divisible}.
The regular characteristic-zero domain $T$ is flat over $\Z$ and
is $\ell$-torsion-free and $F$-smooth in the sense of \cite{BouisMotivic}. Thus
\cite[Corollary 5.12]{BouisMotivic}, used only in degree zero in
weight two, identifies this coefficient group with
\[
 H^0_{\mathrm{\acute et}}(T[1/\ell],\mu_\ell^{\otimes2}).
\]
Its restriction to the generic field is injective: sections of a
finite \'etale sheaf on the normal integral scheme $T[1/\ell]$ are
determined generically. Hence $a=0$ and $z=0$. This proves that $J$ has no nonzero element killed by a prime. Every
nonzero torsion group contains such an element, obtained by multiplying
an element of finite order by a suitable integer. Hence $J$ is
torsion-free.

\proofstep{2. The rational generic kernel.}
We next prove $J\otimes\Q=0$. Choose an integer $m\geq2$ invertible
in $T$. The rational motivic splitting identifies $H^1_{\mot}(T,\Z(2))_\Q$ with
the weight-two summand of $K_3(T)_\Q$, compatibly with restriction
and with the classical $\psi^m$ by Lemma~\ref{loc:operations}.
The positive coniveau part
$N=\ker(K_3(T)_\Q\to K_3(F)_\Q)$ has a finite filtration whose
codimension-$c$ piece is a subquotient of sums of field $K_3$ groups.
Field $K_3\otimes\Q$ has only weights two and three; this follows
from the classical field motivic filtration and vanishing above the
Milnor range. By \eqref{loc:support_adams}, the supported weights
are $c+2,c+3$. Consequently, if $d=\dim T$, the polynomial
\[
 P(t)=\prod_{c=1}^{d}(t-m^{c+2})(t-m^{c+3})
\]
annihilates $N$. Repeated factors in this expression are retained.
The scalar $P(m^2)$ is nonzero. Since $P(\psi^m)$ annihilates $N$,
its restriction to a weight-two vector is multiplication by this
nonzero rational scalar. Such a vector must therefore be zero.
This proves rational injectivity. Consequently every element of $J$
is torsion; the preceding step proved that $J$ is torsion-free, so
$J=0$.

\proofstep{3. Divisibility of the cokernel.}
For surjectivity, use Lemma~\ref{loc:lisse_window} and put
\[
 C=H^1_{\mot}(F,\Z(2))/\im(K_3(T)\longrightarrow H^1_{\mot}(F,\Z(2))).
\]
We first show that $C$ is divisible. Represent a class over a finitely
generated subfield $E\subset F$ containing $\Q$, and let $k$ be the
algebraic closure of $\Q$ in $E$, a number field. Levine's
finite-level constant-field theorem
\cite[Theorem 4.13, p.~336]{Levine} makes
$H^1_{\mot}(k,\Z(2))/\ell\to H^1_{\mot}(E,\Z(2))/\ell$ surjective for every prime $\ell$.

\proofstep{4. Extension of constant classes.}
Every such constant class comes from $K_3(T)$. If the residue
characteristic is zero, normality gives the inclusion of $k$ in $T$. If the residue
characteristic is $p>0$, normality first gives the inclusion of the ring of integers
$\mathcal O_k$ in $T$. With
$\mathfrak p=\mathcal O_k\cap\mathfrak m_T$, locality gives the inclusion
$\mathcal O_{k,\mathfrak p}$ in $T$. The localization sequence and
$K_2(\kappa(\mathfrak p))=0$ for the finite residue field
\cite{QuillenFinite} make
$K_3(\mathcal O_{k,\mathfrak p})\to K_3(k)$ surjective.
Lift the constant class along $K_3(k)\to H^1_{\mot}(k,\Z(2))$, and then lift it from this
arithmetic DVR. Therefore $C=\ell C$ for every $\ell$.

\proofstep{5. A finite annihilator for the cokernel.}
Fix a unit integer $m\geq2$ and work with $\Z[1/m]$ coefficients.
For $g\in H^1_{\mot}(F,\Z(2))[1/m]$, lift to $x\in K_3(F)[1/m]$. In Quillen
coniveau its possible length-$s$ outgoing differential has target
\[
 E_s^{s,-s-2},
\]
a subquotient of codimension-$s$ field $K_2$. Matsumoto's theorem
and \eqref{loc:support_adams} give the scalar $m^{s+2}$ on this
target. Apply successively, for $s=1,\ldots,d$, the operators
$\psi^m-m^{s+2}$. The $s$th operator kills the $s$th obstruction of
the class already surviving to that page. No differential enters
the generic column, so each class remains a literal generic
$K_3$ class. After $d$ steps it is permanent and hence, by the
finite support filtration, lies in the image of $K_3(T)[1/m]$.
Its image in $H^1_{\mot}(F,\Z(2))[1/m]$ is
\[
 N_mg,\qquad N_m=\prod_{s=1}^{d}(m^2-m^{s+2})\ne0
 \quad(d>0).
\]
Thus $N_m C[1/m]=0$. The group $C[1/m]$ is divisible because $C$ is.
Given an element of this group, divide it by the nonzero integer
$N_m$; the displayed annihilation then shows that the original
element is zero. Hence $C[1/m]=0$.

\proofstep{6. Descent from coefficient localizations.}
There are two coprime integers $m,n\geq2$ invertible in $T$:
in residue characteristic $p>0$ one may use
$m=p+1$, $n=p(p+1)+1$, and in residue characteristic zero use $2,3$.
Every element of $C$ is killed by a power of each, hence is zero
by B\'ezout's identity. Therefore $C=0$. For $d=0$ the map was the
identity throughout. Surjectivity of $q_{3,T}$ finishes the proof.
\end{proof}

\begin{theorem}[Degree-zero constancy]
\label{loc:d_constancy}
The groups $H^0_{\mot}(T,\Z(2))$ and $H^0_{\mot}(F,\Z(2))$ are uniquely divisible, and the restriction $H^0_{\mot}(T,\Z(2))\to H^0_{\mot}(F,\Z(2))$ is an isomorphism.
\end{theorem}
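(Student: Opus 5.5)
The plan is to mirror the proof of Theorem~\ref{loc:g_constancy}. Put $D_T=H^0_{\mot}(T,\Z(2))$ and $D_F=H^0_{\mot}(F,\Z(2))$. I will first prove unique divisibility of both groups. Then I will show that restriction is injective and that its cokernel is both divisible and annihilated by a nonzero integer after inverting some $m$.

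\textbf{Divisibility.} Lemma~\ref{loc:field_d_divisible} already gives $D_F/\ell=0$ for every prime $\ell$. For torsion in $D_F$, use the coefficient triangle: $D_F[\ell]$ is the cokernel of $H^{-1}_{\mot}(F,\Z(2))/\ell\to H^{-1}_{\mot}(F,\Z/\ell(2))$. By the field finite-coefficient comparison, the target is étale cohomology in negative degree, hence zero. So $D_F$ is uniquely divisible.

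For $T$, the same triangle gives $D_T[\ell]$ as a quotient of $H^{-1}_{\mot}(T,\Z/\ell(2))$. By \cite[Corollary 5.12]{BouisMotivic}, which applies since $T$ is $\Z$-flat and $F$-smooth, this group is étale cohomology in degree $-1$, so it vanishes. For $D_T/\ell$, the same comparison embeds it into $H^0_{\et}(T[1/\ell],\mu_\ell^{\otimes2})$. That group injects into $H^0_{\et}(F,\mu_\ell^{\otimes2})$, because $T[1/\ell]$ is normal and integral. The field version of the triangle then sends $D_T/\ell$ into $D_F/\ell=0$, compatibly with these embeddings. Hence $D_T/\ell=0$, and $D_T$ is uniquely divisible. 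In residue characteristic $\ell$ I expect to need the degree-zero part of Corollary~5.12 exactly as it was used in Step~1 of Theorem~\ref{loc:g_constancy}.

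\textbf{Rational statement.} Both groups are now $\Q$-vector spaces, so it suffices to prove $D_T\otimes\Q\to D_F\otimes\Q$ is an isomorphism. The rational splitting of Lemma~\ref{loc:operations} identifies $D_T\otimes\Q$ with the weight-two summand of $K_4(T)_\Q$, compatibly with restriction and with $\psi^m$.

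\emph{Injectivity.} The kernel $N$ of $K_4(T)_\Q\to K_4(F)_\Q$ has a finite coniveau filtration. Its codimension-$c$ pieces are subquotients of field $K_4$ groups. Field $K_4\otimes\Q$ has weights $2,3,4$ only, by the vanishing of field motivic cohomology above the Milnor range. By \eqref{loc:support_adams} the supported weights are $c+2,c+3,c+4$, all at least $3$. So $P(\psi^m)=\prod_c\prod_{j=2}^4(\psi^m-m^{c+j})$ kills $N$, while acting on weight-two vectors by a nonzero scalar. This forces the weight-two part of $N$ to vanish.

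\emph{Surjectivity.} Lift a weight-two class $g\in K_4(F)_\Q$ and clear its successive coniveau obstructions with the operators $\psi^m-m^{s+3}$. The possible targets are subquotients of codimension-$s$ field $K_3$, which has weights $2$ and $3$, so they carry supported weights $s+2$ and $s+3$. I would therefore use both factors, $(\psi^m-m^{s+2})(\psi^m-m^{s+3})$. Each is nonzero on weight two for $s\geq1$, so the result is a nonzero multiple of $g$ that lies in the image of $K_4(T)_\Q$. Projecting to weight two finishes the argument, since the splitting is natural.

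\textbf{Main obstacle.} The delicate step is the divisibility of $D_T$, specifically that $H^0_{\mot}(T,\Z/\ell(2))$ and $H^{-1}_{\mot}(T,\Z/\ell(2))$ agree with étale cohomology of $T[1/\ell]$ when $\ell$ equals the residue characteristic. I would rely on the same degree-zero range of \cite[Corollary 5.12]{BouisMotivic}, noting that the fibre there is concentrated in degrees at least three. If that range proves insufficient, a fallback is to obtain surjectivity integrally as in Steps~3--6 of Theorem~\ref{loc:g_constancy}, using Levine's constant-field theorem for $K_4^{\ind}$ together with coprime $m,n$.
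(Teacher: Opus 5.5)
Your proposal is correct and follows the paper's proof essentially step for step. Unique divisibility comes from the degree $-1$ and degree $0$ finite-coefficient comparisons of \cite[Corollary 5.12]{BouisMotivic} together with Lemma~\ref{loc:field_d_divisible}, and the rest is the rational weight-two calculation in $K_4\otimes\Q$, using supported weights $c+2,c+3,c+4$ for the generic kernel and $s+2,s+3$ for the outgoing targets. The only difference is cosmetic: for surjectivity the paper concludes directly from Adams equivariance that every outgoing differential on a weight-two class vanishes, whereas you apply $(\psi^m-m^{s+2})(\psi^m-m^{s+3})$ and divide by the resulting nonzero rational scalar, which over $\Q$ is the same argument.
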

\begin{proof}
We first prove unique divisibility using the finite-coefficient
comparison. This reduces the generic comparison to a rational
weight-two calculation in degree four. The supported weights then
exclude both a generic kernel and outgoing differentials from the
required weight-two classes.

For every prime $\ell$, Corollary 5.12 of \cite{BouisMotivic}
in degree $-1$ gives
$H^{-1}_{\mot}(T,\Z/\ell(2))=0$; the coefficient sequence makes
$H^0_{\mot}(T,\Z(2))[\ell]=0$. The same sequence embeds $H^0_{\mot}(T,\Z(2))/\ell$ in
$H^0_{\mot}(T,\Z/\ell(2))$. Restriction on this latter group is
injective by the degree-zero \'etale description used in
Theorem~\ref{loc:g_constancy}. The image of $H^0_{\mot}(T,\Z(2))/\ell$ restricts
into $H^0_{\mot}(F,\Z(2))/\ell=0$. Thus $H^0_{\mot}(T,\Z(2))/\ell=0$. This proves unique
divisibility, also for $T=F$.

The rational motivic splitting therefore identifies $H^0_{\mot}(T,\Z(2))$ itself
with $K_4(T)_\Q^{(2)}$ and likewise for $F$. Field $K_4\otimes\Q$
has weights two, three and four. The finite positive coniveau
filtration of the generic kernel has in codimension $c\geq1$
only supported weights $c+2,c+3,c+4$, by
Lemma~\ref{loc:operations}. The same finite-polynomial argument
as above excludes weight two from the kernel.

For surjectivity take a weight-two class in $K_4(F)_\Q$. A
length-$s$ outgoing coniveau differential lands in
$E_s^{s,-s-3}$, a subquotient of codimension-$s$ field $K_3$ with
weights $s+2,s+3$. Neither is two, so equivariance makes every
such differential zero. The generic class is permanent and lifts
to $K_4(T)_\Q$. Project the lift to its natural weight-two
summand; this preserves its generic restriction. Hence restriction
is onto on weight two, proving the assertion for $H^0_{\mot}(-,\Z(2))$.
\end{proof}

Constancy of the degree-zero group provides local motivic classes,
but the later hyperbolic construction requires lifts in integral
Quillen theory. The next proposition supplies them. Its proof uses
the opposite duality signs on the two sides of the connecting map.

\begin{proposition}[The integral degree-four edge]
\label{loc:k4_edge}
The motivic edge
\[
                     q_{4,T}:K_4(T)\longrightarrow H^0_{\mot}(T,\Z(2))
\]
is surjective.
\end{proposition}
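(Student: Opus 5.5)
We will prove surjectivity of $q_{4,T}$ by combining the index-two triangle of the realized filtration with the duality signs of Lemma~\ref{loc:operations}. By Lemma~\ref{loc:lisse_window}, $\pi_4\Fil^2_{\mathrm{lis}}K(T)=K_4(T)$ and $\pi_4\gr^2_{\mathrm{lis}}K(T)=H^0_{\mot}(T,\Z(2))$. The edge $q_{4,T}$ is the projection from the second stage to its graded layer. The fibre sequence $\Fil^3_{\mathrm{lis}}\to\Fil^2_{\mathrm{lis}}\to\gr^2_{\mathrm{lis}}$ then gives an exact segment
\[
 K_4(T)\xrightarrow{q_{4,T}}H^0_{\mot}(T,\Z(2))\xrightarrow{\delta}\pi_3\Fil^3_{\mathrm{lis}}K(T)\longrightarrow K_3(T).
\]
Surjectivity is therefore equivalent to $\delta=0$. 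Equivalently, it suffices to show that $\pi_3\Fil^3_{\mathrm{lis}}K(T)\to K_3(T)$ is injective on the image of $\delta$.

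The key input is ordinary duality $\tau$. It preserves the realized filtration and acts by $(-1)^j$ on the $j$th graded layer. Thus $\tau=+1$ on the source $H^0_{\mot}(T,\Z(2))$ of $\delta$. Since $\Fil^4_{\mathrm{lis}}$ is four-connective, $\pi_3\Fil^3_{\mathrm{lis}}=\pi_3\gr^3_{\mathrm{lis}}$, and so $\tau=-1$ on the target. The connecting map is a natural map of spectra, so it commutes with $\tau$. Hence $\delta(x)=\delta(\tau x)=\tau\delta(x)=-\delta(x)$ for every $x$, that is, $2\delta=0$.

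Next we use Theorem~\ref{loc:d_constancy}: $H^0_{\mot}(T,\Z(2))$ is uniquely divisible, in particular $2$-divisible. Given $x$, write $x=2y$. Then $\delta(x)=2\delta(y)=0$, so $\delta$ vanishes identically and $q_{4,T}$ is onto.

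The main obstacle is justifying that the boundary $\delta$ is strictly $\tau$-equivariant, with the stated signs, as an integral statement rather than merely after inverting $2$. Lemma~\ref{loc:operations} asserts integrality of the graded actions and that $\tau$ preserves the realized filtration as a filtered operation. We would rely on exactly this: $\tau$ is an endomorphism of the filtered spectrum $\Fil^\bullet_{\mathrm{lis}}K(T)$, so it induces an endomorphism of the cofibre sequence, and hence commutes with the connecting map with no extra sign. If the filtered structure is only known up to homotopy on each layer, we would instead argue on $P_\bullet$, where the Bachmann--Hopkins/Elmanto--Morrow classical filtered operation $\psi^{-1}$ is a genuine filtered map, and then realize. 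As a fallback, $2$-torsion of the image of $\delta$ can also be killed by restricting to $F$. There, Lemma~\ref{loc:lisse_window} identifies the sequence with the field one and divisibility applies equally.
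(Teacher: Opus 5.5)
Your argument is correct and is essentially the paper's proof: the index-two triangle of the realized filtration, the opposite duality signs ($+1$ on $H^0_{\mot}(T,\Z(2))$, $-1$ on $\pi_3\Fil^3_{\mathrm{lis}}K(T)=\pi_3\gr^3_{\mathrm{lis}}K(T)$) forcing $2\delta=0$, and unique divisibility from Theorem~\ref{loc:d_constancy} to conclude $\delta=0$. The fallback via restriction to $F$ is unnecessary, and it would not work as stated. It needs generic injectivity of $\pi_3\Fil^3_{\mathrm{lis}}K(T)$, which the paper does not establish, so you should drop it.
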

\begin{proof}
The index-two triangle for $\Fil^\bullet_{\mathrm{lis}}K(T)$ and
Lemma~\ref{loc:lisse_window} give
\[
 K_4(T)\longrightarrow H^0_{\mot}(T,\Z(2))
       \xrightarrow{\partial}\pi_3\Fil^3_{\mathrm{lis}}K(T).
\]
Classical duality acts by $+1$ on the middle group and by $-1$
on the last group, by Lemma~\ref{loc:operations}. Equivariance of the connecting map identifies its negative with
itself, so $2\partial=0$. By Theorem~\ref{loc:d_constancy}, every
source element can be divided by two. Applying $\partial$ to such
a half shows that its value on the original element is zero.
Therefore $\partial=0$. The comparison
$\Fil^\bullet_{\mathrm{lis}}K(T)\to \Fil^\bullet_{\mot}K(T)$ identifies this map with the motivic edge, proving its surjectivity.
\end{proof}

\subsection{The needed ordinary-local unit symbols modulo two}

The genuine comparison below will leave a kernel that is killed by
two. To make that kernel vanish, we need to divide its classes while
remaining inside the kernel. For this purpose it is enough to represent
weight-three motivic classes by unit symbols modulo two. We first prove
the representation statement on a smooth local algebra.

\begin{lemma}[Smooth-local weight-three image modulo two]
\label{loc:smooth_modtwo}
Let $P$ be a local essentially smooth $\Z_{(2)}$-algebra, with
fraction field $E$. Then
\begin{equation}\label{loc:smooth_image}
 \im\bigl(\KM_3(P)/2\to\KM_3(E)/2\bigr)
 =\ker\left(\KM_3(E)/2\longrightarrow
       \bigoplus_{\mathfrak q\in(\Spec P)^{(1)}}K_2(\kappa(\mathfrak q))/2
       \right),
\end{equation}
and the ordinary unit-symbol map
\[
        \KM_3(P)/2\longrightarrow H^3_{\mot}(P,\Z(3))/2
\]
is surjective.
\end{lemma}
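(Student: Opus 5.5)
We treat the two assertions in order. The first is an unramified-purity statement for mod-two Milnor $K$-theory. The second then follows from it together with generic injectivity for weight-three motivic cohomology modulo two.

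For \eqref{loc:smooth_image}, we reduce to étale cohomology. On the local essentially smooth $\Z_{(2)}$-algebra $P$, the norm-residue isomorphism (Kerz's Gersten conjecture for improved Milnor $K$-theory, together with Rost--Voevodsky) identifies $\KM_3(E)/2$ with $H^3_{\et}(E,\mu_2^{\otimes3})$. The residue map to $K_2(\kappa(\mathfrak q))/2$ then becomes the Gersten/Bloch--Ogus residue. Two kinds of height-one primes occur.
\begin{enumerate}
\item At primes not containing $2$, we use the Bloch--Ogus–Gabber Gersten resolution for $P[1/2]$, in its local essentially smooth form. This gives unramified classes from $H^3_{\et}(P[1/2],\mu_2^{\otimes3})$.
\item At the prime $(2)$, when $P/2$ is not zero, there is an extra codimension-one point of characteristic two. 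Here we invoke the mixed-characteristic Gersten conjecture for $p$-adic vanishing cycles on smooth $\Z_{(2)}$-schemes (Schneider, Sato, Kato's symbol isomorphism for $p$-adic étale Tate twists), with the residue expressed through Kato's symbol map.
\end{enumerate}
Combining the two, an element of the kernel lies in $H^3_{\et}$ of the local ring with Sato's $\mathfrak T_1(3)$ coefficients. By Bloch--Kato and Kato for henselian local rings, or by a direct Gersten argument on the smooth local model, that group is generated by symbols $\{u_1,u_2,u_3\}$ with units $u_i\in P^\times$. This gives the equality of image and kernel. The reverse inclusion holds because unit symbols have no residues.

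For the surjectivity of $\KM_3(P)/2\to H^3_{\mot}(P,\Z(3))/2$, we first use Bouis's local vanishing $H^4_{\mot}(P,\Z(3))=0$, which is \cite[Corollary 2.10]{BouisWeibel} in weight three. This identifies the quotient with $H^3_{\mot}(P,\Z/2(3))$. On a smooth $\Z_{(2)}$-algebra, Geisser's comparison identifies this group with Zariski-local hypercohomology of Sato's complex $\mathfrak T_2(3)$, and it satisfies Gersten injectivity into the generic group $H^3_{\mot}(E,\Z/2(3))=\KM_3(E)/2$. We then proceed in three steps. A class of $H^3_{\mot}(P,\Z/2(3))$ restricts generically to an unramified element of $\KM_3(E)/2$, since it extends over every height-one localization. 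By \eqref{loc:smooth_image} this element is the image of a unit symbol $s\in\KM_3(P)/2$. The difference between the class and the image of $s$ is then generically zero, and generic injectivity kills it.

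We expect the main obstacle to be the dyadic prime, both for Gersten exactness at $(2)$ and for generic injectivity in motivic cohomology at weight three and degree three. There we would first try Geisser's identification of $\Z/2(3)$ on smooth $\Z_{(2)}$-schemes with Sato's $p$-adic étale Tate twist, combined with Sato's purity and Gersten theorems. If those are unavailable, we would fall back on the Gersten conjecture for motivic cohomology of smooth schemes over a DVR (Geisser, for the Bloch cycle complex) and cycle-theoretic purity.
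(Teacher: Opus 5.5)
Your second half has the same shape as the paper's argument: Bouis's $H^4_{\mot}(P,\Z(3))=0$, Geisser's Zariski Gersten resolution with tame residues, and then the first half. The first half, however, has a genuine gap exactly where the content lies.

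\textbf{The gap.} Gersten purity for Sato's twist places an unramified class of $\KM_3(E)/2$ in $H^3_{\et}(P,\mathfrak T_1(3))$. From there, everything rests on your sentence that this group is generated by symbols of units of $P$.
\begin{enumerate}
\item The Bloch--Kato/Kato computation you invoke concerns henselian or complete situations. Here $P$ is only Zariski-local, and you give no descent from the henselization.
\item A ``direct Gersten argument'' cannot supply this either. $H^3_{\et}(P,\mathfrak T_1(3))$ injects into $\KM_3(E)/2$ with image the unramified classes, and $\KM_3(P)/2\to\KM_3(E)/2$ factors through it. So symbol generation of this group is \emph{equivalent} to \eqref{loc:smooth_image}.
\item Via Geisser's comparison, symbol generation is also equivalent to the surjectivity onto $H^3(P,\Z/2(3))$ that you then deduce from \eqref{loc:smooth_image}.
\end{enumerate}
Both assertions are therefore reduced to one unproved claim, and the argument is circular at its crux.

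Two smaller points. For $\Z/2$-coefficients the twist is $\mathfrak T_1(3)$, not $\mathfrak T_2(3)$. Also, Geisser's results concern classical cycle-complex cohomology. To reach the lemma's $H^3_{\mot}$ you need the classical-to-modern isomorphism through degree three on smooth-local rings, \cite[Corollary~2.12]{BouisWeibel}.

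\textbf{The missing idea.} Start from an \emph{integral} theorem and descend to mod two by hand.
\begin{itemize}
\item The paper uses L\"uders's relative Gersten theorem for improved Milnor $K$-theory of local rings smooth over an excellent DVR \cite{LOCLudersRelative}. This gives integral image exactness at the generic term and exactness at the positive terms.
\item It combines this with Kerz's surjection $\KM_3(P)\to\Khat_3(P)$. This, not Kerz's equicharacteristic Gersten theorem, is the relevant Gersten input.
\end{itemize}
The mod-two statement does not follow formally, and the correction goes as follows.
\begin{enumerate}
\item Lift a mod-two unramified class to $x\in\KM_3(E)$. Then $dx=2b$, where $b$ need not be a cocycle.
\item The boundary $db$ is killed by two. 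It is therefore trivial at characteristic-two points of codimension two, and equal to $\pm1$ at the characteristic-zero ones.
\item Since $\CH^2(P[1/2])=0$, choose a codimension-one unit cochain $a$ on $\Spec P[1/2]$ whose divisor is this parity cycle.
\item Set $t=\{-1,a\}$, extended by zero at the dyadic prime. Its residues at characteristic-two points vanish because $-1$ specializes to $1$.
\item Then $2t=0$ and $dt=db$. Positive exactness gives $y$ with $dy=b-t$.
\item Hence $x-2y\equiv x$ is integrally unramified, and so it is a class of $P$-unit symbols.
\end{enumerate}
With this in place, and with the classical-to-modern comparison added, your second half is the paper's argument.
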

\begin{proof}
\proofstep{1. The integral residue complex.}
Use the integral complex
\[
 \KM_3(E)\xrightarrow{d}
 \bigoplus_{P^{(1)}}K_2(\kappa(\mathfrak q))
 \xrightarrow{d}\bigoplus_{P^{(2)}}\kappa(z)^\times
 \xrightarrow{d}\bigoplus_{P^{(3)}}\Z.
\]
Here and below a superscript on $P$ abbreviates the corresponding
stratum of $\Spec P$. Its positive differentials are the Quillen support differentials, expressed using Matsumoto's theorem.
L\"uders's integral relative Gersten theorem
\cite[Theorem 1.1 and Corollary 4.3]{LOCLudersRelative}
provides both positive exactness and image exactness at the generic
term for improved Milnor theory in this smooth-over-excellent-DVR
setting. The surjection from ordinary unit-symbol Milnor theory to
improved theory \cite[Theorem 4.3.2]{KerzThesis} gives the
same generic image from ordinary $\KM_3(P)$.

\proofstep{2. An integral lift of a mod-two class.}
We prove \eqref{loc:smooth_image} by lifting a mod-two cocycle and
correcting its integral boundary. Lift a class in its right side to $x\in\KM_3(E)$ and write
$dx=2b$. Then $db$ is killed by two. Its component in a
characteristic-two field is trivial, and its component in a
characteristic-zero field is $1$ or $-1$. On
$U=\Spec P[1/2]$ these latter signs define a finite codimension-two
cycle modulo two.

\proofstep{3. The codimension-two Chow group.}
The correction of the boundary in the previous step uses
$\operatorname{CH}^2(P)=0$. We recall its coniveau proof. In Quillen coniveau,
$E_2^{2,-2}=\operatorname{CH}^2(P)$. Its limiting term is zero
because $K_0(P)=\Z$, detected by rank. No higher differential leaves
it, since the corresponding field $K$-degrees are negative; its only
possible higher incoming differential is $d_2$ from unramified
$K_1$. Normality identifies that generic kernel with $P^\times$,
whose classes extend to $K_1(P)$ and are permanent. Thus the
$E_2$ term is already zero. Chow localization then gives $\operatorname{CH}^2(U)=0$.

\proofstep{4. Correction of the coefficient boundary.}
Choose a finite divisor cochain
$a=(a_{\mathfrak q})\in\bigoplus_{U^{(1)}}\kappa(\mathfrak q)^\times$
whose divisor has the prescribed parity cycle, and form the
$K_2$ cochain on $\Spec P$
\[
 t_{\mathfrak q}=\{-1,a_{\mathfrak q}\}
       \quad(\mathfrak q\in U^{(1)}),\qquad
 t_{\mathfrak q}=0\quad(2\in\mathfrak q).
\]
It is killed by two and satisfies $dt=db$. To check this with the
differentials, use the product formula for localization:
at a characteristic-zero specialization the residue of
$\{-1,a\}$ is $(-1)^{v(a)}$, followed by the norm; at a
characteristic-two specialization the global unit $-1$ specializes
to $1$, so the product formula gives zero. This uses the supported
boundary and projection formula even when the closure of a point
is singular.

\proofstep{5. The corrected unramified class.}
Now $b-t$ is an integral cocycle. Integral positive exactness gives
$y\in\KM_3(E)$ with $dy=b-t$. Since $2t=0$, the class $x-2y$ has
zero integral residue. Integral image exactness represents it by
ordinary $P$-unit symbols, proving \eqref{loc:smooth_image}. The
reverse inclusion follows from vanishing of residues of unit symbols.

\proofstep{6. The smooth-local motivic comparison.}
For the motivic assertion use Geisser's finite-coefficient
\emph{Zariski} Gersten resolution
\cite[Corollary 4.5, pp.~786--787]{Geisser}, on a smooth scheme
over the ordinary DVR $\Z_{(2)}$, in degree and weight three with
modulus two. At the local stalk it identifies the classical
$H^3(P,\Z/2(3))$ with the generic kernel in
\eqref{loc:smooth_image}. The residue is the tame symbol, as in
the proof of Corollary 4.4 there, including the characteristic-two
field calculation. Geisser's Proposition 3.6 identifies the
Zariski and Nisnevich complexes through the change-of-topology
map, also after the coefficient triangle. Thus this stalk group
is the classical motivic group used in the lisse comparison.

The classical-to-modern comparison is an isomorphism through
cohomological degree three on this smooth-local ring
\cite[Corollary 2.12]{BouisWeibel}. Classical degree-four
vanishing is Geisser's Corollary 4.4, and modern degree-four
vanishing is \cite[Corollary 2.10]{BouisWeibel}.
Since both degree-four integral groups vanish, the coefficient triangle
identifies the degree-three finite-coefficient groups with the integral
quotients modulo two. The comparison is compatible with these
identifications and respects unit products. Combining it with \eqref{loc:smooth_image}
proves the required surjection.
\end{proof}

\begin{proposition}[Weight-three unit-symbol surjectivity]
\label{loc:weight_three_units}
For every $T$ in the standing hypotheses,
\begin{equation}\label{loc:weight_three_symbol}
       \KM_3(T)/2\longrightarrow H^3_{\mot}(T,\Z(3))/2
\end{equation}
is surjective, where $\KM_3(T)$ is the ordinary unit-symbol group.
\end{proposition}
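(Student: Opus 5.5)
The plan is to reduce to the smooth-local case, Lemma~\ref{loc:smooth_modtwo}, through the simplicial local resolution $P_\bullet\to T$ of \cite[proof of Lemma 2.19]{BouisWeibel}. That resolution was already used in Theorem~\ref{k2:thm:alllocalq2} and Lemma~\ref{loc:lisse_window}. Its terms are local ind-smooth $\Z$-algebras whose augmentation and low face maps are local surjections. Every term has the same residue field as $T$. Since the target of \eqref{loc:weight_three_symbol} is killed by two, we may tensor with $\Z_{(2)}$. Then the terms become local ind-smooth $\Z_{(2)}$-algebras, and by continuity Lemma~\ref{loc:smooth_modtwo} applies to each of them. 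In particular it applies to $P_0$: the map $\KM_3(P_0)/2\to H^3_{\mot}(P_0,\Z(3))/2$ is surjective.

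\textbf{Descent of the motivic group.} The main step is to show that $H^3_{\mot}(P_0,\Z(3))/2\to H^3_{\mot}(T,\Z(3))/2$ is surjective, in the same way that \eqref{k2:eq:motcoequalizer} handled weight two. By the local left-Kan-extension description in \cite{BouisWeibel}, $\Z(3)^{\mot}(T)$ in cohomological degrees at most three is the realization of the lisse complexes $\tau^{\leq3}\Z(3)^{\mot}(P_\bullet)$. The realization map itself comes from \eqref{loc:graded_comparison}. Its cofibre is controlled by the vanishing $H^4_{\mot}(T,\Z(3))=0$ of \cite[Corollary 2.10]{BouisWeibel}. Each $P_n$ has vanishing motivic cohomology above degree three in weight three, by \cite[Corollary 4.4]{Geisser}. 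Hence the top group $H^3$ of the realization is the coequalizer of $H^3_{\mot}(P_1,\Z(3))\rightrightarrows H^3_{\mot}(P_0,\Z(3))$, since only $\pi_0$ of the simplicial top-degree group contributes. A quotient of a coequalizer is still a quotient of the $P_0$ term, so reducing modulo two gives the surjection $H^3_{\mot}(P_0,\Z(3))/2\twoheadrightarrow H^3_{\mot}(T,\Z(3))/2$.

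\textbf{Compatibility with symbols.} The unit-symbol map is compatible along $P_0\to T$: multiplicativity of the motivic filtration sends products of weight-one unit classes to products, and unit symbols over $P_0$ map to unit symbols over $T$ because the augmentation is local. Composing the smooth-local surjection for $P_0$ with the descent surjection, every class of $H^3_{\mot}(T,\Z(3))/2$ is the image of a unit symbol sum from $P_0$. Its image in $\KM_3(T)/2$ maps to that class, so \eqref{loc:weight_three_symbol} is onto.

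\textbf{Expected obstacle.} The delicate point is the coequalizer identification in the top degree three for weight three. One must check that the cofibre of the lisse-to-modern comparison does not change $H^3$, which is the $(j-2)$-coconnectivity in \eqref{loc:graded_comparison} with $j=3$ together with $H^4_{\mot}(T,\Z(3))=0$. One must also check that the realization spectral sequence has no contribution from $H^2$ of higher simplicial degrees that survives in degree three modulo two. I would first try to argue integrally, exactly as in Step~1 of Theorem~\ref{k2:thm:alllocalq2}, and reduce modulo two only at the end, since right exactness of $-\otimes\Z/2$ preserves surjectivity. A secondary point is applying Lemma~\ref{loc:smooth_modtwo} to ind-smooth rather than essentially smooth algebras. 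Both sides commute with filtered colimits, so this reduces to finite stages localized at the contraction of the maximal ideal.
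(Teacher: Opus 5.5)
\textbf{There is a genuine gap when the closed residue characteristic of $T$ is an odd prime $p$.}

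Lemma~\ref{loc:smooth_modtwo} is a statement about local essentially smooth $\Z_{(2)}$-algebras. In this case neither $T$ nor any term $P_n$ of the resolution is a $\Z_{(2)}$-algebra. Every $P_n$ has residue field of characteristic $p$, so $p$ lies in its maximal ideal, whereas $p$ is a unit in $\Z_{(2)}$.

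Your reduction ``the target is killed by two, so we may tensor with $\Z_{(2)}$'' conflates the coefficient group with the ring. Tensoring the $\Z/2$-module $H^3_{\mot}(T,\Z(3))/2$ with $\Z_{(2)}$ changes nothing. Replacing the ring $P_n$ by $P_n\otimes\Z_{(2)}=P_n[1/p]$, however, removes the closed point, so the ring is no longer local, and it changes the motivic groups. Nothing identifies $H^3_{\mot}(T,\Z(3))/2$ with the corresponding group of $T[1/p]$. Moreover, your descent and symbol-compatibility steps are formulated for the original local terms. So in odd residue characteristic your argument never produces a unit-symbol representative.

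\textbf{What is missing is a symbol surjectivity statement with $2$ invertible.} The paper supplies it by splitting into cases.

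For odd $p$, and also for residue characteristic zero with base $\Z_{(3)}$, the paper applies L\"uders's Galois-symbol surjectivity \cite[Corollary 3.8]{Luders} for local algebras over an excellent DVR. This gives $\KM_3(T)/2\twoheadrightarrow H^3_{\et}(T,\mu_2^{\otimes3})$. The paper then identifies this target with $H^3_{\mot}(T,\Z(3))/2$ using two inputs:
\begin{itemize}
\item the finite-coefficient comparison \cite[Corollary 5.6]{BouisMotivic}, valid where two is invertible;
\item the vanishing $H^4_{\mot}(T,\Z(3))=0$ of \cite[Corollary 2.10]{BouisWeibel}.
\end{itemize}
Alternatively, you would have to prove a $\Z_{(p)}$-analogue of Lemma~\ref{loc:smooth_modtwo} before feeding it into your descent, for example via Gersten for $\mu_2^{\otimes3}$ on smooth $\Z_{(p)}$-schemes together with Bloch--Kato. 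That is additional input the lemma does not provide.

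\textbf{In residue characteristic two your route works and is close to the paper's.} The paper uses a single henselian-surjective ind-smooth $P\to T$ and \cite[Corollary 2.8]{BouisWeibel} to get surjectivity on $H^3_{\mot}(-,\Z/2(3))$. Your integral coequalizer of $H^3_{\mot}(P_1,\Z(3))\rightrightarrows H^3_{\mot}(P_0,\Z(3))$ is a valid substitute. The realized complexes vanish above cohomological degree three, and $H^t(P_s)$ contributes only to degree $t-s$. So your worry about $H^2$ of higher simplicial terms reaching degree three is unfounded.

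Residue characteristic zero also fits your setup. There the terms are $\Q$-algebras, hence local essentially smooth over $\Z_{(2)}$, so Lemma~\ref{loc:smooth_modtwo} applies.
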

\begin{proof}
Suppose first that the closed residue characteristic is two.
Choose the local surjection $P\to T$ from a local
ind-smooth $\Z$-algebra constructed in
\cite[proof of Lemma 2.9; also Lemma 2.19]{BouisWeibel},
for which $(P,\ker(P\to T))$ is a henselian pair.
Corollary 2.8 of that source makes
\[
 H^3_{\mot}(P,\Z/2(3))\longrightarrow
 H^3_{\mot}(T,\Z/2(3))
\]
surjective. The local ring $P$ is a filtered colimit of local
essentially smooth $\Z_{(2)}$-algebras. Lemma~\ref{loc:smooth_modtwo}
and continuity therefore represent every source class by $P$-unit
symbols. Corollary 2.10 of
\cite{BouisWeibel} identifies the target with
$H^3_{\mot}(T,\Z(3))/2$. The local surjection $P\to T$ carries these representatives to unit
symbols of $T$.

If the closed residue characteristic is an odd prime $p$, take the
excellent base DVR $V=\Z_{(p)}$. If it is zero, take
$V=\Z_{(3)}$. In either case $T$ is a local $V$-algebra and two
is prime to the residue characteristic of $V$. By \cite[Corollary 3.8 and its proof]{Luders},
for an excellent DVR $V$, any local $V$-algebra $B$, and an
integer $n$ invertible in its base residue field, the map from
ordinary unit symbols to
$H^j_{\mathrm{\acute et}}(B,\mu_n^{\otimes j})$ is onto. The theorem
first uses improved Milnor theory, and its final paragraph passes
to ordinary symbols by their surjection onto improved theory.
Apply it with $B=T$, $j=3$, $n=2$.
The finite-coefficient comparison
\cite[Corollary 5.6]{BouisMotivic}, valid where two is invertible,
and the local degree-four vanishing
\cite[Corollary 2.10]{BouisWeibel} identify its target with that
of \eqref{loc:weight_three_symbol}, preserving unit symbols.
This proves the remaining cases.
\end{proof}

\begin{remark}\label{loc:symbol_scope}
The weight-three symbol surjection is obtained by two local
constructions. In residue characteristic two, a henselian-surjective
ind-smooth presentation reduces the argument to
Lemma~\ref{loc:smooth_modtwo}. When two is invertible in $T$, the local Galois-symbol surjectivity
of \cite[Corollary 3.8 and its proof]{Luders}, together with the
finite-coefficient motivic comparison, gives the surjection.
\end{remark}

\subsection{Genuine Grothendieck--Witt groups in low degrees}

\begin{lemma}[The local Bott calculations]
\label{loc:bott}
For every $T$ in the standing hypotheses one has
\[
 \GW_2^{[3]}(T)=0,\qquad \GW_1^{[3]}(T)=T^\times,
\]
and a natural short exact sequence
\begin{equation}\label{loc:bott_k2}
 0\longrightarrow K_2(T)\xrightarrow{h^{[4]}}\GW_2^{[4]}(T)
       \longrightarrow T^\times[2]\longrightarrow0.
\end{equation}
There is also a natural right-exact sequence
\begin{equation}\label{loc:bott_k3}
 \GW_3^{[3]}(T)\xrightarrow{f}K_3(T)
       \xrightarrow{h^{[4]}}\GW_3^{[4]}(T)\longrightarrow0.
\end{equation}
\end{lemma}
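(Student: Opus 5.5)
The plan is to read all four statements off the genuine Bott sequence \eqref{gw:bott} in shifts $a=2$ and $a=3$. In shift $2$ the inputs are the local symplectic facts used in Proposition~\ref{gw:genuine_local_negative}: $\GW_1^{[2]}(T)=0$, and $h:K_0(T)\to\GW_0^{[2]}(T)$ is an isomorphism. In shift $3$ the inputs are the low-degree symbol comparisons, transported from the field case of Theorem~\ref{gw:field_symbols} to $T$ through the comparison window of Proposition~\ref{gw:comparison_window}. That window applies because $T$ is local of finite global dimension $d$ and two is a nonzerodivisor since $\Frac T$ has characteristic zero. I will work segment by segment, moving up in degree.

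The first segment is shift $a=2$:
\[
 \GW_1^{[2]}(T)\xrightarrow{f}K_1(T)\xrightarrow{h}\GW_1^{[3]}(T)\xrightarrow{\eta_{\mathrm g}}\GW_0^{[2]}(T)\xrightarrow{f}K_0(T).
\]
The left term is zero. The last map is injective, because $fh$ on $K_0$ is multiplication by two on $\Z$. Hence $\GW_1^{[3]}(T)\cong K_1(T)=T^\times$, as a local ring has $\mathrm{SK}_1=0$. The next segment is
\[
 \GW_2^{[2]}\to K_2\to\GW_2^{[3]}\to\GW_1^{[2]}=0 .
\]
So $\GW_2^{[3]}(T)=0$ reduces to surjectivity of $f:\GW_2^{[2]}(T)\to K_2(T)$. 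I will get this from unit symbols. $K_2(T)$ is generated by symbols $\{u,v\}$ (Kerz, as cited in Lemma~\ref{k2:lem:bounds}), and by Lemma~\ref{gw:unit_class} the product $\langle u\rangle$-twisted classes $[u]_{\mathrm g}[v]_{\mathrm g}\in\GW_2^{[2]}$ forget to $\{u,v\}$.

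For \eqref{loc:bott_k2}, use shift $a=3$:
\[
 \GW_2^{[3]}=0\to K_2\xrightarrow{h^{[4]}}\GW_2^{[4]}\xrightarrow{\eta_{\mathrm g}}\GW_1^{[3]}=T^\times\xrightarrow{f}K_1=T^\times .
\]
The last map $f\circ h$ identification needs care. Under the isomorphism $\GW_1^{[3]}\cong K_1$ from the previous segment, $fh$ is $1+\tau$, where ordinary duality acts on $K_1$ by $u\mapsto u^{-1}$ up to the shift sign. With the odd shift I expect $fh(u)=u^2$, so its kernel is $T^\times[2]$. This yields the short exact sequence. Naturality is automatic.

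Finally, \eqref{loc:bott_k3} is the shift-$3$ segment
\[
 \GW_3^{[3]}\xrightarrow{f}K_3\xrightarrow{h^{[4]}}\GW_3^{[4]}\to\GW_2^{[3]}=0 ,
\]
which is exact by \eqref{gw:bott} and the vanishing just proved.

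I expect the main obstacle to be the sign and identification of $f\circ h$ on $K_1$ in the odd shift. Its kernel must be exactly $T^\times[2]$, rather than, for example, the kernel of $u\mapsto u\cdot u^{-1}$, which would be all of $T^\times$. To pin this down I will use the projection formula \eqref{gw:projection} together with the duality sign conventions of Lemma~\ref{gw:unit_class}, checking on the universal ring $\Z[t^{\pm1}]$, where the detection map \eqref{gw:degree_one_detection} fixes the normalization.
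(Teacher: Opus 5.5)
Your proposal is correct and follows the paper's proof essentially step by step. It uses the same three Bott segments: $\GW_1^{[2]}(T)=0$ together with $f=2$ on $\GW_0^{[2]}(T)=\Z$ gives $h^{[3]}:K_1\xrightarrow{\sim}\GW_1^{[3]}$; Kerz's unit-symbol generation plus products of the classes $[u]_{\mathrm g}$ gives surjectivity of $f:\GW_2^{[2]}\to K_2$; and $f h^{[3]}$ is identified with squaring on $T^\times$. The paper writes that last map as $1-\tau$ with $\tau$ the ordinary duality, which agrees with your odd-shift sign.

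Drop your opening claim that shift three needs field symbol comparisons transported through Proposition~\ref{gw:comparison_window}. That proposition compares genuine and homotopy-symmetric groups over the same ring, not $T$ with $F$, and your actual argument never uses it.
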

\begin{proof}
We use the genuine Bott sequence in the convention
\[
 \GW_i^{[j]}\xrightarrow{f}K_i\xrightarrow{h^{[j+1]}}
 \GW_i^{[j+1]}\longrightarrow\GW_{i-1}^{[j]}.
\]
Local symplectic elementary generation gives
$\GW_1^{[2]}=K_1^{\mathrm{Sp}}=0$. Also $\GW_0^{[2]}=\Z$, by half-rank,
and its forgetful map to $K_0=\Z$ is multiplication by two.
It follows that $h^{[3]}:K_1\to\GW_1^{[3]}$ is an isomorphism.

The group $K_2(T)$ is generated by ordinary unit symbols:
use the natural all-local comparison with improved Milnor
$K_2$ and its ordinary-unit surjection
\cite[Propositions 4.2.8(3) and 4.3.3]{KerzThesis}.
A product of the genuine degree-one unit classes in shift one
maps under $f:\GW_2^{[2]}\to K_2$ to that same symbol.
Thus $f$ is surjective; together with $\GW_1^{[2]}=0$, this
implies $\GW_2^{[3]}=0$ and proves
\eqref{loc:bott_k3}.

The remaining Bott segment begins with an injection
$K_2\to\GW_2^{[4]}$ and has cokernel the kernel of
$f:\GW_1^{[3]}\to K_1$. Under the above hyperbolic
identification, this forgetful map is $1-\tau$ on $K_1$.
Duality inverts units, so it is the squaring map on $T^\times$.
Its kernel is $T^\times[2]$, proving \eqref{loc:bott_k2}.
\end{proof}

The Bott sequence expresses the genuine group as a quotient of
Quillen degree three. The motivic edge gives another quotient of that
group. We shall compare them by first showing that the motivic edge
annihilates the forgetful image, and then proving that the induced
kernel is both two-divisible and killed by two.

\begin{theorem}[The genuine weight-two comparison]
\label{loc:gw3_shift4}
There is a natural isomorphism
\[
 \phi_T:\GW_3^{[4]}(T)\xrightarrow{\sim}H^1_{\mot}(T,\Z(2)),
 \qquad \phi_T h^{[4]}=q_{3,T}.
\]
It identifies generic restriction with the isomorphism of
Theorem~\ref{loc:g_constancy}. In particular
$\GW_3^{[4]}(T)\to\GW_3^{[4]}(F)$ is an isomorphism.
\end{theorem}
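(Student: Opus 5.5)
The idea is to build $\phi_T$ from the right-exact Bott sequence \eqref{loc:bott_k3}, which presents $\GW_3^{[4]}(T)$ as $K_3(T)/f(\GW_3^{[3]}(T))$. The motivic edge $q_{3,T}$ of \eqref{loc:k3_window} is surjective. So it suffices to show two things: $q_{3,T}$ vanishes on the forgetful image $f\GW_3^{[3]}(T)$, and the induced kernel $\ker q_{3,T}/f\GW_3^{[3]}(T)$ is zero. Naturality of the Bott sequence and of $q_3$ in $T\to F$ then identifies generic restriction with Theorem~\ref{loc:g_constancy}. The final clause follows because that restriction is an isomorphism.

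\textbf{Annihilation of the forgetful image.} Use the ordinary duality $\tau$ on $K_3$. The forgetful image of a shift-three genuine class is fixed by the shifted duality, which on $K_3(T)$ is $-\tau$. Hence $\tau$ acts by $-1$ on $f\GW_3^{[3]}(T)$. Lemma~\ref{loc:operations} shows that $\tau$ acts by $+1$ on the weight-two layer $H^1_{\mot}(T,\Z(2))$. Therefore $2q_{3,T}(fx)=0$.

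To remove the factor two, I would argue integrally. Symbol products $\mu([u][v][w])$ give classes in $\GW_3^{[3]}$ whose forgetful image is $\{u,v,w\}$, and these lie in the image $j_T$ of weight three. I expect to show that $f\GW_3^{[3]}(T)$ is generated modulo $2K_3(T)$ by such symbols together with $f h$-images. Then $q_3$ kills the symbols, and $fh=1+\tau$-type classes map to $2\times$ weight-two classes killed compatibly. A fallback is torsion-freeness of the relevant image, using unique divisibility arguments as in Theorem~\ref{loc:d_constancy}. This step may need adjustment.

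\textbf{The induced kernel.} Put $Q=\ker q_{3,T}/f\GW_3^{[3]}(T)=\im j_T/(\im j_T\cap f\GW_3^{[3]})$. First, $Q$ is killed by two. On $\ker q_{3,T}$ we have $\tau=-1$ by \eqref{loc:duality_kernel}. Hence $2y=y-\tau y=(1-\tau)y$. In shift four the class $(1-\tau)y$ equals $f h^{[3]}$-type hyperbolic classes, which lie in $f\GW_3^{[3]}$ by the projection formula \eqref{gw:projection}.

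Second, $Q$ is two-divisible. Proposition~\ref{loc:weight_three_units} writes every element of $H^3_{\mot}(T,\Z(3))$ as an ordinary unit-symbol sum plus twice a class. Unit symbols $\{u,v,w\}=f\mu_3([u][v][w])$ lie in the forgetful image by Lemma~\ref{gw:unit_class}. Hence every element of $\im j_T$ is twice an element of $\im j_T$ modulo $f\GW_3^{[3]}$. So $Q=2Q=0$.

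The main obstacle I expect is the integral vanishing of $q_{3,T}$ on the forgetful image, as opposed to vanishing after multiplication by two. I would try first to prove it via the duality sign argument combined with the generic case. Over $F$ the statement holds by Theorem~\ref{gw:field_symbols} and the field sequence. Injectivity of $H^1_{\mot}(T,\Z(2))\to H^1_{\mot}(F,\Z(2))$ from Theorem~\ref{loc:g_constancy} then transports vanishing from $F$ to $T$. This last route is clean, and I would adopt it as the main argument.
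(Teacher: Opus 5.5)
Your adopted argument is correct and is essentially the paper's proof: kill $q_{3,T}$ on the forgetful image using the field case together with the generic injectivity of Theorem~\ref{loc:g_constancy}, then show that the kernel $\ker q_{3,T}/f\GW_3^{[3]}(T)$ is two-divisible (via Proposition~\ref{loc:weight_three_units}) and also killed by two. The only variation is in the killed-by-two step. There you use $fh^{[3]}=1-\tau$ on $K_3$ (the shift-three duality is $-\tau$; this is not the projection formula \eqref{gw:projection}, which concerns $hf$) together with Bott exactness, whereas the paper uses the equivalent identity $h^{[4]}\tau=h^{[4]}$. Your exploratory duality-sign detour in the first step is unnecessary, and its torsion-freeness fallback would not work, since $H^1_{\mot}(F,\Z(2))=K_3^{\ind}(F)$ always has two-torsion.
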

\begin{proof}
Over $F$, the forgetful image of $\GW_3^{[3]}(F)$ is exactly
the forward $\KM_3(F)$ image, by the degree-three field
symbol identification of Theorem~\ref{gw:field_symbols}.
Consequently $q_{3,T}f$ restricts to zero in $H^1_{\mot}(F,\Z(2))$. Generic
injectivity in Theorem~\ref{loc:g_constancy} makes $q_{3,T}f=0$.
Sequence \eqref{loc:bott_k3} therefore induces a unique
surjection $\phi_T$ with the stated normalization. At $F$ it
is the usual indecomposable quotient isomorphism. Uniqueness
makes all these maps natural.

To control the kernel we need the sign with which duality acts under
the hyperbolic map. For ordinary duality $\tau$,
\begin{equation}\label{loc:hyperbolic_sign}
                         h^{[4]}\tau=h^{[4]}.
\end{equation}
Indeed a Poincar\'e shift by four induces $(-1)^4\tau=\tau$
on $K$-theory. The refined hyperbolic map factors through
homotopy orbits for this shifted duality, giving
\eqref{loc:hyperbolic_sign}; see
\cite[Corollary 7.4.18]{GWCDHI} and
\cite[Proposition 3.4.12 and \S3.6, after Lemma 3.6.4]{GWCDHII}.
The sign is positive because the target shift is four.

Put $J=\ker\phi_T$. For $a\in J$, choose $y\in K_3(T)$ with
$h^{[4]}(y)=a$. Then $q_{3,T}(y)=0$, so
\eqref{loc:k3_window} gives $\zeta_0\in H^3_{\mot}(T,\Z(3))$
with $j_T(\zeta_0)=y$. Proposition~\ref{loc:weight_three_units}
gives $u\in\KM_3(T)$ and $\zeta\in H^3_{\mot}(T,\Z(3))$
such that $\zeta_0=\operatorname{symbol}(u)+2\zeta$.
The Quillen image of the unit-symbol term is the forgetful image
of a product in $\GW_3^{[3]}(T)$, hence is killed by
$h^{[4]}$. Therefore
\[
                   a=2h^{[4]}j_T(\zeta).
\]
The element obtained before multiplication by two still belongs to
$J$, because $q_{3,T}j_T=0$. Thus $J=2J$. We next show that
multiplication by two kills $J$. The identities
\eqref{loc:duality_kernel} and \eqref{loc:hyperbolic_sign} give
\[
 a=h^{[4]}(y)=h^{[4]}(\tau y)=h^{[4]}(-y)=-a.
\]
Hence $2J=0$, and $J=0$. The generic assertion now follows
from the natural comparison with $H^1_{\mot}(-,\Z(2))$.
\end{proof}

\begin{remark}
The isomorphism in Theorem~\ref{loc:gw3_shift4} is induced by
the hyperbolic quotient map in \eqref{loc:bott_k3}. Its compatibility
with generic restriction follows from the naturality of that map
and of the weight-two motivic edge.
\end{remark}

\begin{proposition}[Generic injectivity for the symmetric degree-two group]
\label{loc:ko2_injection}
If $\dim T\leq3$, then
\[
 \GW_2^{[4]}(T)\xrightarrow{\sim}\KO_2^{[4]}(T)
       \lhook\joinrel\longrightarrow\KO_2^{[4]}(F).
\]
\end{proposition}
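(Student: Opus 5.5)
The statement has two parts, and I would treat them separately. First, the comparison $\GW_2^{[4]}(T)\to\KO_2^{[4]}(T)$ is an isomorphism. Second, the symmetric group injects into its generic value.

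\textbf{The comparison.} Proposition~\ref{gw:comparison_window} applies because $T$ is regular local of global dimension $d\le3$ and two is a nonzerodivisor (characteristic zero). It requires $i-a\ge d-3$. Here $i-a=2-4=-2$, which meets the bound only for $d\le1$, so the proposition settles low dimensions but not the general case. For $d=2,3$ I would use the fundamental sequence \eqref{gw:fundamental}: both theories share the $K_{hC_2}$ term, so the comparison fibre equals the fibre of $L^{\mathrm{gs}}\to L^{\mathrm{s}}$, taken in degrees $-2$ and $-3$ after the shift $[4]$. The injective range gives degree $-3$ only when $d\le1$, so instead I would show both sides vanish directly. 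Theorem~\ref{gw:local_negative} gives $L^{\mathrm{gs}}_{-2}(T)=L^{\mathrm{gs}}_{-3}(T)=0$ and $L^{\mathrm{s}}_{-2}(T)=0$. Bijectivity on $\GW_2^{[4]}$ then reduces, by the five lemma on the two long exact sequences, to showing that the $L$-terms in total degrees $-2$ and $-1$ agree after the shift, together with injectivity on $L_{-3}$ through $L_{-1}$. The one real input is the vanishing of $L^{\mathrm{s}}_{-3}(T)$, or at least of its image. For $d\le3$ this should follow from the coniveau argument of Theorem~\ref{gw:codim3_lift} together with the supported lifting Theorem~\ref{gw:supported_lift}, which kill the codimension-$\ge1$ filtration pieces.

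\textbf{Generic injectivity.} I would use the Bott sequence \eqref{loc:bott_k2}, $0\to K_2(T)\to\GW_2^{[4]}(T)\to T^\times[2]\to0$, together with the same sequence over $F$. Since $T$ is a normal domain, $T^\times[2]=\{\pm1\}=F^\times[2]$. On the left, $K_2(T)\to K_2(F)$ is injective by Theorem~\ref{k2:thm:main}. The same sequence holds over $F$ because the proof of Lemma~\ref{loc:bott} is field-valid: field $K_2$ is generated by symbols and the symplectic elimination goes through. The snake lemma, with the left map injective and the right map an isomorphism, then gives injectivity of the middle map, using only naturality of the Bott sequence. The field comparison $\GW\to\KO$ is an isomorphism by the field case of Proposition~\ref{gw:comparison_window}, since $d=0$.

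\textbf{Main obstacle.} The comparison isomorphism in dimensions two and three is the hard part, namely controlling $L^{\mathrm{s}}_{-3}(T)$ locally. I would first try the filtration argument: the supported lifting theorem pushes every positive-codimension class to $L^{\mathrm{gs}}_{-3}(T)=0$. I would then handle the generic layer using $L^{\mathrm{s}}_{-3}(F)=0$, which is odd-field vanishing.
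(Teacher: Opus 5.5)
Your generic-injectivity half is exactly the paper's argument: the Bott sequence \eqref{loc:bott_k2} over $T$ and over $F$, injectivity of $K_2(T)\to K_2(F)$ from Theorem~\ref{k2:thm:main}, $T^\times[2]=F^\times[2]$, and the field comparison.

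The gap is in the comparison half. You assert that bijectivity of $\GW_2^{[4]}(T)\to\KO_2^{[4]}(T)$ for $d=2,3$ hinges on $L^{\mathrm s}_{-3}(T)=0$, and you propose to prove this by a filtration argument. That step would fail, because the group is not zero in general. Take $T=\Z_2[[x]]$. Localization along $V(2)$, purity, and Theorem~\ref{gw:equalchar_injection} (exactly as in Step~2 of the proof of Theorem~\ref{gw:global_walt}) embed $L^{\mathrm s}_{-2}(\F_2[[x]])$ into $L^{\mathrm s}_{-3}(T)$, and $L^{\mathrm s}_{-2}(\F_2[[x]])$ contains the nonzero class of $\langle1\rangle$. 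The tool you propose also cannot reach this layer. Theorem~\ref{gw:supported_lift} ties the shift to the codimension bound, so in degree $-3$ it only controls $\Fil^3_{\mathrm{con}}$, as in Theorem~\ref{gw:codim3_lift}. The codimension-one coniveau layer, built from $L^{\mathrm s}_{-2}$ of characteristic-two residue fields, is out of its range.

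Fortunately $L^{\mathrm s}_{-3}(T)$ never enters; your bookkeeping is off by one. The fibre of $\GW^{[4]}(T)\to\KO^{[4]}(T)$ is $\Sigma^4\mathcal F_{\mathrm{cmp}}(T)$, with $\mathcal F_{\mathrm{cmp}}=\operatorname{fib}(L^{\mathrm{gs}}\to L^{\mathrm s})$. So on $\pi_2$ only $\pi_{-2}\mathcal F_{\mathrm{cmp}}(T)$ and $\pi_{-3}\mathcal F_{\mathrm{cmp}}(T)$ matter.

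\textbf{Surjectivity.} The group $\pi_{-3}\mathcal F_{\mathrm{cmp}}(T)$ lies between $L^{\mathrm s}_{-2}(T)=0$ and $L^{\mathrm{gs}}_{-3}(T)=0$, facts you already quoted. Degree $-3$ of $L$ enters only through $\ker(L^{\mathrm{gs}}_{-3}\to L^{\mathrm s}_{-3})$, which vanishes because its source does. This is the paper's argument.

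\textbf{Injectivity.} The paper restricts to $F$: a genuine class killed by the comparison dies in $\KO_2^{[4]}(F)\cong\GW_2^{[4]}(F)$, so it is zero by the generic injectivity you proved. Alternatively, $\pi_{-2}\mathcal F_{\mathrm{cmp}}(T)$ lies between $L^{\mathrm s}_{-1}(T)=0$ and $L^{\mathrm{gs}}_{-2}(T)=0$, both in Theorem~\ref{gw:local_negative}.

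In your five-lemma formulation, the only $L$-maps that occur are in degree $-1$, where surjectivity is needed, and degree $-2$, where bijectivity is needed. Both hold trivially.
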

\begin{proof}
We first prove generic injectivity for the genuine group from its
short exact Bott sequence. We then use the comparison fibre to obtain
surjectivity from genuine to symmetric theory. Generic injectivity
will supply injectivity of that comparison as well.

Use the generic diagram of \eqref{loc:bott_k2}.
Theorem~\ref{k2:thm:main} makes $K_2(T)\to K_2(F)$
injective, and $T^\times[2]\to F^\times[2]$ is injective.
A diagram chase therefore makes
$\GW_2^{[4]}(T)\to\GW_2^{[4]}(F)$ injective.

Put $\mathcal F_{\mathrm{cmp}}(T)=\operatorname{fib}(L^{\mathrm{gs}}(T)
\to L^{\mathrm{s}}(T))$. The local negative-group
vanishings of Theorem~\ref{gw:local_negative} give
$L^{\mathrm{s}}_{-2}(T)=0$
and $L^{\mathrm{gs}}_{-3}(T)=0$. The exact sequence
\[
 L^{\mathrm{gs}}_{-2}\longrightarrow L^{\mathrm{s}}_{-2}
 \longrightarrow \pi_{-3}\mathcal F_{\mathrm{cmp}}(T)\longrightarrow L^{\mathrm{gs}}_{-3}
\]
gives $\pi_{-3}\mathcal F_{\mathrm{cmp}}(T)=0$. The genuine and homotopy-symmetric fundamental
squares have the same $K$-theory-with-duality term, so their
comparison fibre in shift four is $\mathcal F_{\mathrm{cmp}}(T)[4]$. Its first homotopy
group is $\pi_{-3}\mathcal F_{\mathrm{cmp}}(T)$; hence
$\GW_2^{[4]}(T)\to\KO_2^{[4]}(T)$ is onto.

If a genuine class maps to zero under this comparison, its
restriction to $F$ is zero, since two is invertible in $F$ and
comparison there is an equivalence. Genuine generic injectivity
already proved then makes that class zero over $T$. Comparison
is thus also injective, and the displayed assertion follows.
\end{proof}

\subsection{The characteristic-zero degree-four cokernel and its edge}

The field comparison in degree four is used through its cokernel.
We compute that cokernel and identify how the hyperbolic map reaches
it. The identity between the two edge maps is essential: together with
the local degree-four edge, it will show that every cokernel class
extends from the local ring.

\begin{proposition}[A finite-slice cokernel calculation]
\label{loc:field_cokernel}
For every characteristic-zero field $F$ there is a natural exact
sequence
\begin{equation}\label{loc:field_cokernel_sequence}
 \KMW_4(F)\xrightarrow{\mu_4}\KO_4^{[4]}(F)
        \xrightarrow{c_F}H^0_{\mot}(F,\Z(2))\longrightarrow0.
\end{equation}
With the usual Bott orientation and symplectic half-rank convention,
\begin{equation}\label{loc:hyperbolic_edge}
                        c_Fh^{[4]}=q_{4,F}.
\end{equation}
Changing the common Bott orientation changes both sides by the
same sign.
\end{proposition}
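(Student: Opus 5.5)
We propose to compute $\KO_4^{[4]}(F)$ through the homotopy-symmetric fundamental sequence \eqref{gw:fundamental} in shift four, and to compare it with the Bott sequence \eqref{gw:bott}. Over a characteristic-zero field two is invertible, so genuine and homotopy-symmetric theories agree and $\KO^{[4]}(F)=\GW^{[4]}(F)$. The natural first attempt is the Bott segment
\[
 K_4(F)\xrightarrow{h^{[4]}}\GW_4^{[4]}(F)\xrightarrow{\eta_{\mathrm g}}\GW_3^{[3]}(F)\xrightarrow{f}K_3(F).
\]
By Theorem~\ref{gw:field_symbols}, $\GW_3^{[3]}(F)=\KMW_3(F)$. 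The kernel of $f$ on it is $\eta\KMW_4(F)$: in the pullback description its elements are the pairs $(0,y)$ with $y\in I^3$, and the forward map $\KM_3\to K_3$ is injective on the Milnor part only modulo its known kernel. We therefore intend to use instead the Milnor--Witt analogue of \eqref{gw:mw_bott} in degree four, mapped by $\mu$ into the Bott sequence. The aim is the ladder
\[
 \begin{array}{ccccccc}
 \KM_4(F)&\xrightarrow{h_{\mathrm{MW}}}&\KMW_4(F)&\xrightarrow{\eta}&\KMW_3(F)&\xrightarrow{f}&\KM_3(F)\\
 \downarrow&&\downarrow{\scriptstyle\mu_4}&&\downarrow{\scriptstyle\wr}&&\downarrow\\
 K_4(F)&\xrightarrow{h^{[4]}}&\KO_4^{[4]}(F)&\xrightarrow{\eta_{\mathrm g}}&\KO_3^{[3]}(F)&\xrightarrow{f}&K_3(F).
 \end{array}
\]
The third vertical arrow is an isomorphism, and the fourth is injective on the relevant image by Suslin's theorem that $\KM_3(F)\to K_3(F)$ is injective. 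A diagram chase then shows that every class of $\KO_4^{[4]}(F)$ is $\mu_4$ of something plus a hyperbolic class. It follows that $\coker\mu_4$ is the cokernel of $h^{[4]}$ restricted, that is,
\[
 \coker\mu_4=K_4(F)\big/\bigl(\im(\KM_4(F)\to K_4(F))+\ker h^{[4]}\bigr).
\]

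\textbf{Identifying the cokernel with $H^0_{\mot}(F,\Z(2))$.} The weight-four piece of $K_4(F)$ is the image of Milnor $K_4$. The finite motivic filtration of $K_4(F)$ has graded layers $H^{8-2j-4}_{\mot}(F,\Z(j))$ in weights $j=2,3,4$: the layer $H^0_{\mot}(F,\Z(2))$ at weight two, $H^2_{\mot}(F,\Z(3))$ at weight three, and $\KM_4(F)$ at weight four. The edge $q_{4,F}$ is the projection to weight two. We must show that $\ker h^{[4]}$ together with the Milnor image is exactly $\ker q_{4,F}$, which amounts to two inclusions.

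First, $\ker h^{[4]}$ equals the forgetful image of $\GW_4^{[3]}(F)$, and we would show that this image is contained in $\ker q_{4,F}$. The key tool is duality: $h^{[4]}\tau=h^{[4]}$ by \eqref{loc:hyperbolic_sign}, while $\tau$ acts by $(-1)^j$ on weight $j$ (Lemma~\ref{loc:operations}). Consequently $(1-\tau)K_4(F)\subset\ker h^{[4]}$, and this covers the odd weight-three layer up to a factor of two. The factor of two is harmless there because $H^2_{\mot}(F,\Z(3))$ is controlled by Suslin's $K_4^{\ind}$-type arguments. The remaining part of the weight-three layer and the layers of weight $\geq3$ must be shown to be forgetful images of $\GW_4^{[3]}$. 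For this we use the symbols $[u]_{\mathrm g}\cdot\GW_3^{[2]}$ products and the Bott boundary; this is the step most likely to need care.

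Second, we must show that the forgetful image is killed by $q_{4,F}$. For this we argue as in Proposition~\ref{loc:k4_edge}. Duality acts by $+1$ on weight two, and the forgetful image of shift three is the fixed part of the sign-$(-1)$ duality, that is, the image of $1+\tau'$ with $\tau'=-\tau$. Unique divisibility of $H^0_{\mot}(F,\Z(2))$ (Theorem~\ref{loc:d_constancy}) converts these two-torsion sign statements into exact vanishing. The cokernel map $c_F$ is then defined as the factorization of $q_{4,F}$ through $h^{[4]}$, so \eqref{loc:hyperbolic_edge} holds by construction. Surjectivity of $c_F$ follows from surjectivity of $q_{4,F}$ on a field, and a sign change of the Bott class changes $h^{[4]}$ and hence $c_F$ together.

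\textbf{Main obstacle.} The hard step is exactness in the middle, namely that $\ker c_F=\im\mu_4$, which requires the weight-three layer $H^2_{\mot}(F,\Z(3))$ to lie in $\im\mu_4+\ker h^{[4]}$. The route I would try first is a rational argument combined with an argument at each prime. Rationally, weight three is killed by $h^{[4]}$ via duality. For torsion at a prime $\ell$, we would use the Beilinson--Lichtenbaum finite-coefficient comparison, which represents the weight-three classes modulo $\ell$ by Galois symbols. These symbols come from $\KMW$ through products of $[u]_{\mathrm g}$ with degree-three symbols, and we would then divide, as in Theorem~\ref{loc:gw3_shift4}.
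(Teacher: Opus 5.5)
Your reduction is correct as far as it goes, but there is a genuine gap: exactness of the sequence at $\KO_4^{[4]}(F)$ is never proved, and the repair you sketch does not reach it.

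\textbf{What is established.} The ladder, together with Suslin's injectivity of $\KM_3(F)\to K_3(F)$, gives $\KO_4^{[4]}(F)=\im\mu_4+\im h^{[4]}$ and $\coker\mu_4\cong K_4(F)/(\im\KM_4(F)+\ker h^{[4]})$. (Small slip: $\eta\KMW_4(F)$ consists of the pairs $(0,y)$ with $y\in I^4$, not $I^3$.) The inclusion $\im\KM_4(F)+\ker h^{[4]}\subset\ker q_{4,F}$ also holds. It follows from $\tau\circ f=-f$ for $f:\GW_4^{[3]}(F)\to K_4(F)$, from $q_{4,F}\tau=q_{4,F}$, and from torsion-freeness of $H^0_{\mot}(F,\Z(2))$. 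Defining $c_F$ by factoring $q_{4,F}$ through $h^{[4]}$ then gives a natural surjection killing $\im\mu_4$, with \eqref{loc:hyperbolic_edge} built in.

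\textbf{The gap.} By your own reduction, middle exactness is equivalent to $\ker q_{4,F}\subset\im\KM_4(F)+\ker h^{[4]}$. Your duality argument only shows that $Q=\ker q_{4,F}/(\im\KM_4(F)+\ker h^{[4]})$ is killed by $2$. For $y$ of weight at least three, $\tau y=-y+m$ with $m$ in the Milnor image, so $2h^{[4]}(y)=h^{[4]}(m)$. The claim that ``the factor of two is harmless'' has no support, since $H^2_{\mot}(F,\Z(3))$ carries substantial $2$-primary torsion. Take $F=\Q$:
\begin{itemize}
\item $H^0_{\mot}(\Q,\Z(2))=0$;
\item the generator $\{-1,-1,-1,-1\}$ of $\KM_4(\Q)$ already dies in $K_4(\Z)=0$;
\item by localization $K_4(\Q)\cong\bigoplus_pK_3(\F_p)$, which lies entirely in weight three.
\end{itemize}
The proposition then says exactly that $h^{[4]}$ vanishes on $K_4(\Q)$, whereas your argument only gives $2h^{[4]}=0$ there.

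\textbf{Your proposed repair at $\ell=2$.} Classes coming from $\KMW_4(F)$ have forgetful image in the Milnor (weight-four) layer, so they cannot represent nonzero elements of the weight-three layer. Under Beilinson--Lichtenbaum, $H^2_{\mot}(F,\Z(3))/2$ is the subgroup of $H^2_{\mot}(F,\Z/2(3))\cong H^2_{\et}(F,\Z/2)$ consisting of the classes $\theta\cdot\xi$ whose Bockstein $\{-1\}\cdot\xi\in\KM_3(F)$ vanishes. Here $\theta$ is the class of $-1\in\mu_2(F)$ and $\xi\in\KM_2(F)/2$. To divide as in Theorem~\ref{loc:gw3_shift4}, you would have to show that every such class is, modulo two and modulo the Milnor image, a forgetful image from $\GW_4^{[3]}(F)$. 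An example would be a sum of products $\{u\}\cdot f(x)$ with $x\in\GW_3^{[2]}(F)$. That is a substantive statement about the annihilator of $\{-1\}$, and you neither formulate nor prove it.

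\textbf{How the paper avoids this.} The paper never controls the weight-three layer by hand. Bachmann's very-effective slices give $\widetilde s_{-1}KQ=0$ and $\widetilde s_{-2}KQ=\Sigma^{-4,-2}\mathrm H\Z$, and the $\widetilde s_{-3}$ and $\widetilde s_{-4}$ terms vanish in the relevant bidegrees. Bachmann--Hopkins identifies $\pi_{-4,-4}\widetilde f_0KQ$ with $\KMW_4(F)$. Middle exactness is then read off from the single cover triangle $\widetilde f_0KQ\to\widetilde f_{-2}KQ\to\Sigma^{-4,-2}\mathrm H\Z$. The cost is a separate computation of the hyperbolic coefficient on the $\mathrm H\Z$ slice. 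Your factorization definition of $c_F$ would make that coefficient automatic, but only once middle exactness is known.
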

\begin{proof}
The proof has two parts. A finite sequence of very-effective cover
triangles gives the exact sequence. We then place the hyperbolic map
in the corresponding source and target triangles and compute its map
on the coefficient spectrum. The symplectic half-rank convention fixes
that map as the identity.

\proofstep{1. The relevant effective slices.}
All the motivic spectra in this proof are over $F$. This is a
perfect field with two invertible, so the published very-effective
slice calculation of \cite[Theorem 16]{LOCBachmannSlices} applies.
Write $\pi_{a,b}E=[S^{a,b},E]$, put $\mathbb T=S^{2,1}$, and
write $\widetilde f_q$ for very-effective covers and
$\widetilde s_q$ for their cofibres. Let
$E_0=\widetilde s_0KQ$. The calculation gives
\begin{align*}
 \widetilde s_{-4}KQ&=\Sigma^{-8,-4}E_0,&
 \widetilde s_{-3}KQ&=\Sigma^{-6,-3}\mathrm H\Z/2,\\
 \widetilde s_{-2}KQ&=\Sigma^{-4,-2}\mathrm H\Z,&
 \widetilde s_{-1}KQ&=0.
\end{align*}
The same theorem supplies the triangle
\[
       \Sigma^{1,0}\mathrm H\Z/2\longrightarrow E_0
                     \longrightarrow\widetilde{\mathrm H}\Z,
\]
where $\widetilde{\mathrm H}\Z$ is the effective cover of the
Eilenberg--Mac Lane spectrum of the Milnor--Witt homotopy module.
It follows that $\pi_{4,0}E_0=\pi_{5,0}E_0=0$: the
$\mathrm H\Z/2$ terms are negative-degree weight-zero motivic
cohomology, and effective-cover adjunction computes the other
terms in a homotopy-module spectrum, with zero positive
homotopy groups.

\proofstep{2. The cokernel sequence.}
The sphere $S^{-4,-4}=\mathbb T^{-4}\wedge S^{4,0}$ belongs
to very-effective degree $-4$. Adjunction therefore identifies
\[
 \pi_{-4,-4}\widetilde f_{-4}KQ
       =\pi_{-4,-4}KQ=\KO_4^{[4]}(F).
\]
The adjacent groups of $\widetilde s_{-4}KQ$ are the two
groups of $E_0$ shown above to vanish. Those of
$\widetilde s_{-3}KQ$ are
$H^{-3}_{\mot}(F,\Z/2(1))$ and $H^{-2}_{\mot}(F,\Z/2(1))$, also zero.
The two finite cover triangles thus identify
\[
             \pi_{-4,-4}\widetilde f_{-2}KQ
                       =\KO_4^{[4]}(F).
\]
Since $\widetilde s_{-1}=0$, the remaining relevant triangle is
\[
 \widetilde f_0KQ\longrightarrow\widetilde f_{-2}KQ
                   \longrightarrow\Sigma^{-4,-2}\mathrm H\Z.
\]
Its homotopy sequence gives
\begin{equation}\label{loc:finite_slice_edge}
 \pi_{-4,-4}\widetilde f_0KQ\longrightarrow\KO_4^{[4]}(F)
       \longrightarrow H^0_{\mot}(F,\Z(2))\longrightarrow0.
\end{equation}
Here $\pi_{-5,-4}\widetilde f_0KQ=0$, since a
very-effective spectrum is connective in the homotopy
$t$-structure and $-5-(-4)=-1$.

\proofstep{3. The Milnor--Witt map.}
The unit induces an isomorphism
$\KMW_4(F)\simeq\pi_{-4,-4}\widetilde f_0KQ$.
Indeed \cite[Theorem 3.30(1)]{BachmannHopkins} says that the
unit from the motivic sphere to $KQ$ is an equivalence on
$\widetilde s_0$. The covers $\widetilde f_1$ are
one-connective in the homotopy $t$-structure, so the diagonal
and its adjacent preceding groups identify through this
zero-slice comparison. For the sphere the diagonal group is
Morel's $\KMW_4(F)$. This unit is the product-symbol
map $\mu_4$: it agrees in degree one with the chosen canonical
unit class and is multiplicative. In positive degrees the
presentation expresses all classes as sums of products of
degree-one units, by eliminating factors
$\eta[a][b]=[ab]-[a]-[b]$. Our convention sends the Milnor--Witt generator $\eta$ to
$-\eta_{\mathrm g}$ in Grothendieck--Witt theory. Thus \eqref{loc:finite_slice_edge} proves
\eqref{loc:field_cokernel_sequence} with its map.
This identifies both the first group and its map in the finite cover
sequence. It remains to compare the last arrow with the integral
weight-two edge.

\proofstep{4. The shifted hyperbolic map.}
To identify the hyperbolic edge, consider the Wood map of motivic spectra
\[
                        h:KGL\longrightarrow\mathbb T\wedge KQ.
\]
Evaluate it at $\pi_{-2,-3}$. Its source is
$K_{-2-2(-3)}(F)=K_4(F)$; its target is
$\pi_{-4,-4}KQ=\KO_4^{[4]}(F)$. This is the map $h^{[4]}$.

\proofstep{5. The two coefficient spectra.}
The relevant source slice is
$\widetilde s_{-1}KGL=\Sigma^{-2,-1}\mathrm H\Z$, whose
$\pi_{-2,-3}$ is $H^0_{\mot}(F,\Z(2))$. Bott translation by
three Tate suspensions identifies its edge with the ordinary
weight-two $K_4$ edge $q_{4,F}$. The earlier slices have the
required adjacent vanishing groups in negative degrees of
weights zero and one. The corresponding target slice is
\[
 \widetilde s_{-1}(\mathbb T\wedge KQ)
 =\mathbb T\wedge\widetilde s_{-2}KQ
 =\Sigma^{-2,-1}\mathrm H\Z.
\]
Its edge is $c_F$, constructed in \eqref{loc:finite_slice_edge}.
The hyperbolic map induces a map between these source and target
cover triangles. Once its map between the displayed coefficient
spectra is computed, naturality will compare the two edges.

\proofstep{6. Normalization of the hyperbolic coefficient.}
After one untwisting and the Bott identification for $KGL$,
the map on this $\mathrm H\Z$ coefficient is the degree-zero map
of the symplectic hyperbolic functor
\[
 h:\pi_0(KGL)_0=\Z\longrightarrow
            \pi_0(KO^{[2]})_0=\Z.
\]
It is an isomorphism, explicitly used in the proof of
\cite[Theorem 16, pp.~19--20 of the author manuscript]{LOCBachmannSlices}.
A rank-one module maps to the alternating hyperbolic plane,
whose half-rank is one. Lemma 13 there identifies the
corresponding effective Eilenberg--Mac Lane coefficient
with $\mathrm H\Z$. The degree-zero coefficient consequently
fixes the map on this $\mathrm H\Z$ slice as $+1$ in the chosen
orientation. This proves \eqref{loc:hyperbolic_edge} with coefficient $+1$.
\end{proof}

We now use the local results. Since the degree-zero motivic group is
unchanged by generic restriction and is reached by the integral local
edge, the field cokernel can be represented by classes defined over
the ring. Subtracting these classes will preserve residues.

\begin{corollary}[Extension of the entire field cokernel]
\label{loc:residue_images}
Every class in $\coker(\mu_4:\KMW_4(F)\to\KO_4^{[4]}(F))$
has a representative which is the restriction of a hyperbolic class over $T$. Consequently the generic residue
images of $\KMW_4(F)$ and $\KO_4^{[4]}(F)$ in
\[
          \bigoplus_{\mathfrak p\in(\Spec T)^{(1)}}
          \KMW_3(\kappa(\mathfrak p),\omega_{\mathfrak p})
\]
are equal.
\end{corollary}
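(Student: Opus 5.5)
The plan is to chase a cokernel class through Proposition~\ref{loc:field_cokernel}, then lift it to the ring using the local constancy results. Let $\bar z\in\coker(\mu_4)$. By \eqref{loc:field_cokernel_sequence}, $c_F$ identifies this cokernel with $H^0_{\mot}(F,\Z(2))$. Theorem~\ref{loc:d_constancy} says restriction $H^0_{\mot}(T,\Z(2))\to H^0_{\mot}(F,\Z(2))$ is an isomorphism, so $c_F(\bar z)$ is the restriction of a unique class $m_T$. Proposition~\ref{loc:k4_edge} then supplies $y\in K_4(T)$ with $q_{4,T}(y)=m_T$. Put $w=h^{[4]}(y)\in\KO_4^{[4]}(T)$. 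By naturality of $h^{[4]}$ and of the weight-two edge under $T\to F$, together with \eqref{loc:hyperbolic_edge}, we get
\[
 c_F(w_F)=c_F\,h^{[4]}(y_F)=q_{4,F}(y_F)=m_T|_F=c_F(\bar z).
\]
Exactness of \eqref{loc:field_cokernel_sequence} then shows $\bar z-[w_F]=0$ in the cokernel. Hence $w_F$ is a representative of the required hyperbolic form.

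For the residue statement, I will use the symmetric coniveau tower \eqref{gw:ko_coniveau} with $a=4$. Its codimension-one coefficient is $\KO_3^{[3]}(\kappa(\mathfrak p),\omega_{\mathfrak p})$, and Theorem~\ref{gw:field_symbols} together with its line-twisted form identifies this with $\KMW_3(\kappa(\mathfrak p),\omega_{\mathfrak p})$. The generic residue of a class in $\KO_4^{[4]}(F)$ is the first differential, and the residue of a Milnor--Witt class is defined through $\mu_4$. This gives the inclusion $\partial(\im\mu_4)\subset\partial(\KO_4^{[4]}(F))$ directly. For the reverse inclusion, take $x\in\KO_4^{[4]}(F)$ and write $x=\mu_4(\alpha)+w_F$ as above. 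Because $w$ is defined on all of $\Spec T$, its generic restriction is a permanent cycle in the coniveau tower. Equivalently, by exactness of the localization sequence \eqref{gw:coniveau_quotient}, a class restricted from $T$ (or from the complement of codimension~$\geq2$ supports) has zero boundary into codimension one. So $\partial x=\partial\mu_4(\alpha)$, and the two residue images agree.

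The main obstacle is checking compatibility of conventions, not the chase itself. First, the $T$-level classes must be viewed in $\KO^{[4]}$ rather than in $\GW^{[4]}$. Here I would apply $h^{[4]}$ to $\KO$ directly, since the Wood map factors through $\KO$, so no genuine comparison at $T$ is needed. Second, the edge $q_{4,F}$ in \eqref{loc:hyperbolic_edge} must be the same edge that Proposition~\ref{loc:k4_edge} makes surjective over $T$. I would argue this through naturality of the lisse/motivic filtration of Lemma~\ref{loc:lisse_window} along $T\to F$. Over $F$, the slice edge and the filtration edge are then identified, with the sign fixed by the common Bott orientation noted at the end of Proposition~\ref{loc:field_cokernel}. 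Any overall sign only replaces $y$ by $-y$, so it does not affect the argument.
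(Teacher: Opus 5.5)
Your proposal is correct and follows the paper's proof essentially step for step. You identify the cokernel with $H^0_{\mot}(F,\Z(2))$ via $c_F$, extend by Theorem~\ref{loc:d_constancy}, lift by Proposition~\ref{loc:k4_edge}, and apply the hyperbolic map using \eqref{loc:hyperbolic_edge}; you then subtract this extending representative, whose coniveau boundary vanishes, while the reverse inclusion comes from the factorization through $\mu_4$. The only cosmetic difference is that you map into $\KO^{[4]}$ directly with the hyperbolic map, whereas the paper uses the genuine hyperbolic map followed by comparison; these give the same class.
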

\begin{proof}
Identify the cokernel with $H^0_{\mot}(F,\Z(2))$ using
Proposition~\ref{loc:field_cokernel}. Extend a class uniquely
to $H^0_{\mot}(T,\Z(2))$ by Theorem~\ref{loc:d_constancy}, lift it to
$K_4(T)$ by Proposition~\ref{loc:k4_edge}, and apply the
genuine hyperbolic map followed by comparison to $\KO$.
Equation~\eqref{loc:hyperbolic_edge} gives the prescribed
generic cokernel class. Its coniveau boundary is
zero because it extends over $T$.

Given an arbitrary generic $\KO_4^{[4]}$ class, subtract
such an extending representative of its cokernel class.
The difference belongs to $\im\mu_4$ and has the same
boundary. The reverse containment of boundary images is
immediate from the symbol factorization. This proves the equality of residue images.
\end{proof}

\subsection{The compatible Milnor--Witt row in degrees one and two}

We assemble the comparisons into the field complex used later.
The positive-codimension terms are identified by the symbol isomorphisms
through degree three. At the generic term, the preceding corollary
supplies the equality of residue images needed in place of a
degree-four isomorphism.

Let $Y$ be a regular Noetherian scheme of dimension at most
three with characteristic-zero generic fields. At $x\in Y^{(c)}$
write
\[
 \omega_x=\operatorname{Ext}^{c}_{\mathcal O_{Y,x}}
                   (\kappa(x),\mathcal O_{Y,x}).
\]
These are the determinant lines supplied by purity. The
homotopy-symmetric support spectral sequence is
\begin{equation}\label{loc:ko_coniveau}
 E_1^{c,q}=\bigoplus_{x\in Y^{(c)}}
       \KO_{-c-q}^{[4-c]}(\kappa(x),\omega_x)
       \ \Longrightarrow\ \KO_{-c-q}^{[4]}(Y),
 \qquad d_s:(c,q)\longmapsto(c+s,q-s+1).
\end{equation}
This is the finite support tower of
\cite[Proposition 4.5.1, Theorem 5.2.1, and Remark 5.1.10]{GWCHN}.
Purity is computed at the closed point of the regular
local ring $\mathcal O_{Y,x}$; its shift is $[4-c]$ because
$i^!\mathcal O$ is $\omega_x[-c]$. The comparison between
ordinary and Karoubi variants on these support categories is
\cite[Remark 5.2.4]{GWCHN}, using their bounded Noetherian
hearts. Only the finite columns $0\leq c\leq\dim Y$ occur.

\begin{definition}[The compatible weight-four Milnor--Witt complex]\label{loc:mw_row}
The compatible weight-four complex $C_{\mathrm{MW},4}(Y)$
has terms
\[
 C^c_{\mathrm{MW},4}(Y)=
 \bigoplus_{x\in Y^{(c)}}\KMW_{4-c}(\kappa(x),\omega_x).
\]
For $c\geq1$ its differentials are those of the $q=-4$ row
of \eqref{loc:ko_coniveau}, transported through the field isomorphisms $\mu_3,\mu_2,\mu_1$ of
Theorem~\ref{gw:field_symbols}. Its first differential is
the generic map $\mu_4$ followed by that row's boundary.
A coefficient line on $Y$ is included by tensoring the
purity lines with its fibres.
\end{definition}

This is a complex with the support differentials.
Its first arrow is the Milnor--Witt DVR residue. To verify the
normalization, unit classes extending over the DVR have zero
localization boundary, and the product with the uniformizer
has residue the corresponding residue-field unit product with
coefficient one, by the DVR formula \eqref{gw:dvr_coefficient_one}. The presentation calculation in
Lemma~\ref{loc:dvr_image} below generates every generic class
from these two kinds. The positive field symbol maps identify
the resulting boundaries, with their determinant lines.
The Milnor and Witt projections therefore commute with the
Quillen and Witt boundaries.

In the next two theorems, $H^i(C_{\mathrm{MW},4}(T))$ is cohomology
of the field cochain complex, with degree given by codimension.
The comparison with the cohomology of the naive Zariski sheaf is
made in Theorem~\ref{global:naive_mw4}, using stalkwise support bounds.

\begin{theorem}[Local degree-two exactness]
\label{loc:mw4_h2}
If $T$ is in the standing hypotheses and $\dim T\leq3$, then
\[
                         H^2(C_{\mathrm{MW},4}(T))=0.
\]
This holds with any invertible coefficient line.
\end{theorem}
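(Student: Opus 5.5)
The plan is to read $H^2(C_{\mathrm{MW},4}(T))$ as a term of the homotopy-symmetric support spectral sequence \eqref{loc:ko_coniveau} for $Y=\Spec T$, and to kill it using the generic injectivity of Proposition~\ref{loc:ko2_injection}.

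\emph{The row.} In \eqref{loc:ko_coniveau} the $q=-4$ row has codimension-$c$ coefficient $\KO^{[4-c]}_{4-c}(\kappa(x),\omega_x)$. For $c=1,2,3$ these groups are $\KO_3^{[3]}$, $\KO_2^{[2]}$ and $\KO_1^{[1]}$. Theorem~\ref{gw:field_symbols} identifies them canonically with $\KMW_3$, $\KMW_2$ and $\KMW_1$, compatibly with the line twists. By Definition~\ref{loc:mw_row}, the differentials of $C_{\mathrm{MW},4}(T)$ in positive codimension are exactly these transported row differentials. Degree-two cohomology only involves the terms in codimensions $1,2,3$, so the generic term (where $\mu_4$ is not an isomorphism) plays no role. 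Hence
\[
 H^2(C_{\mathrm{MW},4}(T))\cong E_2^{2,-4}.
\]
A coefficient line is handled in the same way by twisting the purity lines, as in the definition.

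\emph{Degeneration at this spot.} Next I would show $E_2^{2,-4}=E_\infty^{2,-4}$.

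1. Outgoing differentials $d_s$ with $s\ge2$ land in column $2+s\ge4$, which is absent because $\dim T\le3$.
2. Incoming differentials come from column $2-s$, so only $d_2:E_2^{0,-3}\to E_2^{2,-4}$ can occur. Its source is a subgroup of $\KO_3^{[4]}(F)$.
3. By Theorem~\ref{loc:gw3_shift4}, every class of $\GW_3^{[4]}(F)$ is the restriction of a class over $T$. Since $2$ is invertible in $F$, $\GW_3^{[4]}(F)=\KO_3^{[4]}(F)$, so every such class is the restriction of a class of $\KO_3^{[4]}(T)$ (compose with the comparison $\GW\to\KO$ over $T$).
4. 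Restrictions of global classes are permanent cycles in the generic column. Hence all differentials out of column $0$ in total degree $3$ vanish, and in particular $d_2=0$.

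\emph{Conclusion.} $E_\infty^{2,-4}$ is a subquotient of the positive coniveau filtration stage of $\KO_2^{[4]}(T)$. That stage is contained in the kernel of restriction to $F$, and this kernel is zero by Proposition~\ref{loc:ko2_injection}, which applies because $\dim T\le3$. Therefore $E_\infty^{2,-4}=0$, and with the previous step $H^2(C_{\mathrm{MW},4}(T))=0$.

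\emph{Main obstacle.} The delicate point is the identification of the first paragraph. I must check that the transported row differentials agree on the nose with the $d_1$ of \eqref{loc:ko_coniveau}, including the signs and the determinant lines $\omega_x$. This is built into Definition~\ref{loc:mw_row}, but the suspension signs mentioned after \eqref{gw:ko_coniveau} should be verified not to alter the cohomology; a uniform sign on a differential does not change kernels or images. A secondary check is that the comparison $\GW_3^{[4]}(T)\to\KO_3^{[4]}(T)$ is compatible with restriction to $F$, so that the lifted generic classes really are restrictions of $\KO$-classes over $T$.
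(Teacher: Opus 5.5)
Your proposal is correct and follows essentially the same route as the paper's proof. Like the paper, you identify the group with $E_2^{2,-4}$ of the homotopy-symmetric support spectral sequence and rule out outgoing differentials by dimension. You then kill the lone incoming $d_2$ from the generic column using Theorem~\ref{loc:gw3_shift4}, and conclude $E_\infty^{2,-4}=0$ from the generic injectivity of Proposition~\ref{loc:ko2_injection}. The paper handles the coefficient line by trivializing it over the local ring $T$, which also covers your remark about twisting the purity lines.
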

\begin{proof}
We place the desired cohomology group in the hermitian coniveau
spectral sequence. There is only one possible higher incoming
differential. The degree-three constancy theorem makes its source
consist of extending classes, so that differential is zero. Generic
injectivity of the abutment then kills the limiting term.

The three positive-codimension terms of the $q=-4$ row in
\eqref{loc:ko_coniveau} are
\[
 \bigoplus_{T^{(1)}}\KO_3^{[3]}
 \longrightarrow\bigoplus_{T^{(2)}}\KO_2^{[2]}
 \longrightarrow\bigoplus_{T^{(3)}}\KO_1^{[1]}.
\]
Their identification with $\KMW_3,\KMW_2,\KMW_1$ gives
\[
                   B:=H^2(C_{\mathrm{MW},4}(T))=E_2^{2,-4}.
\]
Changing the generic term cannot change this cohomology group.
In dimensions at most three, no higher differential leaves
$(2,-4)$. Its only possible higher incoming differential is
\begin{equation}\label{loc:h2_differential}
                        d_2:E_2^{0,-3}\longrightarrow E_2^{2,-4}.
\end{equation}
For $s\geq3$ an incoming source would have negative first
coordinate; there is no incoming $d_3$.

The generic group in the source row is
\[
             \KO_3^{[4]}(F)=\GW_3^{[4]}(F)=H^1_{\mot}(F,\Z(2)).
\]
Theorems~\ref{loc:gw3_shift4} and \ref{loc:g_constancy}
show that every one of these generic classes extends to a
genuine class over $T$, and hence to a $\KO_3^{[4]}(T)$ class. Each is therefore a permanent cycle
of the support tower. In particular
$E_2^{0,-3}$ is the whole generic group and
\eqref{loc:h2_differential} is zero.

By Proposition~\ref{loc:ko2_injection},
$\KO_2^{[4]}(T)\to\KO_2^{[4]}(F)$ is injective. Its
positive support filtration is consequently zero, so
$E_\infty^{2,-4}=0$. Since the only possible higher
incoming differential has just been shown to be zero and no
higher differential leaves $B$, one has
$B=E_\infty^{2,-4}=0$.
An invertible line on the local ring is trivializable; the
argument applied after such a trivialization proves the
twisted assertion.
\end{proof}

\begin{theorem}[Ordinary local surface degree-one exactness]
\label{loc:mw4_h1}
If $T$ is a regular local surface in the standing hypotheses, then
\[
                         H^1(C_{\mathrm{MW},4}(T))=0.
\]
The vanishing also holds with coefficients in any invertible line.
\end{theorem}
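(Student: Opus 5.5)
The plan is to argue exactly as in Theorem~\ref{loc:mw4_h2}, now in total degree three. We place $H^1(C_{\mathrm{MW},4}(T))$ at position $(1,-4)$ of the support spectral sequence \eqref{loc:ko_coniveau}, show that it survives unchanged to $E_\infty$, and then kill the limiting term by generic injectivity of the abutment $\KO_3^{[4]}(T)$. First trivialize the coefficient line on the local ring, so that it suffices to treat the untwisted case.

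\emph{Identification with $E_2^{1,-4}$.} For $\dim T=2$, the $q=-4$ row of \eqref{loc:ko_coniveau} reads
\[
 \KO_4^{[4]}(F)\longrightarrow\bigoplus_{T^{(1)}}\KO_3^{[3]}
 \longrightarrow\bigoplus_{T^{(2)}}\KO_2^{[2]}.
\]
Through $\mu_3$ and $\mu_2$ of Theorem~\ref{gw:field_symbols}, its positive terms and second differential are those of $C_{\mathrm{MW},4}(T)$, so the two complexes have the same cocycles in codimension one. The generic terms differ: one is $\KMW_4(F)$, the other $\KO_4^{[4]}(F)$. However, Corollary~\ref{loc:residue_images} says that their residue images in $\bigoplus_{T^{(1)}}\KMW_3(\kappa(\mathfrak p),\omega_{\mathfrak p})$ coincide. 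Hence the coboundaries also agree, and
\[
 H^1(C_{\mathrm{MW},4}(T))=E_2^{1,-4}.
\]
This is the step I expect to need the most care. One has to check that the transport of the row through the $\mu_n$ and the first differential of Definition~\ref{loc:mw_row} match the support differential exactly, including the determinant lines, so that the corollary applies verbatim. The rest is bookkeeping.

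\emph{Degeneration.} A higher differential $d_s$ leaving $(1,-4)$ lands in column $1+s\geq3$, which is absent in dimension two. A higher differential entering $(1,-4)$ would start in column $1-s<0$, which is also absent. Thus
\[
 E_\infty^{1,-4}=E_2^{1,-4}.
\]
This group has total degree $-c-q=3$, so it is the graded piece $\Fil^1/\Fil^2$ of the support filtration on $\KO_3^{[4]}(T)$. Here $\Fil^1$ is the kernel of generic restriction $\KO_3^{[4]}(T)\to\KO_3^{[4]}(F)$.

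\emph{Generic injectivity.} It remains to show that $\KO_3^{[4]}(T)\to\KO_3^{[4]}(F)$ is injective.
\begin{enumerate}
\item Proposition~\ref{gw:comparison_window} applies, since $T$ is regular of global dimension $d=2$ with two a nonzerodivisor. The indices are $i=3$, $a=4$, so $i-a=-1\geq d-3$, and therefore $\GW_3^{[4]}(T)\to\KO_3^{[4]}(T)$ is bijective.
\item Over $F$, two is invertible, so the same comparison is an isomorphism.
\item Theorem~\ref{loc:gw3_shift4} shows that $\GW_3^{[4]}(T)\to\GW_3^{[4]}(F)$ is an isomorphism.
\end{enumerate}
Combining these in the commutative square gives injectivity of $\KO_3^{[4]}(T)\to\KO_3^{[4]}(F)$. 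Hence $\Fil^1=0$, so $E_\infty^{1,-4}=0$, and therefore $H^1(C_{\mathrm{MW},4}(T))=0$. The twisted case follows by the trivialization made at the start.
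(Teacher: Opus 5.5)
Your proposal is correct and follows the paper's proof essentially step for step. Generic injectivity of $\KO_3^{[4]}$ comes from the surface endpoint of the comparison window combined with Theorem~\ref{loc:gw3_shift4}; the support spectral sequence degenerates at $(1,-4)$ in dimension two; and Corollary~\ref{loc:residue_images} replaces a $\KO_4^{[4]}(F)$ preimage by a Milnor--Witt one. The only difference is ordering: you identify $H^1(C_{\mathrm{MW},4}(T))$ with $E_2^{1,-4}$ at the outset, whereas the paper first shows $E_2^{1,-4}=0$ and then uses the corollary to choose a Milnor--Witt class with the same boundary.
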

\begin{proof}
The genuine-to-homotopy-symmetric comparison is an
isomorphism at homotopy degree three and shift four over a
regular local surface: its difference is $3-4=-1$, the
surface endpoint of Proposition~\ref{gw:comparison_window}.
Combined with Theorem~\ref{loc:gw3_shift4}, it makes
$\KO_3^{[4]}(T)\to\KO_3^{[4]}(F)$ injective.

The $q=-4$ row on this surface is
\[
 \KO_4^{[4]}(F)\longrightarrow
 \bigoplus_{T^{(1)}}\KMW_3(\kappa(\mathfrak p),\omega_{\mathfrak p})
 \longrightarrow\KMW_2(\kappa(\mathfrak m_T),\omega_{\mathfrak m_T}).
\]
Its degree-one cohomology is $E_2^{1,-4}$. Dimension two
excludes every higher incoming and outgoing differential
at this position. It is a positive associated graded piece
of $\KO_3^{[4]}(T)$ and is zero by the generic injection.
Every degree-one cocycle is therefore the boundary of a class in
$\KO_4^{[4]}(F)$.

Corollary~\ref{loc:residue_images} identifies the image of this boundary
map with the image from $\KMW_4(F)$. We may thus choose a
Milnor--Witt class with the same boundary.
This proves the assertion, with all differentials
and residue lines unchanged. Trivializing a coefficient
line over $T$ proves the last statement.
\end{proof}

\begin{remark}\label{loc:no_weight_four_symbols}
The surface calculation combines the degree-four field cokernel
sequence with constancy of $H^0_{\mot}(-,\Z(2))$ and the surjective integral $K_4$
edge. An extending hyperbolic class represents each cokernel class;
subtracting it leaves a Milnor--Witt class with the prescribed residue.
This is the mechanism by which the field comparison gives local
surface exactness.
\end{remark}

\subsection{DVR image exactness for the naive presentation}

\begin{lemma}[All-characteristic naive DVR image exactness]
\label{loc:dvr_image}
Let $V$ be any DVR, with fraction field $E$, residue field $k$,
uniformizer $\pi$, and invertible coefficient module $L$.
Write $\omega=\operatorname{Ext}^1_V(k,V)$ for the exceptional
line at its closed point.
In every degree $n$ the sequence
\begin{equation}\label{loc:dvr_sequence}
 K_n^{\mathrm{MW},\operatorname{naive}}(V,L)
 \longrightarrow\KMW_n(E,L_E)
 \xrightarrow{\partial}\KMW_{n-1}(k,\omega\otimes L_k)
 \longrightarrow0
\end{equation}
is exact at its two right-hand groups.
\end{lemma}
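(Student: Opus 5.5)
First I would reduce to untwisted coefficients. The module $L$ is invertible over the local ring $V$, so a choice of generator trivializes it. The line $\omega=\operatorname{Ext}^1_V(k,V)$ is trivialized by the class of $\pi$, as in the normalization of Lemma~\ref{gw:residue_squares}. Changing either basis multiplies the three groups compatibly by $\langle u\rangle$, so exactness is unaffected. It therefore suffices to treat $\partial:\KMW_n(E)\to\KMW_{n-1}(k)$, normalized by \eqref{gw:dvr_coefficient_one}: $\partial([\pi][u_2]\cdots[u_n])=[\bar u_2]\cdots[\bar u_n]$, and $\partial$ kills products of symbols of units of $V$. The left map is the evident one from the naive unit-symbol presentation of Definition~\ref{notation:mw_definition}. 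Its image consists of classes generated by $\eta$ and the $[u]$ with $u\in V^\times$, and these have zero residue.

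Surjectivity is immediate. Any generator of $\KMW_{n-1}(k)$, a product of $\eta$'s and $[\bar u_i]$, is hit by the same expression with $[\pi]$ prepended, after lifting each unit. Here $n$ may be any integer: negative degrees use powers of $\eta$, and $\eta$ commutes with everything.

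For exactness in the middle I would follow Morel's construction, but only on the level of a generating set. First, every element of $\KMW_n(E)$ is a sum of products of $\eta$'s and symbols $[\pi^a u]$. Using $[\pi^a u]=[\pi^a]+\langle\pi^a\rangle[u]$, together with $[\pi^2]=[\pi]\,(2+\eta[\pi])$ up to the standard $\eta[\pi][-1]$ correction and the identity $[\pi][\pi]=[\pi][-1]$, every element can be written as $x+[\pi]y$. In this form $x$ and $y$ are generated by $\eta$ and unit symbols of $V$, so both lie in the image of the naive $V$-group (the image of $K^{\mathrm{MW},\mathrm{naive}}_*(V)$). Then $\partial(x+[\pi]y)=\bar y$, with $\bar y$ the reduction of $y$. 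If this vanishes, the task is to show that $[\pi]y$ lies in the image of the naive $V$-group. This is the step I expect to be the main obstacle. It requires that the kernel of the reduction map $K^{\mathrm{MW},\mathrm{naive}}_{n-1}(V)\to\KMW_{n-1}(k)$ be generated by elements $z$ with $[\pi]z$ in the image. When $k$ is small, Steinberg relations in $k$ need not lift to ones with $a$ and $1-a$ both units in $V$, so this is not formal.

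My plan for that step is to present $\KMW_{n-1}(k)$ directly by generators and relations. Define a section $s$ from $\KMW_{n-1}(k)$ to $\KMW_n(E)$ modulo the image of the $V$-group, setting $s(\bar y)=[\pi]y$ for any lift $y$. I would verify that $s$ is well defined by checking three things. First, independence of lifts: if $u'=u(1+\pi t)$, then $[\pi][u']-[\pi][u]=[\pi]\langle u\rangle[1+\pi t]$, and I want this in the image. For that I would use the relation $[\pi t'][1+\pi t]=0$ with $t'=-t$ or $-t/(1+\pi t)$, expand it to write $[\pi][1+\pi t]$ as a product of unit symbols, and handle $t\notin V^\times$ by writing $1+\pi t$ as a product of two such factors with unit $t$. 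Second, multiplicativity: it comes for free, since both sides use the same formula. Third, the Steinberg relation in $k$: lift $\bar a$ so that $a$ and $1-a$ are units of $V$, which is possible whenever $\bar a\neq0,1$. The remaining relations $\eta[a]=[a]\eta$ and $\eta(\eta[-1]+2)=0$ already hold in $E$. With $s$ well defined, $s\circ\partial$ equals the identity on the quotient, which gives middle exactness. I would carry out the lift-independence computation first, since it is where I expect a gap to show up if there is one.
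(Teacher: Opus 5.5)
Your proposal is correct and follows essentially the paper's route. Both arguments pass to the quotient by the subring generated by $\eta$ and unit symbols of $V$, where every class has a single $[\pi]$ factor. Both prove lift-independence by the principal-unit identity; your splitting of $1+\pi t$ into two factors with unit parameter is exactly the paper's $\beta=(1-\pi)(1-\pi w)$, and the $\eta$ cross term lies in the subring automatically. Both then show that $\bar y\mapsto[\pi]y$ is a well-defined map $\KMW_*(k)\to$ quotient inverting the residue.

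Only your first choice $t'=-t$ gives a literal Steinberg relation $[-\pi t][1+\pi t]=0$, and that choice suffices.
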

\begin{proof}
After trivializing the coefficient line, we quotient the generic
group by the image of the ring's unit symbols. Every class in this
quotient can be written using one uniformizer factor. The main point
is that its other factors depend only on their residues; the resulting
map from residue-field symbols is inverse to the residue map.

\proofstep{1. The DVR presentation and residue map.}
We prove the image-exactness statement of
\cite[Theorem 2.5.9(1), with Remarks 2.2.8 and 2.4.5]{DegliseMW}
using the unit-symbol presentation; compare
\cite[Theorem 3.22]{LOCMorelBook}.

\proofstep{2. The unit-generated quotient.}
Trivialize $L$ over $V$. Let $U_*$ be the subring of
$\KMW_*(E)$ generated by $\eta$ and the symbols $[u]$
for $u\in V^\times$. It is exactly the image of the
naive presentation over $V$. Let $Q_* =\KMW_*(E)/U_*$
as a graded abelian group. The presentation identities
\[
 [ab]=[a]+[b]+\eta[a][b],\qquad [a]^2=[a][-1],
 \qquad \langle a\rangle=1+\eta[a]
\]
and the inverse-symbol identity reduce every class in
$Q_*$ to sums of symbols having exactly one $[\pi]$
factor and otherwise unit entries, with possible powers
of $\eta$. This follows by writing $a=\pi^m u$ and using
the product and inverse formulas; repeated $[\pi]$
factors reduce by $[\pi]^2=[\pi][-1]$.

\proofstep{3. Independence of unit lifts.}
To show that the unit factors depend only on their residues, it
suffices to prove the principal-unit identity
\begin{equation}\label{loc:principal_unit_identity}
                    [\beta][\pi]\in U_*
                    \quad\hbox{for }\beta\in1+\pi V.
\end{equation}
If $\beta\ne1$, write $\beta=1-\pi^nv$ with
$n\geq1$ and $v\in V^\times$. For $n=1$, the
Steinberg relation for $\pi v$ gives
\[
             [\beta][\pi]\langle v\rangle=-[\beta][v].
\]
The right side belongs to $U_*$ and $\langle v\rangle$
is an invertible element of $U_0$. For $n\geq2$, set
\[
                 w=\frac{\pi^{n-1}v-1}{1-\pi}\in V^\times.
\]
Then $\beta=(1-\pi)(1-\pi w)$. Expand
$[\beta][\pi]$ using the product formula. The
$[1-\pi][\pi]$ term is zero by Steinberg. In the
$\eta$ cross term, graded commutativity and
$\eta(-\langle-1\rangle)=\eta$ move $[1-\pi]$ beside $[\pi]$,
again giving zero. The remaining term is the case
$n=1$ already proved. This establishes \eqref{loc:principal_unit_identity}. The argument
uses only the presentation identities and units in $V$, so it also
applies when $k=\mathbb F_2$.

\proofstep{4. The quotient and the residue map.}
The same identity, the power formulas, and graded
commutativity imply that multiplication on $Q_*$ by a
unit lift depends only on its residue. Together with
$\eta$, these operations satisfy the Milnor--Witt
relations over $k$. The product and $\eta$ relations
descend directly; for the Steinberg relation choose
lifts $u$ and $1-u$ when both residue entries are
nonzero. Thus, putting $[\pi]$ first and using the
corresponding right action, multiplication of its class
by residue symbols defines a surjective degree-shifted
map $\KMW_*(k)\to Q_*$. Residue sends $[\pi]$ to
$1$ and is its inverse. Hence its kernel in
$\KMW_*(E)$ is exactly $U_*$, as required.

\proofstep{5. Coefficients in an invertible line.}
In particular the coefficient-one formula is
\[
 \partial\bigl(\eta^a[\pi][u_1]\cdots[u_b]\bigr)
       =\eta^a[\bar u_1]\cdots[\bar u_b],
\]
with the fixed ordering and uniformizer orientation.
Every residue symbol lifts by this formula. Changing
uniformizer or line trivialization changes both sides
by the standard square-class action; consequently the
statement descends to the displayed determinant-line
and coefficient-line twists.
\end{proof}

\begin{corollary}[The local inputs on the original parameter open]
\label{loc:original_open}
Let $R$ be a regular local mixed-characteristic domain of
dimension four and let $r$ be a nonzero regular parameter.
Put $A=R[1/r]$ and $X=\Spec A$. At every point of $X$:
\begin{enumerate}
 \item the local ring has characteristic-zero fraction field
       and dimension at most three;
 \item its compatible weight-four row has zero degree-two
       cohomology;
 \item at a height-two point that row has zero degree-one
       cohomology;
 \item at a height-one point the naive local-to-generic
       map has image equal to the kernel of the DVR residue,
       and that residue map is surjective.
\end{enumerate}
The ordinary Zariski sheaf associated to either $H^1_{\mot}(-,\Z(2))$ or genuine
$\GW_3^{[4]}$ is naturally the generic-point sheaf
$i_{\xi*}H^1_{\mot}(\Frac R,\Z(2))$, where $\xi$ is the generic point, and is therefore flasque.
\end{corollary}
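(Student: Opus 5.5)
The plan is to reduce each item to a result already proved for the local ring $T=\mathcal O_{X,x}$, after checking that these local rings satisfy the standing hypotheses of this section.

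\textbf{Item (1).} A point $x\in X$ is a prime $\mathfrak q\subset R$ with $r\notin\mathfrak q$. Since $\mathfrak q\ne\fm$, its height is at most three. Hence $T=R_{\mathfrak q}$ is a regular local domain of dimension at most three. Its fraction field is $\Frac R$, which has characteristic zero because $R$ is a domain of characteristic zero. This places $T$ in the standing hypotheses.

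\textbf{Items (2) and (3).} Item (2) is then Theorem~\ref{loc:mw4_h2} applied to $T$. Item (3) is Theorem~\ref{loc:mw4_h1} at height-two points, where $T$ is a regular local surface. One compatibility must be checked: the row $C_{\mathrm{MW},4}(T)$ of Definition~\ref{loc:mw_row} for $Y=\Spec T$ must coincide with the localization of the row on $X$. This holds because the points of $\Spec T$ are the generizations of $x$. Moreover, the lines $\omega_y$ and the support differentials are computed in the local rings $\mathcal O_{X,y}=T_y$, so they are unchanged by localization.

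\textbf{Item (4).} At a height-one point, $T$ is a DVR. Lemma~\ref{loc:dvr_image} gives exactness of the naive sequence at the generic term and surjectivity of the residue map.

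\textbf{The sheaf assertion.} For each $T$, Theorem~\ref{loc:g_constancy} shows that $H^1_{\mot}(T,\Z(2))\to H^1_{\mot}(F,\Z(2))$ is an isomorphism. Theorem~\ref{loc:gw3_shift4} identifies $\GW_3^{[4]}(T)$ with this group compatibly with generic restriction. Naturality gives a map from the presheaf to the constant generic-point sheaf $i_{\xi*}H^1_{\mot}(\Frac R,\Z(2))$. On an irreducible space the stalk of $i_{\xi*}M$ at every point is $M$, and this map is an isomorphism on all stalks. The two sheafifications therefore agree with $i_{\xi*}$.

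Flasqueness follows because $X$ is irreducible. Every nonempty open $U$ contains $\xi$, so $i_{\xi*}M(U)=M$ and every restriction map is the identity.

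\textbf{Main obstacle.} The delicate point is naturality for the stalk identification. The isomorphisms must be natural for the localizations $T\to T_{\mathfrak q'}$ between local rings, so that sheafification commutes with the comparison. The map $\phi_T$ was characterized by uniqueness from $h^{[4]}$ and $q_{3,T}$, and both are natural. I expect this, together with the compatibility of $q_{3}$ with localization, to settle it.
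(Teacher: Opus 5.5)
Your proposal is correct and follows essentially the same route as the paper. Item (1) comes from the fact that $\fm$ contains $r$ and is the only height-four prime, items (2)--(4) from Theorem~\ref{loc:mw4_h2}, Theorem~\ref{loc:mw4_h1} and Lemma~\ref{loc:dvr_image}, and the sheaf assertion from Theorems~\ref{loc:g_constancy} and~\ref{loc:gw3_shift4} together with irreducibility of $X$.

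You leave one input implicit, which the paper states explicitly: motivic cohomology and the genuine groups commute with the filtered localizations defining $\mathcal O_{X,x}$, so the Zariski stalks really are the groups over the local rings. Your stalkwise comparison rests on this continuity, more than on the naturality of $\phi_T$ that you single out as the main obstacle.
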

\begin{proof}
The maximal ideal of $R$ contains $r$ and is absent from
$X$, so all its remaining local rings have dimension at
most three. These local rings are the regular localizations $R_{\mathfrak p}$. Their fraction
field is $\Frac R$, of characteristic zero. The four
assertions are respectively this observation,
Theorem~\ref{loc:mw4_h2}, Theorem~\ref{loc:mw4_h1},
and Lemma~\ref{loc:dvr_image}. In dimension one the
latter lemma also gives degree-one row exactness; in
dimension zero the positive row is empty.

Motivic cohomology and the genuine groups in use commute
with these filtered localizations. Hence their Zariski
stalks are the groups just computed.
Theorems~\ref{loc:g_constancy} and \ref{loc:gw3_shift4}
identify their natural maps to the generic-point sheaf
on every stalk. This proves the sheaf identifications.
Every nonempty open of the integral scheme contains its
generic point, so the restriction maps of that sheaf
are surjective, proving flasqueness.
\end{proof}

These local results give codimension bounds for the augmentation
kernel, its cokernel, and the positive cohomology sheaves on the
small Zariski site of $X$. Part~IV uses those bounds to identify
the degree-three cohomology of the naive Milnor--Witt sheaf with
that of its field complex.

%% file: sections/04_global_mw4.tex
\section{Global Witt, Chow, and Milnor--Witt vanishing}\label{global:section}
We now prove the first vanishing in \eqref{main:arithmetic_endpoints}.
The argument begins with Chow groups and the full Witt row. We then
improve Witt cochains successively by correcting their rank parities
and discriminants. These corrections remove the two-torsion in the
Milnor cohomology group. Finally, we make the Milnor and Witt
preimages compatible in the field pullback, and pass from the field
complex to the naive sheaf by support bounds. Let $R$ and $r$ satisfy the hypotheses of Theorem~\ref{main:quillen}, and put $A=R[1/r]$ and $S=R/(r)$. The ring $S$ is regular local of dimension three; it may have characteristic zero or positive characteristic. The arguments below retain the exceptional lines and the mixed-residue maps supplied by the supported hermitian theory.

\subsection{Field complexes and their normalizations}

For a regular scheme $Y$ write $C_{\mathrm M,n}(Y)$ for the field complex whose codimension-$p$ term is
\[
 C^p_{\mathrm M,n}(Y)=\bigoplus_{x\in Y^{(p)}}K^{\mathrm M}_{n-p}(\kappa(x)),
\]
with negative Milnor degrees zero. In the low degrees used here its maps are the Quillen-supported maps, identified with tame symbols, norms and divisors by the forward field symbol comparison. Write $C_{\mathrm W}(Y)$ for the full Witt row of homotopy-symmetric coniveau, and $C_{I,j}(Y)$ for the filtered row whose codimension-$p$ coefficient is $I^{j-p}(\kappa(x),\omega_x)$, with $I^a=W$ for $a\leq0$. Finally,
\[
 C^p_{\mathrm{MW},4}(Y)=
   \bigoplus_{x\in Y^{(p)}}K_{4-p}^{\mathrm{MW}}(\kappa(x),\omega_x).
\]
Here $\omega_x$ is the exceptional-duality line defined in Section~\ref{gw:supports}. All twists use the same supported duality and the same parameter orientation. The degree-three through degree-zero field comparisons in Theorem~\ref{gw:field_symbols} identify the positive-codimension terms relevant to $R$ and $A$ with the shifted hermitian support row.

By Lemma~\ref{gw:low_residues}, rank parity, the discriminant
$I/I^2\simeq K_1^{\mathrm M}/2$, and
$e_2:I^2/I^3\simeq K_2^{\mathrm M}/2$ commute with the corresponding
supported boundaries. We use these three residue squares below.

The regular divisor has short exact field-complex sequences
\begin{equation}\label{global:divisor_complex}
 0\longrightarrow C_{\mathrm M,n-1}(S)[-1]
  \longrightarrow C_{\mathrm M,n}(R)
  \longrightarrow C_{\mathrm M,n}(A)\longrightarrow0,
\end{equation}
and the analogous full-Witt sequence. The twist on the divisor is the exceptional line trivialized by $r$. The sequences are induced by localization of the support categories.

\subsection{The elementary Chow terms}

For any regular local ring $T$ used below, $\CH^2(T)=0$. Indeed $\CH^2(T)$ occupies $(2,-2)$ in the Quillen coniveau sequence
\begin{equation}\label{global:quillen_ss}
 E_1^{p,q}=\bigoplus_{x\in(\Spec T)^{(p)}}K_{-p-q}(\kappa(x))
       \Longrightarrow K_{-p-q}(T),
 \qquad d_s:(p,q)\longmapsto(p+s,q-s+1).
\end{equation}
Its only possible incoming higher map is $d_2$ from the unramified $K_1$ edge. That edge consists of $T^\times$ and extends to $K_1(T)$, so its classes are permanent. Outgoing terms have negative field $K$-degree. The limiting positive filtration in $K_0(T)=\Z$ is zero, giving the assertion. In particular
\begin{equation}\label{global:ch2}
 \CH^2(R)=\CH^2(S)=\CH^2(A)=0,
\end{equation}
where the assertion for $A$ follows by localization.

A regular parameter DVR curve through the closed point of a regular local ring supplies every terminal divisor coefficient. Consequently $\CH^4(R)=0$ and $\CH^3(S)=0$: the valuation of a uniformizer is one. The calculation takes place on the regular parameter curve.

There is also an integral bound
\begin{equation}\label{global:ch3_bound}
 2\CH^3(R)=0.
\end{equation}
To see this, $E_2^{3,-3}=\CH^3(R)$ has zero limiting piece in $K_0(R)$. The possible $d_3$ source is the extending unit edge; hence $d_2:E_2^{1,-2}\to\CH^3(R)$ is onto. On its source the integral supported Adams weight is two, whereas the target has weight three. The supported Adams operations of Part~II are defined for every positive integer $k$. Thus the target is killed by $k^3-k^2$ for every positive $k$. The values $4$ and $18$ at $k=2,3$ have greatest common divisor two, proving \eqref{global:ch3_bound}.

\subsection{Witt cochains and the third Chow group}

The codimension-three lifting theorem, Theorem~\ref{gw:codim3_lift}, gives
\begin{equation}\label{global:witt3_R}
 H^3(C_{\mathrm W}(R))=0.
\end{equation}
The divisor sequence gives $H^3(C_{\mathrm W}(R))\to H^3(C_{\mathrm W}(A))\to H^3(C_{\mathrm W}(S))$. Its rightmost group is a terminal residue cokernel on the regular local three-dimensional ring $S$. A regular parameter DVR curve lifts each diagonal residue form with coefficient one, so this cokernel vanishes in every characteristic. Therefore
\begin{equation}\label{global:witt3_A}
 H^3(C_{\mathrm W}(A))=0.
\end{equation}

For $Y=\Spec R$ or $\Spec A$, we first improve a full-Witt preimage
so that its codimension-two coefficients lie in $I^1$. Let $\alpha$
be a degree-three $C_{I,3}(Y)$ cocycle. By
\eqref{global:witt3_R}--\eqref{global:witt3_A}, choose a full-Witt
cochain $\beta$ whose boundary is $\alpha$. Its codimension-two rank parities form a cycle modulo two in the terminal term of $C_{\mathrm M,2}(Y)$. By \eqref{global:ch2}, choose a codimension-one unit cochain whose divisor has these parities. Lift each unit to a one-fold Pfister form. The rank/discriminant boundary square shows that its Witt boundary has the required parities. Subtracting this boundary from $\beta$ makes its coefficients
$I^1$-valued. Its boundary remains $d\beta$, since the differential
squares to zero. Hence
\begin{equation}\label{global:i3}
 H^3(C_{I,3}(Y))=0.
\end{equation}

We next show $\CH^3(R)/2=0$. The preceding filtered vanishing will
provide a preimage, but first we must lift the given cycle to a Witt
cocycle. Lift a degree-three cycle modulo two of $C_{\mathrm M,3}(R)$
to a Witt degree-three cochain $\alpha$ with the same rank parities. Its Witt boundary at the closed point need not initially vanish. The terminal map from the codimension-three $I^1$ terms to the closed-point Witt group is onto: on a regular parameter DVR curve, the form $\langle -u\rangle\langle1,-\pi\rangle=\langle -u,\pi u\rangle$ lies in $I^1$ and has residue $\langle\bar u\rangle$ with the fixed line orientation. Choose an $I^1$ cochain $\gamma$ with $d\gamma=d\alpha$. Then $\alpha-\gamma$ is a full-Witt cocycle with unchanged rank parities.

By \eqref{global:i3}, it is the boundary of a codimension-two
$I^1$ cochain. The discriminants of this cochain give a
codimension-two cochain of unit square classes. The low-degree boundary square identifies its divisor modulo two with the original degree-three cycle. Thus every such cycle is a boundary. Together with \eqref{global:ch3_bound} this proves
\begin{equation}\label{global:ch3}
 \CH^3(R)=\CH^3(A)=0.
\end{equation}
The second equality follows by Chow localization.

\subsection{Low-degree Milnor cohomology}

We now turn to the integral Milnor complex. By
Theorem~\ref{k2:thm:main}, every class in the unramified $K_2$ edge
of \eqref{global:quillen_ss} extends from the local ring. Such
classes are permanent cycles. We use this observation to prove
\begin{equation}\label{global:milnor_h2}
 H^2(C_{\mathrm M,3}(R))=
 H^2(C_{\mathrm M,3}(S))=
 H^2(C_{\mathrm M,3}(A))=0.
\end{equation}
For $R$, the group in question is $E_2^{2,-3}$. Its only higher
incoming differential is from the extending $K_2$ edge, and is
therefore zero. Its only possible outgoing target is a subquotient
of $\CH^4(R)=0$. Its limiting positive filtration in $K_1(R)=R^\times$ is zero. For $S$ there is no outgoing target. The assertion for $A$ follows from \eqref{global:divisor_complex}, because the following group is $H^2(C_{\mathrm M,2}(S))=\CH^2(S)=0$. Only field $K_2,K_1,K_0$ occur here, so Matsumoto's degree-two comparison suffices.

Set $B_Y=H^3(C_{\mathrm M,4}(Y))$. Then
\begin{equation}\label{global:milnor_bound}
 B_R\simeq B_A,\qquad 2B_R=2B_A=0.
\end{equation}
Indeed $B_R=E_2^{3,-4}$. The $d_3$ source is the extending unramified $K_2$ edge; the limit is zero in positive-coniveau $K_1(R)$. Hence $d_2:E_2^{1,-3}\to B_R$ is onto. Its source is a subquotient of codimension-one field $K_2$ cochains, of supported weight three; its target is a subquotient of codimension-three field $K_1$ cochains, of supported weight four. Consequently $k^3(k-1)B_R=0$. At $k=2,3$ the annihilators are $8$ and $54$, again with greatest common divisor two. The divisor sequence gives
\[
 H^2(C_{\mathrm M,3}(S))\longrightarrow B_R\longrightarrow B_A
                    \longrightarrow H^3(C_{\mathrm M,3}(S)),
\]
whose outer groups vanish by \eqref{global:milnor_h2} and $\CH^3(S)=0$. This proves the isomorphism.

To strengthen this exponent-two bound to vanishing, it remains to prove
that every class of $B_A$ is divisible by two. The filtered Witt
corrections below provide that divisibility.

We will also use
\begin{equation}\label{global:modtwo_h2}
 H^2(C_{\mathrm M,3}(Y)/2)=0\qquad(Y=\Spec R,\Spec A).
\end{equation}
Lift a degree-two cocycle modulo two to $c$ and write $dc=2a$ in the free degree-three cycle group. By \eqref{global:ch3}, choose $b$ with $db=a$. Then $c-2b$ is an integral cocycle, filled by \eqref{global:milnor_h2}. Reduction of its filler proves \eqref{global:modtwo_h2}. The point of the correction $c\mapsto c-2b$ is that its boundary
vanishes integrally. We can therefore apply the integral exactness
before reducing the resulting preimage modulo two.

\subsection{Filtered Witt cochains and Milnor two-torsion}

The next correction moves the degree-two preimage one step further
into the fundamental-ideal filtration. Let $\alpha$ be a degree-three
$C_{I,4}(Y)$ cocycle. Its coefficients lie in $I^1$, and
\eqref{global:i3} gives a degree-two $I^1$ cochain $\beta$ with
boundary $\alpha$. Its discriminants form a degree-two cocycle of $C_{\mathrm M,3}(Y)/2$, because the next boundary is the rank parity of $\alpha$ and is zero. By \eqref{global:modtwo_h2}, these discriminants are the boundary
of a degree-one $K_2/2$ cochain. Lift that cochain fieldwise to
$I^2$ forms. The $e_2$/discriminant residue square makes the boundary of this lift have the same discriminant as $\beta$. Subtract it. The resulting degree-two cochain has zero discriminant and hence
lies in $I^2$. Its boundary is still $\alpha$. Thus
\begin{equation}\label{global:i4}
 H^3(C_{I,4}(R))=H^3(C_{I,4}(A))=0.
\end{equation}
The residue squares for $e_1$ and $e_2$ give the two successive corrections.

On $A$ a degree-three cochain of $C_{\mathrm M,4}/2$ is a tuple of unit square classes and has no further boundary. Lift it to a tuple of $I^1$ forms. By \eqref{global:i4}, this tuple is the boundary of a degree-two
$I^2$ cochain. Its $e_2$ invariant has the prescribed unit-square-class boundary. Therefore
\[
 H^3(C_{\mathrm M,4}(A)/2)=0.
\]
As this is the final term of the complex, its cokernel is $B_A/2B_A$. Hence $B_A=2B_A$, and the exponent-two bound \eqref{global:milnor_bound} gives
\begin{equation}\label{global:milnor_h3}
 H^3(C_{\mathrm M,4}(A))=H^3(C_{\mathrm M,4}(R))=0.
\end{equation}
The equality for $R$ follows from \eqref{global:milnor_bound}.

\subsection{Compatibility of the Milnor and Witt preimages}

\begin{proposition}[The top Milnor--Witt field-complex vanishing]\label{global:mw4_field}
For $Y=\Spec R$ and $Y=\Spec A$, the twisted field complex satisfies $H^3(C_{\mathrm{MW},4}(Y))=0$.
\end{proposition}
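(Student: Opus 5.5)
We will use the field pullback \eqref{gw:field_pullback} at every point of codimension at least one. In characteristic two it is quoted directly. Away from characteristic two it follows from the same Milnor--Witt pullback together with the Milnor conjecture. This gives, in every codimension, a termwise short exact sequence of field complexes
\[
 0\longrightarrow C_{I,5}(Y)\longrightarrow C_{\mathrm{MW},4}(Y)
 \longrightarrow C_{\mathrm M,4}(Y)\longrightarrow0 .
\]
In codimension $p\geq1$, the left term is the pair $(0,w)$ with $w\in I^{5-p}$. The right term is the Milnor projection, whose kernel is exactly $I^{5-p}$ by the pullback. The differentials are compatible: the Milnor and Witt projections commute with the Quillen and Witt boundaries, as recorded after Definition~\ref{loc:mw_row} and in Lemma~\ref{gw:residue_squares}. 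At the generic term we only need the degree-three positions, so we work with the truncation from codimension one onward.

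Take a degree-three cocycle $z=(x,w)$ of $C_{\mathrm{MW},4}(Y)$, given by codimension-three coefficients in $\KMW_1$. It is automatically a cocycle, since on $A$ there is no codimension four, and on $R$ the next boundary lands in $\KMW_0$ at the closed point and we restrict to cocycles. First, use \eqref{global:milnor_h3} to write the Milnor component $x=d y$ with $y$ a degree-two cochain of $\KM_2$ coefficients. Next, lift $y$ fieldwise to a Milnor--Witt cochain $\tilde y=(y,v)$, where $v\in I^2$ is any form with $e_2(v)\equiv y \bmod 2$. Then $z-d\tilde y$ has zero Milnor component, so it is a pair $(0,w')$ with $w'$ a degree-three cocycle of $C_{I,4}(Y)$, namely $I^1$ coefficients in codimension three. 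By \eqref{global:i4}, $w'=d\beta$ for a degree-two $I^2$ cochain $\beta$, and $(0,\beta)$ is a legitimate Milnor--Witt cochain. Hence $z=d(\tilde y+(0,\beta))$.

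The indexing needs one check: in codimension $p$ the Witt coordinate of $\KMW_{4-p}$ lies in $I^{4-p}$. So the pure-Witt part of $C_{\mathrm{MW},4}$ is $I^{4-p}$ sitting inside $C_{I,4}$, not $C_{I,5}$. This matches \eqref{global:i4} exactly: codimension-three coefficients in $I^1$, with preimage in $I^2$.

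The main obstacle is justifying that the Witt coordinate of the Milnor--Witt differential equals the filtered Witt row differential, including the twisting lines and signs. This is needed for the elements $(0,w)$, which are exactly the $\eta$-divisible classes $\eta^{p-4}$ in the appropriate sense. We would invoke step~2 of Lemma~\ref{gw:residue_squares}: the boundary of a pair $(0,w)$ has zero Milnor coordinate and Witt component equal to the full Witt boundary, up to a suspension sign that does not affect membership in $I^j$. This identifies the pure-Witt subcomplex with $C_{I,4}$ in the relevant degrees.

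For $Y=\Spec A$ the same argument applies verbatim, since \eqref{global:milnor_h3} and \eqref{global:i4} are stated for both $R$ and $A$. The line twists are handled by the fixed parameter orientation.
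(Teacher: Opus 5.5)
There is a genuine gap in your last step. It comes from the ``indexing check'' paragraph, which reverses what your first paragraph correctly says. In the pullback $\KMW_n(k)=\KM_n(k)\times_{\KM_n(k)/2}I^n(k)$, a pair $(0,\beta)$ is allowed only when $e_n(\beta)=0$, i.e.\ $\beta\in I^{n+1}$. So the pure-Witt subcomplex is $C_{I,5}(Y)$, not $C_{I,4}(Y)$. Consequently the residual $(0,w')$ has $w'\in I^2$ in codimension three. To kill it with a pure-Witt cochain you would need a codimension-two preimage in $I^3$, i.e.\ $H^3(C_{I,5}(Y))=0$, which nothing available gives you. The preimage $\beta$ supplied by \eqref{global:i4} lies only in $I^2$, and $(0,\beta)$ is \emph{not} a Milnor--Witt cochain unless $e_2(\beta)=0$. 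Any legitimate cochain with Witt part $\beta$ has the form $(c,\beta)$ with $c\equiv e_2(\beta)$ modulo two. Its boundary is $(dc,w')$, so you still need $dc=0$.

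The missing idea is the paper's correction: every degree-two cocycle of $C_{\mathrm M,4}(Y)/2$ lifts to an \emph{integral} cocycle. Here $e_2(\beta)$ is such a mod-two cocycle, since $d(e_2\beta)=e_1(d\beta)=e_1(w')=0$ by the $e_2$/discriminant residue square. The lifting goes as follows.
\begin{itemize}
\item Choose any integral lift $c$ and write $dc=2a$.
\item Then $2\,da=0$ forces $da=0$: the codimension-four group of $R$ is a free sum of copies of $\Z$, and $A$ has no codimension four.
\item So \eqref{global:milnor_h3} gives $b$ with $db=a$, and $c-2b$ is an integral cocycle reducing to $e_2(\beta)$.
\end{itemize}
Then $(c-2b,\beta)$ is a genuine Milnor--Witt cochain whose boundary is $(0,w')$, by detection in the degree-one pullback. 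This completes your argument. Equivalently, in your short exact sequence you must show that the connecting map $H^2(C_{\mathrm M,4}(Y))\to H^3(C_{I,5}(Y))$ is onto; $H^3(C_{I,4}(Y))=0$ alone does not give this. The paper organizes it slightly differently: it fills the Milnor and Witt projections separately, then corrects the Milnor preimage by an integral cocycle lifting their mod-two discrepancy. The essential input is the same lifting of mod-two cocycles, which your proposal omits.
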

\begin{proof}
The two projections of a Milnor--Witt cocycle can already be filled
separately. To obtain a Milnor--Witt preimage, their chosen preimages
must agree modulo two in the field pullback. We shall correct the
Milnor preimage by an integral cocycle, so that its boundary stays
fixed while its reduction acquires the required value.

Let $\alpha$ be a degree-three Milnor--Witt cocycle. Its Witt and Milnor projections have degree-two preimages under the
respective boundary maps,
\[
 \beta_I\in\bigoplus_{Y^{(2)}}I^2(\kappa(x),\omega_x),
 \qquad
 \beta_M\in\bigoplus_{Y^{(2)}}K_2(\kappa(x))
\]
by \eqref{global:i4} and \eqref{global:milnor_h3}. Their discrepancy modulo two is a degree-two cocycle $\gamma$ of $C_{\mathrm M,4}(Y)/2$, since their boundaries are the two coordinates of the same $\alpha$.

Choose an integral $K_2$ cochain $c$ lifting $\gamma$ and write $dc=2a$. On $R$, the equality $2da=0$ takes place in the free final group of codimension-four integers, so $da=0$. On $A$ the next group is absent. Equation~\eqref{global:milnor_h3} gives $b$ with $db=a$. Thus $c-2b$ is an integral degree-two cocycle with reduction $\gamma$. Subtract the resulting discrepancy cocycle from $\beta_M$.
Because the correction is an integral cocycle, the boundary of
$\beta_M$ does not change; its reduction now agrees with $e_2\beta_I$.

The field pullback, including characteristic two and the purity line,
\[
 K_2^{\mathrm{MW}}(E,\omega)
       =K_2^{\mathrm M}(E)\mathbin{\times}_{K_2^{\mathrm M}(E)/2}I^2(E,\omega),
\]
now supplies a degree-two Milnor--Witt cochain $\beta$. Its two boundary projections agree with those of $\alpha$. The corresponding degree-one field pullback detects equality, so $d\beta=\alpha$. Every boundary used is the compatible supported map of Lemma~\ref{gw:low_residues}.
\end{proof}

\subsection{From the field complex to the naive Zariski sheaf}

\begin{theorem}[The top cohomology of the naive sheaf]\label{global:naive_mw4}
For $A=R[1/r]$ as in Theorem~\ref{main:quillen},
\[
 H^3_{\Zar}(\Spec A,\mathcal K_4^{\mathrm{MW}})=0.
\]
\end{theorem}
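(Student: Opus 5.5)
We will pass from the field-complex vanishing of Proposition~\ref{global:mw4_field} to sheaf cohomology by a Cousin-type augmentation. On the small Zariski site of $X=\Spec A$, sheafify the compatible weight-four row: the presheaf $U\mapsto C^\bullet_{\mathrm{MW},4}(U)$ is a complex of sheaves $\mathcal C^\bullet$ whose degree-$c$ term is $\bigoplus_{x\in X^{(c)}}(i_x)_*\KMW_{4-c}(\kappa(x),\omega_x)$. Each term is a sum of skyscraper-type pushforwards from points, hence flasque, so $\mathbb H^3(X,\mathcal C^\bullet)=H^3(C_{\mathrm{MW},4}(A))=0$ by Proposition~\ref{global:mw4_field}. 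There is a natural augmentation $\epsilon:\mathcal K_4^{\mathrm{MW}}\to\mathcal H^0(\mathcal C^\bullet)$ from unit symbols to generic symbols; the plan is to show that $H^3(X,\mathcal K_4^{\mathrm{MW}})\to\mathbb H^3(X,\mathcal C^\bullet)$ is injective, modulo errors killed by Lemma~\ref{bridge:support}.

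\emph{Step 1 (the preliminary facts cited in the introduction).} Record that $\dim X\le3$ (the maximal ideal is the only height-four prime and contains $r$), that $\Frac A$ has characteristic zero, and that every residue field of $A$ is infinite: a residue field $\kappa(\mathfrak p)$ with $r\notin\mathfrak p$ contains $\Frac(R/\mathfrak p)$, and $R/\mathfrak p$ has positive dimension (since $\mathfrak p\neq\fm$), so its fraction field is infinite. This also supplies the hypotheses of Theorem~\ref{bridge:criterion}.

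\emph{Step 2 (support bounds of the error sheaves).} Let $\mathcal K=\ker\epsilon$, $\mathcal Q=\coker(\epsilon)$ onto $\mathcal H^0$, and $\mathcal H^i=\mathcal H^i(\mathcal C^\bullet)$ for $i=1,2$ (and $\mathcal H^3$). Compute stalks via Corollary~\ref{loc:original_open}. At the generic point $\mathcal K_\xi=\mathcal Q_\xi=0$ tautologically. At height-one points Lemma~\ref{loc:dvr_image} gives $\mathcal Q=0$ and $\mathcal H^1=0$. At height-two points Theorem~\ref{loc:mw4_h1} gives $\mathcal H^1=0$; at height $\le3$ Theorem~\ref{loc:mw4_h2} gives $\mathcal H^2=0$ stalkwise. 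Thus $\mathcal H^2=0$ entirely, $\mathcal H^1$ is supported in codimension $\ge3$, $\mathcal Q$ in codimension $\ge2$, and $\mathcal K$ in codimension $\ge1$. Lemma~\ref{bridge:support} then yields $H^i(X,\mathcal H^1)=0$ for $i>0$, $H^i(X,\mathcal Q)=0$ for $i>1$, and $H^3(X,\mathcal K)=0$.

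\emph{Step 3 (hypercohomology bookkeeping).} Use the truncation triangles $\mathcal H^0\to\tau_{\le2}\mathcal C^\bullet\to\tau_{[1,2]}\mathcal C^\bullet$ and $\tau_{\le2}\to\mathcal C^\bullet\to\mathcal H^3[-3]$. Since $\mathcal H^2=0$ and $H^{\ge1}(\mathcal H^1)=0$, $\mathbb H^{2}$ and $\mathbb H^3$ of $\tau_{[1,2]}\mathcal C^\bullet=\mathcal H^1[-1]$ vanish, so $H^3(X,\mathcal H^0)\cong\mathbb H^3(\tau_{\le2}\mathcal C^\bullet)$, which maps to $\mathbb H^3(\mathcal C^\bullet)=0$ with kernel a quotient of $H^{-1}$-type term, namely $H^{0}(X,\mathcal H^3)$ shifted into degree $2$; I need $\mathbb H^2(\mathcal C^\bullet)\to H^0(X,\mathcal H^3)$ surjective, which I would get because $\mathcal C^\bullet$ is a flasque resolution computing global row cohomology, so the composite is identified with the global row's own truncation. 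This is the step I expect to be the main obstacle: making the Cousin complex genuinely compute hypercohomology, and handling the last term $\mathcal H^3$ (supported at codimension-three points, i.e.\ closed points, where the local top cohomology may be nonzero). I would try first to argue via the exact sequence $0\to\tau_{\le2}\to\mathcal C^\bullet$ directly with the flasque global sections, expressing $H^3(X,\mathcal H^0)$ as cocycles of $\Gamma\mathcal C^3$ that are locally boundaries modulo global boundaries, and then using Proposition~\ref{global:mw4_field}. Finally, from $0\to\mathcal K\to\mathcal K_4^{\mathrm{MW}}\to\mathcal H^0\to\mathcal Q\to0$, split into two short exact sequences and use $H^3(\mathcal K)=0$, $H^2(\mathcal Q)=0$ (and right exactness in top degree) to obtain $H^3(X,\mathcal K_4^{\mathrm{MW}})\hookrightarrow H^3(X,\mathcal H^0)=0$.
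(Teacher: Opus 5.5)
Your proposal is correct and is essentially the paper's proof. It uses the same flasque point-sheaf complex, whose global sections give $H^3(C_{\mathrm{MW},4}(A))=0$. It uses the same stalkwise bounds: augmentation kernel generically zero, cokernel zero through codimension one, $\mathcal H^1$ zero through codimension two, and $\mathcal H^2=0$, all fed into Lemma~\ref{bridge:support}. Your truncation triangles and two short exact sequences are an unpacking of the paper's hypercohomology count for the cone of the augmentation.

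The step you flag as the main obstacle rests on a degree miscount. In the triangle $\tau_{\le2}\mathcal C^\bullet\to\mathcal C^\bullet\to\mathcal H^3(\mathcal C^\bullet)[-3]$, the kernel of $\mathbb H^3(X,\tau_{\le2}\mathcal C^\bullet)\to\mathbb H^3(X,\mathcal C^\bullet)$ is a quotient of
\[
\mathbb H^2(X,\mathcal H^3(\mathcal C^\bullet)[-3])=H^{-1}(X,\mathcal H^3(\mathcal C^\bullet))=0.
\]
The group $H^0(X,\mathcal H^3(\mathcal C^\bullet))$ appears only further to the right and plays no role in injectivity. So you need no surjectivity onto it, and you should discard the justification you sketch for it. That justification does not hold as stated anyway: $\mathcal C^\bullet$ is not known to resolve $\mathcal K_4^{\mathrm{MW}}$, since the augmentation has a kernel and a cokernel and $\mathcal H^1$ is controlled only through codimension two.

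Your worry that the local top cohomology may be nonzero is also unfounded: the paper shows $\mathcal H^3(\mathcal C^\bullet)=0$. At a codimension-three point with regular system of parameters $t_1,t_2,t_3$, the DVR $T/(t_1,t_2)$ has uniformizer $\bar t_3$, and the coefficient-one residue formula lifts every terminal coefficient. The paper uses this to obtain an isomorphism $H^3(X,\mathcal K_4^{\mathrm{MW}})\simeq\mathbb H^3(X,\mathcal C^\bullet)$, whereas your injectivity argument does not need it.
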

\begin{proof}
We compare the naive sheaf with the point-sheaf complex whose global
sections were studied in Proposition~\ref{global:mw4_field}. The
augmentation need not be an isomorphism in every cohomological degree.
What matters here is that its error sheaves are supported in
codimensions large enough to make the comparison an isomorphism in
third cohomology.

\proofstep{1. Residue fields on the parameter open.}
First, every point of $X=\Spec A$ has infinite residue field. Its prime $\mathfrak p$ in the local ring $R$ is properly below $\fm$, which contains $r$. The local domain $R/\mathfrak p$ is therefore not a field. If its fraction field were finite, the domain itself would be a finite domain and hence a field, a contradiction. Also $\dim X\leq3$.

\proofstep{2. The point-sheaf complex.}
On the small Zariski site form the point-sheaf complex
\[
 \mathcal C^p=\bigoplus_{x\in X^{(p)}}i_{x*}
                   K_{4-p}^{\mathrm{MW}}(\kappa(x),\omega_x),\qquad0\leq p\leq3.
\]
Its global sections are the complex of Proposition~\ref{global:mw4_field}. Unit symbols have zero first residue, giving the natural augmentation $\mathcal K_4^{\mathrm{MW}}[0]\to\mathcal C^\bullet$. Its first differential is the DVR residue: the DVR image theorem, Lemma~\ref{loc:dvr_image}, expresses classes using unit symbols and uniformizer-times-unit symbols; the latter have the expected coefficient-one boundary by the unit-product formula and the degree-three field comparison.

\proofstep{3. Local cohomology of the complex.}
At the ordinary local stalk $T=\mathcal O_{X,x}$, the complex is
$C_{\mathrm{MW},4}(T)$. To see this, represent a cochain and any
relations involving it by finitely many field coefficients. They
descend to one common neighborhood of $x$. Passing to the stalk
removes the primes whose closures do not contain $x$, while the
purity lines and their boundary maps localize to those of $T$.

\proofstep{4. Supports of the comparison sheaves.}
Let $\varepsilon:\mathcal K_4^{\mathrm{MW}}\to\mathcal H^0(\mathcal C^\bullet)$ be the induced augmentation. The local results of Part~III give the following support bounds:
\begin{itemize}
\item $\ker\varepsilon$ is zero at the generic point;
\item $\coker\varepsilon$ is zero through codimension one, by the ordinary DVR \emph{image} theorem;
\item $\mathcal H^1(\mathcal C^\bullet)$ is zero through codimension two, by Corollary~\ref{loc:original_open}, which combines the surface and DVR results with the empty positive-degree complex at the generic point;
\item $\mathcal H^2(\mathcal C^\bullet)=0$, by Theorem~\ref{loc:mw4_h2} on every local ring of dimension at most three;
\item $\mathcal H^3(\mathcal C^\bullet)=0$: in a dimension-three
regular local ring $T$, a regular system of parameters $t_1,t_2,t_3$ gives the
DVR $T/(t_1,t_2)$, with the image of $t_3$ as uniformizer
\cite[Corollary~10.15 and Proposition~11.1]{Eisenbud1995}.
Its divisor is the closed point with coefficient one.
The supported unit-product residue formula of
Lemma~\ref{gw:low_residues}, with the compatible purity-line orientation,
therefore lifts every terminal unit-symbol coefficient.
Lower dimensions have no such term.
\end{itemize}
These stalkwise bounds concern the augmentation kernel and cokernel
and the cohomology sheaves of the point-sheaf complex.
They control the augmentation cone, while
Proposition~\ref{global:mw4_field} supplies the vanishing of the third
cohomology of the global field cochain complex. Both enter the final
comparison in Step~6.

\proofstep{5. The augmentation cone.}
By Lemma~\ref{bridge:support}, the cohomological dimension of $\ker\varepsilon$ is at most $2$, that of $\coker\varepsilon$ at most $1$, and that of $\mathcal H^1(\mathcal C^\bullet)$ at most $0$. Let $\mathcal E$ be the cone of the augmentation. Its only possibly nonzero cohomology sheaves are these three, in degrees $-1,0,1$.
For this bounded complex, the hypercohomology spectral sequence has
$E_2^{a,b}=H^a(X,\mathcal H^b(\mathcal E))$ and total degree $a+b$;
see \cite[Application~5.7.10]{WeibelHomological1994}.
The sheaf in degree
$-1$ has cohomology only through degree two, the sheaf in degree zero
only through degree one, and the sheaf in degree one only in degree
zero. Thus every possibly nonzero term has total degree at most one.
In particular,
\[
 \mathbb H^2(X,\mathcal E)=\mathbb H^3(X,\mathcal E)=0.
\]

\proofstep{6. Hypercohomology and global cochains.}
Each point pushforward in $\mathcal C^\bullet$ is flasque. The direct sums remain flasque here: a section is locally supported in finitely many summands, and a finite subcover of a Noetherian open makes that support finite. Restriction merely omits components. Hence $\mathbb H^3(X,\mathcal C^\bullet)$ is computed by its global field complex and is zero by Proposition~\ref{global:mw4_field}.
By the augmentation triangle and the two cone vanishings above,
$H^3(X,\mathcal K_4^{\mathrm{MW}})
\simeq\mathbb H^3(X,\mathcal C^\bullet)$.
This proves the asserted vanishing.
\end{proof}

%% file: sections/05_projective_conclusion.tex
\section{Symplectic framing and freeness in every rank}\label{main:conclusion}
We now assemble the proof. Stable freeness and symplectic completion
settle ranks at least three. For rank two, alternating Witt vanishing
first gives a finite hyperbolic stabilization. We cancel its additional
planes in the stable range, leaving the rank-four frame required by
Theorem~\ref{bridge:criterion}.

Throughout this section, $R$, $r$, and $A$ satisfy the hypotheses of Theorem~\ref{main:quillen}. In particular, $A$ is a domain of dimension at most three. Theorem~\ref{global:naive_mw4} and Theorem~\ref{bridge:criterion} already give
\begin{equation}\label{main:sp4_transitive}
 \Um_4(A)=\Sp_4(A)e_1.
\end{equation}
Thus every unimodular column of length four has a symplectic completion over $A$.

\subsection{Stable freeness}

\begin{lemma}[Stable freeness and freeness in ranks at least three]\label{main:stable_free}
Every projective $A$-module is stably free, and $K_0(A)=\Z$ by rank. Every projective of rank at least three is free.
\end{lemma}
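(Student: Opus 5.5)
The lemma has three parts: stable freeness of every projective $A$-module, the identification $K_0(A)=\Z$ by rank, and freeness in rank at least three. I would take them in that order.

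\emph{Stable freeness.} Let $P$ be a finitely generated projective $A$-module. Since $A=R[1/r]$ is a localization of $R$, I choose a finitely generated $R$-module $M$ with $M[1/r]\simeq P$; one can take the $R$-submodule generated by lifts of finitely many generators of $P$. Because $R$ is regular local, $M$ has a finite free resolution $0\to R^{n_k}\to\cdots\to R^{n_0}\to M\to0$. Localizing at $r$ is exact, so this becomes a finite resolution of $P$ by free $A$-modules. Since $P$ is projective, the resolution splits from the left, one step at a time. This gives an isomorphism
\[
 P\oplus\bigoplus_{i\ \mathrm{odd}}A^{n_i}\simeq\bigoplus_{i\ \mathrm{even}}A^{n_i},
\]
so $P$ is stably free and $[P]$ lies in the subgroup of $K_0(A)$ generated by $[A]$. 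The rank map $K_0(A)\to\Z$ splits this inclusion: $A$ is a domain, so rank is well defined, and $[A]$ has rank one. Hence $K_0(A)=\Z$. The same finite-resolution argument was already invoked in Step~1 of Theorem~\ref{gw:global_walt}.

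\emph{Freeness in rank at least three.} Write $P\oplus A^m\simeq A^{n+m}$ with $n=\operatorname{rank}P\geq3$. It suffices to cancel one free summand at a time, that is, to show: if $Q\oplus A\simeq A^{k+1}$ with $k\geq3$, then $Q$ is free. This amounts to proving that every unimodular row of length $k+1\geq4$ over $A$ can be completed to an invertible matrix; equivalently, that $\GL_{k+1}(A)$ acts transitively on $\Um_{k+1}(A)$.

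For length four (so $k=3$), the transitivity statement \eqref{main:sp4_transitive} gives $\Um_4(A)=\Sp_4(A)e_1$, and $\Sp_4(A)\subset\GL_4(A)$. Thus every stably free module with $Q\oplus A\simeq A^4$ is free. For length $k+1\geq5$ I would use Bass's stable range theorem: $A$ is Noetherian of Krull dimension $d\leq3$, so $\Um_{k+1}(A)=E_{k+1}(A)e_1$ whenever $k+1\geq d+2$, which holds since $d+2\leq5$. Cancellation therefore goes through in every length at least five and, by the previous step, also in length four. Descending from $A^{n+m}$ one free summand at a time shows that $P$ is free.

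The only step that is not routine is length four, that is, rank exactly three. Bass's theorem stops just short of covering it when $\dim A=3$. The deep symplectic input \eqref{main:sp4_transitive}, which comes from Theorem~\ref{global:naive_mw4} together with Theorem~\ref{bridge:criterion}, is precisely what closes this gap. Two checks are needed here. First, by transposing we may pass freely between unimodular rows and columns, using the fact that $\Um_4(A)$ consists of columns. Second, a symplectic matrix with first column $u$ is invertible, so it completes $u$ and its complement is free.
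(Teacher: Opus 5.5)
Your proposal is correct and follows essentially the same route as the paper. A finite free resolution over $R$, localized to $A$ and split, gives stable freeness and $K_0(A)=\Z$; Bass's transitivity handles unimodular rows of length at least five; and the transpose of a symplectic completion from \eqref{main:sp4_transitive} settles the remaining length-four (rank-three) case. The only cosmetic difference is that you cancel free summands one at a time, whereas the paper quotes Bass cancellation directly for rank at least four.
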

\begin{proof}
Let $P$ be projective. Clear denominators in a finite presentation to obtain a finite $R$-module $M$ with $M[1/r]\simeq P$.
Since $R$ is regular local of dimension four, $M$ has a finite
projective resolution of length at most four. Its projective terms
are finite free because $R$ is local; see \cite[Theorem~19.2 and
Corollary~19.6]{Eisenbud1995}. Localization gives a finite free
$A$-resolution of $P$.
The surjection onto $P$ splits because $P$ is projective. Its kernel
is therefore projective, so the preceding surjection splits in turn.
Proceeding through the finite resolution gives an alternating
direct-sum identity, hence an isomorphism $P\oplus A^a\simeq A^b$. This proves stable freeness and the asserted $K_0$ calculation.

Bass cancellation for a Noetherian ring of dimension at most three makes every stably free module of rank at least four free. If $Q$ has rank three, then $Q\oplus A$ has rank four and is free. A chosen isomorphism $Q\oplus A\simeq A^4$ presents $Q$ as the kernel of a unimodular row of length four. The transpose of a completion in \eqref{main:sp4_transitive} is an invertible completion of that row, and its kernel is free of rank three.
\end{proof}

We will also use Bass's elementary transitivity on rows of length at least $\dim A+2$ \cite[Theorems~9.3 and~11.1]{BassStable}.

\subsection{Metabolic alternating forms split integrally}

To pass from a Witt-class relation to an isometry, we need an explicit
splitting of a metabolic form. The following elementary construction
adjusts a chosen complement by a bilinear form with prescribed
antisymmetrization.

\begin{lemma}[Hyperbolicity of a metabolic alternating form]\label{main:metabolic}
Over an arbitrary commutative ring, a perfect alternating space with a projective Lagrangian is hyperbolic.
\end{lemma}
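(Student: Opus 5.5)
Let $(M,b)$ be the perfect alternating space and $L\subset M$ a projective Lagrangian, so $L=L^\perp$ and $L$ is a direct summand. The plan is to construct a complement $L'$ of $L$ that is itself totally isotropic and then read off the hyperbolic structure. First choose any complement $N$ with $M=L\oplus N$, which exists because $L$ is a direct summand. Since $b$ is perfect and $L=L^\perp$, the map $N\to L^*$, $n\mapsto b(-,n)|_L$, is an isomorphism. To see this, note that $M\to M^*\to L^*$ is surjective because $L$ is a summand. Its kernel is $L^\perp=L$, so it induces $M/L\simeq N\xrightarrow{\sim}L^*$. Identifying $N$ with $L^*$ in this way, the form in the decomposition $L\oplus L^*$ has Gram description
\[
 b\bigl((x,\xi),(y,\upsilon)\bigr)=\upsilon(x)-\xi(y)+c(\xi,\upsilon),
\]
with the sign fixed by the chosen convention. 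Here $c$ is an alternating bilinear form on $L^*$: it satisfies $c(\xi,\xi)=0$ because $b$ is alternating and $L$ is isotropic.

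The key step is to modify $N$ by a graph. For a linear map $\phi:L^*\to L$, put $N_\phi=\{(\phi\xi,\xi)\}$. Then
\[
 b\bigl((\phi\xi,\xi),(\phi\upsilon,\upsilon)\bigr)=\upsilon(\phi\xi)-\xi(\phi\upsilon)+c(\xi,\upsilon).
\]
Writing $B(\xi,\upsilon)=\upsilon(\phi\xi)$ for the bilinear form on $L^*$ corresponding to $\phi$, we need $B(\xi,\upsilon)-B(\upsilon,\xi)=-c(\xi,\upsilon)$, and also that $N_\phi$ is totally isotropic on the diagonal. The diagonal condition is automatic, since $B(\xi,\xi)-B(\xi,\xi)+c(\xi,\xi)=0$ with no division by two. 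So the problem reduces to writing an alternating form $c$ on the finite projective module $L^*$ as an antisymmetrization $B-B^{\mathsf t}$, and this is the main obstacle in characteristic-free form.

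I would handle the obstacle by first treating the free case. If $L^*$ is free with basis $e_i$, define $B(e_i,e_j)=-c(e_i,e_j)$ for $i<j$ and $B(e_i,e_j)=0$ for $i\ge j$. Then $B-B^{\mathsf t}$ agrees with $-c$ on basis pairs with $i\neq j$, and on the diagonal both vanish because $c$ is alternating. In the projective case, choose $Q$ with $L^*\oplus Q$ free and extend $c$ by zero to an alternating form on $L^*\oplus Q$. Apply the free construction there and restrict the resulting $B$ to $L^*$; antisymmetrization commutes with restriction. Bilinear forms on a projective module correspond to maps $L^*\to L^{**}=L$ by reflexivity, so $\phi$ exists.

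Finally, $N_\phi$ is still a complement of $L$, since it is a graph over $L^*$, and it is totally isotropic. Hence the pairing $L\times N_\phi\to A$ is perfect, because it is the original isomorphism $N\simeq L^*$ transported along the graph. Therefore $(M,b)\simeq \Hyp(L)$ via $L\oplus N_\phi\simeq L\oplus L^*$. The only genuinely delicate point is the antisymmetrization lift without dividing by two, and the ordered-basis trick together with the free summand is how I would resolve it.
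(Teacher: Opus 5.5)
Your proposal is correct and follows the paper's proof essentially step for step. You split $0\to L\to M\to L^*\to 0$ using that the Lagrangian is a direct summand, and write the form as the hyperbolic pairing plus an alternating correction $c$ on $L^*$. You obtain $-c$ as an antisymmetrization by extending $c$ by zero to a free module $L^*\oplus Q$, choosing a strictly triangular matrix there, and restricting. Replacing the complement by the graph of the corresponding map $L^*\to L$ then gives an isotropic complement with unchanged cross-pairing, which is the paper's argument written out in more detail.
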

\begin{proof}
Let $L$ be the Lagrangian. Split the underlying exact sequence with quotient $L^*$. In this decomposition the pairing is
\[
 \langle(l,\alpha),(m,\beta)\rangle
    =\beta(l)-\alpha(m)+c(\alpha,\beta),
\]
where $c$ is an alternating form on $L^*$. There is a bilinear form $b$ on $L^*$ whose antisymmetrization is $-c$. Indeed choose a projective complement so that $L^*\oplus Q$ is free, extend $c$ by zero, choose a strictly triangular matrix with antisymmetrization $-c$, and restrict it to $L^*$.

View $b$ as a map $t:L^*\to L$ by $\beta(t\alpha)=b(\alpha,\beta)$. Replacing the complement by the graph $\alpha\mapsto(t\alpha,\alpha)$ changes its self-pairing by $b-b^t=-c$. The new complement is isotropic and the cross-pairing is unchanged. This is the hyperbolic form on $L\oplus L^*$.
\end{proof}

\begin{lemma}[A rank-four symplectic frame]\label{main:actual_frame}
Every oriented rank-two projective $P$ over $A$ admits an oriented isometry $P\perp \Hyp\simeq \Hyp^2$.
\end{lemma}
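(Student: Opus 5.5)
The plan is to combine the vanishing $W_{\mathrm{alt}}(A)=0$ of Theorem~\ref{gw:global_walt} with Lemma~\ref{main:metabolic} to obtain an isometry after adding finitely many hyperbolic planes, and then to cancel the surplus planes by stable-range transitivity. First we equip $P$ with the alternating form $b_\theta$ defined by its orientation. The class of $(P,b_\theta)$ in $W_{\mathrm{alt}}(A)$ vanishes, so there are $m,n$ and an isometry $P\perp\Hyp^m\perp M\simeq\Hyp^n\perp M$, with $M$ hyperbolic on some projective module. Lemma~\ref{main:stable_free} says every projective over $A$ is stably free. After adding a hyperbolic space on a complementary free module, we may therefore take every hyperbolic summand to be free, which gives an isometry $P\perp\Hyp^m\simeq\Hyp^{n}$. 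Comparing ranks forces $n=m+1$. Alternatively, we can use Lemma~\ref{main:metabolic} directly: the graph of a Witt-zero relation gives a projective Lagrangian in $P\perp\Hyp^m\perp(-\Hyp^{n})$, so that space is hyperbolic, and stable freeness then makes it free hyperbolic.

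The next step reduces $m$ to one by induction. Suppose $m\ge2$ and $\phi:P\perp\Hyp^m\simeq\Hyp^{m+1}$. Let $e$ be the first basis vector of the last plane of $\Hyp^m$ on the left. Its image $u=\phi(e)$ is a unimodular column of length $2m+2\ge6\ge\dim A+2$. By Bass's elementary transitivity together with its symplectic analogue, namely transitivity of $\ESp_{2m+2}(A)$ on unimodular columns when $2m+2\ge\dim A+3$ (Vaserstein's symplectic stability, or Bass's theorem applied to $\Sp$), there is $g\in\Sp_{2m+2}(A)$ with $g u=e_{2m+1}$. Next we adjust by a transvection fixing $e_{2m+1}$ so that the partner of $u$ is also sent to $e_{2m+2}$. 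This uses the standard fact that the stabilizer of a unimodular vector acts transitively on vectors $v$ with $\langle u,v\rangle=1$: the maps $x\mapsto x+\langle x,w\rangle u+\dots$ give symplectic transvections. Taking perpendicular complements of the matched hyperbolic planes then yields $P\perp\Hyp^{m-1}\simeq\Hyp^m$. We iterate down to $m=1$.

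Finally we check orientations. The isometry $P\perp\Hyp\simeq\Hyp^2$ is symplectic, so it respects the alternating forms. The orientation that $\Hyp^2$ induces on $P_u=u^\perp/Au$ agrees with $b_\theta$ because $b_\theta$ is the form we transported. The statement should be interpreted as an isometry for the form $b_\theta$, so this amounts to a consistency check. We expect the main obstacle to be the cancellation step at $m=2$, which means passing from $\Hyp^3$ to $\Hyp^2$ with columns of length $6$ over a ring of dimension at most three. Here one needs symplectic transitivity on $\Um_6$ in the range $6\ge\dim A+3$. We would first try to invoke Bass's transitivity on $\Um_6$ and then correct within $\Sp_6$ using that $\GL$-elementary moves can be replaced by symplectic ones in this range (Vaserstein). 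If that citation is unavailable, we fall back on Theorem~\ref{bridge:criterion} in rank $6$: by Lemma~\ref{bridge:support} and $\dim A\le3$, its obstruction group lies beyond the cohomological dimension, which gives completion directly.
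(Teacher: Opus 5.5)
Your proposal is correct and is essentially the paper's argument. Witt vanishing, Lemma~\ref{main:metabolic} and a free complement yield $P\perp\Hyp^m\simeq\Hyp^{m+1}$. Each surplus plane is then cancelled by sending the image $(u,v)$ of a hyperbolic pair to a standard pair, using Bass transitivity, the equality of elementary linear and symplectic column orbits, and a symplectic transvection fixing the first vector, and then passing to perpendicular complements.

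Your fallback is unnecessary. The paper obtains the needed $\ESp_{2m+2}(A)$-transitivity for $2m+2\geq6$ from Bass's theorem and Gupta's characteristic-free orbit equality, which matters because $2$ need not be invertible in $A$. Theorem~\ref{bridge:criterion} treats only columns of length four, so it supplies no rank-six statement.
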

\begin{proof}
We first obtain a hyperbolic stabilization from the vanishing of the
alternating Witt group. We then cancel free hyperbolic planes while
the standard target remains in the stable range. The final isometry
retains the given rank-two space as the perpendicular complement.

\proofstep{1. The alternating form on the projective module.}
Write $\theta:\det P\xrightarrow{\sim}A$ for the chosen orientation.
For $p,q\in P$, it defines the pairing
$\langle p,q\rangle=\theta(p\wedge q)$. In an oriented
local basis this pairing is the standard perfect alternating form.
The frame constructed below is an isometry for this form.
By Theorem~\ref{gw:global_walt}, the alternating Witt group is zero. The hyperbolic map $h:K_0(A)\to K_0^{\mathrm{Sp}}(A)$ is therefore onto. Since $K_0(A)=\Z$ by Lemma~\ref{main:stable_free}, rank comparison gives $[P]=[\Hyp]$ in the Grothendieck group of alternating spaces.

\proofstep{2. A finite hyperbolic stabilization.}
Group completion supplies an alternating space $C$ with $P\perp C\simeq \Hyp\perp C$. Add $-C$ to both sides. The diagonal in $C\perp(-C)$ is a projective Lagrangian, so Lemma~\ref{main:metabolic} identifies this sum with the hyperbolic space on the underlying projective module $Q$ of $C$. Choose a finite projective complement $Q'$ with $Q\oplus Q'\simeq A^m$ and add $\Hyp(Q')$. Thus
\begin{equation}\label{main:finite_hyperbolic}
 P\perp \Hyp^m\simeq \Hyp^{m+1}
\end{equation}
for some finite $m$. This also gives the group-completion interpretation when alternating
Witt theory is defined by metabolic relations: each such relation is
hyperbolic by Lemma~\ref{main:metabolic}, and adding a projective
complement turns it into a finite free hyperbolic stabilization.

\proofstep{3. Cancellation in the stable range.}
If $m=0$, the desired conclusion is immediate, and $m=1$ is already the required frame. If $m\geq2$, choose a free hyperbolic pair on the left of \eqref{main:finite_hyperbolic}. Its image is a hyperbolic pair $(u,v)$ in the standard free space of rank $2m+2\geq6$. Bass elementary transitivity and the characteristic-free equality of elementary linear and symplectic column orbits give an elementary symplectic matrix taking $u$ to $e_1$. The latter equality is \cite[Lemma~2.11 and Theorem~3.9]{Gupta}.

\proofstep{4. The partners of the standard columns.}
After that change write $v=e_2+a e_1+w$, with $w$ in the standard perpendicular complement of $Ae_1+Ae_2$. The transformation fixing $e_1$, sending
\[
 e_2\longmapsto e_2+w+a e_1,
 \qquad z\longmapsto z+\langle w,z\rangle e_1
\]
on that complement is symplectic, with inverse obtained by replacing $(w,a)$ by $(-w,-a)$. Its inverse sends $v$ to $e_2$. Restricting the resulting isometry to the perpendicular complements gives
\[
 P\perp \Hyp^{m-1}\simeq \Hyp^m.
\]
Repeat until $m=1$. At each step the standard target has rank at least six, so the stable-range theorem applies. The final isometry is the required rank-four frame.
\end{proof}

\subsection{Completion of the theorem}

Stable freeness trivializes $\det P$ for every rank-two projective. Fix any of its orientations and apply Lemma~\ref{main:actual_frame}. Theorem~\ref{bridge:criterion}, with Theorem~\ref{global:naive_mw4}, shows that $P$ is free. The orientation can be realized by its basis as in the criterion's proof. Rank-one projectives are free because taking determinants in a stable trivialization gives $P\simeq A$. Rank-zero projectives are zero, since their localizations at all primes are zero. Ranks at least three are covered by Lemma~\ref{main:stable_free}. Together with \eqref{main:sp4_transitive}, this proves every assertion of Theorem~\ref{main:quillen}.

\begin{remark}
For a rank-two projective module, the proof constructs an isometry
$P\perp \Hyp\simeq \Hyp^2$ and applies the completion criterion to the
corresponding framed complement. The resulting basis can be rescaled
to induce any chosen determinant orientation.
\end{remark}

%% file: frontmatter/ai_disclosure.tex
\section*{Use of AI tools}
\addcontentsline{toc}{section}{Use of AI tools}

OpenAI's ChatGPT and Codex (GPT-6) were used to develop the proof and
prepare the manuscript. The human author takes full responsibility for
the mathematical arguments and the final manuscript.

%% file: main.bbl
\newcommand{\etalchar}[1]{$^{#1}$}
\begin{thebibliography}{CDH{\etalchar{+}}26}

\bibitem[AF17]{AsokFaselKO}
Aravind Asok and Jean Fasel.
\newblock An explicit {$KO$}-degree map and applications.
\newblock {\em Journal of Topology}, 10(1):268--300, 2017.
\newblock \href{https://doi.org/10.1112/topo.12007}{DOI
  \nolinkurl{10.1112/topo.12007}};
  \href{https://arxiv.org/abs/1403.4588}{Text}.

\bibitem[AGV72]{SGA4}
Michael Artin, Alexander Grothendieck, and Jean-Louis Verdier.
\newblock {\em Th{\'e}orie des topos et cohomologie {\'e}tale des sch{\'e}mas},
  volume 269--270 of {\em Lecture Notes in Mathematics}.
\newblock Springer, 1972.
\newblock SGA 4, Expos{\'e} VI.

\bibitem[Bac17]{LOCBachmannSlices}
Tom Bachmann.
\newblock The generalized slices of {Hermitian} {$K$}-theory.
\newblock {\em Journal of Topology}, 10(4):1124--1144, 2017.
\newblock Cited version:
  \href{https://arxiv.org/abs/1610.01346v3}{arXiv:1610.01346v3}.

\bibitem[Bac24]{GWBachmann}
Tom Bachmann.
\newblock Strongly {$\mathbb A^1$}-invariant sheaves (after {F. Morel}), 2024.
\newblock Version 1, 17 June 2024.
  \href{https://arxiv.org/abs/2406.11526v1}{arXiv:2406.11526v1}.

\bibitem[Bas64]{BassStable}
Hyman Bass.
\newblock {$K$}-theory and stable algebra.
\newblock {\em Publications Math{\'e}matiques de l'IH{\'E}S}, 22:5--60, 1964.
\newblock \href{https://doi.org/10.1007/BF02684689}{DOI
  \nolinkurl{10.1007/BF02684689}};
  \href{https://www.numdam.org/article/PMIHES_1964__22__5_0.pdf}{Text}.

\bibitem[BEM25]{BEM}
Tom Bachmann, Elden Elmanto, and Matthew Morrow.
\newblock {$\mathbb A^1$}-invariant motivic cohomology of schemes, 2025.
\newblock Version 1, 13 August 2025.
  \href{https://arxiv.org/abs/2508.09915v1}{arXiv:2508.09915v1}.

\bibitem[BH21]{BachmannHopkins}
Tom Bachmann and Michael~J. Hopkins.
\newblock {$\eta$}-periodic motivic stable homotopy theory over fields, 2021.
\newblock Version 4, 3 May 2021.
  \href{https://arxiv.org/abs/2005.06778v4}{arXiv:2005.06778v4}.

\bibitem[BM23]{BhattMathew}
Bhargav Bhatt and Akhil Mathew.
\newblock Syntomic complexes and {$p$}-adic {\'e}tale {Tate} twists.
\newblock {\em Forum of Mathematics, Pi}, 11:e1, 2023.
\newblock \href{https://doi.org/10.1017/fmp.2022.21}{Published article}.

\bibitem[Bou25]{BouisWeibel}
Tess Bouis.
\newblock Weibel vanishing and the projective bundle formula for mixed
  characteristic motivic cohomology, 2025.
\newblock Version 1, 22 July 2025.
  \href{https://arxiv.org/abs/2507.16501v1}{arXiv:2507.16501v1}.

\bibitem[Bou26]{BouisMotivic}
Tess Bouis.
\newblock Motivic cohomology of mixed characteristic schemes, 2026.
\newblock Version 3, 7 September 2026.
  \href{https://arxiv.org/abs/2412.06635v3}{arXiv:2412.06635v3}.

\bibitem[Car23]{GWCarlier}
Robin Carlier.
\newblock {Milnor--Witt} {$K$}-theory and {Witt} {$K$}-theory of a field, 2023.
\newblock Version 1, 29 June 2023.
  \href{https://arxiv.org/abs/2306.16985v1}{arXiv:2306.16985v1}.

\bibitem[CDH{\etalchar{+}}21]{GWCDHI}
Baptiste Calm{\`e}s, Emanuele Dotto, Yonatan Harpaz, Fabian Hebestreit, Markus
  Land, Kristian Moi, Denis Nardin, Thomas Nikolaus, and Wolfgang Steimle.
\newblock Hermitian {$K$}-theory for stable {$\infty$}-categories {I}:
  Foundations, 2021.
\newblock Version 3, 27 December 2021.
  \href{https://arxiv.org/abs/2009.07223v3}{arXiv:2009.07223v3}.

\bibitem[CDH{\etalchar{+}}25]{GWCDHII}
Baptiste Calm{\`e}s, Emanuele Dotto, Yonatan Harpaz, Fabian Hebestreit, Markus
  Land, Kristian Moi, Denis Nardin, Thomas Nikolaus, and Wolfgang Steimle.
\newblock Hermitian {$K$}-theory for stable {$\infty$}-categories {II}:
  Cobordism categories and additivity, 2025.
\newblock Version 5, 11 April 2025.
  \href{https://arxiv.org/abs/2009.07224v5}{arXiv:2009.07224v5}.

\bibitem[CDH{\etalchar{+}}26]{GWCDHIII}
Baptiste Calm{\`e}s, Emanuele Dotto, Yonatan Harpaz, Fabian Hebestreit, Markus
  Land, Kristian Moi, Denis Nardin, Thomas Nikolaus, and Wolfgang Steimle.
\newblock Hermitian {$K$}-theory for stable {$\infty$}-categories {III}:
  {Grothendieck--Witt} groups of rings, 2026.
\newblock Version 4, 27 April 2026.
  \href{https://arxiv.org/abs/2009.07225v4}{arXiv:2009.07225v4}.

\bibitem[{\v C}es19]{Cesnavicius}
K{\k e}stutis {\v C}esnavi{\v c}ius.
\newblock Purity for the {Brauer} group.
\newblock {\em Duke Mathematical Journal}, 168(8):1461--1486, 2019.
\newblock \href{https://doi.org/10.1215/00127094-2018-0057}{DOI
  \nolinkurl{10.1215/00127094-2018-0057}};
  \href{https://arxiv.org/abs/1711.06456}{Text}.

\bibitem[{\v C}es22]{CesnaviciusTorsorsSurvey}
K{\k e}stutis {\v C}esnavi{\v c}ius.
\newblock Problems about torsors over regular rings.
\newblock {\em Acta Mathematica Vietnamica}, 47(1):39--107, 2022.
\newblock With an appendix by Yifei Zhao. Cited version:
  \href{https://arxiv.org/abs/2201.06424v4}{arXiv:2201.06424v4}.

\bibitem[{\v C}es24]{CesnaviciusSmoothDivisor}
K{\k e}stutis {\v C}esnavi{\v c}ius.
\newblock Torsors on the complement of a smooth divisor.
\newblock {\em Cambridge Journal of Mathematics}, 12(4):903--936, 2024.
\newblock Cited version:
  \href{https://arxiv.org/abs/2204.08233v6}{arXiv:2204.08233v6}.

\bibitem[CHN25]{GWCHN}
Baptiste Calm{\`e}s, Yonatan Harpaz, and Denis Nardin.
\newblock A motivic spectrum representing hermitian {$K$}-theory, 2025.
\newblock Version 2, 24 January 2025.
  \href{https://arxiv.org/abs/2402.15136v2}{arXiv:2402.15136v2}.

\bibitem[CM21]{ClausenMathew}
Dustin Clausen and Akhil Mathew.
\newblock Hyperdescent and {\'e}tale {$K$}-theory, 2021.
\newblock Version 3, 18 March 2021.
  \href{https://arxiv.org/abs/1905.06611v3}{arXiv:1905.06611v3}.

\bibitem[D{\'e}g23]{DegliseMW}
Fr{\'e}d{\'e}ric D{\'e}glise.
\newblock Notes on {Milnor--Witt} {$K$}-theory, 2023.
\newblock Version 1, 29 May 2023.
  \href{https://arxiv.org/abs/2305.18609v1}{arXiv:2305.18609v1}.

\bibitem[Eis95]{Eisenbud1995}
David Eisenbud.
\newblock {\em Commutative Algebra: With a View Toward Algebraic Geometry},
  volume 150 of {\em Graduate Texts in Mathematics}.
\newblock Springer-Verlag, New York, 1995.

\bibitem[EM26]{LOCElmantoMorrow}
Elden Elmanto and Matthew Morrow.
\newblock Motivic cohomology of equicharacteristic schemes, 2026.
\newblock Version 2, 27 March 2026.
  \href{https://arxiv.org/abs/2309.08463v2}{arXiv:2309.08463v2}.

\bibitem[FGV22]{GWFGV}
Tony Feng, S{\o}ren Galatius, and Akshay Venkatesh.
\newblock The {Galois} action on symplectic {$K$}-theory.
\newblock {\em Inventiones Mathematicae}, 230(1):225--319, 2022.
\newblock \href{https://doi.org/10.1007/s00222-022-01127-8}{DOI
  \nolinkurl{10.1007/s00222-022-01127-8}}.

\bibitem[Gei04]{Geisser}
Thomas Geisser.
\newblock Motivic cohomology over {Dedekind} rings.
\newblock {\em Mathematische Zeitschrift}, 248:773--794, 2004.
\newblock \href{https://www2.rikkyo.ac.jp/web/geisser/Dedekind.pdf}{Article}.

\bibitem[Gup15]{Gupta}
Anjan Gupta.
\newblock Optimal injective stability for the symplectic {$K_1Sp$} group.
\newblock {\em Journal of Pure and Applied Algebra}, 219(5):1336--1348, 2015.
\newblock Cited version:
  \href{https://arxiv.org/abs/1511.09419v1}{arXiv:1511.09419v1}, 30 November
  2015. \href{https://doi.org/10.1016/j.jpaa.2014.06.008}{DOI
  \nolinkurl{10.1016/j.jpaa.2014.06.008}}.

\bibitem[Har77]{Hartshorne1977}
Robin Hartshorne.
\newblock {\em Algebraic Geometry}, volume~52 of {\em Graduate Texts in
  Mathematics}.
\newblock Springer-Verlag, New York, 1977.

\bibitem[Hoo06]{Hoobler}
Raymond~T. Hoobler.
\newblock The {Merkuriev--Suslin} theorem for any semi-local ring.
\newblock {\em Journal of Pure and Applied Algebra}, 207(3):537--552, 2006.
\newblock Cited version:
  \href{https://arxiv.org/abs/math/0503282v2}{arXiv:math/0503282v2}, 15 March
  2005. \href{https://doi.org/10.1016/j.jpaa.2005.10.020}{DOI
  \nolinkurl{10.1016/j.jpaa.2005.10.020}}.

\bibitem[HT10]{BridgeHT}
Kevin Hutchinson and Liqun Tao.
\newblock Homology stability for the special linear group of a field and
  {Milnor--Witt} {$K$}-theory.
\newblock {\em Documenta Mathematica}, Extra Volume: Andrei A. Suslin's
  Sixtieth Birthday:267--315, 2010.
\newblock Published version, revised 1 April 2010.
  \href{https://emis.de/ft/9818}{Article}.

\bibitem[HW15]{BridgeHW}
Kevin Hutchinson and Matthias Wendt.
\newblock On the third homology of {$SL_2$} and weak homotopy invariance.
\newblock {\em Transactions of the American Mathematical Society},
  367(10):7481--7513, 2015.
\newblock Cited version:
  \href{https://arxiv.org/abs/1307.3069v3}{arXiv:1307.3069v3}, 25 April 2014.
  \href{https://doi.org/10.1090/S0002-9947-2014-06495-7}{DOI
  \nolinkurl{10.1090/S0002-9947-2014-06495-7}}.

\bibitem[Ker08]{KerzThesis}
Moritz Kerz.
\newblock {\em Milnor {$K$}-theory of local rings}.
\newblock PhD thesis, Universit{\"a}t Regensburg, 2008.
\newblock \href{https://d-nb.info/98935329x/34}{Dissertation}.

\bibitem[KSW16]{GWKSW}
Max Karoubi, Marco Schlichting, and Charles Weibel.
\newblock The {Witt} group of real algebraic varieties.
\newblock {\em Journal of Topology}, 9(4):1257--1302, 2016.
\newblock Cited version:
  \href{https://arxiv.org/abs/1506.03862v3}{arXiv:1506.03862v3}, 16 June 2016.
  \href{https://doi.org/10.1112/jtopol/jtw024}{DOI
  \nolinkurl{10.1112/jtopol/jtw024}}.

\bibitem[Lan02]{LangAlgebra2002}
Serge Lang.
\newblock {\em Algebra}, volume 211 of {\em Graduate Texts in Mathematics}.
\newblock Springer-Verlag, New York, third revised edition, 2002.

\bibitem[Lev89]{Levine}
Marc Levine.
\newblock The indecomposable {$K_3$} of fields.
\newblock {\em Annales scientifiques de l'{\'E}cole Normale Sup{\'e}rieure},
  22(2):255--344, 1989.
\newblock \href{https://doi.org/10.24033/asens.1585}{DOI
  \nolinkurl{10.24033/asens.1585}};
  \href{https://numdam.org/item/10.24033/asens.1585.pdf}{Text}.

\bibitem[LM]{LudersMorrowK2}
Morten L{\"u}ders and Matthew Morrow.
\newblock Milnor {$K$}-theory of {$p$}-adic rings.
\newblock Author manuscript.
\newblock
  \href{https://www.imo.universite-paris-saclay.fr/~matthew.morrow/Lueders,%20Morrow,%20Milnor%20K-theory%20of%20p-adic%20rings.pdf}{Author
  version}.

\bibitem[L{\"u}d24]{LOCLudersRelative}
Morten L{\"u}ders.
\newblock On the relative {Gersten} conjecture for {Milnor} {$K$}-theory in the
  smooth case, 2024.
\newblock Version 4, 5 July 2024.
  \href{https://arxiv.org/abs/2010.02622v4}{arXiv:2010.02622v4}.

\bibitem[L{\"u}d26]{Luders}
Morten L{\"u}ders.
\newblock {Bloch--Ogus} theory for smooth and semi-stable schemes in mixed
  characteristic.
\newblock {\em {\'E}pijournal de G{\'e}om{\'e}trie Alg{\'e}brique}, 10:Article
  19, 2026.
\newblock Cited version:
  \href{https://arxiv.org/abs/2201.11553v4}{arXiv:2201.11553v4}.

\bibitem[Mor12]{LOCMorelBook}
Fabien Morel.
\newblock {\em {$\mathbb A^1$}-Algebraic Topology over a Field}, volume 2052 of
  {\em Lecture Notes in Mathematics}.
\newblock Springer, 2012.
\newblock Theorem numbering follows the published book.

\bibitem[MS83]{MerkurjevSuslin}
A.~S. Merkur'ev and A.~A. Suslin.
\newblock {$K$}-cohomology of {Severi--Brauer} varieties and the norm residue
  homomorphism.
\newblock {\em Mathematics of the USSR-Izvestiya}, 21(2):307--340, 1983.
\newblock English translation of the Russian original, Izv. Akad. Nauk SSSR
  Ser. Mat. 46 (1982), 1011--1046.
  \href{https://doi.org/10.1070/IM1983v021n02ABEH001793}{DOI
  \nolinkurl{10.1070/IM1983v021n02ABEH001793}};
  \href{https://www.mathnet.ru/eng/im1657}{Article}.

\bibitem[NS90]{GWNS}
Yu.~P. Nesterenko and A.~A. Suslin.
\newblock Homology of the full linear group over a local ring, and {Milnor}'s
  {$K$}-theory.
\newblock {\em Mathematics of the USSR-Izvestiya}, 34(1):121--145, 1990.
\newblock English translation of the 1989 Russian original.
  \href{https://www.mathnet.ru/eng/im1233}{Article}.

\bibitem[Pan03]{Panin}
Ivan~A. Panin.
\newblock The equicharacteristic case of the {Gersten} conjecture.
\newblock {\em Trudy Matematicheskogo Instituta imeni V. A. Steklova},
  241:169--178, 2003.
\newblock English translation: Proceedings of the Steklov Institute of
  Mathematics 241 (2003), 154--163. Citations use the original pagination.
  \href{https://www.mathnet.ru/eng/tm394}{Article}.

\bibitem[Qui72]{QuillenFinite}
Daniel Quillen.
\newblock On the cohomology and {$K$}-theory of the general linear groups over
  a finite field.
\newblock {\em Annals of Mathematics}, 96:552--586, 1972.

\bibitem[Ran73]{GWRanickiFoundations}
A.~A. Ranicki.
\newblock Algebraic {$L$}-theory, {I}: Foundations.
\newblock {\em Proceedings of the London Mathematical Society}, 27(1):101--125,
  1973.
\newblock
  \href{https://www.maths.ed.ac.uk/~v1ranick/papers/ltheory1.pdf}{Author
  version}.

\bibitem[Ran80]{GWRanickiSurgery}
Andrew Ranicki.
\newblock The algebraic theory of surgery. {I}. foundations.
\newblock {\em Proceedings of the London Mathematical Society}, 40(1):87--192,
  1980.
\newblock \href{https://doi.org/10.1112/plms/s3-40.1.87}{DOI
  \nolinkurl{10.1112/plms/s3-40.1.87}};
  \href{https://webhomes.maths.ed.ac.uk/~v1ranick/papers/ats1.pdf}{Author
  version}.

\bibitem[Rao88]{Rao1988}
Ravi~A. Rao.
\newblock The {Bass--Quillen} conjecture in dimension three but characteristic
  {$\ne 2,3$} via a question of {A. Suslin}.
\newblock {\em Inventiones Mathematicae}, 93(3):609--618, 1988.
\newblock \href{https://doi.org/10.1007/BF01410201}{DOI
  \nolinkurl{10.1007/BF01410201}};
  \href{https://link.springer.com/article/10.1007/BF01410201}{Text}.

\bibitem[Sch88]{Schneider}
Peter Schneider.
\newblock Introduction to the {Beilinson} conjectures.
\newblock In Michael Rapoport, Norbert Schappacher, and Peter Schneider,
  editors, {\em Beilinson's Conjectures on Special Values of {$L$}-Functions},
  volume~4 of {\em Perspectives in Mathematics}. Academic Press, 1988.
\newblock
  \href{https://virtualmath1.stanford.edu/~conrad/BSDseminar/refs/BeilinsonintroI.pdf}{Chapter
  text}.

\bibitem[Sch17]{SchlichtingEuler}
Marco Schlichting.
\newblock Euler class groups and the homology of elementary and special linear
  groups.
\newblock {\em Advances in Mathematics}, 320:1--81, 2017.
\newblock Cited version:
  \href{https://warwick.ac.uk/fac/sci/maths/people/staff/marco_schlichting/research/eulerclassfinal.pdf}{accepted
  author manuscript}, 10 August 2017.
  \href{https://doi.org/10.1016/j.aim.2017.08.034}{DOI
  \nolinkurl{10.1016/j.aim.2017.08.034}}.

\bibitem[Sch25a]{SchlichtingSpStability}
Marco Schlichting.
\newblock On the homology stability range for symplectic groups.
\newblock {\em Compositio Mathematica}, 161:2724--2753, 2025.
\newblock \href{https://doi.org/10.1112/S0010437X25102856}{DOI
  \nolinkurl{10.1112/S0010437X25102856}}.

\bibitem[Sch25b]{GWSchlichtingComparison}
Marco Schlichting.
\newblock Symmetric versus genuine symmetric forms in {Hermitian} {$K$}-theory,
  2025.
\newblock Version 3, 19 June 2025.
  \href{https://arxiv.org/abs/2503.14288v3}{arXiv:2503.14288v3}.

\bibitem[Sou85]{Soule}
Christophe Soul{\'e}.
\newblock Op{\'e}rations en {$K$}-th{\'e}orie alg{\'e}brique.
\newblock {\em Canadian Journal of Mathematics}, 37(3):488--550, 1985.
\newblock \href{https://doi.org/10.4153/CJM-1985-029-x}{DOI
  \nolinkurl{10.4153/CJM-1985-029-x}};
  \href{https://doi.org/10.4153/CJM-1985-029-x}{Text}.

\bibitem[SS25]{SchlichtingSarwar}
Husney~Parvez Sarwar and Marco Schlichting.
\newblock The third homology of symplectic groups and algebraic {$K$}-theory.
\newblock {\em American Journal of Mathematics}, 147(3):819--867, 2025.
\newblock Cited version:
  \href{https://preprint.press.jhu.edu/ajm/sites/default/files/AJM-sarwar-schlichting-FINAL.pdf}{final
  author manuscript}. \href{https://doi.org/10.1353/ajm.2025.a961348}{DOI
  \nolinkurl{10.1353/ajm.2025.a961348}}.

\bibitem[Swa88]{Swan1988}
Richard~G. Swan.
\newblock A simple proof of {Gabber}'s theorem on projective modules over a
  localized local ring.
\newblock {\em Proceedings of the American Mathematical Society},
  103(4):1025--1030, 1988.

\bibitem[{The}26]{Stacks}
{The Stacks Project Authors}.
\newblock The {Stacks Project}, 2026.
\newblock \url{https://stacks.math.columbia.edu/}. Accessed 5 October 2026.

\bibitem[TT90]{TT}
Robert~W. Thomason and Thomas Trobaugh.
\newblock Higher algebraic {$K$}-theory of schemes and of derived categories.
\newblock In Pierre Cartier, Luc Illusie, Nicholas~M. Katz, G{\'e}rard Laumon,
  Yuri~I. Manin, and Kenneth~A. Ribet, editors, {\em The Grothendieck
  Festschrift, Volume III}, volume~88 of {\em Progress in Mathematics}, pages
  247--435. Birkh{\"a}user, Boston, 1990.
\newblock \href{https://doi.org/10.1007/978-0-8176-4576-2_10}{DOI
  \nolinkurl{10.1007/978-0-8176-4576-2_10}};
  \href{https://www.sas.rochester.edu/mth/sites/doug-ravenel/otherpapers/Thomason-Trobaugh.pdf}{Text}.

\bibitem[Wei93]{WeibelChern}
Charles Weibel.
\newblock {\'E}tale {Chern} classes at the prime 2.
\newblock In {\em Algebraic {$K$}-Theory and Algebraic Topology}, volume 407 of
  {\em NATO ASI Series C}, pages 249--286. Kluwer, 1993.
\newblock Cited version:
  \href{https://sites.math.rutgers.edu/~weibel/archive/papers-dir/chernclass.pdf}{author
  manuscript}.

\bibitem[Wei94]{WeibelHomological1994}
Charles~A. Weibel.
\newblock {\em An Introduction to Homological Algebra}, volume~38 of {\em
  Cambridge Studies in Advanced Mathematics}.
\newblock Cambridge University Press, Cambridge, 1994.

\end{thebibliography}
